
\documentclass[12pt]{thesis}  
\usepackage{titlesec}
   \titleformat{\chapter}
      {\normalfont\large}{Chapter \thechapter:}{1em}{}

\usepackage{graphicx}
\usepackage{cite}
\usepackage{lscape}
\usepackage{indentfirst}
\usepackage{latexsym}
\usepackage{multirow}
\usepackage{tabls}
\usepackage{wrapfig}
\usepackage{slashbox}
\usepackage{longtable}
\usepackage{supertabular}
\usepackage{subfigure}
\usepackage{dsfont}
\usepackage{amsmath,amsthm,amssymb,amsfonts, amscd}
\usepackage{tikz}
\usetikzlibrary{matrix}
\newcommand{\biddots}{
\begin{tikzpicture}
\filldraw [black] (0,0) circle (1.8pt);
\filldraw [black] (.2,.2) circle (1.8pt);
\filldraw [black] (.4,.4) circle (1.8pt);
\end{tikzpicture}
}

\newcommand{\C}{\mathbb{C}}
\newcommand{\R}{\mathbb{R}}
\newcommand{\SO}{\mathrm{SO}}
\newcommand{\Sp}{\mathrm{Sp}}
\newcommand{\OO}{\mathrm{O}}
\newcommand{\GL}{\mathrm{GL}}

\newcommand{\vol}{\mathrm{vol}}
\newcommand{\wwedge}[1]{\sideset{}{^{#1}}\bigwedge}
\newcommand{\bfv}{\mathbf{v}}
\newtheorem{thm}[subsection]{Theorem}  
\newtheorem{lem}[subsection]{Lemma}         
\newtheorem*{lem*}{Lemma}         
\newtheorem{prop}[subsection]{Proposition}
\newtheorem*{prop*}{Proposition}
\newtheorem{rmk}[subsection]{Remark}
\theoremstyle{definition}
\newtheorem{defn}[subsection]{Definition}   

\newcommand{\Hom}{\mathrm{Hom}}
\newtheorem{cor}[subsection]{Corollary}
\newcommand{\Z}{\mathbb{Z}}
\renewcommand{\baselinestretch}{2}
\setlength{\textwidth}{5.9in}
\setlength{\textheight}{9in}
\setlength{\topmargin}{-.50in}
\setlength{\oddsidemargin}{.55in}
\setlength{\parindent}{.4in}
\pagestyle{empty}

\begin{document}


\hbox{\ }

\renewcommand{\baselinestretch}{1}
\small \normalsize

\begin{center}
\large{{ABSTRACT}} 

\vspace{3em} 

\end{center}
\hspace{-.15in}
\begin{tabular}{ll}
Title of dissertation:    & {\large  The Relative Lie Algebra Cohomology} \\
& {\large of the Weil Representation }\\

\ \\
&                          {\large Jacob Ralston, Doctor of Philosophy, 2015} \\
\ \\
Dissertation directed by: & {\large  Professor John Millson} \\
&  				{\large	 Department of Mathematics } \\
\end{tabular}

\vspace{3em}

\renewcommand{\baselinestretch}{2}
\large \normalsize

We study the relative Lie algebra cohomology of $\mathfrak{so}(p,q)$ with values in the Weil representation $\varpi$ of the dual pair $\mathrm{Sp}(2k, \R) \times \OO(p,q)$.  Using the Fock model defined in Chapter \ref{Introchapter}, we filter this complex and construct the associated spectral sequence. We then prove that the resulting spectral sequence converges to the relative Lie algebra cohomology and has  $E_0$ term, the associated graded complex, isomorphic to a Koszul complex, see Section \ref{defofkoszulsection}.  It is immediate that the construction of the spectral sequence of Chapter \ref{spectralchapter} can be applied to any reductive subalgebra $\mathfrak{g} \subset \mathfrak{sp}(2k(p+q), \R)$.  By the Weil representation of $\OO(p,q)$, we mean the twist of the Weil representation of the two-fold cover $\widetilde{\OO(p,q)}$ by a suitable character. We do this to make the center of $\widetilde{\OO(p,q)}$ act trivially.  Otherwise, all relative Lie algebra cohomology groups would vanish, see Proposition \ref{genuineprop}.

In case the symplectic group is large relative to the orthogonal group ($k \geq pq$), the $E_0$ term is isomorphic to a Koszul complex defined by a regular sequence, see \ref{defofkoszulsection}.  Thus, the cohomology vanishes except in top degree.  This result is obtained without calculating the space of cochains and hence without using any representation theory.  On the other hand, in case $k < p$, we know the Koszul complex is not that of a regular sequence from the existence of the class $\varphi_{kq}$ of Kudla and Millson, see \cite{KM2}, a nonzero element of the relative Lie algebra cohomology of degree $kq$.

For the case of $\SO_0(p,1)$ we compute the cohomology groups in these remaining cases, namely $k < p$.  We do this by first computing a basis for the relative Lie algebra cochains and then splitting the complex into a sum of two complexes, each of whose $E_0$ term is then isomorphic to a Koszul complex defined by a regular sequence.

This thesis is adapted from the paper, \cite{BMR}, this author wrote with his advisor John Millson and Nicolas Bergeron of the University of Paris.


\thispagestyle{empty}
\hbox{\ }
\vspace{1in}
\renewcommand{\baselinestretch}{1}
\small\normalsize

\begin{center}

\large{{The Relative Lie Algebra Cohomology of the Weil Representation \\}}

\ \\
\ \\
\large{by} \\
\ \\
\large{Jacob Ralston}
\ \\
\ \\
\ \\
\ \\
\normalsize
Dissertation submitted to the Faculty of the Graduate School of the \\
University of Maryland, College Park in partial fulfillment \\
of the requirements for the degree of \\
Doctor of Philosophy \\
2015
\end{center}

\vspace{7.5em}

\noindent Advisory Committee: \\
Professor John Millson, Chair/Advisor \\
Professor Jeffrey Adams\\
Professor Thomas Haines\\
Professor Steve Halperin\\
Professor P.S. Krishnaprasad

\pagestyle{plain}
\pagenumbering{roman}
\setcounter{page}{2}


\renewcommand{\baselinestretch}{2}
\small\normalsize
\hbox{\ }
 
\vspace{-.65in}

\begin{center}
\large{Acknowledgments} 
\end{center} 

\vspace{1ex}

I would like to thank the numerous people who made this thesis possible.  First, I want to thank my advisor and friend John Millson.  Finding an advisor can be a tricky process, but eventually we found ourselves simultaneously in need of a student and an advisor (respectively).  I can honestly say that without an advisor and collaborator like John I could not have completed (or maybe even started) my thesis.  I would also like to thank Matei Machedon for pointing out the importance of the complex $C_-$, Steve Halperin for the proof of Proposition \ref{Steveprop1}, Jeff Adams for answering the many questions John and I had, and Yousheng Shi for his help with constructing the spectral sequence associated to a filtration.

There are many other people without whom I could not have finished (or started!).  Chronologically (and perhaps in other ways too), Paul Koprowski gets a lot of credit.  I met him Freshman year of college and he unknowlingly led the way: squash, math, phsycis, UMD, the UClub, and moving to DC (but hopefully not to Baltimore!).  He assured Chris Laskowski (the graduate chair at the time of my admittance) that I was up to snuff - he may or may not have been correct.  Thanks Paul!

Then there are those I met when I first arrived.  Mike, Scott, Dan, Tim, Lucia, Jon (aka Rubber Duck), Zsolt, the list goes on...  You were all essential, so thank you.  Adam, Rebecca, Ryan, and Sean, thanks for always having your office door open.

My time at Haverford helped shape me in many ways, including mathematically.  I want to thank Steve Wang for being my informal math advisor and Dave Lippel for being my math advisor.  I also want to thank Jon Lima for being there with me (and for staying up as late as necessary to finish problem sets).  I'm not sure who made the right decision, but at least we're both done.

Finally, I would like to thank my family.  You were always there to help me make decisions and let me sleep.  Maybe what you're looking at now answers your favorite question, ``what do you study?"  Probably it doesn't, but that's ok. 

\renewcommand{\baselinestretch}{1}
\small\normalsize
\tableofcontents 

\newpage
\setlength{\parskip}{0em}
\renewcommand{\baselinestretch}{2}
\small\normalsize

\setcounter{page}{1}
\pagenumbering{arabic}

\renewcommand{\thechapter}{1}

\chapter{Introduction}

\subsection{Results for $\SO(p,q) \times \mathrm{Sp}(2k, \R)$ for large $k$ or large $p$.}

We let $(V, (\ , \ ))$ be $\R^{p,q}$, a real vector space of signature $(p,q)$.  We will consider the connected real Lie group $G = \SO_0(p,q)$ with Lie algebra $\mathfrak{so}(p,q)$ and maximal compact subgroup $K = \SO(p) \times \SO(q)$.  Let $\mathcal{P}_k$ be the space of all holomorphic polynomials on $(V \otimes \C)^k$, see Section \ref{weilrepsection}.  We will consider the Weil representation with values in $\mathcal{P}_k$.

We first summarize our results obtained for general $p,q$.  In what follows, the $q_{\alpha \mu}$ are the quadratic polynomials on $(V \otimes \C)^k$ defined in Equation \eqref{qalphadef} and $\vol$ is defined in Equation \eqref{voldefn}.

\subsubsection{Results for large $k$.}

\begin{thm} \label{introtheoremlargek}
Assume $k \geq pq$.  Then we have 
\begin{equation*}
H^\ell(\mathfrak{so}(p,q), \SO(p) \times \SO(q); \mathcal{P}_k) = \begin{cases}
0 &\text{ if } \ell \neq pq \\
(\mathcal{P}_k/(q_{1, p+1}, \ldots, q_{p,p+q}))^K \vol &\text{ if } \ell = pq.
\end{cases}
\end{equation*}
\end{thm}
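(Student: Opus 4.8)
The plan is to realize the relative Lie algebra cohomology as the cohomology of a Koszul-type complex and then show that the defining sequence $q_{1,p+1},\ldots,q_{p,p+q}$ is a regular sequence on $\mathcal{P}_k$ when $k \geq pq$. First I would set up the relative Lie algebra cochain complex $\mathrm{Hom}_K(\wwedge{\bullet}(\mathfrak{so}(p,q)/\mathfrak{k}), \mathcal{P}_k)$, where $\mathfrak{k} = \mathfrak{so}(p)\oplus\mathfrak{so}(q)$, and identify $\mathfrak{p} := \mathfrak{so}(p,q)/\mathfrak{k}$ with $\R^p \otimes \R^q$ as a $K$-module. The differential in the Fock model, after the filtration described in the abstract, has associated graded $E_0$ given by contraction against the quadratic polynomials $q_{\alpha\mu}$; this is exactly the content of the earlier results I am permitted to assume, so the $E_0$ term is the Koszul complex $K_\bullet(q_{1,p+1},\ldots,q_{p,p+q};\mathcal{P}_k)$, taken in its $K$-invariant part. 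I would then invoke the general fact that the spectral sequence converges to $H^\bullet(\mathfrak{so}(p,q),K;\mathcal{P}_k)$.

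The crux is the commutative algebra input: I would show that the $pq$ quadratics $q_{\alpha\mu}$, for $1 \leq \alpha \leq p$ and $p+1 \leq \mu \leq p+q$, form a regular sequence in the polynomial ring $\mathcal{P}_k$ on $(V\otimes\C)^k = \C^{(p+q)k}$. Because these are homogeneous elements in a polynomial ring (a Cohen--Macaulay, indeed regular, graded ring), regularity is equivalent to the statement that the common zero locus $Z(q_{1,p+1},\ldots,q_{p,p+q})$ has codimension exactly $pq$, i.e. dimension $(p+q)k - pq$. Writing the $k$ tuples as column vectors $z_1,\ldots,z_k \in \C^{p+q}$ and splitting each as $z_j = (x_j, y_j)$ with $x_j \in \C^p$, $y_j \in \C^q$, the polynomial $q_{\alpha\mu}$ is (up to constants) $\sum_{j} x_{j,\alpha} y_{j,\mu}$, so the full collection vanishes precisely when $X Y^{t} = 0$ where $X$ is the $k \times p$ matrix of the $x_j$'s and $Y$ is the $k\times q$ matrix of the $y_j$'s. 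I would estimate $\dim\{(X,Y) : XY^t = 0\}$ by stratifying according to $r = \mathrm{rank}(X)$: for fixed $X$ of rank $r$ the condition on $Y$ is that its rows lie in the $(k-r)$-dimensional kernel of $X^t$, so that stratum has dimension $\dim\{X : \mathrm{rk}\,X = r\} + q(k-r) = \big(r(k+p-r)\big) + q(k-r)$ (using that the rank-$r$ locus in $k\times p$ matrices has dimension $r(k+p-r)$). Maximizing $f(r) = r(k+p-r) + q(k-r)$ over $0 \le r \le \min(k,p)$, one checks the maximum is attained at the endpoint $r = \min(k,p) = p$ (this is where $k \geq pq \geq p$ is used, together with a short calculation comparing $f$ at consecutive integers), giving $\dim Z = p\cdot k + q(k-p) = (p+q)k - pq$, exactly codimension $pq$, so the sequence is regular.

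Given regularity, the Koszul complex $K_\bullet(q_{1,p+1},\ldots,q_{p,p+q};\mathcal{P}_k)$ is acyclic except in top degree $pq$, where its cohomology is the quotient module $\mathcal{P}_k/(q_{1,p+1},\ldots,q_{p,p+q})$. Hence the $E_0$ page, and therefore every subsequent page, is concentrated in degree $pq$, the spectral sequence degenerates at $E_1$, and $H^\ell(\mathfrak{so}(p,q),K;\mathcal{P}_k) = 0$ for $\ell \neq pq$ while $H^{pq} = \big(\mathcal{P}_k/(q_{1,p+1},\ldots,q_{p,p+q})\big)^K\,\vol$, the factor $\vol$ and the superscript $K$ coming from the identification of the top exterior power $\wwedge{pq}\mathfrak{p}^{*}$ with the $K$-module spanned by $\vol$ and from passing to $K$-invariants throughout. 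The main obstacle is the dimension count in the middle paragraph: one must be careful that the naive codimension bound (number of equations) is actually achieved, which is precisely the failure point when $k < p$ — there the class $\varphi_{kq}$ of Kudla--Millson shows the sequence is not regular — so the inequality $k \geq pq$ (equivalently, enough of a margin that the $r = p$ stratum dominates) is doing real work and the optimization over $r$ must be done honestly.
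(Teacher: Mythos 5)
Your proposal is correct and follows the paper's overall architecture (filtration by polynomial degree, identification of the associated graded complex with the Koszul complex on the $q_{\alpha\mu}$, Koszul acyclicity for a regular sequence, then convergence of the spectral sequence via Propositions \ref{grCzeroimpliesCzero} and \ref{generalspectral}), but your proof of the key commutative-algebra input --- that the $q_{\alpha\mu}$ form a regular sequence --- takes a genuinely different route. The paper argues purely algebraically: it prepends a carefully chosen set of coordinate variables to the sequence, observes that modulo those variables each $q_{\alpha\mu}$ specializes to one of $pq$ pairwise disjoint quadratic monomials (Lemma \ref{easyregular}), and then invokes the permutation lemma for homogeneous regular sequences (Lemma \ref{Matsumuralemma}) to put the $q_{\alpha\mu}$ first; the hypothesis $k \geq pq$ enters as the requirement that there be enough columns to place the $pq$ monomials disjointly. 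You instead use the Cohen--Macaulay criterion that homogeneous elements of a polynomial ring form a regular sequence iff their common zero locus has codimension equal to their number, verified by stratifying the locus $\{(X,Y) : X^{t}Y = 0\}$ by $\mathrm{rank}(X)$ (note the small transposition slip: with your conventions the condition is $X^{t}Y=0$ and it is the \emph{columns} of $Y$ that must lie in $\ker X^{t} \subset \C^k$; the dimension $q(k-r)$ is nonetheless correct). Your optimization is right: $f(r+1) \geq f(r)$ for $r < p$ amounts to $k \geq p+q-1$, which follows from $k \geq pq$ since $(p-1)(q-1) \geq 0$, so the maximal stratum is $r=p$ and the codimension is exactly $pq$. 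In fact your argument proves regularity under the weaker hypothesis $k \geq p+q-1$, sharper than the paper's bound (which the paper itself acknowledges is only an upper bound for the true threshold). What the paper's argument buys is elementary self-containedness, avoiding the unmixedness theorem for Cohen--Macaulay rings; what yours buys is a better constant and a geometric explanation of when regularity fails, consistent with the obstruction $\varphi_{kq}$ for $k<p$.
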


In fact, we show that the top degree cohomology never vanishes.  That is, $H^{pq}(\mathfrak{so}(p,q), \SO(p) \times \SO(q); \mathcal{P}_k) \neq 0$.  The key point of this theorem is the vanishing in all other degrees.

\begin{rmk}
There are some known vanishing results, for example that the relative Lie algebra cohomology will vanish in all degrees below the rank of the group, in this case $q$, see for example \cite{BW}.  The above theorem implies vanishing below degree $q$.
\end{rmk}

\subsubsection{Results for large $p$.}

\begin{thm}
If $p \geq kq$, then
\begin{equation*}
H^\ell(\mathfrak{so}(p,q), \SO(p) \times \SO(q); \mathcal{P}_k) = \begin{cases}
\text{nonzero } &\text{ if } \ell = kq, pq \\
\text{unknown } &\text{ if } kq < \ell < pq \\
0 &\text{ otherwise}.
\end{cases}
\end{equation*}

\end{thm}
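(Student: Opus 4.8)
The plan is to leverage the spectral sequence machinery developed in the earlier chapters together with the explicit Koszul-complex description of its $E_0$ term. By construction (see the discussion of the abstract and Section \ref{defofkoszulsection}), after filtering the relative Lie algebra cochain complex, the associated graded complex is isomorphic to a Koszul complex built from the quadratic polynomials $q_{\alpha\mu}$, $1 \le \alpha \le p$, $p+1 \le \mu \le p+q$, acting on (the $K$-invariants of) a polynomial ring. The bidegree in which a given $q_{\alpha\mu}$ contributes is governed by the pair $(\alpha,\mu)$, so the total cohomological degree of a Koszul generator is controlled by the number of indices used. The overall strategy is: first, identify which truncations of this Koszul complex are exact under the hypothesis $p \ge kq$; second, transport the surviving cohomology through the spectral sequence to the actual relative Lie algebra cohomology; and third, pin down the nonvanishing in degrees $kq$ and $pq$ by exhibiting explicit nonzero classes.

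First I would analyze the $E_0$ term. When $p \ge kq$, I expect that the relevant sequence of $q_{\alpha\mu}$'s splits into a ``large'' regular-sequence part and a controlled remainder, analogously to the large-$k$ case of Theorem \ref{introtheoremlargek} but now with the roles calibrated by the inequality $p \ge kq$ rather than $k \ge pq$. The key algebraic input is that a sufficiently long sub-collection of the $q_{\alpha\mu}$ forms a regular sequence on $\mathcal{P}_k$ (or on the appropriate $K$-isotypic piece), which forces the Koszul homology to concentrate in the extreme degrees. This yields vanishing of $E_0$, hence of $E_1$ and ultimately of $E_\infty$, outside the degree window $[kq, pq]$, and in particular gives the clean $0$ for $\ell < kq$ and $\ell > pq$. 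The convergence of the spectral sequence to $H^\bullet(\mathfrak{so}(p,q), \SO(p)\times\SO(q); \mathcal{P}_k)$, already established in Chapter \ref{spectralchapter}, then transfers this vanishing to the relative Lie algebra cohomology itself, since a filtered complex whose associated graded vanishes in a range of total degrees has vanishing cohomology in that range.

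For the nonvanishing in degree $kq$, I would invoke the Kudla--Millson class $\varphi_{kq}$ of \cite{KM2}: the excerpt already records that $\varphi_{kq}$ is a nonzero element of $H^{kq}$, and this hypothesis $p \ge kq$ is precisely the range in which that class is defined and nonzero. For degree $pq$, I would argue as in Theorem \ref{introtheoremlargek}: the top-degree group is $(\mathcal{P}_k/(q_{1,p+1},\ldots,q_{p,p+q}))^K\,\vol$, and one shows this quotient of $K$-invariants is nonzero (e.g., the constant polynomial survives), so $H^{pq} \ne 0$. The degrees strictly between $kq$ and $pq$ are left genuinely open — here the Koszul complex is \emph{not} that of a regular sequence (again by the existence of $\varphi_{kq}$), so the $E_0$ cohomology need not vanish and the spectral sequence may have nontrivial higher differentials; sorting this out would require computing the cochain spaces explicitly, which is exactly the program carried out only for $\SO_0(p,1)$ later in the thesis.

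The main obstacle I anticipate is the regularity claim for the sub-collection of the $q_{\alpha\mu}$ in the regime $p \ge kq$: one must verify that the codimension of the common zero locus of the chosen quadrics (inside the relevant affine space, and after passing to $K$-invariants) equals the number of quadrics, so that the Koszul complex is a resolution and its homology collapses to the top. Getting the bookkeeping of bidegrees right — so that the surviving Koszul homology lands exactly in total degrees $kq$ and $pq$ and nowhere in between is forced — is the delicate combinatorial heart of the argument, and is presumably where the hypothesis $p \ge kq$ (as opposed to $k \ge pq$) enters decisively.
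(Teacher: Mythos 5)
Your main line for the vanishing statement is exactly the paper's: filter by polynomial degree, identify the $E_0$ term (before taking $K$-invariants) with the Koszul complex $K(\{q_{\alpha\mu}\})$ on $\wwedge{\bullet}\mathfrak{p}^*\otimes\mathcal{P}_k$, extract a regular subsequence of length $kq$, apply the depth bound (Lemma \ref{Eisenbuddepth}) to get $H^\ell(E_0)=0$ for $\ell<kq$, and transfer this through the convergent spectral sequence via Equation \eqref{E_0CisA} and Proposition \ref{grCzeroimpliesCzero}. The combinatorial step you flag as the ``delicate heart'' is done in the paper by choosing one $q_{\alpha\mu}$ for each $\alpha\le kq$ (possible precisely because $p\ge kq$), prepending the off-diagonal coordinates $z_{\alpha i}$, $\alpha\not\equiv i \bmod k$, reducing modulo these so that the chosen quadrics become disjoint monomials (Lemma \ref{easyregular}), and reordering with Lemma \ref{Matsumuralemma}; your codimension-of-the-zero-locus reformulation is an acceptable substitute. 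For nonvanishing in degree $kq$ you cite the global Kudla--Millson theorem; the paper instead gives a local argument ($\varphi_{kq}$ involves only the positive variables, every $d_A$-coboundary involves negative variables, and $H^{kq-1}(\mathrm{gr}(C))=0$ forces the class to survive to $E_\infty$). Your route is logically valid but forfeits the self-contained local proof that is one of the stated points of the thesis.

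The genuine gap is the nonvanishing in degree $pq$. Your argument establishes only that $E_1^{pq}=(\mathcal{P}_k/(\{q_{\alpha\mu}\}))^K\vol$ is nonzero; identifying this with $H^{pq}(C)$ as in Theorem \ref{introtheoremlargek} requires $H^{pq-1}(\mathrm{gr}(C))=0$ (statements (1) and (3) of Proposition \ref{generalspectral}), and that is precisely what is \emph{not} available here: when $k<p$ the degree $pq-1$ lies in the band $kq\le\ell<pq$ where the cohomology of the associated graded is not known to vanish. Without it one only has the surjection $H^{pq}(\mathrm{gr}(C))\twoheadrightarrow H^{pq}(C)$ from Proposition \ref{generalspectral}(2), and a nonzero source does not rule out higher differentials out of total degree $pq-1$ killing every class, so $H^{pq}(C)\neq 0$ does not follow. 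The paper closes this by an independent argument (Section \ref{nonvanishingtop}): pairing the cocycle $\vol\otimes 1$ against a nonzero $\mathfrak{g}$-invariant linear functional (the Dirac delta at the origin composed with the Bargmann transform) sends it to a nonzero multiple of $[\vol]$ in $H^{pq}(\mathfrak{g},K;\R)$, so it cannot be exact. You need this, or some equivalent, to finish degree $pq$.
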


\begin{rmk}
The nonvanishing of the cohomology class of $\varphi_{kq}$, defined in Equation \eqref{defnofKMkq}, in cohomological degree $kq$, was already known but this nonvanishing required the general theorem of Kudla and Millson that $\varphi_{kq}$ is Poincar\'{e} dual to a special cycle.  Then one also has to show the special cycle is nonzero in homology.  The proof in this thesis is the first local proof of nonvanishing.
\end{rmk}

\subsection{Results for $\SO_0(n,1)$ for all $\ell$ and $k$.}

For the case of $\SO_0(n,1)$, we compute the cohomology groups\\
$H^\ell(\mathfrak{so}(n,1), \SO(n); \mathcal{P}_k)$ in the remaining cases, namely $k < n$.  In the following theorem, let $\varphi_k$ be the cocycle constructed in the work of Kudla and Millson, \cite{KM2}, see Section \ref{notation}, Equation \eqref{vaprhikdef}.  In what follows, $c_1, \ldots, c_k$ are the cubic polynomials on $(V \otimes \C)^k$ defined in Equation \eqref{cubic}, $q_1, \ldots, q_n$ are the quadratic polynomials on $(V \otimes \C)^k$ defined in Equation \eqref{qalphadef}, and $\vol$ is as defined in Equation \eqref{voldefn}.  Note that statement $(3)$ of the following theorem is a consequence of Theorem \ref{introtheoremlargek}.

\begin{thm}\label{main}
\hfill

\begin{enumerate}
\item  If $k < n$ then
\begin{equation*}
H^\ell \big(\mathfrak{so}(n,1) ,\SO(n); \mathcal{P}_k \big) = \begin{cases}
\mathcal{R}_k \varphi_k &\text{ if } \ell=k \\
\mathcal{S}_k/(c_1, \ldots, c_k) \vol &\text{ if } \ell=n \\
0 &\text{ otherwise }
\end{cases}
\end{equation*}
\item If $k = n$ then
\begin{equation*}
H^\ell \big(\mathfrak{so}(n,1) ,\SO(n); \mathcal{P}_k \big) = \begin{cases}
\mathcal{R}_k \varphi_k \oplus \mathcal{S}_k/(c_1, \ldots, c_k) \vol &\text{ if } \ell=n \\
0 &\text{ otherwise }
\end{cases}
\end{equation*}
\item If $k>n$
\begin{equation*}
H^\ell \big(\mathfrak{so}(n,1) ,\SO(n); \mathcal{P}_k \big) = \begin{cases}
\big( \mathcal{P}_k/(q_1, \ldots, q_n) \big)^K \vol &\text{ if } \ell=n \\
0 &\text{ otherwise }
\end{cases}
\end{equation*}
\end{enumerate}
\end{thm}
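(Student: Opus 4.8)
The plan is to reduce the computation, via a spectral sequence, to commutative algebra on the relative Lie algebra cochain complex $C^\bullet = \bigl(\mathcal{P}_k \otimes \wwedge{\bullet}\mathfrak{p}^*\bigr)^{K}$, where $\mathfrak{p} = \mathfrak{so}(n,1)/\mathfrak{so}(n) \cong \R^n$ is the standard representation of $K = \SO(n)$ and the differential is the one induced from the Fock model as in Chapter~\ref{Introchapter}. Statement $(3)$ needs nothing new: with $p = n$, $q = 1$ the hypothesis $k > n$ is exactly $k \ge pq$, so it is Theorem~\ref{introtheoremlargek}. Thus I would assume $k \le n$ and handle $(1)$ and $(2)$ at once.

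First I would describe $C^\bullet$ by invariant theory. By the first fundamental theorem for $\OO(n)$ and $\SO(n)$, applied to the action of $K$ on the $k$ copies of $\R^n$ cut out of $(V\otimes\C)^k$ together with the single exterior factor $\wwedge{\ell}\R^n$, every invariant $\ell$-cochain is a polynomial in the $K$-fixed variables --- the inner products $\langle z_i,z_j\rangle$ of the Gaussian vectors and the last coordinates $z_{n+1,1},\dots,z_{n+1,k}$ --- times one of two kinds of building block: a \emph{type $A$} block $z_{j_1}\wedge\cdots\wedge z_{j_\ell}$, a wedge of $\ell$ of the Gaussian vectors (nonzero only for $\ell \le k$), or a \emph{type $B$} block obtained by contracting $n-\ell$ Gaussian vectors into $\vol$ (nonzero only for $\ell \ge n-k$). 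Let $C_+^\bullet$ and $C_-^\bullet$ be the spans of the type $A$ and type $B$ cochains. The first real task is to prove $C^\bullet = C_+^\bullet \oplus C_-^\bullet$ as complexes: the direct-sum decomposition of the graded vector space requires a linear-independence check of the two families over the coefficient ring in the overlap range $n-k \le \ell \le k$ (which comes down to the column span of the generic $n\times k$ Gaussian matrix having rank $k$), while $d$-invariance of the splitting holds because the Fock-model differential acts on the polynomial factor and by exterior multiplication only, and so cannot turn a type $A$ cochain into a type $B$ cochain or conversely.

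Next I would apply the filtration and spectral sequence of Chapter~\ref{spectralchapter} to each of $C_+^\bullet$ and $C_-^\bullet$, filtering by polynomial degree so that on $E_0$ only the leading, degree-raising part of each operator $\varpi(X_a)$, $X_a \in \mathfrak{p}$, survives. A direct computation then identifies $E_0(C_+^\bullet)$ with the Koszul cochain complex of the $k$ linear forms $z_{n+1,1},\dots,z_{n+1,k}$ --- applying the leading part of the differential to $z_{j_1}\wedge\cdots\wedge z_{j_\ell}$ produces $\sum_j z_{n+1,j}\,z_j\wedge z_{j_1}\wedge\cdots\wedge z_{j_\ell}$ --- and identifies $E_0(C_-^\bullet)$, via the Hodge star, with the Koszul complex of the cubics $c_1,\dots,c_k$ of Equation~\eqref{cubic} (each $c_m$ of the shape $\sum_j z_{n+1,j}\langle z_m,z_j\rangle$). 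The crucial algebraic input, which is Proposition~\ref{Steveprop1}, is that since $k \le n$ both are regular sequences: for the $c_j$ this is the statement that the variety of a symmetric $k\times k$ matrix annihilating a vector has codimension exactly $k$, hence is a complete intersection. Because each $E_1 = H(E_0)$ is then concentrated in a single cohomological degree, every higher differential vanishes, the spectral sequences collapse, and one reads off: $H^\bullet(C_+^\bullet)$ sits only in degree $k$ and equals $\mathcal{R}_k\,\varphi_k$ --- using that the surviving generator $z_1\wedge\cdots\wedge z_k$ represents a nonzero multiple of the Kudla--Millson cocycle $\varphi_k$ of Equation~\eqref{vaprhikdef} --- while $H^\bullet(C_-^\bullet)$ sits only in degree $n$ and equals $\mathcal{S}_k/(c_1,\dots,c_k)\,\vol$. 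Since $H^\bullet(C^\bullet) = H^\bullet(C_+^\bullet) \oplus H^\bullet(C_-^\bullet)$, this is $(1)$ when $k < n$ (the two pieces in the distinct degrees $k$ and $n$, all else zero) and $(2)$ when $k = n$ (the two pieces in the common degree $n$).

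I expect the main obstacle to be twofold. The heart of the matter is the commutative-algebra statement that $c_1,\dots,c_k$ is a regular sequence in $\mathcal{S}_k$ --- Proposition~\ref{Steveprop1} --- since this is precisely what breaks down for $k > p$ and produces the different answer there; making the codimension count (equivalently, the acyclicity of the relevant Koszul or Eagon--Northcott complex) airtight is the delicate point. The other substantial piece of work is the splitting $C^\bullet = C_+^\bullet \oplus C_-^\bullet$ itself --- the invariant-theoretic basis, the independence of the two families in the overlap range, and a careful accounting of all lower-order terms of the differential against the two types; this is routine but lengthy. By comparison, matching the surviving $C_+$-generator with $\varphi_k$ is a short computation once the Fock-model formula for $\varphi_k$ is at hand.
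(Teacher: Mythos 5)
Your proposal follows essentially the same route as the paper's proof: statement (3) is deduced from the large-$k$ theorem, the complex is split as $C = C_+ \oplus C_-$ into the determinant-type and starred cochains, the associated graded of $C_+$ is identified with the Koszul complex of $w_1,\dots,w_k$ and that of $C_-$ (after a degree shift) with the Koszul complex of $c_1,\dots,c_k$, both sequences are shown regular, and the spectral sequence collapses. The one step where your justification does not hold up as written is the $d$-stability of the splitting: the observation that the differential acts on the polynomial factor and by exterior multiplication on the form factor does not by itself prevent mixing of the two families in the overlap range $n-k \le \ell \le k$, since both types live in the same exterior algebra and only their span, not the individual wedge terms, is preserved. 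The paper sidesteps this entirely by \emph{defining} $C_\pm$ as the $\pm 1$ eigenspaces of the involution $\iota \otimes \iota$, where $\iota \in \OO(n,\C)\setminus\SO(n,\C)$ negates $e_1$ (Equation \eqref{iotadef}); since $\iota\otimes\iota$ commutes with $d$, the splitting is automatically one of complexes, and only afterwards are the eigenspaces identified with your type $A$ and type $B$ cochains (the Hodge star anticommutes with $\iota\otimes\iota$ and carries $C_+^\ell$ to $C_-^{n-\ell}$). You should adopt that device, or else actually carry out the "careful accounting" you defer. Two smaller points: regularity of $(c_1,\dots,c_k)$ is Lemma \ref{regularsequence}, not Proposition \ref{Steveprop1} (which is the convergence statement for the spectral sequence); and the paper proves it not by a codimension count but by prepending the super-diagonal $r_{ij}$, $i<j$, reducing the $c_j$ to the disjoint monomials $r_{jj}w_j$, and reordering via Lemma \ref{Matsumuralemma} --- your codimension argument also works because $\mathcal{S}_k$ is a polynomial ring (here $k\le n$ is used), hence Cohen--Macaulay, but it is not the argument in the text.
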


The chart at the top of the next page summarizes Theorem \ref{main}.  The symbol $\bullet$ means the corresponding group is non-zero.

The cohomology groups $H^\ell \big(\mathfrak{so}(n,1) ,\SO(n); \mathcal{P}_k \big)$ are $\mathfrak{sp}(2k, \R)$-modules.  We do not prove this here, but we will now describe these modules.  The key point is that the {\bf cohomology class} of $\varphi_{k}$ is a lowest weight vector for $\mathfrak{sp}(2k, \R)$.  If $k < \frac{n+1}{2}$, then as an $\mathfrak{sp}(2k, \R)$-module $\mathcal{R}_k \varphi_k$ is isomorphic to the space of $\mathrm{MU}(k)$-finite vectors in the holomorphic discrete series representation with parameter $(\frac{n+1}{2}, \cdots,\frac{n+1}{2}) $. If $k < n$, then the cohomology group $H^k\big(\mathfrak{so}(n,1) ,\SO(n); \mathcal{P}_k \big)$ is an irreducible holomorphic representation because it was proved in \cite{KM2} that the class of $\varphi_k$ is a lowest weight vector in $H^k\big(\mathfrak{so}(n,1) ,\SO(n); \mathcal{P}_k \big)$.  On the other hand, the cohomology group \\
$H^n\big(\mathfrak{so}(n,1) ,\SO(n); \mathcal{P}_k \big)$ is never irreducible.  Indeed, if $k < n$ then \\
$H^n\big(\mathfrak{so}(n,1) ,\SO(n); \mathcal{P}_k \big)$ is the direct sum of two nonzero $\mathfrak{sp}(2k,\R)$-modules $H_+^n$ and $H_-^n$ and if $k = n$, then $H^n\big(\mathfrak{so}(n,1) ,\SO(n); \mathcal{P}_k \big)$ is the direct sum of three nonzero $\mathfrak{sp}(2k,\R)$-modules $H_+^n$, $H_-^n$, and $\mathcal{R}_k(V) \varphi_k$.

\begin{figure}
\begin{tikzpicture}
  \matrix (m) [matrix of math nodes,
    nodes in empty cells,nodes={minimum width=3ex,
    minimum height=2ex,outer sep=-5pt},
    column sep=1ex,row sep=1ex]{
    &&&&&&&&& \\
          n   &  &  \bullet & \bullet &  \bullet \bullet \bullet & \bullet & \bullet & \bullet & \bullet & \bullet \bullet \bullet & \\
          n-1 &    &  0 & 0 &  \cdots & 0 & \bullet &0 & 0 & 0 & \\
          n-2 &    &  0 & 0 &  \cdots  & \bullet &0 &0 & 0 & 0 & \\
          \vdots  &   &  0 & 0 &  \biddots &0 & 0 &0 & \vdots & \vdots & \\
          2   &  &  0 & \bullet &  \cdots &0 & 0 &0 & 0 & 0 & \\
          1  &   &  \bullet & 0 &  \cdots &0 & 0 &0 & 0 & 0 & \\
          \ell =0   &  &  0  & 0 &  \cdots &0 & 0 &0 & 0 & 0 & \\
    \quad\strut &  & k=1  &  2  &  \cdots &n-2 & n-1 &  n  & n+1 & \cdots & \strut \\};
\draw[thick] (m-1-2.east) -- (m-9-2.east) ;
\draw[thick] (m-9-1.north) -- (m-9-11.north) ;
\end{tikzpicture}
$$H^\ell(\mathfrak{so}(n,1), \SO(n); \mathcal{P}_k)$$
\end{figure}

\subsection{Outline of paper}
The main results of this paper are all proved in a similar fashion.  We will filter the relative Lie algebra complex (to be referred to as ``the complex'' from now on) using a filtration induced by polynomial degree in Section \ref{constructionofss}.  There is then a spectral sequence associated to any, and hence this, filtered complex, see Section \ref{spectralsection}.  We then define what a Koszul complex is in Section \ref{defofkoszulsection}.  We see that the $E_0$ page of our spectral sequence is isomorphic to a Koszul complex in Subsection \ref{E_0isakoszulsection}.  There are results about the cohomology of a Koszul complex when the defining elements of the Koszul complex form a regular sequence, see Section \ref{defofkoszulsection}.  Then we use general spectral sequence facts, see Section \ref{consequencesofE_1vanishsection}, to deduce facts about the relative Lie algebra cohomology from the cohomology of the $E_0$ term, a Koszul complex.

\subsection{The theta correspondence}

We now explain why it is important to compute the relative Lie algebra cohomology groups with values in the Weil representation for the study of the cohomology of arithmetic quotients of the associated locally symmetric spaces. The key point is that {\it cocycles} of degree $\ell$ with values in the Weil representation of $\mathrm{Sp}(2k,\R) \times \mathrm{O}(p,q)$ give rise (using the theta distribution $\theta$) to {\it closed} differential $\ell$-forms on arithmetic locally symmetric spaces associated to such groups $G$.  This construction gives rise to a map $\theta$ from the relative Lie algebra cohomology of $G$ with values in the Weil representation to the ordinary cohomology of suitable arithmetic quotients $M$ of the symmetric space associated to $G$.  This reduces the {\it global} computation of the cohomology of $M$ to local algebraic computations in $\wwedge{\bullet} \mathfrak{p}^* \otimes \mathcal{P}_k$.  For all cohomology classes studied so far, the span of their images under $\theta$ is the span of the Poincar\'{e} duals of certain totally geodesic cycles in the arithmetic quotient, the ``special cycles'' of Kudla and Millson.  Furthermore, the refined Hodge projection of the map $\theta$ has been shown in \cite{BMM1} and \cite{BMM2} to be onto a certain refined Hodge type for low degree cohomology.  In particular, for $\mathrm{Sp}(2k,\R) \times \OO(n,1)$, it is shown in \cite{BMM1} that this map is onto $H^\ell(M)$ for $\ell < \frac{n}{3}$.  We now describe such a map in more detail. 

We introduce the Schr\"odinger model to describe one such map more easily.  Let $\mathcal{S}(V^k)$ be the space of rapidly decaying functions on the real vector space $V^k$.  Then there is a map, the Bargmann transform $\beta$, from $\mathcal{P}_k \to \mathcal{S}(V^k)$, see \cite{Folland} pages 39-40.  It sends $1$ to the Gaussian and, more generally, $\mathcal{P}_k$ to the span of the hermite functions.

Given a discrete subgroup $\Gamma \subset G$, let $M$ be the associated locally symmetric space $M = \Gamma \backslash G / K$ and $\pi: D \rightarrow M$ be the quotient map.  Take $f \in \mathcal{P}_k$, $\Gamma \subset G$ and a $\Gamma$-stable lattice $\mathcal{L} \subset V^k$.  Then define $\theta_\mathcal{L}$ to be the map which sends $f$ to $\beta(f)$ and then sums its values on the lattice.  That is,
\begin{equation*}
\theta_\mathcal{L}(f) = \sum_{\ell \in \mathcal{L}} \beta(f)(\ell).
\end{equation*}
Then this map is a $\Gamma$-invariant linear functional on $\mathcal{P}_k$,
\begin{equation*}
\theta_\mathcal{L}: \mathcal{P}_k \to \C.
\end{equation*}
Thus, we have the sequence of maps
\begin{equation*}
C^k(\mathfrak{g}, K; \mathcal{P}_k) \cong A^{\bullet}(D, \mathcal{P}_k)^G \hookrightarrow A^{\bullet}(D, \mathcal{P}_k)^\Gamma \xrightarrow{(\theta_\mathcal{L})_*} A^{\bullet}(D, \R)^\Gamma = A^{\bullet}(\Gamma \backslash D),
\end{equation*}
which induces a map on cohomology
\begin{equation} \label{thetastar}
(\theta_\mathcal{L})_* : H^\ell(\mathfrak{g},K;\mathcal{P}_k) \to H^\ell(M, \C).
\end{equation}

The main goal is then to find more classes in $H^\ell(\mathfrak{g},K;\mathcal{P}_k)$ which map, under appropriate choices $\theta_*$, to nonzero classes in $H^\ell(M, \C)$.  It follows from the theory of the theta correspondence that for $\omega \in H^\ell(\mathfrak{g}, K; \mathcal{P}_k)$, we have ${\theta_\mathcal{L}}_*(\omega)$ is an automorphic form on $\Gamma^\prime \backslash \mathrm{Sp}(2k, \R) / \mathrm{U}(n)$ with values in $H^\ell(M, \C)$ for $\Gamma^\prime \subset \mathrm{Sp}(2k, \R)$ a suitable lattice, see \cite{KM2}.

We will describe some features of the image of the map (actually maps) $\theta_*$ in subsubsection \ref{Johnmotivation}.

\subsection{Motivation}

The relative Lie algebra cocycles of Kudla and Millson, $\varphi_{kq}$ and $\varphi_{kp}$ (and their analogues for $\mathrm{U}(p,q)$), and their images under the theta map (actually, theta maps) have been studied for over thirty years.  The motivation is from geometry and we discuss here one of their geometric properties.  Let $\varphi_{q} \in H^{q}(\mathfrak{so}(p,q), \SO(p) \times \SO(q); \mathcal{P}_k)$ where $V = \R^{p,q}$ be given by (see Section \ref{weilrepsection} for definitions of $z, \omega$)
\begin{equation*}
\varphi_{q} = \sum_{1 \leq \alpha_1, \ldots, \alpha_q \leq p} z_{\alpha_1} \cdots z_{\alpha_q} \omega_{\alpha_1, p+1} \wedge \cdots \wedge \omega_{\alpha_q, p+q}.
\end{equation*}

Then define $\varphi_{kq} \in H^{kq}(\mathfrak{so}(p,q), \SO(p) \times \SO(q); \mathcal{P}_k)$ by taking the ``outer'' wedge (see Equation \eqref{outerwedgedef}) of $\varphi_q$ with itself $k$-times
\begin{equation} \label{defnofKMkq}
\varphi_{kq} = \varphi_{q} \wedge \cdots \wedge \varphi_{q}.
\end{equation}

We say a vector $\mathbf{x} = (x_1, \ldots, x_k) \in V^k$ has positive length if the Gram matrix (the matrix of inner products $\beta = (x_i, x_j)$) is positive definite.  Now let $\mathbf{x} \in V(\mathbb{Q})^k$ be a rational vector of positive length.  Let $\Gamma$ be a discrete subgroup of $\SO_0(p,q)$ and $\Gamma_\mathbf{x}$ be the stabilizer of $\mathbf{x}$.

Recall the symmetric space $D$ is the subset of the Grassmannian $\mathrm{Gr}_q(V)$ given by
$$D = \{Z \subset \mathrm{Gr}_q(V) : (\ ,\ )|_Z << 0\}.$$
Let $Z_0 \in D$ be the subspace spanned by the $e_{p+1}, \ldots, e_{p+q}$.  We define the totally geodesic subsymmetric space $D_\mathbf{x} \subset D$ by
$$D_\mathbf{x} = \{ Z \in D : Z \perp \mathrm{span}(\mathbf{x}) \}.$$
Finally, we define the special cycle $C_\mathbf{x}$ by
$$C_\mathbf{x} = \pi(D_\mathbf{x}).$$

We let $\tilde{\varphi} \in A^{\ell}(D, \mathcal{V})^G$ be the image of $\varphi \in C^{\ell}(\mathfrak{g}, K; \mathcal{V})$ under the isomorphism of Proposition \ref{derhamisomorphism}.
Then define 
\begin{equation} \label{specialforms}
\phi_{kq}(\mathbf{x}, Z) = \sum_{\gamma \in \Gamma_\mathbf{x} \backslash \Gamma} \tilde{\varphi}_{kq} (\gamma^{-1} \mathbf{x}, Z).
\end{equation}

Then we have the following theorem of Kudla and Millson, \cite{KM2}.

\begin{thm}
Fix $\mathbf{x} \in V^k$ of positive length.  Then 
\begin{enumerate}
\item $\phi_{kq}(\mathbf{x}, Z)$ extends to a non-holomorphic Siegel modular form $\phi_{kq}(\mathbf{x}, Z, \tau)$ of weight $\frac{p+q}{2}$ for $\tau$ in the Siegel space $\mathbb{S}_g$.
\item $\phi_{kq}(\mathbf{x}, Z, \tau)$ is a closed differential form in the $Z$ variable. 
\item The cohomology class of $\phi_{kq}(\mathbf{x}, Z, \tau)$ is a nonzero multiple, depending on $\mathbf{x}$ and $\tau$, of the Poincar\`{e} dual of the special cycle $C_\mathbf{x}$.
\end{enumerate}

\end{thm}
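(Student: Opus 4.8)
The plan is to derive all three assertions from the two structural properties of $\varphi_{kq}$ established by Kudla and Millson in \cite{KM2}: that it is a relative Lie algebra \emph{cocycle}, and that it is a \emph{lowest weight vector} for $\mathfrak{sp}(2k,\R)$ acting through the Weil representation -- concretely, the maximal compact $\mathrm{U}(k)\subset\Sp(2k,\R)$ acts on $\varphi_{kq}$, viewed as a $\wwedge{kq}\mathfrak{p}^*$-valued polynomial on $(V\otimes\C)^k$, by the character $\det^{(p+q)/2}$. Applying the Bargmann transform $\beta$ converts $\varphi_{kq}$ into a Schwartz form $\varphi_{kq}^{\mathrm{Sch}}\in\wwedge{kq}\mathfrak{p}^*\otimes\mathcal{S}(V^k)$, and the weight property becomes the statement that $\varphi_{kq}^{\mathrm{Sch}}$ is an eigenfunction, up to an explicit automorphy factor of weight $\tfrac{p+q}{2}$, for the Weil-representation action of the standard generators of $\Sp(2k,\Z)$: the Fourier transform on $V^k$ for the Weyl element, multiplication by a theta-type exponential for the integral translations, and the substitution $\mathbf{x}\mapsto\mathbf{x}A$ for $\GL_k(\Z)$.

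For part $(1)$, I would set $\tilde\varphi_{kq}(\mathbf{x},Z,\tau)=(\det v)^{-(p+q)/4}\,\omega(g_\tau)\varphi_{kq}^{\mathrm{Sch}}(\mathbf{x})(Z)$ with $v=\mathrm{Im}\,\tau$ and $g_\tau\in\Sp(2k,\R)$ a fixed element taking $i\cdot I_k$ to $\tau$; the $\mathrm{U}(k)$-equivariance makes this independent of the choice of $g_\tau$, and unwinding the Weil action shows it equals a polynomial in the entries of $\mathbf{x}$ (with $Z$-dependent coefficients) times a power of $\det v$ times $e^{-\pi\,\mathrm{tr}(v\,R_Z(\mathbf{x}))}$ times a phase in $\mathrm{Re}\,\tau$, where $R_Z$ is the positive semidefinite majorant form attached to $Z$ -- positive definite precisely when $\mathbf{x}$ has positive length, so that $\phi_{kq}(\mathbf{x},Z,\tau)=\sum_{\gamma\in\Gamma_{\mathbf{x}}\backslash\Gamma}\tilde\varphi_{kq}(\gamma^{-1}\mathbf{x},Z,\tau)$ converges by Gaussian decay. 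The transformation law $\phi_{kq}(\mathbf{x},Z,\gamma'\tau)=\det(C\tau+D)^{(p+q)/2}\,\phi_{kq}(\mathbf{x},Z,\tau)$ for $\gamma'$ in a suitable congruence subgroup of $\Sp(2k,\Z)$ (with $C,D$ its lower blocks) then follows generator by generator from the Weil-representation identities above together with Poisson summation, while the factor $\det v$ and the anti-holomorphic part of the polynomial exhibit the non-holomorphy. For part $(2)$, the dual pair property forces $\omega(g_\tau)$ to commute with the Chevalley--Eilenberg differential, so $\omega(g_\tau)\varphi_{kq}$ is again a cocycle for every $\tau$; via the de Rham isomorphism of Proposition \ref{derhamisomorphism} this produces a $d_Z$-closed $G$-invariant $\mathcal{S}(V^k)$-valued form on $D$, and the locally finite sum over $\Gamma_{\mathbf{x}}\backslash\Gamma$ is then a closed $\Gamma$-invariant form, i.e.\ a closed form on $M$.

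For part $(3)$, I would show that for $\mathbf{x}$ of positive length $\tilde\varphi_{kq}(\mathbf{x},\cdot,\tau)$ is cohomologous to an explicit scalar multiple of a Thom form of the normal bundle of the totally geodesic $D_{\mathbf{x}}\subset D$: as $\mathrm{Im}\,\tau\to\infty$ (after a rescaling) the Gaussian $e^{-\pi\,\mathrm{tr}(v\,R_Z(\mathbf{x}))}$ concentrates, as a current on $D$, along the locus $\{Z:Z\perp\mathrm{span}(\mathbf{x})\}$, and the precise normalization of the polynomial part of $\varphi_{kq}$ forces the transverse integral over the normal fibre to equal $1$ -- the Mathai--Quillen-type computation underlying the Kudla--Millson construction -- while an explicit secondary (transgression) form, also produced by Kudla and Millson, realizes the cohomology between $\tilde\varphi_{kq}(\mathbf{x},\cdot,\tau)$ at finite $\tau$ and this limiting current. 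Summing over $\Gamma_{\mathbf{x}}\backslash\Gamma$ and pushing forward along $\pi$ then identifies $[\phi_{kq}(\mathbf{x},Z,\tau)]$ with a nonzero multiple of the Poincar\'e dual of $C_{\mathbf{x}}=\pi(D_{\mathbf{x}})$, the multiple being the value of an exponential-times-power function of $(\mathbf{x},\tau)$ and hence never zero. The main obstacle is exactly this last point -- verifying the Thom-form property -- which demands the careful local analysis near $D_{\mathbf{x}}$: choosing coordinates adapted to the normal bundle, rewriting $\varphi_{kq}$ in them, and evaluating the resulting transverse Gaussian integral to obtain precisely total mass $1$. By contrast, parts $(1)$ and $(2)$ are essentially formal once the Weil-representation equivariance and cocycle property of $\varphi_{kq}$ are in hand.
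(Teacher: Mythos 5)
This theorem is not proved in the thesis at all: it is quoted verbatim from Kudla--Millson \cite{KM2} as background motivation, so there is no in-paper argument to compare against. Judged on its own, your sketch does follow the broad architecture of the actual Kudla--Millson proof (Schwartz form via the Bargmann transform, $\mathrm{U}(k)$-equivariance of weight $\tfrac{p+q}{2}$, closedness from the dual-pair commutation, and a Thom-form/fiber-integral identification for the Poincar\'e duality), and parts of it, notably the convergence of the orbit sum from the positivity of the majorant and the deduction of $(2)$ from the cocycle property plus Proposition \ref{derhamisomorphism}, are correct and essentially complete.

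Two points need attention. First, in part $(1)$ you propose to prove the transformation law ``generator by generator \ldots together with Poisson summation,'' but $\phi_{kq}(\mathbf{x},Z,\tau)$ is a sum over the single $\Gamma$-orbit $\Gamma_{\mathbf{x}}\backslash\Gamma$ of a fixed rational $\mathbf{x}$, not over a lattice in $V^k$; Poisson summation applies to the full theta series $\sum_{\mathbf{x}\in\mathcal{L}}\tilde\varphi_{kq}(\mathbf{x},Z,\tau)$, and the modularity of the individual piece $\phi_{kq}(\mathbf{x},Z,\tau)$ has to be extracted from that of the whole series by grouping terms according to the Gram matrix $(\mathbf{x},\mathbf{x})$ (equivalently, it appears as a Fourier coefficient of the theta lift), which is how \cite{KM2} phrases it. As written, your argument proves modularity of the wrong object. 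Second, part $(3)$ is the entire mathematical content of the theorem, and your proposal defers it to ``the Mathai--Quillen-type computation underlying the Kudla--Millson construction'': the concentration of the Gaussian along $D_{\mathbf{x}}$, the normalization of the transverse integral to $1$, and the transgression form are each substantial computations occupying most of \cite{KM2} (and its predecessor), so the proposal here is a statement of what must be done rather than a proof. Since the thesis itself only cites the result, this is acceptable as a summary of the literature, but it should not be presented as a self-contained argument.
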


\begin{rmk}
The closed form $\phi_{kq}(\mathbf{x},Z,\tau)$ does not depend holomorphically on $\tau$, however its cohomology class is holomorphic in $\tau$.
\end{rmk}



\subsubsection{The classes $\phi_{kq}(\mathbf{x}, Z, \tau)$ and the subspace of the cohomology they span} \label{Johnmotivation}

In what follows we  will ignore the fact that to get cocompact subgroups of $\SO(p,q)$ for $p+q>4$ we must choose a totally-real number field $E$ and use restriction of scalars from $E$ to $\mathbb{Q}$.  Hence for $p+q>4$ the manifolds $M$ defined below will have finite volume but will not be compact.  We leave the required modifications to the expert reader.

We define a family of discrete subgroups $\Gamma$ depending on a choice of positive integer $N$ and a lattice $L$ in $V^k$.  Let $L \in V^k$ be a lattice and $N$ be a positive integer. Then $NL$ is the sublattice of all lattice vectors divisible by $N$.   We then have the finite quotient group $L/NL$.  Let $G(L)$ denote the subgroup of $G$ that stabilizes the lattice $L$.  Then $G$ also stabilizes the sublattice $NL$ and consequently we have a homomorphism
$$\pi: G(L) \to \mathrm{Aut}(L/NL).$$
\begin{defn}
We define $\Gamma$ to be the kernel of $\pi$, that is $\Gamma$ is the congruence subgroup of $G(L)$ of level $N$.
\end{defn}

Recall that $M$ is the quotient $M= \Gamma \backslash D$.  Recall the family of closed forms $\phi_{kq}(\mathbf{x},Z,\tau)$ on $M$ associated to the cocycle $\varphi_{qk}$ defined in Equation \eqref{specialforms}.  Fix $\mathbf{x} \in L^k$ with $(\mathbf{x}, \mathbf{x})$ positive definite.


We now  show that for all $\mathbf{x}, \tau$  the classes $[\phi(\mathbf{x},Z,\tau)]$ lie in the subspace (refined Hodge summand) $SH^{kq}(M)$ of $H^{kq}(M)$ to be defined immediately below.

What follows is an expanded version of the discussion in \cite{BMM1}, page 7.  In the following discusssion, recall we have the splitting (orthogonal for the Killing form) $\mathfrak{g} = \mathfrak{k} \oplus \mathfrak{p}_0$, and let $\mathfrak{p}$ denote the complexification of $\mathfrak{p}_0$.


Since $\mathfrak{p} \cong V_+ \otimes V_- \otimes \C$, where $V$ splits naturally as $V_+ \oplus V_-$,  the group $\mathrm{SL}(q,\C) \cong \mathrm{SL}(V_- \otimes \C)$ acts on $\mathfrak{p}$ and hence it  acts on $\wwedge{j}(\mathfrak{p})$ for all $j$.

\begin{defn} 
The invariant summand $S \wwedge{j} (\mathfrak{p})$ is defined by 
$$ S \wwedge{j} (\mathfrak{p})  = \big( \wwedge{j} (\mathfrak{p}) \big)^{\mathrm{SL}(q, \C)}.$$
\end{defn}
We have an analogous definition of $S \wwedge{j}(\mathfrak{p}^*) \subset \wwedge{j}(\mathfrak{p}^*)$. 

As a homomorphism from $\wwedge{kq}(\mathfrak{p})$ to $\mathrm{Pol}(V^k)$, the cocycle $\varphi_{kq}$ factors through $S\wwedge{j}(\mathfrak{p})$ (of $\wwedge{kq}(\mathfrak{p})$).  We will denote the space of such cocycles by \\
$SC^{kq}( \mathfrak{so}(p,q), SO(p) \times SO(q); \mathrm{Pol}(V^k))$, to be abbreviated $SC^{kq}$.

We then have the subbundle $S\wwedge{j}(T^*(D))$ given by
\begin{defn}
$$S\wwedge{j}(T^*(D)) = (G \times_ K S\wwedge{j}(\mathfrak{p}^*)).$$
\end{defn}

Since the subspace $S\wwedge{j}(\mathfrak{p})$ is invariant under the Riemannian holonomy group $K = SO(p) \times SO(q)$, the space of sections of the subbundle $S\wwedge{j}(T^*(D))$ is invariant under harmonic projection.  We will define the subspace $SH^{kq}(M)$ to be the subspace of those $\omega \in H^{kq}(M)$ such that the harmonic projection of some representative closed form is a section of the subbundle $S\wwedge{j}(T^*(D))$.

It is obvious that the cocycle $\varphi_q$ belongs to $SC^q$ and hence its outer exterior powers $\varphi_{kq}$ belong to $SC^{kq}$.  Hence the $kq$-forms  $\phi_{kq}(\mathbf{x}, Z, \tau)$ defined in Equation \eqref{specialforms} are sections of 
$S\wwedge{kq}(T^*(M))$ and hence their harmonic projections are also sections of
 $S\wwedge{kq}(T^*(M))$.
We obtain 
\begin{lem}
$$[\phi_{kq}(\mathbf{x}, Z, \tau)] \in SH^{kq}(M).$$
\end{lem}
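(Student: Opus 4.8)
The plan is to trace the statement through the chain of membership assertions that the preceding paragraphs have already set up, so that the lemma becomes a short bookkeeping argument rather than a new computation. The key structural fact is the definition of $SH^{kq}(M)$: a class lies in it precisely when some closed representative is a section of the subbundle $S\wwedge{kq}(T^*(D))$ (equivalently $S\wwedge{kq}(T^*(M))$ after passing to the quotient). So the whole task reduces to exhibiting one such representative of $[\phi_{kq}(\mathbf{x},Z,\tau)]$, and the obvious candidate is $\phi_{kq}(\mathbf{x},Z,\tau)$ itself.

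First I would recall that $\varphi_q \in SC^q$, which is immediate from the explicit formula
$$\varphi_{q} = \sum_{1 \leq \alpha_1, \ldots, \alpha_q \leq p} z_{\alpha_1} \cdots z_{\alpha_q}\, \omega_{\alpha_1, p+1} \wedge \cdots \wedge \omega_{\alpha_q, p+q}$$
together with the identification $\mathfrak{p} \cong V_+ \otimes V_- \otimes \C$: each wedge factor runs over the full index set $p+1,\dots,p+q$ of $V_-$ exactly once in order, which is (up to normalization) the $\SL(V_-\otimes\C)$-invariant generator of $\wwedge{q}(V_-\otimes\C)$, and the $z_{\alpha}$ depend only on the $V_+$ indices. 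Next, I would invoke the fact (used in the excerpt) that taking outer wedges is compatible with the $S$-decomposition, so $\varphi_{kq} = \varphi_q \wedge \cdots \wedge \varphi_q \in SC^{kq}$; concretely, the outer wedge of $k$ copies of the invariant generator of $\wwedge{q}(V_-\otimes\C)$ lands in $S\wwedge{kq}(\mathfrak{p})$ because $\SL(q,\C)$ acts diagonally.

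Then I would pass from the cocycle to the differential form: under the de Rham isomorphism of Proposition \ref{derhamisomorphism}, $\varphi_{kq} \in SC^{kq}$ corresponds to $\tilde\varphi_{kq} \in A^{kq}(D,\mathcal{P}_k)^G$ which is a section of $S\wwedge{kq}(T^*(D))\otimes \mathcal{P}_k$, since the isomorphism is built from the $K$-equivariant identification $\wwedge{\bullet}\mathfrak{p}^* \cong$ invariant forms and $S\wwedge{kq}(\mathfrak{p}^*)$ is $K$-stable. The summation in Equation \eqref{specialforms} over $\gamma \in \Gamma_{\mathbf{x}}\backslash \Gamma$ of $\Gamma$-translates of $\tilde\varphi_{kq}(\gamma^{-1}\mathbf{x},Z)$ is a (locally finite) sum of sections of a fixed $G$-invariant, hence $\Gamma$-invariant and $K$-invariant, subbundle, so $\phi_{kq}(\mathbf{x},Z,\tau)$ descends to a section of $S\wwedge{kq}(T^*(M))$; its closedness is statement (2) of the Kudla–Millson theorem quoted above. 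Finally, since $S\wwedge{kq}(\mathfrak{p})$ is invariant under the Riemannian holonomy group $K = \SO(p)\times\SO(q)$, the harmonic projection of $\phi_{kq}(\mathbf{x},Z,\tau)$ is again a section of $S\wwedge{kq}(T^*(M))$ — exactly the paragraph preceding the lemma — so the class lies in $SH^{kq}(M)$ by definition. The only mildly delicate point, and the one I would be most careful about, is verifying that the outer wedge construction genuinely respects the $\SL(q,\C)$-invariant summand (rather than merely the $K$-invariants), i.e. that $\varphi_q \in SC^q \Rightarrow \varphi_{kq}\in SC^{kq}$; everything else is a formal transport of a $K$-stable, $G$-invariant subbundle condition through the standard identifications.
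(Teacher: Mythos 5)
Your proposal is correct and follows essentially the same route as the paper: the paper's argument is precisely that $\varphi_q \in SC^q$, hence $\varphi_{kq} \in SC^{kq}$, hence the forms $\phi_{kq}(\mathbf{x},Z,\tau)$ are sections of $S\wwedge{kq}(T^*(M))$, and $K$-invariance of $S\wwedge{kq}(\mathfrak{p})$ makes harmonic projection preserve this, which is exactly the membership criterion for $SH^{kq}(M)$. You merely spell out in more detail the steps the paper treats as obvious (the explicit $\SL(V_-\otimes\C)$-invariance of $\varphi_q$, the compatibility of the outer wedge with the $S$-summand, and the transport through the de Rham isomorphism and the $\Gamma$-sum).
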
 

We can now describe the subspace of the cohomology of $H^{kq}(M)$ that can be obtained from the cocycle $\varphi_{kq}$.  The following theorem follows from Theorem 10.10 of \cite{BMM1}. For the description of the Euler class  $e_q$, see \cite{BMM1} page 6 and subsection 5.12.1.


\begin{thm}
The special subspace $SH^{kq}(M)$ of $H^{kq}(M)$ is generated by the products of the  classes $[\phi_{jq} (\mathbf{x},Z,\tau)]$, $1 \leq j \leq k$, with the powers of the Euler class $e_q$ as $x$ and $\tau$ vary. 

\end{thm}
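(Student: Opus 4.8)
The plan is to deduce this structural statement from Theorem 10.10 of \cite{BMM1} combined with the explicit relationship, established above, between the Kudla--Millson cocycle $\varphi_{kq}$, the special subbundle $S\wwedge{kq}(T^*(D))$, and the outer-wedge construction $\varphi_{kq} = \varphi_q \wedge \cdots \wedge \varphi_q$. First I would recall precisely what Theorem 10.10 of \cite{BMM1} gives: it identifies the full refined Hodge summand $SH^{kq}(M)$ (the part of the cohomology whose harmonic representatives are sections of $S\wwedge{kq}(T^*(D))$) as the image of the theta lift restricted to the corresponding refined Hodge type, and describes this image as the subring generated by the special-cycle classes together with the Euler class. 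The content of our theorem is then to translate that description into the language of this thesis: namely that the generators of degree $jq$ are exactly the classes $[\phi_{jq}(\mathbf{x},Z,\tau)]$ attached to the Kudla--Millson cocycles $\varphi_{jq}$, and that multiplying by powers of $e_q$ fills out the remaining degrees up to $kq$.

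The key steps, in order, would be the following. (1) Identify the refined Hodge type appearing in Theorem 10.10 of \cite{BMM1} with the $\mathrm{SL}(q,\C)$-invariant summand $S\wwedge{\bullet}(\mathfrak{p}^*)$ used here; this is essentially a bookkeeping matter matching the two notational conventions, using that $\mathfrak{p} \cong V_+ \otimes V_- \otimes \C$ and that $\mathrm{SL}(q,\C) \cong \mathrm{SL}(V_-\otimes\C)$. (2) Recall from the Lemma just proved that $[\phi_{kq}(\mathbf{x},Z,\tau)] \in SH^{kq}(M)$, and more generally — by the same argument applied to $\varphi_{jq} \in SC^{jq}$ for each $1 \le j \le k$ — that $[\phi_{jq}(\mathbf{x},Z,\tau)] \in SH^{jq}(M)$, so these classes are legitimate generators of the subring in question. (3) Invoke Theorem 10.10 of \cite{BMM1} to conclude that $SH^{kq}(M)$ is spanned, as $\mathbf{x}$ and $\tau$ range, by products $[\phi_{j_1 q}] \cdots [\phi_{j_r q}] \cdot e_q^{m}$ with $\sum j_i q + mq \cdot(\text{something}) = kq$; here one must use the compatibility of the outer-wedge construction with cup product, i.e.\ that the cup product of the cohomology classes of $\varphi_{j_1 q}$ and $\varphi_{j_2 q}$ is (a multiple of) the class of $\varphi_{(j_1+j_2)q}$, which reduces an arbitrary product of the generators to a single $[\phi_{jq}]$ times a power of $e_q$ — so the generating set can be taken to consist precisely of the $[\phi_{jq}(\mathbf{x},Z,\tau)]$ for $1 \le j \le k$ together with powers of $e_q$, matching the statement. (4) Finally, check the degree bookkeeping: a power $e_q^m$ has degree $mq$ and $[\phi_{jq}]$ has degree $jq$, so the products landing in degree $kq$ are exactly those with the exponents summing appropriately, which is consistent with $1 \le j \le k$.

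The main obstacle I anticipate is step (3): faithfully extracting from \cite{BMM1} Theorem 10.10 the statement that the image of the theta lift on the relevant refined Hodge type is generated as claimed, and in particular identifying the Euler class $e_q$ appearing there (defined via \cite{BMM1} page 6 and subsection 5.12.1) with the class that multiplies our special-cycle classes. This requires a careful reconciliation of two sets of conventions — the automorphic/theta-lift framework of \cite{BMM1} versus the relative Lie algebra cohomology framework here — and verifying that the "low degree" hypotheses under which \cite{BMM1} proves surjectivity of the refined theta lift are either automatically satisfied in degree $kq$ or not actually needed for the \emph{generation} statement (as opposed to an isomorphism statement). A secondary, more routine obstacle is confirming that cup product of these special classes corresponds to the outer wedge of the underlying cocycles up to nonzero scalar, which should follow from the definition \eqref{outerwedgedef} of the outer wedge together with the standard compatibility of the de Rham isomorphism of Proposition \ref{derhamisomorphism} with products.
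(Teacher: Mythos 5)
Your approach matches the paper's exactly: the thesis gives no proof of this theorem beyond the single sentence that it ``follows from Theorem 10.10 of \cite{BMM1}'', which is precisely the reduction you propose. Your additional bookkeeping steps (identifying the refined Hodge summand, checking $[\phi_{jq}]\in SH^{jq}(M)$ via the preceding Lemma, and reconciling conventions for $e_q$) are a reasonable elaboration of what that citation implicitly requires, but the paper itself supplies none of these details.
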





\subsection{Further results}

In a paper in preparation with Yousheng Shi, we have shown that in case $k=1$, $U(\mathfrak{sl}(2, \R)) \varphi_q = H^q(\mathfrak{so}(p,q), \SO(p) \times \SO(q); \mathcal{P}_1)$.

Since the $E_0$ term of the spectral sequence developed in this paper is a Koszul complex (see Chapter \ref{spectralchapter}), we can use the existing computer program Macaulay 2 to compute the $E_1$ term of the spectral sequence and thereby prove vanishing theorems of the relative Lie algebra cohomology for small $p,q,k$.  For example, we have computed that $H^\ell(\mathfrak{so}(2,2), \SO(2) \times \SO(2); \mathcal{P}_1) \neq 0$ if and only if $\ell = 2, 3, \text{ or } 4$.  We have also shown that $H^\ell(\mathfrak{so}(2,2), \SO(2) \times \SO(2); \mathcal{P}_2) = 0$ if and only if $\ell \neq 4$.  We have shown $H^\ell(\mathfrak{so}(3,2), \SO(3) \times \SO(2); \mathcal{P}_1) = 0$ if $\ell = 0,1 \text{ or } 5$ and is nonzero if $\ell = 2, 3, \text{ or } 6$.  We do not know if the cohomology vanishes or not in case $\ell = 4$.  We have also shown $H^\ell(\mathfrak{so}(3,2), \SO(3) \times \SO(2); \mathcal{P}_2) = 0$ if $\ell < 4$ and is nonzero in case $\ell = 4, 6$.  We do not know if the cohomology vanishes in case $\ell = 5$.


\renewcommand{\thechapter}{2}

\chapter{Preliminaries} \label{Introchapter}

\section{Relative Lie algebra cohomology}

\subsection{The relative Lie algebra complex $(C^\bullet(\mathfrak{g}, K;\mathcal{V}),d)$}

Given a semi-simple Lie group $G$ with Lie algebra $\mathfrak{g}$ and maximal compact $K$ with Lie algebra $\mathfrak{k}$, we have the following splitting, orthogonal for the Killing form
\begin{equation*}
\mathfrak{g} = \mathfrak{k} \oplus \mathfrak{p}.
\end{equation*}

Let $\{\omega_i\}, \{e_i\}$ be dual bases for $\mathfrak{p}^*, \mathfrak{p}$.
As in Borel and Wallach \cite{BW} (page 13), we define the relative Lie algebra complex $C^\bullet(\mathfrak{g}, K; \mathcal{V})$ for $\mathcal{V}$ a $(\mathfrak{g}, K)$-module with action $\rho$ by
\begin{defn}
\begin{equation}
C^\ell(\mathfrak{g}, K; \mathcal{V}) = \big( \wwedge{\ell}(\mathfrak{g}/\mathfrak{k})^* \otimes \mathcal{V} \big)^K.
\end{equation}
To be precise, $C^\ell(\mathfrak{g}, K; \mathcal{V})$ is those elements $\omega \in \wwedge{\ell}(\mathfrak{g}/\mathfrak{k})^* \otimes \mathcal{V}$ such that for $k \in K$,
\begin{equation*}
(\mathrm{Ad}k)^*(\omega) = \rho(k) \omega.
\end{equation*}
Also, for $\omega \in C^\ell(\mathfrak{g}, K; \mathcal{V})$, $d$ is given by
\begin{equation*}
d \omega (x_1, \ldots, x_{\ell+1}) = \sum_{j=1}^{\ell+1} (-1)^{j-1} \rho(x_j) \omega(x_1, \ldots, \widehat{x_j}, \ldots, x_{\ell+1}).
\end{equation*}

\end{defn}

Using the basis fixed above, we have the following formula for calculating $d$.

\begin{lem} \label{relatived}
For $\omega \in C^{\ell}(\mathfrak{g}, K; \mathcal{V})$ we have 
\begin{equation*} 
d \omega = \sum_i \big( A(\omega_i) \otimes \rho(e_i) \big) \omega
\end{equation*}
where $A(\omega_i)$ denotes the operation of left exterior multiplication by $\omega_i$.

\end{lem}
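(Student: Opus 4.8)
The plan is to unwind both sides on a test $\ell$-tuple $(x_1,\dots,x_\ell)$ of vectors in $\mathfrak{p}$ (which we identify with $\mathfrak{g}/\mathfrak{k}$ for the purposes of evaluation) and check that the coefficient extracted from the right-hand side agrees with the definition of $d\omega$ given above. First I would recall how the operator $A(\omega_i)$ acts: for a decomposable cochain $\eta\otimes v$ with $\eta\in\wwedge{\ell}\mathfrak{p}^*$ and $v\in\mathcal{V}$, we have $\big(A(\omega_i)\otimes\rho(e_i)\big)(\eta\otimes v) = (\omega_i\wedge\eta)\otimes\rho(e_i)v$. Evaluating $\omega_i\wedge\eta$ on an $(\ell+1)$-tuple $(x_1,\dots,x_{\ell+1})$ produces, by the standard wedge formula, the alternating sum $\sum_{j=1}^{\ell+1}(-1)^{j-1}\omega_i(x_j)\,\eta(x_1,\dots,\widehat{x_j},\dots,x_{\ell+1})$.

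Next I would sum over $i$. Since $\{\omega_i\}$ and $\{e_i\}$ are dual bases of $\mathfrak{p}^*$ and $\mathfrak{p}$, we have $\sum_i \omega_i(x_j)\,e_i = x_j$ for every $x_j\in\mathfrak{p}$; this is the one genuinely structural input, and the rest is bookkeeping. Applying this after pulling the (linear) operator $\rho(-)$ through the sum, one gets
\begin{equation*}
\sum_i \big(A(\omega_i)\otimes\rho(e_i)\big)\omega\,(x_1,\dots,x_{\ell+1}) = \sum_{j=1}^{\ell+1}(-1)^{j-1}\rho(x_j)\,\omega(x_1,\dots,\widehat{x_j},\dots,x_{\ell+1}),
\end{equation*}
which is precisely $d\omega(x_1,\dots,x_{\ell+1})$ from the definition. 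By linearity in $\omega$ it suffices to have checked this on decomposable cochains, so this completes the argument.

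I expect no serious obstacle here; the only points requiring care are (i) getting the sign conventions in the wedge-product evaluation formula to match the sign convention in the definition of $d$ — in particular confirming that left multiplication $A(\omega_i)$ (rather than right) is what reproduces the $(-1)^{j-1}$ with $j$ running from $1$ — and (ii) noting that although $\omega$ is only defined as a $K$-invariant element, both sides of the claimed identity are defined for arbitrary elements of $\wwedge{\ell}\mathfrak{p}^*\otimes\mathcal{V}$, so the computation can be carried out before imposing invariance; one should still remark (or cite Borel–Wallach) that $d$ preserves the $K$-invariant subspace so that the formula indeed computes the differential of the relative complex.
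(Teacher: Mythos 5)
Your proposal is correct and follows essentially the same route as the paper's own proof: evaluate the right-hand side on an $(\ell+1)$-tuple, expand the wedge via the alternating-sum formula, interchange the sums, and use the duality relation $\sum_i \omega_i(x_j)e_i = x_j$ to recover the defining formula for $d\omega$. The remarks on signs and on working before imposing $K$-invariance are sensible but add nothing beyond what the paper's computation already implicitly handles.
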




\begin{proof}
We show
$$\sum_{i=1}^N (A(\omega_i) \wedge \rho(e_i) \omega) (x_1, \ldots, x_{\ell+1}) = \sum_{j=1}^{\ell+1} (-1)^{j-1} \rho(x_j) \omega(x_1, \ldots, \widehat{x_j}, \ldots, x_{\ell+1})$$.
We have
\begin{align*}
\sum_{i=1}^N (A(\omega_i) \wedge \rho(e_i) \omega) &(x_1, \ldots, x_{\ell+1}) \\
&= \sum_{i=1}^N \sum_{j=1}^{\ell+1} (-1)^{j-1} \omega_i(x_j) \rho(e_i) \omega(x_1, \ldots, \widehat{x_j}, \ldots, x_{\ell+1}) \\
&= \sum_{j=1}^{\ell+1} (-1)^{j-1} \sum_{i=1}^N \omega_i(x_j) \rho(e_i) \omega(x_1, \ldots, \widehat{x_j}, \ldots, x_{\ell+1}) \\
&= \sum_{j=1}^{\ell+1} (-1)^{j-1} \rho(\sum_{i=1}^N \omega_i(x_j) e_i) \omega(x_1, \ldots, \widehat{x_j}, \ldots, x_{\ell+1}) \\
&= \sum_{j=1}^{\ell+1} (-1)^{j-1} \rho(x_j) \omega(x_1, \ldots, \widehat{x_j}, \ldots, x_{\ell+1})
\end{align*}
where the final equality follows since $\sum_{i=1}^N \omega_i(x_j) e_i = x_j$.

\end{proof}

\subsection{The connection with the de Rham complex $((A^\bullet(G/K), \mathcal{V})^G, d)$}

The following proposition, stated in \cite{BW} page 15, provides the motivation for the definition of relative Lie algebra cohomology.

\begin{prop} \label{derhamisomorphism}
Given the symmetric space $D = G/K$ and $\pi: G \to D$, the following map gives an isomorphism of the relative Lie algebra complex and the de Rham complex of $\mathcal{V}$-valued $G$-invariant forms  on the symmetric space $D$.
\begin{align*}
\big( A^{\ell}(D, \mathcal{V})^G, d \big) &\cong \big(C^{\ell}(\mathfrak{g}, K; \mathcal{V}),d\big) \\
\omega &\mapsto \pi^* \omega|_e.
\end{align*}
\end{prop}

\begin{proof}
Given $\omega \in \big( A^{\ell}(D, \mathcal{V})\big)^G$, we have
$$L_{g^{-1}}^* \circ \rho(g) \omega = \omega \text{ or } L_{g}^* \omega = \rho(g) \omega.$$
Since $\omega \in A^\ell(G/K, \mathcal{V})$, for $k \in K$,
$$R_k^* \pi^* \omega = \pi^* \omega.$$
That is,
\begin{equation} \label{1}
R_k^* \pi^* \omega = \pi^* \omega \text{ and }
\end{equation}
\begin{equation} \label{2}
L_{k}^* \pi^* \omega = \rho(k) \pi^* \omega.
\end{equation}
Hence
\begin{align*}
L_{k}^* R_{k^{-1}}^* \pi^* \omega &= \rho(k) \pi^* \omega \text{ and thus }\\
(\mathrm{Ad}k)^* \pi^* \omega &= \rho(k) \pi^* \omega \text{ and }\\
\mathrm{Ad}^*(k) \circ \rho(k) \pi^* \omega|_e &= (\mathrm{Ad}{k^{-1}})^* \rho(k) \pi^* \omega|_e = \pi^* \omega|_e.
\end{align*}

Thus $\pi^* \omega|_e \in C^{\ell}(\mathfrak{g}, K; \mathcal{V})$.
Now we show the map $\omega \mapsto \pi^* \omega|_e$ is a map of complexes.  That is, it preserves the differentials.  This will be proved in Lemma \ref{mainlemmahere}.


\end{proof}

First, note that we have the trivial bundle $\pi: G \times \mathcal{V} \rightarrow G$ equipped with the $G$-action
$$g_0 (g, v) = (g_0 g, g_0v) := (g_0 g, \rho(g_0)v).$$
Note also, if $v \in \mathcal{V}$, then the constant section
$$s_v(g) = (g, v)$$
is not $G$-invariant.  However, if we define a section $\tilde{s}_v$ by
$$\tilde{s}_v(g) = (g, gv)$$
we obtain a $G$-invariant section by
the following lemma. We leave the proof to the reader.

\begin{lem}
$\tilde{s}_v$ is $G$-invariant.  That is,
$$\rho(g_0) \tilde{s}_v(g_0^{-1} g) = \tilde{s}_v(g).$$
\end{lem}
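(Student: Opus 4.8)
The plan is simply to unwind the definition of $\tilde{s}_v$ together with the $G$-action on the trivial bundle $G \times \mathcal{V}$ and check the asserted identity by a direct substitution. First I would recall the two ingredients: for any $h \in G$ the section value is $\tilde{s}_v(h) = (h, hv) = (h, \rho(h)v)$, and the $G$-action on $G \times \mathcal{V}$ is $g_0(g,w) = (g_0 g, \rho(g_0)w)$. In particular the symbol $\rho(g_0)$ applied to a point $\tilde{s}_v(\cdot)$ of the bundle in the statement denotes this fiberwise part of the $G$-action, i.e. $g_0 \cdot \tilde{s}_v(\cdot)$.

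Next I would substitute $h = g_0^{-1} g$ to obtain $\tilde{s}_v(g_0^{-1} g) = (g_0^{-1} g, \ \rho(g_0^{-1} g) v)$, and then use the homomorphism property $\rho(g_0^{-1} g) = \rho(g_0)^{-1} \rho(g)$ to rewrite this as $(g_0^{-1} g, \ \rho(g_0)^{-1} \rho(g) v)$. Applying $g_0$ through the bundle action then gives $g_0 \cdot (g_0^{-1} g, \ \rho(g_0)^{-1} \rho(g) v) = (g_0 g_0^{-1} g, \ \rho(g_0)\rho(g_0)^{-1} \rho(g) v) = (g, \rho(g) v) = (g, gv) = \tilde{s}_v(g)$, which is exactly the claimed $G$-invariance.

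The argument is entirely formal, so there is no genuine obstacle; the only point requiring care is bookkeeping — distinguishing the occurrences of $g_0$ that act on the base of $G \times \mathcal{V}$ from the ones that act on the fiber via $\rho$. An equivalent and perhaps cleaner way to organize the proof is to observe that $g \mapsto (g, \rho(g)v)$ intertwines left translation by $g_0$ on the base $G$ with the diagonal action $(g,w) \mapsto (g_0 g, \rho(g_0) w)$ on $G \times \mathcal{V}$, which is precisely the statement that $\tilde{s}_v$ is a $G$-invariant section.
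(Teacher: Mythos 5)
Your verification is correct: the paper explicitly leaves this proof to the reader, and your direct unwinding of the definitions of $\tilde{s}_v$ and the bundle action (using the homomorphism property of $\rho$) is exactly the intended routine computation. Nothing is missing.
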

Choose a basis $\mathcal{B} = \{v_i\}_{i \in I}$ for $\mathcal{V}$ (note $I$ could be uncountable) and let \\
$\omega \in A^\ell(G/K, \mathcal{V})^G$.  Then we have the following lemma.

\begin{lem} \label{finitesum}
There exists a finite subset of independent vectors $\{v_1, \ldots, v_n\} \subset \mathcal{V}$ such that
$$\pi^* \omega = \sum_{i=1}^n \omega_i \otimes \tilde{s}_{v_i}$$
where $\omega_i$ is left $G$-invariant.

\end{lem}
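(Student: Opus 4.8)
The plan is to pin everything down by its value at the identity $e \in G$, using that $\wwedge{\ell}(\mathfrak{g}^*)$ is finite-dimensional together with the fact that a $G$-invariant object on the homogeneous space $G$ is determined by one fibre value. First I would evaluate $\pi^*\omega$ at $e$. Since $\pi^*\omega$ is a smooth $\mathcal{V}$-valued $\ell$-form on $G$, its value $(\pi^*\omega)|_e$ lies in $\wwedge{\ell}(T_e^*G) \otimes \mathcal{V} = \wwedge{\ell}(\mathfrak{g}^*) \otimes \mathcal{V}$; because $\wwedge{\ell}(\mathfrak{g}^*)$ is finite-dimensional this is the algebraic tensor product, so decomposing $\mathcal{V} = \bigoplus_{i \in I} \C v_i$ along the fixed basis $\mathcal{B}$ yields a \emph{finite} expansion
\[
(\pi^*\omega)|_e = \sum_{i=1}^n \eta_i \otimes v_i, \qquad \eta_i \in \wwedge{\ell}(\mathfrak{g}^*),
\]
with $v_1, \dots, v_n$ distinct elements of $\mathcal{B}$, hence linearly independent. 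This already singles out the required finite set of vectors. (One can note in passing that $\pi^*\omega$ annihilates vectors tangent to the fibres of $\pi$, so in fact $\eta_i \in \wwedge{\ell}((\mathfrak{g}/\mathfrak{k})^*)$, though the statement as given does not need this.)

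Next I would let $\omega_i$ be the unique left-$G$-invariant $\ell$-form on $G$ with $(\omega_i)|_e = \eta_i$ and set $\Theta := \sum_{i=1}^n \omega_i \otimes \tilde{s}_{v_i}$. The key point is that $\Theta$ and $\pi^*\omega$ obey the same transformation law under left translations. Reading the $G$-invariant section $\tilde{s}_{v_i}$ as the $\mathcal{V}$-valued function $g \mapsto \rho(g)v_i$, the preceding lemma gives $L_g^* \tilde{s}_{v_i} = \rho(g)\,\tilde{s}_{v_i}$, and since $L_g^*\omega_i = \omega_i$ we get $L_g^*(\omega_i \otimes \tilde{s}_{v_i}) = \rho(g)(\omega_i \otimes \tilde{s}_{v_i})$, whence $L_g^*\Theta = \rho(g)\Theta$. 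On the other hand, $\pi \circ L_g$ is the left action of $g$ on $D = G/K$ followed by $\pi$, so the $G$-invariance $L_g^*\omega = \rho(g)\omega$ established in the proof of Proposition \ref{derhamisomorphism} gives $L_g^*(\pi^*\omega) = \pi^*(\rho(g)\omega) = \rho(g)(\pi^*\omega)$ — the same law. A $\mathcal{V}$-valued form $\Phi$ on $G$ with $L_g^*\Phi = \rho(g)\Phi$ for all $g$ is determined by $\Phi|_e$, because $L_g$ acts transitively on $G$ and $\Phi|_g$ is obtained from $\Phi|_e$ by applying $\rho(g)$ and the pullback by $L_{g^{-1}}$. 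Since $\Theta|_e = \sum_i \eta_i \otimes v_i = (\pi^*\omega)|_e$ by construction, I conclude $\Theta = \pi^*\omega$, which is precisely the asserted identity.

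I do not anticipate a real obstacle: the mathematical content is just "a $G$-invariant object on a homogeneous space is determined by its value at one point," combined with the finiteness coming from $\dim \wwedge{\ell}(\mathfrak{g}^*) < \infty$ (so that the possibly uncountable basis $\mathcal{B}$ contributes only finitely many terms). The only thing requiring care is bookkeeping — aligning the notion of $G$-invariance used for the $0$-form section $\tilde{s}_{v_i}$ (handled by the previous lemma) with the transformation law $L_g^*(\pi^*\omega) = \rho(g)(\pi^*\omega)$ for the $\mathcal{V}$-valued $\ell$-form, so that each tensor product $\omega_i \otimes \tilde{s}_{v_i}$ is genuinely left-invariant. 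Once the conventions are matched, the comparison at $e$ finishes it.
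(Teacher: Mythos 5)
Your argument is correct and follows essentially the same route as the paper: extract a finite expansion of $\pi^*\omega|_e$ from the finite-dimensionality of $\wwedge{\ell}(\mathfrak{p}^*)$ (the paper takes a basis of the image of $\pi^*\omega|_e$ rather than a finite subset of $\mathcal{B}$, an immaterial difference), then propagate to all of $G$ via the equivariance law $L_g^*(\pi^*\omega) = \rho(g)\pi^*\omega$, which is exactly the paper's step of applying $L_{g^{-1}}^* \otimes \rho(g)$ to both sides of the identity at $e$. Your explicit verification that $\Theta$ and $\pi^*\omega$ obey the same transformation law and agree at $e$ is just a slightly more spelled-out version of the same comparison.
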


\begin{proof}
The image of $\wwedge{\ell}(\mathfrak{p})$ under $\pi^* \omega|_e$ is a finite dimensional subspace of $\mathcal{V}$.  Choose a basis for that subspace.  Then there exists elements $\alpha_1, \ldots, \alpha_n \in \wwedge{\ell}(\mathfrak{p}^*)$ such that
$$\pi^* \omega|_e = \sum_{i=1}^n \alpha_i \otimes v_i.$$
We see, by applying $L_{g^{-1}}^* \otimes \rho(g)$ to both sides of the above equation, that
$$\pi^* \omega|_g = \sum_{i=1}^n L_{g^{-1}}^*\alpha_i \otimes \tilde{s}_{v_i}.$$
Then $\omega_i$ defined by $\omega_i|_g = L_{g^{-1}}^* \alpha_i$ is left $G$-invariant by definition.

\end{proof}



Let $x_1, \ldots, x_N$ be a basis for $\mathfrak{p}$ and $X_1, \ldots, X_N$ be the corresponding left-invariant horizontal vector fields on $G$.  Then we have the following lemma.

\begin{lem} \label{mainlemmahere}
\begin{equation*}
\pi^*(d \omega|_e)(x_1, \ldots, x_{\ell +1}) = \sum_{j=1}^{\ell+1} (-1)^{j-1} \rho(x_j) \pi^* \omega|_e (x_1, \ldots, \widehat{x_j}, \ldots, x_{\ell +1})
\end{equation*}
\end{lem}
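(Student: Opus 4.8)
The plan is to unwind the definition of the exterior derivative on the symmetric space $D = G/K$ and compare it with the Lie-algebra formula. The standard de Rham formula for $d$ on $D$, evaluated at the basepoint $eK$ using the left-invariant horizontal vector fields $X_1, \ldots, X_N$ constructed above, reads
\begin{equation*}
d(\pi^*\omega)(X_1, \ldots, X_{\ell+1}) = \sum_{j=1}^{\ell+1}(-1)^{j-1} X_j\big(\pi^*\omega(X_1, \ldots, \widehat{X_j}, \ldots, X_{\ell+1})\big) + \sum_{i<j}(-1)^{i+j}\pi^*\omega\big([X_i,X_j], X_1, \ldots, \widehat{X_i}, \ldots, \widehat{X_j}, \ldots, X_{\ell+1}\big).
\end{equation*}
First I would kill the bracket terms: since $\mathfrak{p}$ is the orthogonal complement of $\mathfrak{k}$, one has $[\mathfrak{p}, \mathfrak{p}] \subset \mathfrak{k}$, so each $[X_i, X_j]$ is a vertical vector field along the fiber over $eK$; because $\pi^*\omega$ is the pullback of a form on the base, it annihilates vertical vectors, so all these terms vanish. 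This reduces the claim to the ``derivative'' terms.

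Next I would compute $X_j\big(\pi^*\omega(X_1,\ldots,\widehat{X_j},\ldots,X_{\ell+1})\big)$ at $e$. Here I would invoke Lemma~\ref{finitesum} to write $\pi^*\omega = \sum_{i=1}^n \omega_i \otimes \tilde{s}_{v_i}$ with $\omega_i$ left $G$-invariant scalar forms. Then $\pi^*\omega(X_1,\ldots,\widehat{X_j},\ldots) = \sum_i \omega_i(X_1,\ldots,\widehat{X_j},\ldots)\,\tilde{s}_{v_i}$, and since the $X_m$ are left-invariant and $\omega_i$ is left-invariant, the scalar coefficient $\omega_i(X_1,\ldots,\widehat{X_j},\ldots)$ is a constant function on $G$. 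Therefore differentiating by the left-invariant vector field $X_j$ only hits $\tilde{s}_{v_i}(g) = (g, gv_i)$, whose $G$-equivariant part is $g \mapsto \rho(g)v_i$; the derivative of $g \mapsto \rho(g)v_i$ in the direction $X_j$ at $e$ is $\rho(x_j)v_i$. Assembling, $X_j(\cdots)\big|_e = \sum_i \omega_i(x_1,\ldots,\widehat{x_j},\ldots)\,\rho(x_j)v_i = \rho(x_j)\big(\pi^*\omega|_e(x_1,\ldots,\widehat{x_j},\ldots,x_{\ell+1})\big)$, which is exactly the $j$-th term on the right-hand side of the asserted identity.

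Finally I would combine the two computations: the bracket terms are zero, the derivative terms give precisely $\sum_{j}(-1)^{j-1}\rho(x_j)\,\pi^*\omega|_e(x_1,\ldots,\widehat{x_j},\ldots,x_{\ell+1})$, and since $d$ commutes with pullback, $\pi^*(d\omega)|_e = d(\pi^*\omega)|_e$ evaluated on $x_1,\ldots,x_{\ell+1}$ agrees with this, proving the lemma. Together with Lemma~\ref{relatived}, this shows $\omega \mapsto \pi^*\omega|_e$ intertwines the de Rham differential with the relative Lie algebra differential, completing the proof of Proposition~\ref{derhamisomorphism}.

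The main obstacle is the bookkeeping in the middle step: one must be careful that ``the scalar coefficient is constant'' genuinely uses left-invariance of both $\omega_i$ and the vector fields $X_m$, and that the $\mathcal{V}$-valued product rule isolates the derivative of the equivariant section correctly; there is also a mild subtlety that $\tilde{s}_{v_i}$ is $G$-invariant (as noted in the unnumbered lemma preceding Lemma~\ref{finitesum}) but the fiber coordinate $g \mapsto gv_i$ is what carries the $\mathfrak{p}$-derivative, so one should phrase the differentiation in terms of the trivialization $G \times \mathcal{V}$ rather than the abstract section. Everything else is the routine vanishing of vertical contributions and reindexing.
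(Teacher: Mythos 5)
Your proposal is correct and follows essentially the same route as the paper: the same reduction via $\pi^* d\omega = d\pi^*\omega$, the same vanishing of the bracket terms using $[\mathfrak{p},\mathfrak{p}]\subset\mathfrak{k}$ and basicness, and the same application of Lemma \ref{finitesum} with the Leibniz rule to isolate the derivative of $\tilde{s}_{v_i}$, yielding $\rho(x_j)v_i$. (The sign on the bracket terms is immaterial since they vanish.)
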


\begin{proof}
First, note that
$$\pi^* d \omega = d \pi^* \omega.$$
The definition of the exterior derivative of $\mathcal{V}$-valued forms on $G$ gives
\begin{align*}
d \pi^* \omega(X_1, \ldots, X_{\ell +1}) &= \sum_j (-1)^{j-1} X_j \pi^* \omega (X_1, \ldots, \widehat{X_j}, \ldots, X_{\ell +1}) \\
&+ \sum_{j<k} (-1)^{j+k-1} \pi^* \omega([X_j, X_k], X_1, \ldots, \widehat{X_j}, \ldots, \widehat{X_k}, \ldots, X_{\ell+1}).
\end{align*}
The second term is zero, however, since $[\mathfrak{p}, \mathfrak{p}] \subset \mathfrak{k}$ and $\pi^* \omega$ is basic.  We claim we have
\begin{equation*}
X_j (\pi^* \omega|_e(X_1, \ldots, \widehat{X_j}, \ldots, X_{\ell+1}))|_e = \rho(x_j) \pi^* \omega|_e(x_1, \ldots, \widehat{x_j}, \ldots, x_{\ell+1}).
\end{equation*}
Indeed, by Lemma \ref{finitesum},
\begin{align*}
X_j (\pi^* \omega|_e(X_1, \ldots, \widehat{X_j}, \ldots, X_{\ell+1})) &= X_j (\sum_i \omega_i(X_1, \ldots, \widehat{X_j}, \ldots, X_{\ell+1}) \otimes \tilde{s}_{v_i}) \\
&= \sum_i X_j (\omega_i(X_1, \ldots, \widehat{X_j}, \ldots, X_{\ell+1})) \otimes \tilde{s}_{v_i} \\
&+ \sum_i \omega_i(X_1, \ldots, \widehat{X_j}, \ldots, X_{\ell+1}) \otimes X_j(\tilde{s}_{v_i}).
\end{align*}
The first term is zero because $\omega_i(X_1, \ldots, \widehat{X_j}, \ldots, X_{\ell+1})$ is constant (the $\omega_i$'s and the $X_j$'s are left-invariant).  Also,
\begin{align*}
X_j \tilde{s}_{v_i}(g) &= \frac{d}{dt}|_{t=0} \tilde{s}_{v_i}(g e^{tX_j}) \\
&= (g, \frac{d}{dt}|_{t=0} g e^{tX_j} e_i) \\
&= (g, g \rho(X_j) e_i).
\end{align*}
Hence, evaluating at the identity we have
\begin{equation*}
X_j \tilde{s}_{v_i} (e) = (e, \rho(x_j) e_i).
\end{equation*}

Thus,
\begin{equation*}
\pi^* d\omega|_e = d \pi^* \omega|_e = \sum_j (-1)^{j-1} \rho(x_j) \omega|_e(x_1, \ldots, \widehat{x_j}, \ldots, x_{\ell+1}).
\end{equation*}


\end{proof}

\section{The Weil representation} \label{weilrepsection}

Let $(V, (\ ,\ ))$ be an orthogonal space of signature $p,q$ and $(W, <\ ,\ >)$ be a real vector space of dimension $2k$ equipped with a non-degenerate skew-symmetric form.  Then we can form the space $V \otimes W$ with non-degenerate skew-symmetric form $(\ ,\ ) \otimes <\ ,\ >$.  We will construct the Fock model, $\mathcal{P}_k$, of the Weil representation $(\varpi, \mathfrak{sp}(V \otimes W))$ and give formulas for how $\mathfrak{o}(V)$ operates in this model.

\begin{rmk}
We will adopt the convention of using ``early" greek letters $\alpha, \beta$ to denote integers between $1$ and $p$ and ``late" greek letters $\mu, \nu$ to denote integers between $p+1$ and $p+q$.
\end{rmk}

Let $e_1,\ldots,e_{p+q}$ be an orthogonal basis for $V$ such that $(e_\alpha ,e_\alpha) =1, 1 \leq \alpha \leq p$ and $(e_\mu,e_\mu) = -1, p+1 \leq \mu \leq p+q$.  Let $V_+$ be the span of $e_1, \ldots, e_p$ and $V_-$ be the span of $e_{p+1}, \ldots, e_{p+q}$.  Then we have the splitting $V = V_+ \oplus V_-$.  We have the splitting (orthogonal for the Killing form)
$$\mathfrak{so}(p,q) = \mathfrak{k} \oplus \mathfrak{p}_0$$
and denote the complexification $\mathfrak{p}_0 \otimes \C$ by $\mathfrak{p}$.

We recall that the map $\phi:\bigwedge^2(V) \to \mathfrak{so}(p,q)$ given by
$$\phi(u \wedge v)(w) = (u,w)v - (v,w)u $$
is an isomorphism.  Under this isomorphism the elements $e_\alpha \wedge e_\mu, 1 \leq \alpha \leq p, p+1 \leq \mu \leq p+q$, are a basis for $\mathfrak{p}_0$ and the elements $e_\alpha \wedge e_\beta$ and $e_\mu \wedge e_\nu$ are a basis for $\mathfrak{k}$, where $, 1 \leq \alpha, \beta \leq p, p+1 \leq \mu, \nu \leq p+q$.  We define $e_{\alpha, \mu} = -e_\alpha \wedge e_\mu$ and let $\{ \omega_{\alpha \mu} \}$ be the dual basis for $\mathfrak{p}_0^*$.  We  define $\vol \in (\wwedge{pq}\mathfrak{p}_0^*)^K$ by
\begin{equation} \label{voldefn}
\vol = \omega_{1, p+1} \wedge \cdots \wedge \omega_{p, p+q}.
\end{equation}

Now let $(V \otimes \C)^k = \displaystyle \bigoplus_{i=1}^k (V \otimes \C)$.  We will use $\mathbf{v}$ to denote the element $(v_1,v_2,\cdots,v_k) \in (V \otimes \C)^k$.  We will often identify $(V \otimes \C)^k$ with the $((p+q) \times k)$-matrices 
$$\begin{pmatrix}
z_{11} & z_{12} & \cdots & z_{1k} \\
\vdots & \vdots & \ddots & \vdots \\
z_{p+q, 1} & z_{p+q, 2} & \cdots & z_{p+q,k}
\end{pmatrix}$$
over $\C$ using the basis $e_1, \ldots, e_{p+q}$.  Then $\mathbf{v}$ will correspond to the $(p+q) \times k$ matrix $Z$ where $v_j$ is the $j^{th}$ column of the matrix.  The splitting $V = V_+ \oplus V_-$ induces the splitting $(V \otimes \C)^k = (V_+ \otimes \C)^k \oplus (V_- \otimes \C)^k$. 

We define $\mathcal{P}_k$ to be the space of holomorphic polynomials on $(V \otimes \C)^k$.  That is,
\begin{equation*}
\mathcal{P}_k = \mathrm{Pol}((V \otimes \C)^k).
\end{equation*}

By Theorem 7.1 of \cite{KM2}, we have the following formulas for the action of $\mathfrak{o}(V)$ on $\mathcal{P}_k$ (note that we have set their parameter $\lambda$ equal to $\frac{1}{2i}$).  Note that in the reference there is an overall sign error.

\begin{prop} \label{actionofov}
\begin{align*}
\varpi(e_\alpha \wedge e_\beta) &= -\sum_{i=1}^k (z_{\alpha i} \frac{\partial}{\partial z_{\beta i}} - z_{\beta i} \frac{\partial}{\partial z_{\alpha i}}) \\
\varpi(e_\mu \wedge e_\nu) &= \sum_{i=1}^k (z_{\mu i} \frac{\partial}{\partial z_{\nu i}} - z_{\nu i} \frac{\partial}{\partial z_{\mu i}}) \\
\varpi(e_\alpha \wedge e_\mu) &= \sum_{i=1}^k (-z_{\alpha i} z_{\nu i} + \frac{\partial^2}{\partial z_{\alpha i}\partial z_{\mu i}}).
\end{align*}
\end{prop}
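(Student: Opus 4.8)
The plan is to derive these formulas directly from the well-known formula for the Weil representation of $\mathfrak{sp}(V\otimes W)$ in the Fock model, restricted to the embedded subalgebra $\mathfrak{o}(V) \hookrightarrow \mathfrak{sp}(V\otimes W)$. First I would recall the standard setup: fix a complete polarization $V\otimes W = \mathbb{L} \oplus \bar{\mathbb{L}}$ compatible with a choice of compatible complex structure, identify the Fock space with $\mathrm{Pol}(\mathbb{L})$, and recall that under $\varpi$ the creation operators act as multiplication by linear coordinates $z_{\bullet i}$ and the annihilation operators act as the corresponding partial derivatives $\partial/\partial z_{\bullet i}$ (up to the normalizing constant, here $\lambda = \tfrac{1}{2i}$). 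The key computational input is that for $X \in \mathfrak{sp}(V\otimes W)$ written in block form with respect to the polarization, $\varpi(X)$ is a second-order differential operator: the ``$\bar{\mathbb{L}}\to\bar{\mathbb{L}}$" block contributes a degree-preserving first-order term $\sum z \partial$, the ``$\mathbb{L}\to\bar{\mathbb{L}}$" block contributes a degree $+2$ multiplication term, and the ``$\bar{\mathbb{L}}\to\mathbb{L}$" block contributes a degree $-2$ term $\sum \partial^2$.

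Next I would compute, under the isomorphism $\phi: \wwedge{2}(V) \xrightarrow{\sim} \mathfrak{so}(p,q)$ and the embedding into $\mathfrak{sp}(V\otimes W)$, the three types of block decompositions corresponding to $e_\alpha\wedge e_\beta$ (both indices in $V_+$), $e_\mu\wedge e_\nu$ (both in $V_-$), and $e_\alpha\wedge e_\mu$ (one in each). Because the complex structure on $V\otimes W$ is built so that $V_+$ sits (roughly) in $\mathbb{L}$ and $V_-$ in $\bar{\mathbb{L}}$ (or vice versa, depending on the signature conventions), the ``compact" elements $e_\alpha\wedge e_\beta$ and $e_\mu\wedge e_\nu$ land in the degree-preserving part of $\mathfrak{sp}$ and hence act by first-order operators $\mp\sum_i(z_{\alpha i}\partial_{\beta i} - z_{\beta i}\partial_{\alpha i})$, with opposite overall signs reflecting the opposite signs $(e_\alpha,e_\alpha)=1$ versus $(e_\mu,e_\mu)=-1$ in the definition of $\phi$. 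The ``noncompact" element $e_\alpha\wedge e_\mu$ has nontrivial components in both the raising and lowering blocks, producing the mixed second-order operator $\sum_i(-z_{\alpha i}z_{\mu i} + \partial_{\alpha i}\partial_{\mu i})$. I would simply quote Theorem 7.1 of \cite{KM2} for the precise constants, noting (as the statement already flags) the overall sign correction and the specialization $\lambda = \tfrac{1}{2i}$, and then verify internal consistency by checking that the listed operators satisfy the $\mathfrak{so}(p,q)$ bracket relations on a few sample commutators (e.g. that $[\varpi(e_\alpha\wedge e_\beta), \varpi(e_\alpha\wedge e_\mu)]$ reproduces $\pm\varpi(e_\beta\wedge e_\mu)$, and that $[\varpi(e_\alpha\wedge e_\mu),\varpi(e_\beta\wedge e_\mu)]$ gives $\pm\varpi(e_\alpha\wedge e_\beta)$).

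The main obstacle, as is typical with Weil-representation formulas, is bookkeeping of signs and normalization constants rather than any conceptual difficulty: one must track (i) the sign in $\phi(u\wedge v)(w) = (u,w)v - (v,w)u$ together with the sign in $e_{\alpha,\mu} = -e_\alpha\wedge e_\mu$, (ii) the factor $\lambda = \tfrac{1}{2i}$ coming from the choice of additive character, and (iii) the overall sign error in the cited reference. I would handle this by pinning down all conventions against a single low-rank check — for instance the $\mathfrak{sl}_2$-triple generated by $e_\alpha\wedge e_\mu$ in the case $p=q=k=1$, where the formulas must reduce to the classical oscillator representation of $\mathfrak{sl}(2,\R)$ with $\varpi(e\wedge f)$ acting as $\tfrac{\partial^2}{\partial z^2} - z^2$ — and then propagate those normalizations to the general case. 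Once the conventions are fixed this way, the proposition follows immediately from Theorem 7.1 of \cite{KM2}.
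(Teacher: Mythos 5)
Your proposal is correct and matches the paper's treatment: the paper offers no independent proof of Proposition \ref{actionofov}, simply quoting Theorem 7.1 of \cite{KM2} with the specialization $\lambda = \tfrac{1}{2i}$ and the noted overall sign correction, which is exactly what your argument reduces to once the conventions are pinned down. Your added consistency checks (bracket relations, the $\mathfrak{sl}_2$ low-rank case) are sensible extras, and your version of the third formula with $z_{\alpha i} z_{\mu i}$ is the one actually used later in Equation \eqref{defnofd}, so the $z_{\nu i}$ appearing in the stated proposition is evidently a typographical slip.
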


\begin{rmk}
We see from the above proposition that $\mathfrak{k}$ acts diagonally in the ``usual" way, perhaps twisted by some character, hence we may twist the representation by a power of $\det$ so that $K = \SO(p) \times \SO(q)$ acts in the usual way.  That is, given $f \in \mathcal{P}_k$ and $g \in K$,
\begin{equation*}
\varpi(g)f(\mathbf{v}) = f(g^{-1} \mathbf{v}).
\end{equation*}
The unexpected action of $\mathfrak{p}$ is due to the model we have chosen.
\end{rmk}

We will be concerned with the complex $\big( C^\ell(\mathfrak{so}(p,q), \SO(p) \times \SO(q); \mathcal{P}_k),d \big)$.  By Proposition \ref{relatived} and Proposition \ref{actionofov}, we have the following formula for $d$,

\begin{equation} \label{defnofd}
d = \sum_{i=1}^k \sum_{\alpha=1}^p \sum_{\mu = p+1}^{p+q} A(\omega_{\alpha \mu}) \otimes \big( \frac{\partial^2}{\partial z_{\alpha i} \partial z_{\mu i}} - z_{\alpha i} z_{\mu i} \big).
\end{equation}
We note that
\begin{equation*}
C^{\bullet} \big(\mathfrak{so}(p,q) ,\SO(p) \times \SO(q); \mathcal{P}_k \big) \cong C^{\bullet} \big(\mathfrak{so}(p,q, \C) ,\SO(p, \C) \times \SO(q, \C); \mathcal{P}_k \big).
\end{equation*}
We will use this isomorphism between cochain complexes throughout the paper.


\renewcommand{\thechapter}{3}

\chapter{The Spectral Sequence Associated to the Relative Lie Algebra Cohomology of the Weil Representation} \label{spectralchapter}

\section{The spectral sequence associated to a filtered complex} \label{spectralsection}
In this section we will construct a  spectral sequence associated to a filtered complex.  Our basic reference will be Chapter 2 of \cite{Mc}, especially Theorem 2.1.  However, we warn the reader that the convergence part of \cite{Mc} Theorem 2.1 will not apply to our case, since our filtration is not assumed to be bounded below.

In what follows we will assume we have a filtered cochain complex $(F^\bullet, C, d)$ with (cohomological) degrees between $0$ and $n$ for some fixed $n$, that is,
$$d(F^p C^\ell) \subset F^p C^{\ell+1}.$$
In fact, we will assume this filtration is {\it decreasing}, that is,
$$F^{p+1} C \subset F^p C.$$

\subsection{Some general results on spectral sequences}
We first recall that a spectral sequence is a sequence of bigraded (by $\Z \times \Z$) complexes $\{ E^{\bullet,\bullet}_r\, d_r\}$ such that
\begin{equation} \label{definingequationforE}
H^{p,q}(E_r,d_r) \cong E^{p,q}_{r+1}, \ p,q \in \Z \times \Z, r \geq 0.
\end{equation}
\begin{rmk}
Note that $d_r$ is bigraded, hence it will have a bidegree $(a,b)$.
\end{rmk}

We recall the following definitions.
\begin{defn}
A filtration $F^\bullet$ of a cochain complex $C$ is {\it exhaustive} if
$$\bigcup_{p \in \Z} F^p C = C $$
and {\it separated} if
$$\bigcap_{p \in \Z} F^p C = 0.$$
\end{defn}

Many occurences of spectral sequences come from the following theorem, see \cite{Mc} Theorem 2.1. Note that the defining formulas for $Z^{p,q}_r$  and $B^{p,q}_r$ on page 33 of \cite{Mc} are not correct as stated but the correct formulas are used throughout pages 33-35, in particular in the proof of Theorem 2.1.  We define them below for clarity.

Define subspaces $Z^{p,q}_r$ and $B ^{p,q}_r$, for $r \geq 1$, of $C^{p+q}$ by
\begin{enumerate}
\item $Z^{p,q}_r = \mathrm{ker}\big(d: F^pC^{p+q} \to  F^p C^{p+q+1}/F^{p+r}C^{p+q+1} \big) $
\item $B ^{p,q}_r = \mathrm{im}\big(d:F^{p-r+1}C^{p+q-1} \to F^pC^{p+q}\big)$.
\end{enumerate}
Thus, $Z_r^{p,q}$ are the elements $z \in F^p C^{p+q}$ such that $dz \in F^{p+r} C^{p+q+1}$ and $B_r^{p,q}$ are the elements $b \in F^p C^{p+q}$ such that there exists $x \in F^{p-r+1} C^{p+q}$ with $dx = b$.  Note that elements of $B_r^{p,q}$ are ``absolute" coboundaries whereas elements of $Z_r^{p,q}$ need not be ``absolute" cocycles.  That is, we have
$$ Z^{p,q}_1 \supset Z^{p,q}_2\supset \cdots  \supset Z^{p,q} \supset B^{p,q} \supset \cdots \supset B^{p,q}_2 \supset B^{p,q}_1 .$$
For the sake of consistent notation, we define the following subspaces of $C^{p,q}$.
\begin{equation*}
\begin{aligned}
Z_{-1}^{p,q} &= Z_0^{p,q} = F^p C^{p+q} \\
B_0^{p,q} &= 0.
\end{aligned}
\end{equation*}

We note two properties of $\{Z^{p,q}_r\}$ and $\{B^{p,q}_r\}$. 
\begin{lem} \label{exhaustsep}
We have
\begin{enumerate}
\item If $F^\bullet$ is exhaustive then $B^{p,q} = \bigcup_{r} B^{p,q}_r$.
\item If $F^\bullet$ is separated then $Z^{p,q} = \bigcap_{r} Z^{p,q}_r$.
\end{enumerate} 
\end{lem}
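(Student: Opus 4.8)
The statement is Lemma~\ref{exhaustsep}, which asserts two facts about the towers $\{Z^{p,q}_r\}$ and $\{B^{p,q}_r\}$: that $B^{p,q} = \bigcup_r B^{p,q}_r$ when $F^\bullet$ is exhaustive, and that $Z^{p,q} = \bigcap_r Z^{p,q}_r$ when $F^\bullet$ is separated. Here $Z^{p,q}$ and $B^{p,q}$ are, respectively, the ``absolute'' cocycles and coboundaries inside $F^pC^{p+q}$, i.e. $Z^{p,q} = \ker(d\colon F^pC^{p+q}\to C^{p+q+1})$ and $B^{p,q} = \mathrm{im}(d)\cap F^pC^{p+q}$ — more precisely $B^{p,q}$ is the image of $\bigcup_s F^sC^{p+q-1}$ under $d$, landing in $F^pC^{p+q}$. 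Since our degrees run only between $0$ and $n$, there are no convergence subtleties in the $\ell$-direction; everything happens at a fixed cohomological degree $p+q$. The plan is to prove each containment by unwinding the definitions.

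\textbf{Part (1).} First I would establish $\bigcup_r B^{p,q}_r \subseteq B^{p,q}$: this holds with no hypothesis, since each $B^{p,q}_r = d(F^{p-r+1}C^{p+q-1})$ is visibly contained in the image of $d$ and lies in $F^pC^{p+q}$, hence in $B^{p,q}$. For the reverse containment $B^{p,q}\subseteq\bigcup_r B^{p,q}_r$, take $b\in B^{p,q}$, so $b\in F^pC^{p+q}$ and $b = dx$ for some $x\in C^{p+q-1}$. By exhaustiveness, $C^{p+q-1} = \bigcup_{s} F^sC^{p+q-1}$, so $x\in F^sC^{p+q-1}$ for some $s\in\Z$. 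If $s \geq p - r + 1$, i.e. $r \geq p - s + 1$, then $x\in F^{p-r+1}C^{p+q-1}$ and $b = dx\in B^{p,q}_r$. Choosing $r = \max(1,\,p-s+1)$ works (using that $F^\bullet$ is decreasing, so a smaller superscript is a larger subgroup), giving $b\in B^{p,q}_r\subseteq\bigcup_r B^{p,q}_r$.

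\textbf{Part (2).} First I would establish $Z^{p,q}\subseteq\bigcap_r Z^{p,q}_r$: this holds with no hypothesis, since if $z\in Z^{p,q}$ then $z\in F^pC^{p+q}$ and $dz = 0$, so in particular $dz\in F^{p+r}C^{p+q+1}$ for every $r$, whence $z\in Z^{p,q}_r$ for all $r$. For the reverse containment, take $z\in\bigcap_r Z^{p,q}_r$. Then $z\in F^pC^{p+q}$ and $dz\in F^{p+r}C^{p+q+1}$ for every $r\geq 1$, so $dz\in\bigcap_{r\geq 1}F^{p+r}C^{p+q+1}\subseteq\bigcap_{s\in\Z}F^sC^{p+q+1}$, which equals $0$ by separatedness. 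Hence $dz = 0$, i.e. $z\in Z^{p,q}$.

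\textbf{Main obstacle.} There is no real obstacle here — this is a definition-chase — but the one point requiring a little care is keeping the direction of the filtration (decreasing in the superscript) straight when translating ``$x\in F^sC$ for some $s$'' into ``$x\in F^{p-r+1}C$ for a suitable $r$,'' and noting that the index $r$ in $Z^{p,q}_r$, $B^{p,q}_r$ is only defined for $r\geq 1$, so one should clamp the chosen $r$ at $1$ from below in Part (1). The other thing worth a sentence is justifying that $\bigcap_{r\geq 1}F^{p+r}C^{p+q+1}$ is contained in $\bigcap_{s\in\Z}F^sC^{p+q+1}$: indeed for any integer $s$, if $s\leq p$ then $F^sC\supseteq F^pC\supseteq F^{p+1}C$ and the intersection is inside $F^{p+1}C$, while if $s > p$ then $s = p+r$ for some $r\geq 1$ and the intersection is inside $F^sC$ by construction; so the displayed intersection is inside $F^sC$ for every $s$, hence inside their common intersection, which is $0$.
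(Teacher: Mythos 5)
Your proof is correct: both parts are the straightforward unwinding of the definitions of $Z^{p,q}_r$ and $B^{p,q}_r$, and the paper itself states this lemma without proof, so your argument supplies exactly the intended (and essentially unique) definition-chase. The only cosmetic point is that $B^{p,q}_r$ is really $d(F^{p-r+1}C^{p+q-1})\cap F^pC^{p+q}$ rather than the full image of $F^{p-r+1}C^{p+q-1}$ under $d$, but you use it correctly in both directions, so nothing is affected.
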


We now define a candidate for the $r^{th}$ bigraded complex of the spectral sequence associated to a filtration.  Define the quotient space $E_r^{p,q}$ by
\begin{equation} \label{Erpqdef}
E^{p,q}_r = \frac{Z^{p,q}_r}{B^{p,q}_r  + Z^{p+1,q-1}_{r-1}}, r \geq 0
\end{equation}
and let
$$E_r = \bigoplus_{p,q} E_r^{p,q}.$$
Note that, by definition,
$$E_0^{p,q} = \frac{F^p C^{p,q}}{F^{p+1} C^{p+q}}.$$
Together with the action $d_0$ (the differential induced on the quotient by $d$), $E_0$ is then the graded complex associated to the filtration $F^\bullet$.  We will often refer to this bigraded complex as the associated graded complex.

\begin{thm}\label{existenceofspecseq}
Suppose $(F^\bullet, C,d)$ is a cochain complex equipped with a decreasing filtration $F^{\bullet} C$.  Then there is a spectral sequence $\{ E^{\bullet,\bullet}_r\, d_r\}$ with first term $E_0$, the associated graded complex, and $E_r$ defined in Equation \eqref{Erpqdef}.  The differential $d_r$ is the differential $d$ of $C$ restricted to $Z_r^{p,q} \subset C^{p+q}$ for $r \geq 0$.  Note that $d_r$ has bidegree $(r, -r+1)$ for $r \geq 0$.
\end{thm}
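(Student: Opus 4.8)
The plan is to verify the defining isomorphism \eqref{definingequationforE} directly from the explicit description of $E_r^{p,q}$ in \eqref{Erpqdef}, following the standard argument as in \cite{Mc} Chapter 2 but being careful about the two points flagged above: the filtration is not bounded below, and the corrected formulas for $Z_r^{p,q}$ and $B_r^{p,q}$ must be used. First I would check that $d$ is well-defined as a map $d_r \colon E_r^{p,q} \to E_r^{p+r,q-r+1}$. Concretely, if $z \in Z_r^{p,q}$, then $dz \in F^{p+r}C^{p+q+1}$ by definition, and $d(dz) = 0 \in F^{p+2r}C^{p+q+2}$, so $dz \in Z_r^{p+r,q-r+1}$; one then checks that $d$ carries $B_r^{p,q}$ into $B_r^{p+r,q-r+1}$ (immediate, since coboundaries map to zero) and carries $Z_{r-1}^{p+1,q-1}$ into $B_{r}^{p+r,q-r+1}$ (because $d$ applied to an element of $F^{p+1}C^{p+q}$ lands in the image of $d$ restricted to $F^{(p+r)-r+1}C^{p+q}$). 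Hence $d_r$ descends to the quotient, and $d_r \circ d_r = 0$ since $d \circ d = 0$. This establishes that $(E_r, d_r)$ is a bigraded complex with $d_r$ of bidegree $(r, -r+1)$.

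Next I would identify the cohomology $H^{p,q}(E_r, d_r)$ with $E_{r+1}^{p,q}$. This is the computational heart of the argument: one must show
\begin{equation*}
\ker\bigl(d_r \colon E_r^{p,q} \to E_r^{p+r,q-r+1}\bigr) = \frac{Z_{r+1}^{p,q} + Z_{r-1}^{p+1,q-1}}{B_r^{p,q} + Z_{r-1}^{p+1,q-1}}
\end{equation*}
and
\begin{equation*}
\mathrm{im}\bigl(d_r \colon E_r^{p-r,q+r-1} \to E_r^{p,q}\bigr) = \frac{B_{r+1}^{p,q} + Z_{r-1}^{p+1,q-1}}{B_r^{p,q} + Z_{r-1}^{p+1,q-1}},
\end{equation*}
so that the quotient is $Z_{r+1}^{p,q}/(B_{r+1}^{p,q} + Z_{r-1}^{p+1,q-1})$. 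The subtlety is that the natural candidate denominator would be $B_{r+1}^{p,q} + Z_r^{p+1,q-1}$, not $B_{r+1}^{p,q} + Z_{r-1}^{p+1,q-1}$; reconciling the two requires a short lemma showing $Z_r^{p+1,q-1} \subseteq B_{r+1}^{p,q} + Z_{r-1}^{p+1,q-1}$ modulo the relevant subspaces, which is exactly the kind of diagram chase with the shifted indices that \cite{Mc} carries out on pages 33--35. For the kernel computation: a class $[z] \in E_r^{p,q}$ is killed by $d_r$ iff $dz \in B_r^{p+r,q-r+1} + Z_{r-1}^{p+1,q-1}$ as an element of $C^{p+q+1}$, i.e.\ $dz = db + w$ with $b \in F^{p+1}C^{p+q}$ and $w \in Z_{r-1}^{p+1+r-1,\,(q-1)-(r-1)+1}$; then $d(z-b) = w$ has $w$ with $dw = 0$, and unwinding the definitions shows $z - b \in Z_{r+1}^{p,q}$, giving the claimed description of the kernel. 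The image computation is dual and slightly easier.

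Finally I would note the $r = 0$ base case separately, where $E_0^{p,q} = F^pC^{p+q}/F^{p+1}C^{p+q}$ with $d_0$ the induced differential, and remark that none of the steps above used boundedness below of the filtration — only that $d$ has cohomological degree $+1$ and the complex has degrees in $[0,n]$ so that everything in sight is eventually stationary in the $\ell$-direction. The main obstacle is purely bookkeeping: keeping the four families of indices $(p,q,r,\ell)$ and the shifted subspaces $Z_{r-1}^{p+1,q-1}$ straight through the kernel/image identification, and in particular proving the index-shift lemma $Z_r^{p+1,q-1} + B_r^{p,q} = Z_{r-1}^{p+1,q-1} + B_r^{p,q}$ modulo what is needed — there is no conceptual difficulty, but it is the step most prone to sign or off-by-one errors, so I would write it out in full rather than citing \cite{Mc} verbatim, given that the excerpt already warns the reader that \cite{Mc}'s displayed formulas are misstated.
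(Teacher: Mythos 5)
Your proposal follows the same route as the paper's proof: verify that $d$ descends to a map $d_r$ on the quotient $E_r^{p,q}$, compute the kernel and image of $d_r$ explicitly in terms of the subspaces $Z^{\bullet,\bullet}_{\bullet}$ and $B^{\bullet,\bullet}_{\bullet}$, and reconcile the resulting quotient with $E_{r+1}^{p,q}$ via the intersection identity $Z_{r+1}^{p,q}\cap Z_{r-1}^{p+1,q-1}=Z_{r}^{p+1,q-1}$ together with the standard isomorphism theorems (the paper's ``Butterfly Lemma'' step). The only slip is an off-by-one in the denominator term of the target page --- the condition for $[z]$ to be a $d_r$-cocycle is $dz\in B_r^{p+r,q-r+1}+Z_{r-1}^{p+r+1,q-r}$, not ${}+Z_{r-1}^{p+r,q-r+1}$, and the correct index is needed to conclude $z-b\in Z_{r+1}^{p,q}$ --- but since you explicitly flag this bookkeeping as the step to be written out in full, the argument is sound and essentially identical to the paper's.
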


\begin{proof}

We claim $d$ induces a map $d_r$ on the $r^{th}$ page,
\begin{align*}
d_r: E_r^{p,q} &\rightarrow E_r^{p+r,q-r+1} \\
\frac{Z^{p,q}_r}{B^{p,q}_r  + Z^{p+1,q-1}_{r-1}} &\rightarrow \frac{Z^{p+r,q-r+1}_r}{B^{p+r,q-r+1}_r  + Z^{p+r+1,q-r}_{r-1}}.
\end{align*}

Let $z \in Z_r^{p,q}$.  Then $dz$ is closed and in $F^{p+r}C^{p+q+1}$, so $dz \in Z^{p+r,q-r+1} \subset Z_r^{p+r,q-r+1}$.  So $d$ induces a map $Z_r^{p,q} \rightarrow Z^{p+r,q-r+1}_r$ and hence a map to the quotient $E_r^{p+r,q-r+1}$.  Now we show it factors through the quotient $E_r^{p,q}$.

We must show $d(B^{p,q}_r  + Z^{p+1,q-1}_{r-1}) \subset (B^{p+r,q-r+1}_r  + Z^{p+r+1,q-r}_{r-1})$.  But $d(B^{p,q}_r) = 0$, so all that remains is to show $d(Z^{p+1,q-1}_{r-1}) \subset (B^{p+r,q-r+1}_r  + Z^{p+r+1,q-r}_{r-1})$.  Let $z \in Z^{p+1,q-1}_{r-1}$.  Then $dz$ is a boundary and in $F^{p+r} C^{p+q+1}$, thus $dz \in B_r^{p+r, q-r+1}$.

Finally, note that since $d^2 = 0$ we have $d_r^2 = 0$.  Now we show that the cohomology of the $r^{th}$ page is the $(r+1)^{st}$ page.  Accordingly, we define the kernel and image of the map $d_r$.  We define the graded subspaces $\overline{Z}^{p,q}_r$ and $\overline{B}^{p,q}_r$ of $E^{p,q}_{r-1}$ by
\begin{equation}
\begin{aligned}
\overline{Z}^{p,q}_r &= \mathrm{ker}\big(d_{r-1}:E^{p,q}_{r-1} \to E^{p+r-1,q-r+2}_{r-1} \bigskip),\\
\overline{B}^{p,q}_r &=\mathrm{im}\big(d_{r-1}:E^{p-r+1,q+r-2}_{r-1} \to E^{p,q}_{r-1} \big).
\end{aligned}
\end{equation}
Hence, by definition we have
\begin{equation*}
H^{p,q}(E^{p,q}_{r-1}) = \overline{Z}^{p,q}_r / \overline{B}^{p,q}_r .
\end{equation*}


We construct an isomorphism
$$H^{p,q}(E^{p,q}_{r-1}) \cong E_r^{p,q}.$$
First, consider $\overline{Z}_r^{p,q} = \mathrm{ker}(E_{r-1}^{p,q} \rightarrow E_{r-1}^{p+r-1,q-r+2})$.  This is, by definition,

\begin{equation*}
\mathrm{ker} \bigg(d: \frac{Z_{r-1}^{p,q}}{B_{r-1}^{p,q} + Z_{r-2}^{p+1,q-1}} \rightarrow \frac{Z_{r-1}^{p+r-1,q-r+2}}{B_{r-1}^{p+r-1,q-r+2} + Z_{r-2}^{p+r,q-r+1}} \bigg).
\end{equation*}

Suppose $z$ is in the kernel.  If $dz \in Z_{r-2}^{p+r,q-r+1}$, then $dz \in F^{p+r}C^{p+q+1}$ and $z \in F^pC^{p+q+1}$, hence $z \in Z_r^{p,q}$.  If $dz \in B_{r-1}^{p+r-1,q-r+2}$ then $z \in F^{p+1}C^{p+q}$ and $dz \in F^{p+r-1}C^{p+q+1}$ and hence $z \in Z_{r-2}^{p+1, q-1}$.  Thus

\begin{equation*}
\overline{Z}_r^{p,q} = \frac{Z_{r}^{p,q} + Z_{r-2}^{p+1,q-1}} {B_{r-1}^{p,q} + Z_{r-2}^{p+1,q-1}}.
\end{equation*}

Now consider $\overline{B}_r^{p,q} = \mathrm{im}(E_{r-1}^{p-r+1,q+r-2} \rightarrow E_{r-1}^{p,q})$.  By definition, this is
\begin{equation*}
\mathrm{im} \bigg(d: \frac{Z_{r-1}^{p-r+1,q+r-2}}{B_{r-1}^{p-r+1,q+r-2} + Z_{r-2}^{p-r+2,q+r-3}} \rightarrow \frac{Z_{r-1}^{p,q}}{B_{r-1}^{p,q} + Z_{r-2}^{p+1,q-1}} \bigg).
\end{equation*}
If $z \in Z_{r-1}^{p-r+1,q+r-2}$, then $dz \in B_{r}^{p,q}$.  Now suppose $w \in B_r^{p,q}$.  Then there is some $z^\prime \in C^{p-r+1,q+r-2}$ so that $dz^\prime = w$ and hence $z^\prime \in Z_{r-1}^{p-r+1,q+r-2}$.  Thus, since $B_{r-1}^{p,q} \subset B_r^{p,q}$
\begin{equation*}
\overline{B}_r^{p,q} = \frac{B_{r}^{p,q} + B_{r-1}^{p,q} + Z_{r-2}^{p+1,q-1}}{B_{r-1}^{p,q} + Z_{r-2}^{p+1,q-1}} = \frac{B_{r}^{p,q} + Z_{r-2}^{p+1,q-1}}{B_{r-1}^{p,q} + Z_{r-2}^{p+1,q-1}}.
\end{equation*}
Thus
\begin{equation} \label{sumtopsumbot}
\frac{\overline{Z}_r^{p,q}}{\overline{B}_r^{p,q}} = \frac{Z_{r}^{p,q} + Z_{r-2}^{p+1,q-1}}{B_{r}^{p,q} + Z_{r-2}^{p+1,q-1}}.
\end{equation}

Recall the following consequence of the Butterfly Lemma, $\frac{X+Y}{Y} \cong \frac{X}{X \cap Y}$.  Dividing the top and bottom of Equation \eqref{sumtopsumbot} by $Z_{r-2}^{p+1,q-1}$, we have
\begin{equation} \label{quottopquotbot}
\frac{\overline{Z}_r^{p,q}}{\overline{B}_r^{p,q}} = \frac{Z_{r}^{p,q} / (Z_{r}^{p,q} \cap Z_{r-2}^{p+1,q-1})}{B_{r}^{p,q} / (B_{r}^{p,q} \cap Z_{r-2}^{p+1,q-1})}.
\end{equation}

Simply checking the definitions yields $Z_{r}^{p,q} \cap Z_{r-2}^{p+1,q-1} = Z_{r-1}^{p+1,q-1}$ and $B_{r}^{p,q} \cap Z_{r-2}^{p+1,q-1} = B_{r+1}^{p+1,q-1}$.  Hence
\begin{equation*}
\frac{\overline{Z}_r^{p,q}}{\overline{B}_r^{p,q}} = \frac{Z_r^{p,q}/Z_{r-1}^{p+1,q-1}}{B_r^{p,q}/B_{r+1}^{p+1,q-1}}.
\end{equation*}

Another consequence of the Butterfly Lemma is the isomorphism $\frac{A}{B+C} \cong \frac{A/B}{C/(B\cap C)}$.  Finally, since $B_{r+1}^{p+1,q-1} = B_r^{p,q} \cap Z_{r-1}^{p+1,q-1}$, we have
\begin{equation*}
\frac{\overline{Z}_r^{p,q}}{\overline{B}_r^{p,q}} \cong \frac{Z_r^{p,q}}{B_r^{p,q} + Z_{r-1}^{p+1,q-1}} = E_r^{p,q}.
\end{equation*}

\end{proof}

We now give conditions on the filtration of a filtered complex that are sufficient to imply convergence of the associated spectral sequence.  The applications of this spectral sequence later in the paper will satisfy the hypotheses of Proposition \ref{Steveprop1}.

First, a simple lemma.

\begin{lem} \label{r(p)}
Suppose $(F^\bullet, C, d)$ is a filtered cochain complex such that the filtration is decreasing and bounded above.  Then the filtration is separated and for all $(p,q)$ there exists an integer $r(p)$ so that for all $r \geq r(p)$,
$$Z_r^{p,q} = Z^{p,q}.$$
\end{lem}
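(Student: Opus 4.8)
The statement has two parts: (i) the filtration is separated; (ii) for each $(p,q)$ there is $r(p)$ with $Z_r^{p,q} = Z^{p,q}$ for all $r \ge r(p)$. Both will follow directly from unwinding the definition of "bounded above" for a decreasing filtration. By "bounded above" I mean there is an integer $P_0$ with $F^{P_0} C = C$ (equivalently $F^p C = C$ for all $p \le P_0$); combined with the standing assumption in Section~\ref{spectralsection} that the complex lives in cohomological degrees $0$ through $n$, this is enough. First I would record the consequence that for a fixed total degree $m = p+q$, there are only finitely many distinct steps $F^p C^m$ that matter: indeed $F^p C^m = C^m$ once $p \le P_0$, so the chain $F^{P_0} C^m \supseteq F^{P_0+1} C^m \supseteq \cdots$ is the only nonconstant part. (Note I do \emph{not} claim the filtration is bounded below — that is exactly what the excerpt warns against — so I must be careful to use only boundedness above.)

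\textbf{Separatedness.} This is the easy half. I would argue that $\bigcap_{p} F^p C = 0$ need \emph{not} hold from boundedness above alone in general; but wait — rereading, the lemma asserts it does. The resolution is that in our situation the relevant filtration is the polynomial-degree filtration, which on each $C^\ell$ is a filtration of a space that, degree-by-degree in the polynomial grading, is finite, and the intersection is $0$ because a polynomial of every sufficiently high degree is $0$. More carefully, I would invoke that $F^p C^\ell$ consists of cochains whose polynomial coefficients have degree $\ge$ something growing with $p$ (this is the construction of Section~\ref{constructionofss}, to be cited), so any element of $\bigcap_p F^p C^\ell$ has coefficient polynomials of arbitrarily large degree, forcing it to be $0$. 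Thus the filtration is separated. If one wants a purely formal statement, separatedness should be taken as part of the meaning of "bounded above" in this paper's conventions, or derived from the explicit filtration; I would state it via the explicit filtration to be safe.

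\textbf{Stabilization of $Z_r^{p,q}$.} Recall $Z_r^{p,q} = \ker\big(d : F^p C^{p+q} \to F^p C^{p+q+1}/F^{p+r} C^{p+q+1}\big)$, i.e. $z \in F^p C^{p+q}$ with $dz \in F^{p+r} C^{p+q+1}$, while $Z^{p,q}$ is the subspace with $dz = 0$. Since $d$ raises cohomological degree by one but the complex stops at degree $n$, when $p+q+1 > n$ we have $C^{p+q+1} = 0$ and every space in sight is $0$, so there is nothing to prove; assume $p+q+1 \le n$. Now here is the key point: by Lemma~\ref{exhaustsep}(2), $Z^{p,q} = \bigcap_r Z_r^{p,q}$ because the filtration is separated (just established), so it suffices to show the decreasing chain $Z_1^{p,q} \supseteq Z_2^{p,q} \supseteq \cdots$ \emph{stabilizes}. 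An element $dz \in F^p C^{p+q+1}$ lies in $F^{p+r} C^{p+q+1}$ for all large $r$ iff it lies in $\bigcap_r F^{p+r} C^{p+q+1}$, which is $0$ by separatedness; but I need the \emph{uniform} bound $r(p)$, valid for all $z$ at once, not one depending on $z$. That uniformity is where the explicit filtration enters: in the polynomial-degree filtration each $F^p C^{p+q+1}$ is spanned (over the finite-dimensional coefficient space $\wwedge{p+q+1}(\mathfrak{p}/\mathfrak{k})^*$) by monomials of bounded degree for $p$ in the relevant finite range, and increasing $p$ by one strictly raises the minimal polynomial degree, so after finitely many steps — say $r(p)$ many — the condition "$dz \in F^{p+r} C^{p+q+1}$" becomes "$dz = 0$". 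I would make this precise by noting $F^{p'} C^{p+q+1} = 0$ for $p'$ large enough (polynomial degree exceeding the top degree present), which holds degree-by-degree; combined with boundedness above this gives a single $r(p)$ with $F^{p+r(p)} C^{p+q+1} = 0$, whence $Z_r^{p,q} = \ker(d|_{F^p C^{p+q}}) = Z^{p,q}$ for $r \ge r(p)$.

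\textbf{Main obstacle.} The only real subtlety is that "bounded above" plus "degrees in $[0,n]$" does \emph{not} by itself make each graded piece finite-dimensional, so the uniform stabilization bound $r(p)$ genuinely requires the concrete description of the filtration from Section~\ref{constructionofss} (polynomial degree), not just abstract nonsense; in particular I cannot get away with a purely formal argument the way one could if the filtration were bounded below. The cleanest writeup therefore fixes the meaning of "bounded above" to include, or immediately derive, that $F^{p} C^{\ell} = 0$ for $p$ sufficiently large (for each fixed $\ell$), from which both separatedness and the existence of $r(p)$ are one-line consequences. I would also remark that the same $r(p)$ then shows $E_r^{p,q}$ stabilizes, which is presumably the use this lemma is headed toward in Proposition~\ref{Steveprop1}.
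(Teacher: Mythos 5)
Your difficulty here stems from a misreading of the hypothesis. For the \emph{decreasing} filtration in this paper, ``bounded above'' does not mean $F^{P_0}C = C$ for some $P_0$ (that would be boundedness \emph{below}, which the paper explicitly does not assume); it means $F^pC^{p+q} = 0$ for all $p > P$. This is how the proof of Lemma \ref{r(p)} opens, and it is what the construction in Section \ref{constructionofss} actually produces: after the reindexing $p'' = 2\ell - p$, the step $F^{p''}$ contains cochains of polynomial degree at most $-p''$ (up to the shift), so it vanishes for $p''$ large --- not, as you write, cochains of degree ``$\ge$ something growing with $p$.'' With the correct reading, both conclusions are one-liners in complete generality: separatedness is immediate since $\bigcap_p F^pC \subset F^{P+1}C = 0$, and for $r$ with $p + r > P$ the condition $dz \in F^{p+r}C^{p+q+1} = 0$ forces $dz = 0$, so $Z_r^{p,q} = Z^{p,q}$. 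This is exactly the paper's argument (phrased there as $E_r^{p+r,q-r+1} = 0$), and your own closing paragraph recovers it.

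The genuine defect is therefore not the final computation but the two claims you build around it: (a) that separatedness cannot be derived from boundedness above and must be extracted from the explicit polynomial filtration, and (b) that the uniform bound $r(p)$ ``genuinely requires'' the concrete description of the filtration rather than abstract nonsense. Both are false once the hypothesis is read correctly, and they matter because Lemma \ref{r(p)} is stated for an arbitrary filtered complex in Section \ref{spectralsection}; a proof valid only for the particular polynomial-degree filtration would not establish the lemma as stated. Replace your interpretation of ``bounded above'' by $F^pC = 0$ for $p > P$, delete the detour through the concrete filtration, and your last paragraph becomes the whole (correct) proof, with $r(p) = P - p + 1$.
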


\begin{proof}
Suppose the filtration is bounded above by $P$.  That is, for all $p > P$ we have $F^p C^{p,q} = 0$.  Thus $F$ is separated.

Fix $(p,q)$.  The image of $E_r^{p,q}$ under $d_r$ is contained in $E_r^{p+r, q-r+1}$ for all $r,p,q$.  So, if $r > P-p$, then $p+r > P$ and $E_r^{p+r, q-r+1} = 0$.  Thus, elements which are pushed up enough in the filtration will necessarily be zero.  Taking $P-p = r(p)$ is sufficient.

\end{proof}

\begin{rmk}
The above lemma can be interpreted as follows.  Since the support of $C^{p,q}$ is bounded on the right in the $p,q$-plane, eventually all maps out of a given point on the page will have codomain outside the support of the bigraded complex, see for instance, the figure after Remark \ref{d=d_{0,1}}.
\end{rmk}

\begin{prop} \label{Steveprop1}
Suppose $(F^\bullet, C, d)$ is a filtered cochain complex such that the filtration is decreasing, bounded above, and exhaustive.  Then the spectral sequence converges.  That is, for all $p,q$, there is an isomorphism
\begin{equation}
E^{p,q}_\infty \rightarrow gr^{p,q}(H^\bullet).
\end{equation}
\end{prop}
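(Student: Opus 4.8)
The plan is to prove convergence by a standard "eventual stabilization" argument, using the two facts supplied by the hypotheses: since the filtration is bounded above and decreasing, Lemma~\ref{r(p)} gives that it is separated and that for each $(p,q)$ there is an $r(p)$ with $Z_r^{p,q} = Z^{p,q}$ for all $r \geq r(p)$; and since it is exhaustive, Lemma~\ref{exhaustsep}(1) gives $B^{p,q} = \bigcup_r B_r^{p,q}$, while separatedness gives (via Lemma~\ref{exhaustsep}(2)) $Z^{p,q} = \bigcap_r Z_r^{p,q}$. The first step is to unwind the definition of $E_\infty^{p,q}$: using $Z_r^{p,q} = Z^{p,q}$ for large $r$, and noting that $Z_{r-1}^{p+1,q-1} = Z^{p+1,q-1}$ for large $r$ as well (apply Lemma~\ref{r(p)} at the point $(p+1,q-1)$), the denominator $B_r^{p,q} + Z_{r-1}^{p+1,q-1}$ increases with $r$ to $\bigl(\bigcup_r B_r^{p,q}\bigr) + Z^{p+1,q-1} = B^{p,q} + Z^{p+1,q-1}$. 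Hence
\begin{equation*}
E_\infty^{p,q} := \lim_{r} E_r^{p,q} = \frac{Z^{p,q}}{B^{p,q} + Z^{p+1,q-1}}.
\end{equation*}
Here I should be careful to justify that the tower $E_r^{p,q}$ actually stabilizes rather than merely having a colimit: for $r$ large the numerator is fixed at $Z^{p,q}$, the term $Z_{r-1}^{p+1,q-1}$ is fixed at $Z^{p+1,q-1}$, and $B_r^{p,q}$ is an increasing union inside the fixed space $Z^{p,q}$, so the quotients form a tower of surjections; I will define $E_\infty^{p,q}$ as this colimit (equivalently the eventual stable value, once one checks the $B_r^{p,q}$ themselves stabilize — which they need not, but the colimit description above is what I will actually use).

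The second step is to identify $\mathrm{gr}^{p,q}(H^\bullet)$. The filtration on $C$ induces a filtration on $H^\bullet(C)$ by $F^p H^{p+q} = \mathrm{image}\bigl(H^{p+q}(F^pC) \to H^{p+q}(C)\bigr)$, which concretely is $(Z^{p,q} + \mathrm{im}\,d)/\mathrm{im}\,d$ where $\mathrm{im}\,d$ here means the absolute coboundaries landing in $C^{p+q}$, i.e. $B^{p,q}_\infty := \bigcup_r B_r^{p,q} = B^{p,q}$ together with coboundaries from lower filtration — actually $B^{p,q}$ as defined is already $\mathrm{im}(d) \cap F^pC^{p+q}$ intersected appropriately; I will write $Z^{p,q}$ for the absolute cocycles in $F^pC^{p+q}$ and $B^{p,q}$ for the absolute coboundaries in $F^pC^{p+q}$. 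Then
\begin{equation*}
\mathrm{gr}^{p,q}(H^\bullet) = \frac{F^pH^{p+q}}{F^{p+1}H^{p+q}} = \frac{(Z^{p,q} + B)/B}{(Z^{p+1,q-1} + B)/B} \cong \frac{Z^{p,q} + B}{Z^{p+1,q-1} + B} \cong \frac{Z^{p,q}}{Z^{p,q} \cap (Z^{p+1,q-1} + B)},
\end{equation*}
using the second isomorphism theorem, where $B$ denotes the full space of absolute coboundaries in $C^{p+q}$. The final step is to check $Z^{p,q} \cap (Z^{p+1,q-1} + B^{p,q}) = Z^{p+1,q-1} + B^{p,q}$, i.e. that $Z^{p+1,q-1} + B^{p,q} \subseteq Z^{p,q}$: this holds because $Z^{p+1,q-1} \subseteq Z^{p,q}$ (higher filtration cocycles are cocycles of lower filtration, as $F^{p+1}C \subseteq F^pC$) and $B^{p,q} \subseteq Z^{p,q}$ (coboundaries are cocycles), and because any absolute coboundary in $C^{p+q}$ lying in $F^pC^{p+q}$ is by definition in $B^{p,q}$. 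Combining, $\mathrm{gr}^{p,q}(H^\bullet) \cong Z^{p,q}/(B^{p,q} + Z^{p+1,q-1}) = E_\infty^{p,q}$, giving the desired isomorphism.

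The main obstacle I anticipate is the bookkeeping around the exact meaning of $B^{p,q}$ versus "all absolute coboundaries in $C^{p+q}$" and making sure the exhaustiveness hypothesis is used exactly where needed — namely to pass from $\bigcup_r B_r^{p,q}$ to the genuine space of coboundaries and, dually, to guarantee that the induced filtration on $H^\bullet$ is itself exhaustive so that $\mathrm{gr}^{p,q}$ sees all of $H^{p+q}$. Separatedness (from bounded-above) is what makes the filtration on $H^\bullet$ separated and lets the $Z$-side colimit/intersection work cleanly. Everything else is the Noether isomorphism theorems applied carefully; there is no deep content beyond organizing the three lemmas (Lemma~\ref{exhaustsep}, Lemma~\ref{r(p)}, and the construction in Theorem~\ref{existenceofspecseq}) correctly, and confirming that the bigrading indices $(p,q) \mapsto (p+1,q-1)$ match up with the filtration degrees throughout.
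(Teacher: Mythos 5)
Your proposal is correct and follows essentially the same route as the paper's proof: both use Lemma \ref{r(p)} to stabilize $Z_r^{p,q}$ and $Z_{r-1}^{p+1,q-1}$, Lemma \ref{exhaustsep} to identify $\bigcup_r B_r^{p,q}$ with $B^{p,q}$, and the resulting tower of surjections to compute $E_\infty^{p,q} \cong Z^{p,q}/(B^{p,q}+Z^{p+1,q-1})$, which is then matched with $\mathrm{gr}^{p,q}(H^\bullet)$ via the isomorphism theorems (the paper isolates that last identification as Proposition \ref{grH}). The only packaging difference is that the paper phrases the colimit step as explicit compatible surjections $\pi_r$ onto the fixed target followed by an injectivity check, which is the same content as your observation that the denominator increases to its union.
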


\begin{proof}
Let $p$ be given. Because the filtration is bounded above, by Lemma \ref{r(p)} we have that for each $(p,q)$ there exists an $r(p)$ so that $Z_r^{p,q} = Z^{p,q}$ for all $r \geq r(p)$.  For $r > \mathrm{max}(r(p), r(p+1))$ we will construct below a surjective map 
\begin{equation} \label{pirmap}
\pi_{r} : E_r^{p,q} \cong \frac{Z_r^{p,q}}{B_r^{p,q} + Z_{r-1}^{p+1, q-1}} \to gr^{p,q}(H^\bullet) \cong \frac{Z^{p,q}}{B^{p,q} + Z^{p+1, q-1}}.
\end{equation}
Since $r > r(p)$, $Z^{p,q}_r =Z^{p,q}$ and we may first define $\widetilde{\pi}_r: Z^{p,q}_r \to Z^{p,q}$ to be the identity map.  We then define $\pi'_r$ to be the induced quotient map
$$\pi'_r:   Z^{p,q}_r  \to \frac{Z^{p,q}}{B^{p,q} + Z^{p+1, q-1}}.$$
Since $r > r(p+1)$, we have $Z^{p+1,q-1}_{r-1} =Z^{p+1,q-1}$.  Moreover, $B_r^{p,q} \subset B^{p,q}$. Hence the map $\pi'_r$ factors through the quotient by $B_r^{p,q} + Z_{r-1}^{p+1, q-1}$ to give the required surjection $\pi_r$. 

Note that there is a quotient map from $E_r^{p,q}$ to $E_{r+1}^{p,q}$ making $\{E_r^{p,q}, r> r(p)\}$ into a direct system. Moreover, the maps $\{ \pi_r: r > r(p)\}$ fit together to induce a morphism from the direct system to  $gr^{p,q}(H^\bullet)$ and hence we obtain a surjective map $\pi_{\infty}: E_{\infty} ^{p,q} \to gr^{p,q}(H^\bullet)$. We claim that $\pi_{\infty}$ is injective. Indeed suppose $x \in  E_{\infty} ^{p,q}$ satisfies $\pi_{\infty}(x) =0$.  Then for some $r$ we have $\pi_r(x) = 0$.  Hence $x \in B^{p,q} + Z^{p+1, q-1}$.  By Lemma \ref{exhaustsep}, we have $x \in B_{r'}^{p,q} + Z^{p+1, q-1}$ for some $r' \geq r$.  Furthermore, since $Z^{p+1,q-1} \subset Z_{r'}^{p+1,q-1}$ we have $x \in B_{r'}^{p,q} +Z_{r'}^{p+1,q-1}$.  Thus $x$ is zero in $E_{r'}^{p,q}$ and hence is zero in the direct limit.

\end{proof}

\begin{rmk}
The above proof highlights a fundamental concept of spectral sequences.  That is, $E_{r+1}^{p,q}$ is a subquotient of $E_r^{p,q}$.  It is the closed elements (``sub") quotiented by the exact elements.  With the hypotheses of Proposition \ref{Steveprop1} (or just those of Lemma \ref{r(p)}), eventually everything is closed and hence the $Z_r^{p,q}$ stabilize.  At this point, $E_{r+1}^{p,q}$ is just a quotient (rather than a subquotient) of $E_r^{p,q}$ and we obtain maps between the pages.
\end{rmk}

The spectral sequence above converges to the graded vector space associated to the induced filtration of the cohomology. We conclude the general discussion by describing this bigraded vector space.

\subsubsection{The associated graded $\mathrm{gr}(H^\bullet)$}
The filtration on $C$ induces filtrations on the cocycles $Z$ and coboundaries $B$.  Hence, it induces a filtration on the cohomology $H$.  A priori, the vector space $\mathrm{gr}^{p,q}(H)$ is a four-fold quotient, but we have the following proposition.

\begin{prop} \label{grH}
\begin{equation*}
\mathrm{gr}^{p,q}(H) \cong \frac{Z^{p,q}}{B^{p,q} + Z^{p+1,q-1}}.
\end{equation*}
\end{prop}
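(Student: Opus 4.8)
The plan is to make explicit the filtration that $F^\bullet$ induces on $H^{p+q}$ and then to read off the associated graded by two applications of the isomorphism theorems. Recall that $Z^{p,q} = \ker d \cap F^p C^{p+q}$ and $B^{p,q} = d(C^{p+q-1}) \cap F^p C^{p+q}$ --- the absolute cocycles, respectively coboundaries, lying in $F^p C^{p+q}$ (cf. Lemma \ref{exhaustsep}) --- and write $\widetilde{B} = d(C^{p+q-1})$ for the space of all coboundaries in $C^{p+q}$, so that $B^{p,q} = \widetilde{B}\cap F^p C^{p+q}$; since $\widetilde{B}\subset\ker d$ we also have $B^{p,q} = \widetilde{B}\cap Z^{p,q}$.

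First I would identify $F^p H^{p+q}$. By definition this is the image of the natural map $H^{p+q}(F^p C)\to H^{p+q}(C)$, i.e., the set of classes possessing a representative cocycle in $F^p C^{p+q}$; this is the image of $Z^{p,q}$ under the projection $\ker d \to H^{p+q}$, so $F^p H^{p+q}\cong Z^{p,q}/(Z^{p,q}\cap\widetilde{B}) = Z^{p,q}/B^{p,q}$. Next I would locate $F^{p+1}H^{p+q}$ inside this quotient. A class represented by $z\in Z^{p,q}$ lies in $F^{p+1}H^{p+q}$ precisely when it also has a representative in $F^{p+1}C^{p+q}$, i.e., when $z - z'\in\widetilde{B}$ for some $z'\in Z^{p+1,q-1}$, i.e., when $z\in Z^{p+1,q-1}+\widetilde{B}$. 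Intersecting with $F^p C^{p+q}$ and using $Z^{p+1,q-1}\subset F^p C^{p+q}$ together with the modular law gives $(Z^{p+1,q-1}+\widetilde{B})\cap F^p C^{p+q} = Z^{p+1,q-1} + (\widetilde{B}\cap F^p C^{p+q}) = Z^{p+1,q-1}+B^{p,q}$; hence under the isomorphism just obtained, $F^{p+1}H^{p+q}$ corresponds to the subspace $(B^{p,q}+Z^{p+1,q-1})/B^{p,q}$ of $Z^{p,q}/B^{p,q}$.

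Finally, since $B^{p,q}\subset B^{p,q}+Z^{p+1,q-1}\subset Z^{p,q}$, the third isomorphism theorem yields
\[
\mathrm{gr}^{p,q}(H) = F^p H^{p+q}/F^{p+1}H^{p+q} \cong \frac{Z^{p,q}/B^{p,q}}{(B^{p,q}+Z^{p+1,q-1})/B^{p,q}} \cong \frac{Z^{p,q}}{B^{p,q}+Z^{p+1,q-1}},
\]
which is the assertion. I do not expect any serious obstacle: the argument is pure homological bookkeeping. The two points that call for a bit of care --- and so constitute the only real difficulty --- are, first, that $B^{p,q}$ must genuinely be the whole space of coboundaries landing in $F^p C^{p+q}$ (the content of Lemma \ref{exhaustsep}(1), where exhaustiveness of the filtration is used), and second, the appeal to the modular (Dedekind) law in the second step, which is legitimate precisely because $Z^{p+1,q-1}$ is contained in $F^p C^{p+q}$.
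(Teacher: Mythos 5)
Your proof is correct and follows essentially the same route as the paper's: both reduce the associated graded of the induced filtration on cohomology to $Z^{p,q}/(B^{p,q}+Z^{p+1,q-1})$ by standard isomorphism-theorem bookkeeping through the intermediate subspace $(B^{p,q}+Z^{p+1,q-1})/B^{p,q}$. Your version is slightly more explicit about what the induced filtration $F^pH^{p+q}$ actually is (the paper takes $F^pH \cong Z^{p,q}/B^{p,q}$ as given and instead verifies $B^{p,q}\cap Z^{p+1,q-1}=B^{p+1,q-1}$), but the substance is identical.
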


\begin{proof}
By definition,
\begin{equation*}
\mathrm{gr}^{p,q}(H) = \frac{Z^{p,q}/B^{p,q}}{Z^{p+1,q-1}/B^{p+1,q-1}}.
\end{equation*}
By one of the standard isomorphism theorems,
\begin{equation*}
\frac{Z^{p,q}}{B^{p,q} + Z^{p+1,q-1}} \cong \frac{Z^{p,q}/B^{p,q}}{(B^{p,q} + Z^{p+1,q-1})/B^{p,q}}.
\end{equation*}
By another standard isomorphism theorem,
\begin{equation*}
\frac{B^{p,q} + Z^{p+1,q-1}}{B^{p,q}} \cong \frac{Z^{p+1,q-1}}{B^{p,q} \cap Z^{p+1,q-1}}.
\end{equation*}
Finally, observe that $B^{p,q} \cap Z^{p+1,q-1} = B^{p+1,q-1}$.

\end{proof}

\subsection{Some consequences of the vanishing of $E_1^{p,q}$} \label{consequencesofE_1vanishsection}

In the spectral sequences which follow, many of the terms $E_1^{p,q}$ will vanish.  To utilize this feature, we need the following two general propositions from the theory of spectral sequences.  In what follows we assume the filtration $F^{\bullet}$ is bounded above and exhaustive.

The following proposition is an immediate consequence of convergence of the spectral sequence (Proposition \ref{Steveprop1}) since $\mathrm{gr}(H)$ is obtained from $E_1$ by taking successive subquotients.

\begin{prop}\label{grCzeroimpliesCzero}
Suppose $(F^\bullet,C, d)$ is a filtered cochain complex such that $F^\bullet$ is bounded above and exhaustive.  Then $H^\ell(\mathrm{gr}(C))= 0$ implies $H^\ell(C) = 0$.
\end{prop}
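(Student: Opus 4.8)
The plan is to derive the statement directly from the convergence result of Proposition \ref{Steveprop1}, exploiting the fact that $E_1^{p,q}$ (the cohomology of the associated graded complex $E_0$ in bidegree $(p,q)$) is the starting point of a tower of successive subquotients whose direct limit is $\mathrm{gr}^{p,q}(H^\bullet(C))$. Concretely, since $E_{r+1}^{p,q}$ is a subquotient of $E_r^{p,q}$ for every $r \geq 1$, an easy induction shows $E_\infty^{p,q}$ is a subquotient of $E_1^{p,q}$; I would phrase this carefully using the explicit formula $E_\infty^{p,q} \cong Z^{p,q}/(B^{p,q}+Z^{p+1,q-1})$ together with the surjections $\pi_r$ constructed in the proof of Proposition \ref{Steveprop1}.

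The key steps, in order, are as follows. First, I would observe that $H^\ell(\mathrm{gr}(C)) = 0$ means precisely that $E_1^{p,q} = 0$ for all $(p,q)$ with $p+q = \ell$, since $E_0^{p,q} = F^pC^{p+q}/F^{p+1}C^{p+q}$ with differential $d_0$ and $E_1^{p,q} = H^{p,q}(E_0, d_0)$ is exactly the degree-$\ell$ part of $H^\bullet(\mathrm{gr}(C))$ decomposed by the filtration. Second, for each such $(p,q)$ I would invoke the fact that $E_\infty^{p,q}$ is a subquotient of $E_1^{p,q}$ — here one uses that the maps $E_r^{p,q} \to E_{r+1}^{p,q}$ in the relevant range are surjections (by the argument in Proposition \ref{Steveprop1}, once $r$ exceeds $r(p)$ and $r(p-1)$ the coboundary source and cocycle target stabilize), and for smaller $r$ one has $E_{r+1}^{p,q} = \overline{Z}_{r+1}^{p,q}/\overline{B}_{r+1}^{p,q}$ as a genuine subquotient of $E_r^{p,q}$ — so $E_1^{p,q} = 0$ forces $E_\infty^{p,q} = 0$. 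Third, by Proposition \ref{Steveprop1} we get $\mathrm{gr}^{p,q}(H^\bullet) \cong E_\infty^{p,q} = 0$ for all $(p,q)$ with $p+q = \ell$. Finally, since the filtration $F^\bullet$ is bounded above and exhaustive, the induced filtration on $H^\ell(C)$ is finite and exhausts $H^\ell(C)$; a vector space with a finite exhaustive filtration all of whose graded pieces vanish is itself zero, so $H^\ell(C) = 0$.

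The main obstacle — or rather the one point needing genuine care rather than routine bookkeeping — is the passage from ``all graded pieces of $\mathrm{gr}(H^\ell)$ vanish'' to ``$H^\ell(C) = 0$,'' because the filtration on $C$ (hence on $H^\ell$) is only assumed bounded above, not bounded below. One must check that boundedness above plus exhaustiveness still gives a \emph{finite} filtration on the finite-degree complex: indeed $F^pC^\ell = 0$ for $p > P$, and exhaustiveness plus the ambient structure forces $F^pC^\ell = C^\ell$ for $p$ sufficiently negative (in our applications $C^\ell$ itself carries a natural lower bound, e.g. nonnegative polynomial degree), so only finitely many steps are nontrivial and the standard finite-filtration argument applies. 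Alternatively, and more in the spirit of the remark preceding the proposition, one can simply cite that $\mathrm{gr}(H)$ is obtained from $E_1$ by iterated subquotients — so $E_1 = 0$ in degree $\ell$ immediately yields $\mathrm{gr}(H^\ell) = 0$, and then $H^\ell = 0$ because its associated graded for an exhaustive, bounded-above (hence here finite) filtration vanishes. I would write the proof in this streamlined form, relegating the finiteness check to a parenthetical remark.
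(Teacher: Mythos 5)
Your proof is correct and follows the same route as the paper, which simply observes that the statement is an immediate consequence of convergence (Proposition \ref{Steveprop1}) because $\mathrm{gr}(H)$ is obtained from $E_1$ by successive subquotients; you have just written out the details. One small point: your worry about the filtration not being bounded below, and the appeal to ``the ambient structure'' of the applications to get finiteness, is unnecessary --- since the filtration on $H^\ell$ is bounded above, $F^{P+1}H^\ell = 0$, and the vanishing of all graded pieces gives $F^pH^\ell = F^{p+1}H^\ell$ for every $p$, so downward induction yields $F^pH^\ell = 0$ for all $p$, and exhaustiveness then gives $H^\ell = \bigcup_p F^pH^\ell = 0$ with no finiteness or lower bound required.
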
 

\begin{rmk}
In the case we are studying, $C^\bullet = C^\bullet(\mathfrak{so}(p,q), \SO(p) \times \SO(q); \mathcal{P}_k)$ has a canonical grading as a vector space.  Hence, in this case, there is a map of graded vector spaces $f: C \to \mathrm{gr}(C)$ which sends $\varphi \in C$ to its leading term.  However, $f$ does not commute with the differential. Hence, there is no map in general (even if $C$ is graded as a vector space)  from the cohomology of $C$ to the cohomology of $\mathrm{gr} (C) $.
\end{rmk} 

\begin{prop} \label{generalspectral}
Let $(F^\bullet,C, d)$ be a filtered cochain complex such that $F^\bullet$ is bounded above and exhaustive.
\begin{enumerate}
\item If $H^{\ell-1}(\mathrm{gr}(C)) = 0$, then there is a well defined map from $H^\ell(C)$ to $H^\ell(\mathrm{gr}(C))$ and it is an injection.
\item If $H^{\ell+1}(\mathrm{gr}(C)) = 0$, then there is a well defined map from $H^\ell(\mathrm{gr}(C))$ to $H^\ell(C)$ and it is a surjection.
\item If $H^{\ell-1}(\mathrm{gr}(C)) = 0$ and $H^{\ell+1}(\mathrm{gr}(C)) = 0$, then the map from $H^\ell(C)$ \\
to $H^\ell(\mathrm{gr}(C))$ is an isomorphism.
\end{enumerate}

\end{prop}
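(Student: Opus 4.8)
The plan is to deduce all three statements from the convergence of the spectral sequence (Proposition \ref{Steveprop1}) together with the vanishing hypotheses, exactly as one does for a classical first-quadrant spectral sequence, but being careful that our filtration is only bounded above, not bounded below. The starting observation is that $H^\ell(\mathrm{gr}(C)) = \bigoplus_{p+q=\ell} E_1^{p,q}$, so the hypothesis $H^{\ell-1}(\mathrm{gr}(C)) = 0$ says $E_1^{p,q} = 0$ whenever $p+q = \ell-1$, and similarly for $\ell+1$. Since every later page is a subquotient of $E_1$ (this is the content of the Remark after Proposition \ref{Steveprop1}), the same vanishing holds for all $E_r^{p,q}$ with $p+q \in \{\ell-1,\ell+1\}$, and in particular for $E_\infty^{p,q}$ on those antidiagonals.

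For statement (1): when $p+q = \ell$, the differential $d_r \colon E_r^{p-r,q+r-1} \to E_r^{p,q}$ has source on the antidiagonal $p+q = \ell-1$, which vanishes by the above; hence no incoming differentials ever hit the degree-$\ell$ terms, so $E_r^{p,q}$ is obtained from $E_1^{p,q}$ purely by taking kernels of outgoing differentials, i.e. $E_\infty^{p,q} \hookrightarrow E_1^{p,q}$ for each $p+q=\ell$. By convergence, $\mathrm{gr}^{p,q}(H^\ell(C)) \cong E_\infty^{p,q}$, and because the induced filtration on $H^\ell(C)$ is bounded above and exhaustive (inherited from $F^\bullet$, using Lemma \ref{exhaustsep} and the boundedness), $H^\ell(C)$ is recovered as the direct sum of its graded pieces; assembling the injections $E_\infty^{p,q}\hookrightarrow E_1^{p,q}$ over the antidiagonal $p+q=\ell$ gives the desired injection $H^\ell(C)\hookrightarrow H^\ell(\mathrm{gr}(C))$. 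Dually, for statement (2): when $p+q=\ell$, the outgoing differential $d_r\colon E_r^{p,q}\to E_r^{p+r,q-r+1}$ has target on the antidiagonal $p+q=\ell+1$, which vanishes; hence the degree-$\ell$ terms only get quotiented, so $E_1^{p,q}\twoheadrightarrow E_\infty^{p,q}\cong \mathrm{gr}^{p,q}(H^\ell(C))$, and summing over $p+q=\ell$ yields the surjection $H^\ell(\mathrm{gr}(C))\twoheadrightarrow H^\ell(C)$. Statement (3) is the conjunction: under both vanishing hypotheses the degree-$\ell$ terms are neither hit nor mapped nontrivially, so $E_1^{p,q}=E_\infty^{p,q}$ for all $p+q=\ell$, and the composite of the two maps (or either one, now an iso) identifies $H^\ell(C)$ with $H^\ell(\mathrm{gr}(C))$.

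The one genuinely delicate point — and the place I expect to spend the most care — is making the maps in (1) and (2) \emph{well defined at the level of cohomology}, not merely on associated graded pieces, since as the Remark after Proposition \ref{grCzeroimpliesCzero} warns, there is in general no map between $H^\bullet(C)$ and $H^\bullet(\mathrm{gr}(C))$. The resolution is that the maps we construct are the \emph{edge homomorphisms} of the spectral sequence: in case (1) the filtration on $H^\ell(C)$ has $\ell$ steps (indexed by the finitely many $p$ with $p+q=\ell$ and $F^pC^\ell\neq 0$, using boundedness above), and one checks the associated graded injections $E_\infty^{p,q}\hookrightarrow E_1^{p,q}$ splice into a single filtration-respecting injection because the $E_1$-terms below degree $\ell$ vanish, so there are no extension ambiguities obstructing the assembly; symmetrically in case (2). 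I would phrase this using the concrete descriptions $E_\infty^{p,q}\cong Z^{p,q}/(B^{p,q}+Z^{p+1,q-1})$ and $\mathrm{gr}^{p,q}(H)\cong Z^{p,q}/(B^{p,q}+Z^{p+1,q-1})$ from Proposition \ref{grH}, tracking a cohomology class through its representative cocycle and its leading filtration term, and invoking Lemma \ref{exhaustsep} to pass between the ``absolute'' objects $Z^{p,q},B^{p,q}$ and the approximants $Z_r^{p,q},B_r^{p,q}$ exactly where needed.
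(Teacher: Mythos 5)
Your argument is correct and is essentially the paper's own proof: the vanishing of $E_1$ on the antidiagonals $\ell\mp 1$ kills all incoming (resp.\ outgoing) differentials in total degree $\ell$, yielding an inverse system of injections $E_r^{p,q}\hookrightarrow E_{r-1}^{p,q}$ (resp.\ a direct system of surjections $E_{r-1}^{p,q}\twoheadrightarrow E_r^{p,q}$), and Proposition \ref{Steveprop1} then identifies $E_\infty^{p,q}$ with $\mathrm{gr}^{p,q}(H^\ell(C))$. The only caveats concern your final paragraph, which goes beyond what the paper itself does (it stops at the level of associated graded pieces): boundedness above alone does not make the filtration on $C^\ell$ finite in the $p$-direction (in the application it is infinite below), and the splicing of the graded injections into a map on $H^\ell(C)$ itself is only a non-canonical splitting $H^\ell(C)\cong\mathrm{gr}(H^\ell(C))$ coming from separatedness and exhaustiveness, not something forced by the vanishing in degree $\ell-1$.
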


\begin{proof}
We first prove (1). We will construct an inverse system of injective maps
$$\cdots \hookrightarrow E_r^{p,q} \hookrightarrow E_{r-1}^{p,q} \hookrightarrow E_{r-2}^{p,q} \hookrightarrow \cdots \hookrightarrow E_2^{p,q} \hookrightarrow E_1^{p,q}.$$
First note by the hypothesis of (1) we have
\begin{equation} \label{coboundariesvanish}
\overline{B}^{p,q}_r = 0, r \geq 2.
\end{equation}
Next, note that for any spectral sequence $\{E_r, d_r\}$ we have an inclusion
\begin{equation} \label{inclusioncobound}
\overline{Z}^{p,q}_r \hookrightarrow E_{r-1}^{p,q}, r \geq 1.
\end{equation}
But since $E^{p,q}_r = \overline{Z}^{p,q}_r / \overline{B}^{p,q}_r$, by \eqref{coboundariesvanish} we have 
$$E_{r}^{p,q} = \overline{Z}^{p,q}_r, r \geq 2$$
and the inclusion of Equation \eqref{inclusioncobound} becomes 
$$E_{r}^{p,q}\hookrightarrow E_{r-1}^{p,q}, r \geq 2.$$
Thus $\{E_r^{p,q} \}$ is an inverse system of injections which may be identified with a decreasing (for inclusion) sequence of bigraded subspaces of the fixed bigraded vector space $E_1$.  We have constructed the required inverse system. 

The inverse limit of the above sequence is $E^{p,q}_{\infty}$.  Since the inverse limit of an inverse system maps to each member of the system, we have a map $E_\infty^{p,q} \rightarrow E_1^{p,q}$.  In this case the inverse limit is simply the intersection of all the subspaces and the map of the limit is the inclusion of the infinite intersection which is obviously an injection.  Since we have convergence (Proposition \ref{Steveprop1}), $E_\infty^{p,q} \cong \mathrm{gr}^{p,q}(H)$ and $E_1^{p,q} = H^{p,q}(\mathrm{gr}(C))$.  Hence (1) is proved.


We now prove (2).  We construct a direct system of surjective maps
$$E_1^{p,q} \twoheadrightarrow E_2^{p,q} \twoheadrightarrow \cdots \twoheadrightarrow  E_{r-1}^{p,q} \twoheadrightarrow E_r^{p,q} \twoheadrightarrow E_{r+1}^{p,q} \twoheadrightarrow  \cdots.$$ 

First note by the hypothesis of (2) we have $d_{r-1}|_{E_{r-1}^{p,q}} = 0, r \geq 2, p+q = \ell$ and hence
\begin{equation} \label{everythingisacycle}
E^{p,q}_{r-1}  = \overline{Z}^{p,q}_r = 0, r \geq 2.
\end{equation}
Next, note that for any spectral sequence $\{E_r, d_r\}$ we have a surjection
$$\overline{Z}^{p,q}_r \twoheadrightarrow E_{r}^{p,q}, r \geq 1.$$
Hence, by Equation \eqref{everythingisacycle}, the previous surjection becomes 
$$E_{r-1}^{p,q}\twoheadrightarrow E_{r}^{p,q}, r \geq 2.$$
Hence $\{E_r^{p,q} \}$ is a direct system of surjections which may be identified with a  sequence of bigraded quotient spaces of the fixed bigraded vector space $E_1$. We have constructed the required direct system. 

Since each member of a direct system maps to the direct limit, the space $E_1^{p,q}$ maps to $E_\infty^{p,q}$ and this map is clearly surjective.  As in the proof of (1), since we have convergence (Proposition \ref{Steveprop1}), $E_\infty^{p,q} \cong \mathrm{gr}^{p,q}(H)$ and $E_1^{p,q} = H^{p,q}(\mathrm{gr}(C))$.  Hence (2) is proved.

Lastly, (3) is obvious.

\end{proof}

\section{Construction of the spectral sequence for the relative Lie algebra cohomology of the Weil representation.} \label{constructionofss}

We now study the above spectral sequence for the case in hand, $G = \SO(a,b)$, and apply the previous results to it.  We let $V$ be a real vector space of signature $a,b$.  Recall that we use $\mathcal{P}_k$ to denote the ring $\mathrm{Pol}((V \otimes \C)^k) $.  This ring is graded by polynomial degree
$$\mathcal{P}_k = \bigoplus_{i=0}^\infty \mathcal{P}_k(i).$$
This grading of $\mathcal{P}_k$ induces a grading of $C^\ell = C^\ell(\mathfrak{so}(a,b), \SO(a) \times \SO(b); \mathcal{P}_k)$, for each $\ell$, called the polynomial grading. We will let $C^\ell(i)$ denote the $i$-th graded summand of $C^\ell$.  The above grading of $C^\ell$ induces an increasing filtration $F_{\bullet}$  of $C^\ell$ by
$$F_p C^\ell= \bigoplus_{i=0}^p C^\ell(i).$$
The filtration $F_\bullet$ is bounded below but not bounded above and is exhaustive
$$C = \bigcup_{p=0}^\infty F_p C.$$
Note also that $C$ is bigraded by $(\ell, i)$
\begin{equation} \label{Cbigradingorig}
C = \bigoplus_{\ell = 0}^{ab} \bigoplus_{i=0}^\infty C^\ell(i).
\end{equation}

It is clear that $d$ may be written as a direct sum $d = d_2 + d_{-2}$ where $d_2$ increases the polynomial degree by two and $d_{-2}$ lowers the polynomial degree by two, see Equation \eqref{defnofd}. From $d^2 = 0$ we obtain
\begin{lem} \label{doublecomplex}
\hfill

\begin{enumerate}
\item $d_2^2 = 0$
\item $d_{-2}^2 =0$
\item $d_2 d_{-2} + d_{-2} d_2 =0$.
\end{enumerate}
\end{lem}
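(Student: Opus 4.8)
The plan is to write $d = d_2 + d_{-2}$ explicitly from Equation \eqref{defnofd}, expand $d^2 = 0$ according to how the pieces shift polynomial degree, and then separate the resulting identity by homogeneity. From \eqref{defnofd} we have
\begin{equation*}
d = \sum_{i,\alpha,\mu} A(\omega_{\alpha\mu}) \otimes \Big( \frac{\partial^2}{\partial z_{\alpha i}\partial z_{\mu i}} - z_{\alpha i} z_{\mu i} \Big),
\end{equation*}
so I would set
\begin{equation*}
d_2 = -\sum_{i,\alpha,\mu} A(\omega_{\alpha\mu}) \otimes z_{\alpha i} z_{\mu i}, \qquad d_{-2} = \sum_{i,\alpha,\mu} A(\omega_{\alpha\mu}) \otimes \frac{\partial^2}{\partial z_{\alpha i}\partial z_{\mu i}},
\end{equation*}
noting that $d_2$ raises polynomial degree by $2$ and $d_{-2}$ lowers it by $2$ (multiplication by a quadratic, resp. a second-order differential operator). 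The exterior-algebra factor $A(\omega_{\alpha\mu})$ does not affect polynomial degree.

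The key step is the homogeneity argument. Expand
\begin{equation*}
0 = d^2 = (d_2 + d_{-2})^2 = d_2^2 + (d_2 d_{-2} + d_{-2} d_2) + d_{-2}^2.
\end{equation*}
The three grouped operators shift the polynomial degree by $+4$, $0$, and $-4$ respectively; explicitly, $d_2^2$ maps $C^\ell(i) \to C^{\ell+2}(i+4)$, the cross term maps $C^\ell(i) \to C^{\ell+2}(i)$, and $d_{-2}^2$ maps $C^\ell(i) \to C^{\ell+2}(i-4)$. Since the bigrading \eqref{Cbigradingorig} of $C$ is a direct sum over $(\ell,i)$, an element of $C$ is zero if and only if each of its homogeneous components vanishes; applying this to the image of a homogeneous element under $d^2$ forces each of the three pieces to vanish separately. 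This gives $d_2^2 = 0$, $d_{-2}^2 = 0$, and $d_2 d_{-2} + d_{-2} d_2 = 0$, which are exactly the three claimed identities.

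I do not expect a genuine obstacle here; the only thing to be careful about is verifying the degree shifts of $d_2$ and $d_{-2}$ (so that the three summands of $d^2$ land in distinct graded pieces) and confirming that each summand $d_2, d_{-2}$ does preserve $K$-invariance so that they are well-defined operators on $C^\bullet$ — but this is inherited from the fact that $d$ does, together with the observation that multiplication by $\sum_i z_{\alpha i} z_{\mu i}$ and the operator $\sum_i \partial^2/\partial z_{\alpha i}\partial z_{\mu i}$ are each $K$-equivariant of the appropriate type (indeed these are components of the $K$-equivariant decomposition of $\varpi(e_\alpha \wedge e_\mu)$), so each transforms the same way under $K$ as the full $d$. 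Once the degree bookkeeping is in place, the conclusion is immediate.
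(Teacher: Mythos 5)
Your argument is correct and is exactly the one the paper intends: the paper derives the lemma by writing $d = d_2 + d_{-2}$ and invoking $d^2 = 0$, with the separation by polynomial-degree shift ($+4$, $0$, $-4$) left implicit. Your proposal simply spells out that homogeneity argument, so it matches the paper's approach.
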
 
In particular, we have
\begin{equation} \label{dandfiltrationlevel}
d F_p C^\ell \subset F_{p+2} C^{\ell+1}.
\end{equation}

\begin{rmk}
Relative to the bigrading of $C$ given by Equation \eqref{Cbigradingorig}, $d$ is a bigraded map with
\begin{equation}
d = d_{1,2} + d_{1,-2}.
\end{equation}
\end{rmk}

Since $d$ increases filtration degree, $(F_\bullet, C, d)$ is not a filtered cochain complex.  Also, the above filtration of $C$ is increasing whereas the general theory assumes it is decreasing.  We can, however, correct this by regrading $C$ so that $d$ preserves the filtration and so that the filtration is decreasing.

The vector space underlying the complex $(C, d)$ is bigraded by cochain degree $\ell$ and polynomial degree $p$. As is customary in the theory of spectral sequences, we regrade by complementary degree $q = \ell - p$ and $p$. We change this bigrading according to $(p,q) \rightarrow (p^\prime, q^\prime)$ where $p^\prime = p - 2 \ell$ and $q^\prime = p+q - p^\prime$.  Note that $\ell = p+q = p^\prime + q^\prime$ and $d$ preserves the filtration.  That is,
\begin{equation*}
d F_{p^\prime} C^\ell \subset F_{p^\prime} C^{\ell+1}.
\end{equation*}

The theory of the spectral sequence associated to a filtered cochain complex requires the filtration to be decreasing, however the filtration $F_{p^\prime}$ is increasing.  Accordingly, we pass to the new decreasing filtration $F^{p^{\prime \prime}}$ defined by $F^{p^{\prime \prime}} = F_{-p^{\prime \prime}}$.  As before, $q^{\prime \prime}$ is the complementary degree, so $q^{\prime \prime} = \ell - p^{\prime \prime}$.  Hence
\begin{equation*}
p^{\prime \prime} = 2\ell - p \text{ and } q^{\prime \prime} = p - \ell.
\end{equation*}

Thus, from the bigrading of Equation \eqref{Cbigradingorig} we obtain a new bigrading for $C$
\begin{equation} \label{Cbigrading}
C = \bigoplus_{p^{\prime \prime} = -\infty}^{2ab} \bigoplus_{q^{\prime \prime} = -ab}^{\infty} C^{p^{\prime \prime}, q^{\prime \prime}}.
\end{equation}

\begin{lem} \label{dnewbigrading}
Relative to the bigrading Equation \eqref{Cbigrading}, we have $d = d_{0,1} + d_{4,-3}$.  In particular, $d$ preserves the new filtration.
\end{lem}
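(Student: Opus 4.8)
The statement is a bookkeeping verification tracking how the two homogeneous pieces of $d$ move under the successive regradings, so the plan is simply to make that change of variables explicit. First I would recall, from the explicit formula \eqref{defnofd} (and Lemma \ref{doublecomplex}), that $d = d_2 + d_{-2}$, where each summand raises the cochain degree $\ell$ by one, $d_2$ raises the polynomial degree $p = i$ by two, and $d_{-2}$ lowers it by two; in particular $d$ has \emph{no} component preserving the polynomial degree, since every term of \eqref{defnofd} is $A(\omega_{\alpha\mu})$ tensored with an operator that changes polynomial degree by exactly $\pm 2$. Hence it suffices to compute the bidegree of $d_2$ and of $d_{-2}$ relative to the bigrading $C = \bigoplus C^{p'',q''}$ of Equation \eqref{Cbigrading}, where $p'' = 2\ell - p$ and $q'' = p - \ell$.

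Next I would substitute. An element of $C^\ell(p)$ lies in bidegree $(p'',q'') = (2\ell - p,\, p-\ell)$. Applying $d_2$ sends it into $C^{\ell+1}(p+2)$, which has bidegree $\bigl(2(\ell+1) - (p+2),\, (p+2) - (\ell+1)\bigr) = (2\ell - p,\, (p-\ell)+1) = (p'',\, q''+1)$; thus $d_2$ has bidegree $(0,1)$, i.e. $d_2 = d_{0,1}$. Applying $d_{-2}$ sends it into $C^{\ell+1}(p-2)$, which has bidegree $\bigl(2(\ell+1) - (p-2),\, (p-2) - (\ell+1)\bigr) = ((2\ell - p)+4,\, (p-\ell)-3) = (p''+4,\, q''-3)$; thus $d_{-2}$ has bidegree $(4,-3)$, i.e. $d_{-2} = d_{4,-3}$. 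This gives $d = d_{0,1} + d_{4,-3}$.

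For the final assertion I would note that the decreasing filtration is $F^{p''} = F_{-p''}$, so $F^{p''}C$ consists precisely of the homogeneous summands with $p''$-degree $\geq p''$. Therefore $d$ preserves $F^{\bullet}$ as soon as each of its homogeneous components raises the $p''$-degree by a nonnegative amount; here the shifts are $0$ (for $d_{0,1}$) and $4$ (for $d_{4,-3}$), so $d F^{p''}C^\ell \subset F^{p''}C^{\ell+1}$, as claimed. In fact this shows $d_2$ preserves every filtration step while $d_{-2}$ pushes it strictly upward.

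There is no genuine obstacle here — the lemma is a change-of-variables computation — and the only points requiring care are (a) checking at the outset, from \eqref{defnofd}, that $d$ really has only the two pieces $d_{\pm 2}$ and no polynomial-degree-preserving part, and (b) keeping straight that we have passed from the \emph{increasing} filtration $F_\bullet$ to the \emph{decreasing} filtration $F^\bullet$ via $F^{p''} = F_{-p''}$, so that ``$d$ preserves the filtration'' is equivalent to the two filtration-index shifts $0$ and $4$ being $\geq 0$ rather than $\leq 0$.
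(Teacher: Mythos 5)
Your computation is correct and is exactly the verification the paper leaves implicit: the paper states this lemma without proof, relying on the substitution $p''=2\ell-p$, $q''=p-\ell$ applied to the two homogeneous pieces $d_{1,2}$ and $d_{1,-2}$ of the original bigrading, which is precisely what you carry out. Your observation that $F^{p''}C$ consists of the summands with $p''$-degree $\geq p''$, so that shifts of $0$ and $4$ both preserve the decreasing filtration, correctly handles the one point where a sign error would be easy to make.
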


\begin{rmk} \label{d=d_{0,1}}
The differential $d^\prime$ on $E_0$ is induced by the summand $d_{0,1}$ in Lemma \ref{dnewbigrading}.  In fact, we use the negative of $d_{0,1}$.  That is, we take
$$d^\prime = \sum_{i=1}^k \sum_{\alpha=1}^a \sum_{\mu=a+1}^{a+b} A(\omega_{\alpha \mu}) \otimes z_{\alpha i} z_{\mu i}.$$
\end{rmk}

$F^{\bullet}$ is a decreasing filtration preserved by $d$ and the associated bigraded vector space $E_0^{p'', q''} = \bigoplus_{p'',q''}F^{p''} C^{p'' + q''}$ is  supported in the quadrant $p'' \leq 2ab$ and $q'' \geq -ab$.  In addition, because the cohomological degree $\ell$ satisfies $ 0 \leq \ell \leq ab$ and $\ell = p'' + q''$, $E_0^{p'', q''}$ is supported in the intersection of the above quadrant with the band $0 \leq p'' + q'' \leq ab$.

In what follows we will abuse notation and write $p$ and $q$ instead of $p''$ and $q''$.  The figure shows the support of the bigraded complex $C^{\bullet,\bullet}$ with the new bigrading.

\begin{figure} \label{supportfigure}
\centering
\begin{tikzpicture}
\draw[draw=gray!50!white,fill=gray!20!white] 
    plot[smooth,samples=100,domain=-2.5:1.5] (\x,{-\x}) -- 
    plot[smooth,samples=100,domain=3:-1] (\x,{1.5-\x});

\draw[draw=gray!50!white,fill=gray!70!white] 
    plot[smooth,samples=100,domain=-1.5:2.5] (1.2,\x) -- 
    plot[smooth,samples=100,domain=2.5:-1.5] (3,\x);

\draw[draw=gray!50!white,fill=gray!110!white] 
    plot[smooth,samples=100,domain=1.2:1.5] (\x,{-\x}) -- 
    plot[smooth,samples=100,domain=3:1.2] (\x,{1.5 - \x});

\node at (1.7,-1) {$F^p$};

\draw[domain=-2.5:2] plot (\x,{-\x});
\node at (-2.1,1.6) {\small{$\ell = 0$}};

\draw[domain=-1:3] plot (\x,{1.5-\x});
\node at (-.41,2.4) {\small{$\ell = ab$}};

\draw[thick, dashed] (-2.5,-1.5) -- (3.5,-1.5);
\node at (-2,-1.7) {$q = -ab$};

\draw[thick, dashed] (3,-2.5) -- (3,2.5);
\node at (3.7,2) {$p = 2ab$};

\draw (-2.5,0)--(3.5,0) node[right]{$p$};
\draw (0,-2.5)--(0,2.5) node[above]{$q$};

\draw (1.2,-2.5)--(1.2,2.5);
\node at (1.8,2) {$F^p \rightarrow$};
\end{tikzpicture}
$$C^{p,q} \text{ is supported in the shaded diagonal region}.$$
\end{figure}

Because the filtration is bounded above by $p = 2ab$, for each $(p,q)$ there exists an $r(p)$ so that $Z_r^{p,q} = Z^{p,q}$ for all $r \geq r(p)$. In our case it suffices to take $r(p) = 2ab - p +1$.  Note that $r(p)$ is a decreasing function of $p$, so $r > r(p)$ implies $r > r(p+k)$ for $k \geq 1$.  Since the complex is bounded below by $q \geq -ab$ we also obtain an analogous bound $r(q) = q+ab+1$, however, we will use only $r(p)$.
\begin{rmk}
For the action of a general reductive $G$ on the polynomial Fock model, the exterior differential may be decomposed as $d = d_{-2} + d_0 + d_2$ and preserves the new filtration.  The support of the resulting bigraded complex will still be contained in the band of the figure on the previous page. We leave the details to the reader.  
\end{rmk}

Now that we have finished any details concerning the filtration used to obtain a spectral sequence, we return to using $p,q$ for the signature of a real vector space.  We now prove a general theorem about the relative Lie algebra cohomology of $\SO(p,q)$ with values in $\mathcal{P}_k$ for large $k$.





\section{The cohomology of $\SO(p,q)$ when $k \geq pq$.} \label{kgeqnsection}
In this section, we work under the assumption
\begin{equation}
k \geq pq.
\end{equation}
Let $C^\ell = C^\ell(\mathfrak{so}(p,q), \SO(p) \times \SO(q); \mathcal{P}_k)$.  We prove the following theorem, see Equation \eqref{qalphadef} for the definition of the quadratic elements $q_{\alpha \mu} \in \mathcal{P}(V^k)$.
\begin{thm} \label{kgeqnvanishing}
Assume $k \geq pq$.  Then we have 
\begin{equation*}
H^\ell(\mathfrak{so}(p,q), \SO(p) \times \SO(q); \mathcal{P}_k) = \begin{cases}
0 &\text{ if } \ell \neq pq \\
(\mathcal{P}_k/(q_{1, p+1}, \ldots, q_{p,p+q}))^K \vol &\text{ if } \ell = pq.
\end{cases}
\end{equation*}
\end{thm}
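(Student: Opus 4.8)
The plan is to run the spectral sequence machinery of Chapter~\ref{spectralchapter} with $G = \SO(p,q)$ acting on $\mathcal{P}_k$, and to identify the $E_0$ page with an explicit Koszul complex which, under the hypothesis $k \geq pq$, is the Koszul complex of a \emph{regular} sequence. First I would recall that the filtered complex $(F^\bullet, C, d)$ constructed in Section~\ref{constructionofss} satisfies the hypotheses of Proposition~\ref{Steveprop1}: the filtration is decreasing, bounded above (by $p'' = 2pq$), and exhaustive. Hence the associated spectral sequence converges to $\mathrm{gr}(H^\bullet(C))$. By Lemma~\ref{dnewbigrading} the differential on $E_0$ is induced by $d_{0,1}$, and by Remark~\ref{d=d_{0,1}} we may take $d' = \sum_{i,\alpha,\mu} A(\omega_{\alpha\mu}) \otimes z_{\alpha i} z_{\mu i}$, i.e.\ the ``purely multiplicative'' part of the differential in Equation~\eqref{defnofd}, dropping the second-order term $\partial^2/\partial z_{\alpha i}\partial z_{\mu i}$.

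The key step is to recognize $(E_0, d')$ as a Koszul complex. The exterior algebra $\wwedge{\bullet}\mathfrak{p}^*$ (with $\mathfrak{p} \cong V_+ \otimes V_- \otimes \C$, of dimension $pq$) has generators $\omega_{\alpha\mu}$, $1\le \alpha\le p$, $p+1\le\mu\le p+q$, and $d'$ acts by left multiplication by $\omega_{\alpha\mu}$ tensored with multiplication by the quadratic polynomial $q_{\alpha\mu} = \sum_{i=1}^k z_{\alpha i} z_{\mu i} \in \mathcal{P}_k$ (this is Equation~\eqref{qalphadef}). So, at least before taking $K$-invariants, $(\wwedge{\bullet}\mathfrak{p}^* \otimes \mathcal{P}_k, d')$ is precisely the Koszul complex on the $pq$ elements $\{q_{\alpha\mu}\}$ over the polynomial ring $\mathcal{P}_k$, as defined in Section~\ref{defofkoszulsection}. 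One must then check that passing to $K = \SO(p)\times\SO(q)$ invariants is compatible with this identification --- since $K$ acts reductively, taking invariants is exact and commutes with forming cohomology, so $E_0 = (\wwedge{\bullet}\mathfrak{p}^*\otimes\mathcal{P}_k)^K$ with differential the restriction of $d'$, and this is the $K$-invariant part of the Koszul complex.

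Next I would show that $q_{1,p+1}, \ldots, q_{p,p+q}$ (all $pq$ of them) form a regular sequence in $\mathcal{P}_k = \mathbb{C}[z_{\beta i}]$ when $k \geq pq$. The quadrics $q_{\alpha\mu} = \sum_i z_{\alpha i} z_{\mu i}$ are the entries of the ``cross'' block $Z_+^{\!\top} Z_-$ of the Gram-type matrix, and the standard dimension count: the common zero locus in $(V\otimes\C)^k$ has codimension exactly $pq$ precisely when $k$ is large enough (here one uses that a generic configuration of $k$ vectors with $k\ge pq$ imposes independent conditions), so by the unmixedness/Cohen--Macaulay criterion for polynomial rings the sequence is regular. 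This is the step I expect to be the main obstacle --- verifying the codimension (equivalently, that the ideal $(q_{\alpha\mu})$ has height $pq$) requires a genuine algebraic-geometry argument about the variety of matrices $Z$ with $Z_+^{\!\top}Z_- = 0$, and getting the bound $k \geq pq$ sharp is the crux. Granting this, the Koszul complex of a regular sequence of length $pq$ has cohomology concentrated in top degree $pq$, equal to $\mathcal{P}_k/(q_{1,p+1},\ldots,q_{p,p+q})$ times the generator $\vol$; taking $K$-invariants gives $E_1^{pq} = (\mathcal{P}_k/(q_{\alpha\mu}))^K \vol$ and $E_1^\ell = 0$ for $\ell \neq pq$. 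Finally, since $E_1$ is concentrated in a single cohomological degree, all higher differentials vanish, $E_1 = E_\infty$, and by convergence (Proposition~\ref{Steveprop1}, or directly Proposition~\ref{grCzeroimpliesCzero} and Proposition~\ref{generalspectral}) we conclude $H^\ell(C) = 0$ for $\ell\neq pq$ and $H^{pq}(C) \cong (\mathcal{P}_k/(q_{1,p+1},\ldots,q_{p,p+q}))^K\vol$, which is the assertion of the theorem.
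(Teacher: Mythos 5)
Your overall architecture coincides with the paper's: filter by polynomial degree, identify the $E_0$ page (before taking $K$-invariants) with the Koszul complex $K(\{q_{\alpha\mu}\})$ over $\mathcal{P}_k$, use compactness of $K$ to commute invariants with cohomology (Equation \eqref{E_0CisA}), and then feed the resulting vanishing into Propositions \ref{grCzeroimpliesCzero} and \ref{generalspectral}. All of that matches Section \ref{kgeqnsection}. The problem is that the one step carrying the real content of the theorem --- that the $pq$ quadrics $q_{\alpha\mu}$ form a regular sequence in $\mathcal{P}_k$ when $k \geq pq$ --- is exactly the step you leave unproved. Your proposed route (show that the common zero locus of the $q_{\alpha\mu}$ has codimension $pq$ and invoke Cohen--Macaulayness of the polynomial ring) is legitimate in principle, but the codimension claim is not a formality: the variety of $Z$ with vanishing cross-block of the Gram matrix must be analyzed stratum by stratum according to the rank of the ``positive'' block, and the phrase ``a generic configuration of $k$ vectors imposes independent conditions'' says nothing about the non-generic strata, which is precisely where the hypothesis $k \geq pq$ gets used. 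As written, the crux is asserted rather than established.

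The paper's Proposition \ref{Aregularsequence} avoids geometry entirely. One prepends to the sequence $\{q_{\alpha\mu}\}$ a collection of ``off-diagonal'' coordinate functions $z_{\alpha i}$ and $z_{\mu i}$; modulo those coordinates each $q_{\alpha\mu}$ reduces to the single monomial $z_{\alpha,(\mu-p-1)p+\alpha}\,z_{\mu,(\mu-p-1)p+\alpha}$, and these monomials are pairwise disjoint, hence form a regular sequence by Lemma \ref{easyregular}. This is where $k \geq pq$ enters concretely: one needs $pq$ distinct column indices to house the surviving products. Matsumura's permutation lemma (Lemma \ref{Matsumuralemma}, valid for homogeneous elements of a graded Noetherian ring) then lets one reorder the enlarged regular sequence so that the $q_{\alpha\mu}$ come first, and an initial segment of a regular sequence is regular. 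If you want to keep your Cohen--Macaulay route you must actually carry out the dimension count on this determinantal-type variety; otherwise, substitute the paper's combinatorial argument for your third paragraph.
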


Our method of proof is to compute the cohomology of the $E_0$ term of the spectral sequence we have just developed.  To do this, we will first compute the cohomology of this complex before taking $K$-invariance.  This complex will be a Koszul complex associated to a regular sequence.  We will then use the results of Section \ref{spectralsection} and Equation \eqref{E_0CisA} to finish the calculation.

Define the complex $(A, d_A)$ by
\begin{equation} \label{defofAcomplex}
A^\ell = \wwedge{\ell}(\mathfrak{p}^*) \otimes \mathcal{P}_k \text{ and } d_A = \sum_{\alpha = 1}^p \sum_{\mu = p+1}^{p+q} \sum_{i=1}^k  A(\omega_{\alpha \mu}) \otimes z_{\alpha i} z_{\mu i}.
\end{equation}
Then $C^\ell = (A^\ell)^K$ and by Remark \ref{d=d_{0,1}} we have, since $K$ is compact,
\begin{equation} \label{E_0CisA}
H^\ell(E_0(C)) = (H^\ell(A))^K.
\end{equation}

\section{Koszul complexes and regular sequences} \label{defofkoszulsection}
We will see that $E_0(C)$ is a Koszul complex.  We now define what a Koszul complex is and state some results about Koszul complexes in case the defining elements form a regular sequence.

Let $S = \C[x_1, \ldots, x_n]$ and $f_1, \ldots, f_N \in S$.  Let $Y = S^N$, $e_1, \ldots, e_N$ be the standard basis for $Y$, and $\omega_1, \ldots, \omega_N$ be the dual basis for $\mathrm{Hom}_S(Y, S)$.  Define the Koszul complex $K(f_1, \ldots, f_N)$ by
\begin{align*}
K^\ell &= {\bigwedge}^\ell Y^* \\
d &= \sum_i f_i A(\omega_i).
\end{align*}

From Eisenbud, \cite{Eisenbud} Corollary 17.5, we have
\begin{prop}
If $f_1, \ldots, f_N$ is a regular sequence in $S$ then
\begin{enumerate}
\item $H^\ell(K(f_1, \ldots, f_N)) = 0$ for $\ell < N$
\item $H^N(K(f_1, \ldots, f_N)) \cong S/(f_1, \ldots, f_N)$.
\end{enumerate}
\end{prop}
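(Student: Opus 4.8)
The plan is to prove the two statements together by induction on $N$, realizing the Koszul cochain complex as an iterated mapping cone and reading off the cohomology from the resulting long exact sequences; at each stage the relevant vanishing comes directly from the defining property of a regular sequence.

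For the base case $N=1$, the complex $K(f_1)$ is concentrated in degrees $0$ and $1$, with $K^0 = S$, $K^1 = S\omega_1$, and differential equal to multiplication by $f_1$. Since $f_1$ is a non-zero-divisor on $S$, we get $H^0(K(f_1)) = \ker(f_1 \cdot) = 0$ (and $\ell = 0 < 1 = N$), while $H^1(K(f_1)) = S/(f_1)$; this is exactly (1) and (2) for $N=1$.

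For the inductive step, write $K' = K(f_1,\dots,f_{N-1})$, built on $(Y')^* = \Hom_S(S^{N-1},S)$, and use the splitting $Y^* = (Y')^* \oplus S\omega_N$ to identify $K^\ell = (K')^\ell \oplus (K')^{\ell-1}\wedge\omega_N$ with differential $d = d' + f_N A(\omega_N)$. One checks that $K'\wedge\omega_N$ is a subcomplex isomorphic to $K'$ with degrees shifted up by one, that the quotient $K/(K'\wedge\omega_N)$ is $K'$, and that the connecting homomorphism of the resulting short exact sequence of complexes is, up to sign, multiplication by $f_N$. The long exact sequence then reads
\begin{equation*}
\cdots \to H^{\ell-1}(K') \xrightarrow{\pm f_N} H^{\ell-1}(K') \to H^\ell(K) \to H^\ell(K') \xrightarrow{\pm f_N} H^\ell(K') \to \cdots.
\end{equation*}
By the inductive hypothesis $H^\ell(K') = 0$ for $\ell < N-1$ and $H^{N-1}(K') = S/(f_1,\dots,f_{N-1})$, and moreover $H^N(K') = 0$ since $K'$ is supported in degrees $0,\dots,N-1$. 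Feeding this in: for $\ell < N-1$ both flanking terms vanish, so $H^\ell(K)=0$; for $\ell = N-1$ we obtain $H^{N-1}(K) = \ker\big(f_N : S/(f_1,\dots,f_{N-1}) \to S/(f_1,\dots,f_{N-1})\big) = 0$ because $f_N$ is a non-zero-divisor on $S/(f_1,\dots,f_{N-1})$; and for $\ell = N$ we obtain $H^N(K) = \mathrm{coker}\big(f_N : S/(f_1,\dots,f_{N-1}) \to S/(f_1,\dots,f_{N-1})\big) = S/(f_1,\dots,f_N)$. This establishes both (1) and (2).

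The only point needing genuine care — the main, and fairly mild, obstacle — is verifying that the connecting map in the mapping-cone short exact sequence is precisely $\pm f_N$ and keeping the Koszul signs straight in the decomposition $K^\ell = (K')^\ell \oplus (K')^{\ell-1}\wedge\omega_N$; once that is done, everything reduces to bookkeeping with the long exact sequence and the definition of a regular sequence. (Alternatively, after dualizing to pass to the chain-complex conventions, the statement is literally Eisenbud, Corollary 17.5.)
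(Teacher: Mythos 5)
Your argument is correct. The paper itself does not prove this proposition --- it simply quotes it as Corollary 17.5 of Eisenbud --- so any complete argument here is ``different from the paper'' by default. What you give is the standard inductive proof: realize $K(f_1,\dots,f_N)$ as the mapping cone of multiplication by $f_N$ on $K' = K(f_1,\dots,f_{N-1})$, and read off the cohomology from the long exact sequence, using that $f_N$ is a non-zero-divisor on $S/(f_1,\dots,f_{N-1})$ to kill $H^{N-1}(K)$ and to identify $H^N(K)$ with the cokernel $S/(f_1,\dots,f_N)$. This is essentially the argument underlying Eisenbud's own proof, so in substance you have reconstructed the cited result rather than found a new route to it; the benefit is that the thesis becomes self-contained at this point, and the identification $K \cong \mathrm{cone}(f_N : K' \to K')$ makes transparent exactly where regularity enters (once per step, as injectivity of $f_N$ on the top cohomology of the shorter complex). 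Your bookkeeping checks out: $K'\wedge\omega_N$ is indeed a subcomplex isomorphic to a shift of $K'$ because $\omega_N\wedge\omega_N=0$, the connecting homomorphism is $(-1)^{\ell}f_N$ on $H^{\ell}(K')$, and the sign is harmless since only the kernel and cokernel of that map are used. The one convention to flag is that the base case requires $f_1$ to be a non-zero-divisor in $S$ itself; this is part of the definition of regular sequence the paper recalls (each $f_i$ a non-zero-divisor in $S/(f_1,\dots,f_{i-1})$), so your induction is consistent with the paper's usage.
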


In fact, we also have the following result of \cite{Eisenbud}, Corollary 17.12.

\begin{lem} \label{Eisenbuddepth}
If $x_1, \ldots, x_i$ is a regular sequence then
\begin{equation*}
H^\ell(K(x_1, \ldots, x_N)) = 0 \text{ for } \ell < i.
\end{equation*}

\end{lem}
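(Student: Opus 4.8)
The plan is to reduce the statement to the previously stated Proposition on regular sequences by a change of ambient ring. Recall we are given a Koszul complex $K(x_1, \ldots, x_N)$ where $x_1, \ldots, x_i$ (the first $i$ entries) form a regular sequence in $S$, and we want vanishing of $H^\ell$ for $\ell < i$. The key observation is that the Koszul complex $K(f_1, \ldots, f_N)$ decomposes as a tensor product of the Koszul complexes on the individual generators: $K(f_1, \ldots, f_N) \cong K(f_1) \otimes_S \cdots \otimes_S K(f_N)$, where each $K(f_j)$ is the two-term complex $S \xrightarrow{f_j} S$ placed in cohomological degrees $0$ and $1$. This is an immediate consequence of the definition $K^\ell = \bigwedge^\ell Y^*$ together with the standard fact that the exterior algebra of a direct sum is the (graded) tensor product of the exterior algebras, and that under this identification the differential $\sum_j f_j A(\omega_j)$ is the tensor-product differential.

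First I would invoke this tensor decomposition to write $K(x_1, \ldots, x_N) \cong K(x_1, \ldots, x_i) \otimes_S K(x_{i+1}, \ldots, x_N)$. The second factor is a bounded complex of free (hence flat) $S$-modules, so by the Künneth spectral sequence — or, since we only need a vanishing range, by a straightforward induction peeling off one free two-term complex $K(x_j)$ at a time and using the long exact sequence in cohomology — the cohomology $H^\ell$ of the tensor product vanishes in every degree $\ell$ in which $H^{\bullet}(K(x_1, \ldots, x_i))$ vanishes below $\ell$ and, more precisely, vanishes for $\ell$ below the smallest degree where $K(x_1, \ldots, x_i)$ has nonzero cohomology. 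Since $x_1, \ldots, x_i$ is a regular sequence, the previously stated Proposition (Eisenbud, Corollary 17.5) gives $H^\ell(K(x_1, \ldots, x_i)) = 0$ for $\ell < i$. Feeding this into the tensor decomposition yields $H^\ell(K(x_1, \ldots, x_N)) = 0$ for $\ell < i$, which is the claim.

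An alternative and perhaps cleaner route, which I would mention as a remark, is simply to cite Eisenbud, Corollary 17.12 directly, since that is exactly the source of the statement; but giving the tensor-product argument makes the thesis self-contained. The main obstacle is not conceptual but bookkeeping: one must check carefully that the isomorphism of graded modules $\bigwedge^\bullet Y^* \cong \bigwedge^\bullet Y_1^* \otimes \bigwedge^\bullet Y_2^*$ intertwines the two differentials with the correct Koszul signs, and that the resulting double-complex (or iterated cone) spectral sequence genuinely degenerates enough to read off the vanishing range. Once the sign conventions are fixed this is routine, so I expect the proof to be short. I would present it in the order: (1) state the tensor decomposition of Koszul complexes; (2) split off the last $N - i$ factors; (3) apply the known regular-sequence Proposition to the first $i$ factors; (4) conclude by the Künneth/long-exact-sequence argument.
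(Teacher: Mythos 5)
Your argument is correct, but you should know that the paper offers no proof of this lemma at all: it is stated as a direct citation of \cite{Eisenbud}, Corollary 17.12, which bounds the vanishing range of Koszul cohomology by the depth of the ideal $(x_1,\ldots,x_N)$; the regular sequence $x_1,\ldots,x_i$ lies inside that ideal, so the depth is at least $i$. Your tensor-product argument is a legitimate self-contained alternative. The decomposition $K(x_1,\ldots,x_N)\cong K(x_1,\ldots,x_i)\otimes_S K(x_{i+1})\otimes_S\cdots\otimes_S K(x_N)$ is standard, each $K(x_j)$ with $j>i$ is a two-term complex of free modules, and tensoring a complex $C$ with $S\xrightarrow{x_j}S$ gives a short exact sequence of complexes whose long exact sequence sandwiches $H^\ell(C\otimes K(x_j))$ between $H^\ell(C)$ and $H^{\ell-1}(C)$; both vanish when $H^m(C)=0$ for all $m<i$ and $\ell<i$, so the vanishing range is preserved at each step of the induction. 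Combined with the already-quoted Corollary 17.5 applied to $x_1,\ldots,x_i$, this yields the claim. What your route buys is self-containedness and the explicit observation that only an initial segment of the sequence needs to be regular; what the paper's citation buys is brevity and the sharper depth-sensitive statement. The one place to tighten your write-up is the sentence describing the vanishing of the cohomology of the tensor product, which is loosely worded: the precise inductive statement is the two-term one above (both $H^\ell$ and $H^{\ell-1}$ of the previous stage must vanish), and with that fixed the proof is complete.
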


We now show that $(A, d_A)$ is a Koszul complex, and hence $E_0(C)$ is a sub-Koszul complex.

\subsection{$E_0(C)$ is a Koszul complex.} \label{E_0isakoszulsection}

Define the quadratic elements $q_{\alpha \mu}$ of $\mathcal{P}_k$ by
\begin{equation} \label{qalphadef}
q_{\alpha \mu} = \sum_{i=1}^k z_{\alpha i} z_{\mu i} \text{ for } 1 \leq \alpha \leq p, p+1 \leq \mu \leq p+q.
\end{equation}
We note that the $q_{\alpha \mu}$ are the result of the following matrix multiplication of elements of $\mathcal{P}_k$
\begin{align*}
\begin{pmatrix}
z_{11} & z_{12} & \cdots & z_{1k} \\
\vdots & \vdots & \ddots & \vdots \\
z_{p1} & z_{p2} & \cdots & z_{pk}
\end{pmatrix}&
\begin{pmatrix}
z_{p+1,1} & z_{p+2,1} & \cdots & z_{p+q,1} \\
\vdots & \vdots & \ddots & \vdots \\
z_{p+1 k} & z_{p+2,k} & \cdots & z_{p+q,k}
\end{pmatrix} \\
=&
\begin{pmatrix}
q_{1,p+1} & q_{1,p+2} & \cdots & q_{1,p+q} \\
\vdots & \vdots & \ddots & \vdots \\
q_{p,p+1} & q_{p,p+2} & \cdots & q_{p,p+q}
\end{pmatrix}.
\end{align*}
Then we have
\begin{equation*}
d_A = \sum_{\alpha = 1}^p \sum_{\mu = p+1}^{p+q} A(\omega_{\alpha \mu}) \otimes q_{\alpha \mu}.
\end{equation*}

We first note that $d_A$ is the differential in the Koszul complex $K(\{q_{\alpha, \mu}\})$ associated to the sequence of the quadratic polynomials $q_{\alpha \mu}$, see Eisenbud \cite{Eisenbud}, Section 17.2.  To see that the Koszul complex as described in \cite{Eisenbud} is the above complex $A$ we choose $\mathcal{P}_k$ as Eisenbud's ring $R$ and $\mathcal{P}_k^k$ as Eisenbud's module $N$. In our description, since $\mathfrak{p} \cong V_+ \otimes V_- \cong \C^{pq}$, we are using the exterior algebra $\bigwedge^{\bullet}( (\C^{pq})^*) \otimes \mathcal{P}_k$.  But the operation of taking the exterior algebra of a module commutes with base change and hence we have $\bigwedge^{\bullet}( (\C^{pq})^*) \otimes \mathcal{P}_k \cong \bigwedge^{\bullet} (\mathcal{P}_k^{pq})$. Then we apply Eisenbud's construction with the sequence $\{q_{\alpha \mu}\}$ to obtain the above complex $A$.  We recall that $f_1, \ldots, f_n$ is a regular sequence in a ring $R$ if and only if $f_i$ is not a zero divisor in $R/(f_1, \ldots, f_{i-1})$ for $1 \leq i \leq n$.  We will show the $q_{\alpha \mu}$ form a regular sequence.  This will be a consequence of the following two lemmas.

The following lemma is Matsumura's corollary to Theorem 16.3 on page 127, \cite{Matsumura}.  It gives a condition under which we may reorder a sequence while preserving regularity.

\begin{lem} \label{Matsumuralemma}
If $R$ is Noetherian and graded and $a_1, \ldots, a_n$ is a regular sequence of homogeneous elements in $R$, then so is any permutation of $a_1, \ldots, a_n$.
\end{lem}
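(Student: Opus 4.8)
The plan is to reduce the statement to a regular sequence of length two and then carry out the classical descent argument. Every permutation of $\{1,\dots,n\}$ is a product of transpositions of adjacent indices, so it is enough to prove that passing from $a_1,\dots,a_n$ to $a_1,\dots,a_{i-1},a_{i+1},a_i,a_{i+2},\dots,a_n$ preserves regularity. I would do this by replacing $R$ with the quotient $\bar R = R/(a_1,\dots,a_{i-1})$, which is again Noetherian and graded since $(a_1,\dots,a_{i-1})$ is a homogeneous ideal; in $\bar R$ the images $\bar a_i,\bar a_{i+1}$ form a regular sequence of homogeneous elements, and the classes of $a_{i+2},\dots,a_n$ form a regular sequence in $\bar R/(\bar a_i,\bar a_{i+1})$, which is literally the same ring as $\bar R/(\bar a_{i+1},\bar a_i)$. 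So everything comes down to the case $n=2$: given a regular sequence $a,b$ of homogeneous elements in a Noetherian graded ring $R$, show that (i) $b$ is a nonzerodivisor on $R$ and (ii) $a$ is a nonzerodivisor on $R/(b)$.

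Part (ii) is a short diagram chase that uses neither Noetherianity nor the grading. If $ax\in(b)$, write $ax=bc$; reducing modulo $(a)$ gives $\bar b\,\bar c=0$ in $R/(a)$, and since $b$ is a nonzerodivisor on $R/(a)$ we obtain $c=ad$ for some $d$; then $ax=bad=a(bd)$, and cancelling the nonzerodivisor $a$ gives $x=bd\in(b)$. Hence $a$ is a nonzerodivisor on $R/(b)$.

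Part (i) is the place where the hypotheses of the lemma are genuinely needed, and I expect it to be the main obstacle. Let $I=\mathrm{Ann}_R(b)=\{x\in R:bx=0\}$; this is a homogeneous ideal, and it is finitely generated because $R$ is Noetherian. The key claim is $I=aI$. For a homogeneous $x\in I$ we have $bx=0\in(a)$, so $\bar b\,\bar x=0$ in $R/(a)$, hence $x=ay$ for some homogeneous $y$ since $b$ is a nonzerodivisor on $R/(a)$; then $a(by)=bx=0$ and $a$ a nonzerodivisor force $by=0$, i.e. $y\in I$, so $x\in aI$. As $I$ is generated by such homogeneous elements, $I=aI$. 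Now the graded Nakayama lemma finishes the argument: $a$ is homogeneous of positive degree (in the application the $q_{\alpha\mu}$ are quadratic, so this holds), and a finitely generated graded module over an $\mathbb{N}$-graded ring is bounded below in degree, so $I=aI$ forces $I=0$; thus $b$ is a nonzerodivisor on $R$. The one point to watch is the edge case in which $a$ has degree zero, where graded Nakayama does not apply directly; this does not occur in our setting, and in general it can be handled separately (e.g. via the Krull intersection theorem), but I would not pursue that here.
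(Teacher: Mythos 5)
Your argument is correct, and it differs from the paper only in that the paper offers no proof at all: Lemma \ref{Matsumuralemma} is simply quoted as the corollary to Theorem 16.3 on page 127 of \cite{Matsumura}. What you have written out is essentially the standard proof of that corollary, namely the graded analogue of Matsumura's proof of Theorem 16.3 itself (where ordinary Nakayama for elements of the Jacobson radical is replaced by graded Nakayama for elements of positive degree). The reduction to adjacent transpositions and then to $n=2$ is sound, since $R/(a_1,\ldots,a_{i-1})$ is again Noetherian and graded and the residual sequence is unchanged as the quotient ideals coincide; part (ii) is the usual diagram chase; and in part (i) the ideal $I=\mathrm{Ann}_R(b)$ is indeed homogeneous because $b$ is, so your computation $I=aI$ on homogeneous elements suffices, and a nonzero homogeneous ideal of an $\mathbb{N}$-graded ring has a nonzero element of minimal degree, which $I=aI$ with $\deg a>0$ contradicts. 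The one caveat you raise --- that the statement as printed omits the hypothesis $\deg a_i>0$, without which the Nakayama step fails --- is a genuine (if harmless) imprecision in the paper's formulation rather than in your proof: Matsumura's corollary does assume positive degree, and the elements to which the lemma is applied in this paper (the $q_{\alpha\mu}$, the coordinates $z_{\alpha i}$, the $w_i$, the $r_{ij}$, and the $c_j$) all have positive degree. So your proof supplies exactly the argument the paper delegates to the reference, with the hypotheses stated correctly.
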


\begin{rmk}
The above lemma allows us to say, for instance, that the $\{q_{\alpha \mu}\}$ are a regular sequence without having to order the elements.
\end{rmk}

\begin{lem} \label{easyregular}
Let $R = \C[x_1, \ldots, x_N, y_1, \ldots, y_N]$.  Then $(x_1 y_1, x_2 y_2, \ldots, x_N y_N)$ is a regular sequence.
\end{lem}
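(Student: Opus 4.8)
The plan is to prove that $x_1 y_1, \ldots, x_N y_N$ is a regular sequence in $R = \C[x_1,\ldots,x_N,y_1,\ldots,y_N]$ by induction on $N$. The base case $N = 1$ is immediate: $x_1 y_1$ is a nonzero element of the integral domain $R$, hence a nonzerodivisor, so the one-term sequence is regular. For the inductive step, assume $x_1 y_1, \ldots, x_{N-1} y_{N-1}$ is a regular sequence in $R' = \C[x_1,\ldots,x_{N-1},y_1,\ldots,y_{N-1}]$, and we must show (i) that it remains regular after extending scalars to $R = R'[x_N, y_N]$, and (ii) that $x_N y_N$ is a nonzerodivisor modulo the ideal $I = (x_1 y_1, \ldots, x_{N-1} y_{N-1})R$.

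For (i), the point is that $R = R' \otimes_{\C} \C[x_N, y_N]$ is a free (in particular flat, and faithfully flat) $R'$-module, so a regular sequence in $R'$ stays regular in $R$; concretely, $R/I \cong (R'/I') \otimes_{\C} \C[x_N,y_N]$ where $I' = (x_1 y_1,\ldots,x_{N-1}y_{N-1})R'$, and multiplication by $x_j y_j$ on $(R'/(x_1y_1,\ldots,x_{j-1}y_{j-1})) \otimes \C[x_N,y_N]$ is injective because it is injective on the first factor and we are tensoring with a free module. For (ii), we must check that $x_N y_N$ is a nonzerodivisor on $R/I$. Since $R/I \cong (R'/I')[x_N, y_N]$, and $x_N, y_N$ are independent indeterminates over the ring $R'/I'$, it suffices to know that $R'/I'$ is a domain — or, more weakly, that $x_N y_N$ (a product of the two new variables) is a nonzerodivisor in the polynomial ring $(R'/I')[x_N,y_N]$. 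The latter holds as soon as $R'/I'$ is reduced, or even just when $x_N$ and $y_N$ are each nonzerodivisors there, which they are because they are polynomial variables adjoined to $R'/I'$: a variable over any nonzero commutative ring is a nonzerodivisor, and a product of nonzerodivisors is a nonzerodivisor.

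Thus the crux is to verify that $R'/I'$ is a nonzero ring (so that the adjoined variables $x_N, y_N$ behave as expected); this is clear since $I'$ is generated by elements of positive degree in the graded ring $R'$, so $R'/I' \neq 0$. Assembling: $x_N y_N$ is a nonzerodivisor modulo $I$, completing the induction. The main obstacle, such as it is, is purely bookkeeping — correctly tracking the flat base-change argument and the identification $R/I \cong (R'/I')[x_N,y_N]$ — rather than anything substantive. One could alternatively bypass the base-change step entirely by working with the full ring $R$ from the start and verifying directly that, modulo $(x_1y_1,\ldots,x_{j-1}y_{j-1})$, the element $x_j y_j$ is a nonzerodivisor, using that the monomials appearing are "disjoint" in the variables $x_j, y_j$; but the inductive/flatness packaging is cleaner. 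Note also that by Lemma \ref{Matsumuralemma} the order is irrelevant, which will be used when we apply this to the $q_{\alpha\mu}$.
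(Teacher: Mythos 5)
Your proof is correct and rests on the same key observation as the paper's: modulo $(x_1y_1,\ldots,x_{i-1}y_{i-1})$ the variables $x_i,y_i$ are untouched, so the quotient is a polynomial ring in $x_i,y_i$ over a nonzero base and $x_iy_i$ is a nonzerodivisor there. The paper verifies this by writing $R$ as a tensor product of the rings $\C[x_i,y_i]$ and checking injectivity of multiplication by $x_iy_i$ on an explicit basis, rather than via your induction-plus-flat-base-change packaging, but the substance is identical.
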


\begin{proof}
We first rewrite $R$ as the tensor product of $N$ polynomial rings
\begin{equation*}
R \cong \C[x_1, y_1] \otimes \C[x_2, y_2] \otimes \cdots \otimes \C[x_N, y_N].
\end{equation*}
Fix $i$ between $1$ and $N$.  We verify that $x_i y_i$ is not a zero divisor in
$$R_i = R/(x_1 y_1, \ldots, x_{i-1} y_{i-1}).$$
Note that
\begin{equation*}
R_i \cong \frac{\C[x_1, y_1]}{(x_1 y_1)} \otimes \frac{\C[x_2, y_2]}{(x_2 y_2)} \otimes \frac{\C[x_{i -1}, y_{i -1}]}{(x_{i -1} y_{i -1})} \otimes \C[x_{i}, y_{i}] \otimes \cdots \otimes \C[x_N, y_N].
\end{equation*}
Let $b_{e,f} = x_i^e y_i^f \in \C[x_i, y_i]$.  Then $\{b_{e,f}\}_{e,f \geq 0}$ is a basis for $\C[x_i, y_i]$.  Now consider the map
\begin{align*}
g : R_i &\to R_i \\
r &\mapsto x_i y_i r.
\end{align*}
We show $g$ is injective and thus $x_i y_i$ is not a zero divisor in $R_i$.  Suppose $r$ is in the kernel of $g$.  Then $r$ has a unique representation
\begin{equation*}
r = \sum_{e,f} a_{1,e,f} \otimes a_{2,e,f} \otimes \cdots \otimes b_{e,f} \otimes a_{i+1,e,f} \otimes \cdots \otimes a_{N,e,f} \text{ where } a_{\beta,e,f} \in \C[x_\beta, y_\beta].
\end{equation*}
Hence
\begin{align*}
g(r) &= \sum_{e,f} a_{1,e,f} \otimes a_{2,e,f} \otimes \cdots \otimes x_i y_i b_{e,f} \otimes \cdots \otimes a_{N,e,f} \\
&= \sum_{e,f} a_{1,e,f} \otimes a_{2,e,f} \otimes \cdots \otimes b_{e+1,f+1} \otimes \cdots \otimes a_{N,e,f} = 0.
\end{align*}
Since $b_{e+1, f+1}$ is a basis for $\C[x_i, y_i]$, we have, for all $e,f \geq 0$,
\begin{equation*}
a_{1,e,f} \otimes a_{2,e,f} \otimes \cdots \otimes a_{i-1,e,f} \otimes a_{i+1,e,f} \otimes \cdots \otimes a_{N,e,f} = 0.
\end{equation*}
Hence $r = 0$.  Thus $x_i y_i$ is not a zero divisor and the lemma is proved.

\end{proof}

\begin{rmk}
The above proof can be adapted to show that any ``disjoint" monomials form a regular sequence.  The details are left to the reader.
\end{rmk}

\begin{prop} \label{Aregularsequence}
The $q_{\alpha \mu}$ form a regular sequence in $\mathcal{P}_k$.
\end{prop}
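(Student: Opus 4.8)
The plan is to deduce that the $q_{\alpha\mu}$ form a regular sequence in $\mathcal{P}_k = \mathrm{Pol}((V\otimes\C)^k)$ from Lemma \ref{easyregular} (disjoint bilinear monomials $x_i y_i$ form a regular sequence), Lemma \ref{Matsumuralemma} (regularity of homogeneous sequences is permutation-invariant in a Noetherian graded ring), and the standing hypothesis $k \geq pq$. The key observation is that the $pq$ polynomials $q_{\alpha\mu} = \sum_{i=1}^k z_{\alpha i} z_{\mu i}$ live in $\C[z_{\alpha i}, z_{\mu i}]$, a polynomial ring in $k(p+q)$ variables; since $k \geq pq$, there is enough room to ``separate'' the $q_{\alpha\mu}$ after a linear change of variables so that they look like a sequence of disjoint monomials, up to adding extra polynomial variables that do not affect regularity.

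First I would set up the combinatorics: index the $pq$ pairs $(\alpha,\mu)$ by $1,\dots,pq$ and, using $k \geq pq$, assign to the $j$-th pair a distinct ``slot'' $i = i(j) \in \{1,\dots,k\}$. The idea is to exhibit a $\C$-algebra automorphism (or at least an injective homomorphism onto a polynomial subring with the same fraction field behaviour) of $\mathcal{P}_k$ that carries the $q_{\alpha\mu}$ to the monomials $z_{\alpha, i(j)} z_{\mu, i(j)}$, which are pairwise variable-disjoint. Concretely, one can argue by induction on the number of terms: $q_{\alpha\mu} = z_{\alpha, i(j)} z_{\mu, i(j)} + (\text{terms in other slots})$, and a triangular change of variables $z_{\alpha, i(j)} \mapsto z_{\alpha, i(j)} - z_{\alpha, i(j)}^{-1}(\cdots)$ does not literally work inside the polynomial ring, so instead I would localize or, cleaner, use the following standard move: a sequence $f_1,\dots,f_r$ of homogeneous elements is regular iff $\dim \mathcal{P}_k/(f_1,\dots,f_r) = \dim \mathcal{P}_k - r$ (for a polynomial ring over a field, a system of homogeneous parameters of the right length is automatically a regular sequence, since polynomial rings are Cohen--Macaulay). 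Thus it suffices to show the common zero locus $Z(q_{1,p+1},\dots,q_{p,p+q}) \subset (V\otimes\C)^k$ has codimension exactly $pq$.

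So the real content I would carry out is the dimension count: show that $\{Z : q_{\alpha\mu}(Z) = 0 \text{ for all } \alpha,\mu\}$, i.e. the set of $(p+q)\times k$ matrices $Z$ whose top $p$ rows $Z_+$ and bottom $q$ rows $Z_-$ satisfy $Z_+ Z_-^{t} = 0$, has codimension $pq$ in the affine space of dimension $(p+q)k$. One shows $\dim\{(Z_+, Z_-) : Z_+ Z_-^t = 0\} = (p+q)k - pq$ by stratifying over the rank of $Z_-$ (for generic $Z_-$ of rank $q$, the condition $Z_+ Z_-^t = 0$ forces the $p$ rows of $Z_+$ into the $(k-q)$-dimensional orthogonal complement, giving $pk - pq$ free parameters for $Z_+$; lower-rank strata have strictly smaller dimension because $k \geq pq \geq q$ keeps the generic stratum dominant) and checking that this top-dimensional component has the asserted codimension. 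By Cohen--Macaulayness of $\mathcal{P}_k$, a homogeneous sequence of $pq$ elements cutting out a codimension-$pq$ variety is automatically regular, and Lemma \ref{Matsumuralemma} then frees us from worrying about the order.

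The main obstacle I expect is the dimension count on the lower-rank strata: one must verify that when $Z_-$ has rank $s < q$ the corresponding stratum contributes dimension strictly less than $(p+q)k - pq$, which is exactly where the hypothesis $k \geq pq$ is used (it guarantees $k$ is large enough that the generic-rank stratum dominates). If one instead prefers the purely algebraic route via Lemma \ref{easyregular}, the obstacle shifts to making the change-of-variables argument rigorous inside the polynomial ring — this can be done by a graded Nakayama / associated-graded argument showing that the initial forms of the $q_{\alpha\mu}$ with respect to a suitable monomial order are the disjoint monomials $z_{\alpha,i(j)} z_{\mu,i(j)}$, whence regularity of the initial sequence (Lemma \ref{easyregular} plus Lemma \ref{Matsumuralemma}) lifts to regularity of the $q_{\alpha\mu}$ themselves.
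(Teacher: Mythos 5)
Your primary route (codimension count plus Cohen--Macaulayness) is correct and genuinely different from the paper's argument, which stays entirely inside elementary commutative algebra: the paper prepends to the $q_{\alpha\mu}$ a regular sequence $\tau$ of ``off-diagonal'' coordinate variables chosen so that in $\mathcal{P}_k/(\tau)$ each $q_{\alpha\mu}$ reduces to a single monomial $z_{\alpha,(\mu-p-1)p+\alpha}\,z_{\mu,(\mu-p-1)p+\alpha}$ (this is exactly your ``assign each pair a distinct slot,'' and it is where $k\geq pq$ enters), applies Lemma \ref{easyregular} to these disjoint monomials, and then uses Lemma \ref{Matsumuralemma} to move the $q_{\alpha\mu}$ to the front and take an initial segment --- no change of variables, no localization, no Gr\"obner degeneration, and no appeal to Cohen--Macaulayness. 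Your geometric route buys a conceptual picture (the locus $Z_+Z_-^t=0$ and why small $k$ fails) at the cost of invoking the unmixedness theorem; the paper's route buys self-containedness. One caution on your stratification: the parenthetical reason ``$k\geq pq\geq q$ keeps the generic stratum dominant'' is not the right inequality. The rank-$s$ stratum has dimension $s(q+k-s)+p(k-s)$, which differs from the generic value $(p+q)k-pq$ by $(s-q)(k-s-p)$, so what you need is $k\geq s+p$ for all $s\leq q-1$, i.e.\ $k\geq p+q-1$; this does follow from $k\geq pq$ since $(p-1)(q-1)\geq 0$, but $k\geq q$ alone is insufficient (e.g.\ $p=3$, $q=2$, $k=2$: the stratum $Z_-=0$ has dimension $6>4$). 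With that inequality corrected, your argument goes through. Your secondary route via initial forms is essentially the paper's proof in Gr\"obner-basis clothing: a weight order giving weight to $z_{\alpha i}$ exactly when $i\equiv\alpha\bmod p$ and to $z_{\mu i}$ exactly on the $\mu$-block selects the paper's diagonal monomials as leading terms, and the paper's quotient-by-$\tau$ trick is the elementary substitute for the ``initial forms regular implies regular'' lemma.
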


\begin{proof}
First, we examine a longer sequence $\sigma$ where we have prepended many of the variables $z_{\alpha i}$ and $z_{\mu i}$ to the sequence of $q_{\alpha \mu}$.  We will show this is a regular sequence.  Once we have done this, by Lemma \ref{Matsumuralemma} we can reorder so that the $q_{\alpha \mu}$ come first and this reordered sequence will still be regular.  Then we use the obvious fact that any initial segment of a regular sequence is a regular sequence and hence the $q_{\alpha \mu}$ form a regular sequence.

We first define the sequence $\tau$ as follows.  It will contain all the variables $z_{\alpha i}$ except for those with $i \equiv \alpha \mathrm{ mod } p$.  It will also contain all the $z_{\mu i}$ except for those with $(\mu - p-1)p < i \leq (\mu - p)p$.  Now we define $\sigma$ to be $\tau$ followed by the $q_{\alpha \mu}$.  We now check that this is a regular sequence.

It is clear that $\tau$ is a regular sequence.  The ``off-diagonal" (see the diagram below) $z_{\alpha i}$ and the $z_{\mu i}$ included form a regular sequence since they are coordinates.  To check that $\sigma$ is regular, we must check that the $q_{\alpha \mu}$ are a regular sequence in $\mathcal{P}_k / (\tau)$.  Note that
\begin{equation*}
\mathcal{P}_k / (\tau) \cong \C[\{z_{\alpha i}\}_{i \equiv \alpha \mathrm{ mod }p} \cup \{z_{\mu i}\}_{p(\mu-p-1) < i \leq p(\mu - p) } ].
\end{equation*}
The image of $q_{\alpha \mu}$ in this quotient ring is the result of the matrix multiplication
\begin{align*}
&\begin{pmatrix}
z_{11} & 0 & \cdots & 0 & \cdots & \cdots & z_{1, p(q-1) + 1} & 0 & \cdots & 0\\
0 & z_{22} & \cdots & 0 & \cdots & \cdots & 0 & z_{2, p(q-1) + 2} & \cdots & 0\\
\vdots & \vdots & \boldsymbol{\ddots} & \vdots & \cdots &\cdots & \vdots & \vdots & \boldsymbol{\ddots} & \vdots\\
0 & 0 & \cdots & z_{pp} & \cdots & \cdots & 0 & 0 & \cdots & z_{p, pq}\\
\end{pmatrix} \bullet \\
&\begin{pmatrix}
z_{p+1,1} & 0 & \cdots & 0 \\
\boldsymbol{\vdots} & \vdots & \ddots & \vdots \\
z_{p+1,p} & 0 & \cdots & 0 \\
0 & z_{p+2,p+1} & \cdots & 0 \\
\vdots & \boldsymbol{\vdots} & \ddots & \vdots \\
0 & z_{p+2,2p} & \cdots & 0 \\
\vdots & \vdots & \boldsymbol{\ddots} & \vdots \\
0 & 0 & \cdots & z_{p+q, p(q-1)+1} \\
\vdots & \vdots & \ddots & \boldsymbol{\vdots} \\
0 & 0 & \cdots & z_{p+q, pq}
\end{pmatrix}.
\end{align*}
That is,
\begin{equation*}
q_{\alpha \mu} \mapsto z_{\alpha, (\mu-p-1)p + \alpha} z_{\mu, (\mu-p-1)p + \alpha}.
\end{equation*}
These elements form a regular sequence by Lemma \ref{easyregular}.

Now we apply Lemma \ref{Matsumuralemma} to reorder $\sigma$ such that the $q_{\alpha \mu}$ come first, and note that any initial segment of $\sigma$, in particular the $\{q_{\alpha \mu}\}$, is a regular sequence.

\end{proof}

\begin{rmk}
Proposition \ref{Aregularsequence} provides an upper bound on the minimal $k$ so that the $q_{\alpha \mu}$ form a regular sequence.  A lower bound is $p$ since if $k < p$ then the form $\varphi_{kq}$ of Kudla and Millson is a non-zero cohomology class in degree $kq$ (which is less than the top degree $pq$).
\end{rmk}

We now compute the cohomology of $(A, d_A)$
\begin{prop} \label{Avanishing}
\begin{equation*}
H^\ell(A) = \begin{cases}
0 &\text{ if } \ell \neq pq \\
\mathcal{P}_k/(\{q_{\alpha \mu}\}) \vol &\text{ if } \ell = pq.
\end{cases}
\end{equation*}
\end{prop}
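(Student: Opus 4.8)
The plan is to recognize $(A,d_A)$ as the Koszul complex on the quadratic polynomials $q_{\alpha\mu}$ and then feed in the regularity of that sequence together with the standard Koszul machinery. First I would recall from Subsection \ref{E_0isakoszulsection} that
\begin{equation*}
d_A = \sum_{\alpha=1}^p \sum_{\mu=p+1}^{p+q} A(\omega_{\alpha\mu}) \otimes q_{\alpha\mu},
\end{equation*}
so that $(A,d_A)$ is precisely the Koszul complex $K(\{q_{\alpha\mu}\})$ on the $pq$ elements $q_{\alpha\mu} \in \mathcal{P}_k$. Here one uses the identification $\mathfrak{p} \cong V_+ \otimes V_- \cong \C^{pq}$ and the fact that the exterior algebra commutes with base change, so that $\wwedge{\bullet}(\mathfrak{p}^*) \otimes \mathcal{P}_k \cong \wwedge{\bullet}(\mathcal{P}_k^{pq})$, matching Eisenbud's setup with ring $\mathcal{P}_k$ and the sequence $\{q_{\alpha\mu}\}$.

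Next I would invoke Proposition \ref{Aregularsequence}, which asserts that the $q_{\alpha\mu}$ form a regular sequence of (homogeneous) elements in $\mathcal{P}_k$ under the standing hypothesis $k \geq pq$. With that input, the Proposition on Koszul complexes of regular sequences (Corollary 17.5 of \cite{Eisenbud}) applies directly with $N = pq$, yielding $H^\ell(K(\{q_{\alpha\mu}\})) = 0$ for $\ell < pq$ and $H^{pq}(K(\{q_{\alpha\mu}\})) \cong \mathcal{P}_k/(\{q_{\alpha\mu}\})$. For $\ell > pq$ there is nothing to check, since $\dim_\C \mathfrak{p}^* = pq$ forces $A^\ell = \wwedge{\ell}(\mathfrak{p}^*) \otimes \mathcal{P}_k = 0$. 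To conclude, I would make the top-degree identification explicit: $\wwedge{pq}(\mathfrak{p}^*)$ is one-dimensional, spanned by $\vol = \omega_{1,p+1} \wedge \cdots \wedge \omega_{p,p+q}$, so the isomorphism above can be written concretely as $H^{pq}(A) = \big(\mathcal{P}_k/(\{q_{\alpha\mu}\})\big)\vol$, which is the claimed formula; note this group is nonzero because the ideal $(\{q_{\alpha\mu}\})$ is generated in degree $2$ and hence proper.

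I do not expect any real obstacle here: the substantive work is already done, partly in establishing that the $q_{\alpha\mu}$ form a regular sequence (Proposition \ref{Aregularsequence}, where the inequality $k \geq pq$ is genuinely used via the prepended-variables argument) and partly in the general Koszul statement quoted from \cite{Eisenbud}. The only point deserving a sentence of care is checking that our ordering conventions for the Koszul differential agree with Eisenbud's so that the cohomological indexing (rather than a homological one) is correct, but that compatibility was already arranged in Subsection \ref{E_0isakoszulsection}.
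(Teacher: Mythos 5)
Your proposal is correct and follows exactly the route the paper takes: identify $(A,d_A)$ with the Koszul complex $K(\{q_{\alpha\mu}\})$, invoke Proposition \ref{Aregularsequence} for regularity of the sequence (where $k\geq pq$ is used), and apply Corollary 17.5 of \cite{Eisenbud}. The extra details you supply (the trivial case $\ell>pq$ and the explicit identification of the top exterior power with $\C\,\vol$) are correct but are left implicit in the paper's one-line proof.
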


\begin{proof}
Corollary 17.5 of \cite{Eisenbud} (with $M = R$), states that the cohomology of a Koszul complex $K(f_1,\ldots,f_N)$ below the top degree vanishes if $f_1,\ldots,f_N$ is a regular sequence and that in this case the top cohomology $H^N(K(f_1, \ldots, f_N))$ is isomorphic to $R/(f_1, \ldots, f_N)$.

\end{proof}

We now prove Theorem \ref{kgeqnvanishing}.

\begin{proof}

By Equation \eqref{E_0CisA} and Proposition \ref{Avanishing}, we have, since $\vol$ is $K$-invariant,
\begin{equation*}
H^\ell(E_0(C)) = \begin{cases}
0 &\text{ if } \ell \neq pq \\
(\mathcal{P}_k/(\{q_{\alpha \mu}\}))^K \vol &\text{ if } \ell = pq.
\end{cases}
\end{equation*}
Thus, by Proposition \ref{grCzeroimpliesCzero} and statement $(3)$ of Proposition \ref{generalspectral}, we have
\begin{equation*}
H^\ell(\mathfrak{so}(p,q), \SO(p) \times \SO(q); \mathcal{P}_k) = \begin{cases}
0 &\text{ if } \ell \neq pq \\
(\mathcal{P}_k/(\{q_{\alpha \mu}\}))^K \vol &\text{ if } \ell = pq.
\end{cases}
\end{equation*}

\end{proof}

We now show that $H^{pq}(C)$ is not finitely generated.

\begin{prop}
The map from $\mathrm{Pol}(V_+^{pq})^{\SO(p)} \oplus \mathrm{Pol}(V_-^{pq})^{\SO(q)}$ to $H^{pq}(C)$ sending $(f,g)$ to $[(f + g)\vol]$ is an injection.
\end{prop}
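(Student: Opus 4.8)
The plan is to reduce the statement, via Theorem \ref{kgeqnvanishing}, to an elementary fact about the ideal $I=(\{q_{\alpha\mu}\})$. First I would check that the map is well defined: $\wwedge{pq}(\mathfrak{p}^*)$ is one--dimensional, spanned by $\vol$, on which $K=\SO(p)\times\SO(q)$ acts trivially; since $f$ is $\SO(p)$--invariant and involves only the ``plus'' variables while $g$ is $\SO(q)$--invariant and involves only the ``minus'' variables, the sum $f+g$ lies in $\mathcal{P}_k^K$, so $(f+g)\vol\in C^{pq}$. Because $\wwedge{pq+1}(\mathfrak{p}^*)=0$ we have $C^{pq+1}=0$, so $(f+g)\vol$ is automatically a cocycle and $[(f+g)\vol]$ is defined.

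The key point is a structural observation about $I$: it is homogeneous for the bigrading of $\mathcal{P}_k$ by (degree in the plus--variables, degree in the minus--variables), and each generator $q_{\alpha\mu}=\sum_i z_{\alpha i}z_{\mu i}$ is bihomogeneous of bidegree $(1,1)$; hence every bihomogeneous component of an element of $I$ has both coordinates $\geq 1$. A polynomial in the plus--variables alone has all of its bihomogeneous components of type $(a,0)$, and one in the minus--variables alone of type $(0,b)$, so $f+g\in I$ forces all of these components, and hence $f+g$ itself, to vanish. Granting that the isomorphism $H^{pq}(C)\cong(\mathcal{P}_k/I)^K\vol$ of Theorem \ref{kgeqnvanishing} carries $[(f+g)\vol]$ to $(f+g\bmod I)\vol$, we conclude $[(f+g)\vol]=0$ iff $f+g\in I$ iff $f+g=0$; since $f$ involves only the plus--variables and $g$ only the minus--variables, $f+g=0$ means $f$ and $g$ are opposite constants, so the pair $(f,g)$ is the zero element of the domain, and the map is injective. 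In particular, specialising to $g=0$ embeds the infinite--dimensional ring $\mathrm{Pol}(V_+^{pq})^{\SO(p)}$ into $H^{pq}(C)$, which gives the non--finite--generation asserted just before the proposition.

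The step that needs genuine care — the \emph{main obstacle} — is justifying the concrete ``reduction modulo $I$'' form of the isomorphism used in the previous paragraph, since Proposition \ref{generalspectral}(3) describes it only abstractly; to avoid this entirely I would instead argue by a direct descent on polynomial degree. Write $d=d_2+d_{-2}$ as in Lemma \ref{doublecomplex}, where $d_2=\pm d_A$ is the Koszul differential of the regular sequence $\{q_{\alpha\mu}\}$ (Proposition \ref{Aregularsequence}). Suppose $(f+g)\vol=d\eta$ and let $N$ be the top polynomial degree occurring in $\eta$; then the degree--$(N+2)$ part of $d\eta$ equals $d_2$ applied to the degree--$N$ part $\eta_N$ (the summand $d_{-2}$ lowers polynomial degree, hence contributes nothing in degree $N+2$), and therefore lies in $\vol\cdot I$. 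But it also equals $(f+g)_{N+2}\vol$, all of whose relevant bihomogeneous pieces are of type $(N+2,0)$ or $(0,N+2)$, so by the previous paragraph it vanishes and $d_2\eta_N=0$. Exactness of the Koszul complex of the regular sequence below top degree (Eisenbud, \cite{Eisenbud}; exactness is preserved on $K$--invariants because $K$ is compact) lets us write $\eta_N=d_2\xi$ for some $\xi$ of polynomial degree $N-2$ and replace $\eta$ by $\eta-d\xi$, strictly lowering its top polynomial degree; iterating forces $\eta$, and hence $f+g$, to be $0$.
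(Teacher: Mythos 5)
Your argument is correct and its core algebraic content agrees with the paper's, but you take a more careful route at the one point the paper glosses over. The paper's own proof is two lines: it observes $(\{q_{\alpha\mu}\})\subset(\{z_{\alpha i}\})$ and $(\{q_{\alpha\mu}\})\subset(\{z_{\mu i}\})$, so that $f+g$ in the ideal forces $f=g=0$; your bidegree observation (every bihomogeneous component of an element of the ideal has both degrees $\geq 1$) is an equivalent way of packaging the same fact. Where you genuinely diverge is in refusing to take for granted that the abstract isomorphism $H^{pq}(C)\cong(\mathcal{P}_k/(\{q_{\alpha\mu}\}))^K\vol$ sends $[(f+g)\vol]$ to the class of $f+g$; the paper implicitly assumes this, while your descent on polynomial degree (peel off the top-degree part of a putative primitive $\eta$, kill it using exactness of the Koszul complex below top degree and exactness of $(\cdot)^K$ for compact $K$, and iterate) gives a self-contained proof that $(f+g)\vol$ exact implies $f+g=0$. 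That argument is sound: $d_2\eta_N$ lands in $\vol\otimes(\{q_{\alpha\mu}\})$, is forced to vanish by the bidegree argument, and $H^{pq-1}(A)=0$ lets you lower the degree of $\eta$, so the extra work buys you independence from the precise form of the spectral-sequence identification at the cost of a page. One small caveat applies to both your write-up and the paper's: in degree zero the bidegree (or ideal-containment) argument only yields that $f$ and $g$ are opposite constants, so the pair $(c,-c)$ with $c\neq 0$ is a nonzero element of the stated domain mapping to zero; strictly speaking the map is injective only on the subspace where one normalizes the constants (e.g.\ requires $g(0)=0$), and your sentence ``so the pair $(f,g)$ is the zero element of the domain'' elides exactly the same point the paper does.
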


\begin{proof}
First, note that $f+g$ is $\SO(p) \times \SO(q)$-invariant, hence $(f+g) \vol$ is $K$-invariant.  Now we show the map is an injection.  Suppose $f+g \in ( \{q_{\alpha \mu} \})$.  Then since $( \{q_{\alpha \mu} \}) \subset (\{z_{\alpha i} \})$ we have $g = 0$.  Similarly, since $( \{q_{\alpha \mu} \}) \subset (\{z_{\mu i} \})$ we have $f = 0$.  Hence the map is injective.


\end{proof}

\section{The cohomology of $\SO(p,q)$ when $p \geq kq$.}

In this section we prove the following theorem

\begin{thm} \label{mainpgeqkq}
If $p \geq kq$, then
\begin{equation*}
H^\ell(\mathfrak{so}(p,q), \SO(p) \times \SO(q); \mathcal{P}_k) = \begin{cases}
\text{nonzero } &\text{ if } \ell = kq, pq \\
\text{unknown } &\text{ if } kq < \ell < pq \\
0 &\text{ otherwise}.
\end{cases}
\end{equation*}

\end{thm}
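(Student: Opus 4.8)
The plan is to run the same spectral-sequence machinery used for Theorem~\ref{kgeqnvanishing}, but now the Koszul complex $A$ on the $E_0$ page is defined by the sequence $\{q_{\alpha\mu}\}$ which is \emph{no longer} a regular sequence (the bound in Remark after Proposition~\ref{Aregularsequence} shows regularity fails once $k<p$). So instead of getting vanishing in all but top degree, I need a finer analysis of the ideal generated by the $q_{\alpha\mu}$ and of the depth of the Koszul complex. First I would recall that the matrix $(q_{\alpha\mu})$ is the product $Z_+ Z_-^t$ where $Z_+$ is the $p\times k$ matrix of the $z_{\alpha i}$ and $Z_-$ is the $q\times k$ matrix of the $z_{\mu i}$, so the ideal $(\{q_{\alpha\mu}\})$ is the ideal of the (generic-ish) determinantal-type variety cut out by $Z_+Z_-^t=0$. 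When $p\geq kq$ the relevant codimension count will give that the $q_{\alpha\mu}$ have depth exactly $kq$ rather than $pq$; by Lemma~\ref{Eisenbuddepth} this yields $H^\ell(A)=0$ for $\ell<kq$, hence $H^\ell(E_0(C))=0$ for $\ell<kq$, and then Proposition~\ref{grCzeroimpliesCzero} gives $H^\ell(C)=0$ for $\ell<kq$.

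Next I would handle the two nonvanishing claims. The class $\varphi_{kq}$ of Kudla and Millson, defined in Equation~\eqref{defnofKMkq}, is a nonzero element of $H^{kq}(\mathfrak{so}(p,q),\SO(p)\times\SO(q);\mathcal{P}_k)$ — this is exactly the content cited in the Remark after Theorem~\ref{mainpgeqkq} in the large-$p$ subsection — so $\ell=kq$ is nonzero. For $\ell=pq$ I would invoke the fact, already noted in the paragraph after Theorem~\ref{introtheoremlargek} and reproved in the preceding section, that the top-degree cohomology $H^{pq}(C)$ never vanishes: indeed the map $\mathrm{Pol}(V_+^{pq})^{\SO(p)}\oplus\mathrm{Pol}(V_-^{pq})^{\SO(q)}\to H^{pq}(C)$, $(f,g)\mapsto[(f+g)\vol]$, was shown to be injective, and this argument does not use $k\geq pq$. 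That settles nonvanishing at $\ell=kq$ and $\ell=pq$, and the "unknown" range $kq<\ell<pq$ is simply the claim that we make no assertion there; the "$0$ otherwise" reduces to the vanishing for $\ell<kq$ established above (there is nothing above $pq$ since $pq$ is the top degree of the complex).

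The main obstacle is the depth computation for the sequence $\{q_{\alpha\mu}\}$ in the range $p\geq kq$: I need that $\mathrm{depth}(\{q_{\alpha\mu}\},\mathcal{P}_k)=kq$. One way is to produce an explicit regular subsequence of length $kq$ among suitable linear combinations of the $q_{\alpha\mu}$ — in the spirit of Proposition~\ref{Aregularsequence}, prepend enough coordinate variables $z_{\alpha i},z_{\mu i}$ so that in the quotient ring the $q_{\alpha\mu}$ specialize to disjoint monomials, but now arrange the bookkeeping so that exactly $kq$ of them survive as a regular "disjoint monomial" sequence (using the Remark after Lemma~\ref{easyregular}), while also checking one cannot do better, e.g.\ by a dimension count on $V(Z_+Z_-^t=0)\subset (V\otimes\C)^k$ showing its codimension is $kq$ when $p\geq kq$. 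Combined with Lemma~\ref{Eisenbuddepth} this gives the vanishing below degree $kq$ on the $E_0$ page, and then Propositions~\ref{grCzeroimpliesCzero} and~\ref{generalspectral} transport it to $H^\bullet(C)$, completing the proof.
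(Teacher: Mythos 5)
Your overall strategy for the vanishing part --- a regular subsequence of length $kq$ among the $q_{\alpha\mu}$ obtained by prepending coordinate variables so that the survivors specialize to disjoint monomials, then Lemma \ref{Eisenbuddepth} to get $H^\ell(A)=0$ for $\ell<kq$, then Equation \eqref{E_0CisA} and Proposition \ref{grCzeroimpliesCzero} to transport this to $H^\ell(C)$ --- is exactly the paper's proof; the paper takes the subsequence $\{q_{\alpha,\,p+\lfloor \alpha/k\rfloor}\}_{1\le\alpha\le kq}$ and prepends the off-diagonal $z_{\alpha i}$, just as you sketch. Your extra worry about showing the depth is \emph{exactly} $kq$ is unnecessary: the theorem asserts nothing in the range $kq<\ell<pq$, so only the lower bound on depth is used.

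Two points diverge from the paper, and one is a genuine gap. First, at $\ell=kq$ you quote the known nonvanishing of $[\varphi_{kq}]$; but the Remark you cite states that the previously known proof is global (Poincar\'e duality with special cycles) and that the point here is to give the first \emph{local} proof. The paper's local argument is short: $\varphi_{kq}$ is an honest cocycle of $C$ whose class in $H^{kq}(A)=E_1$ is nonzero because it involves only the positive variables $z_{\alpha i}$, whereas every term in the image of $d_A$ is divisible by some negative variable $z_{\mu i}$; since $H^{kq-1}(\mathrm{gr}(C))=0$, no differential can reach it, so it survives to $E_\infty$ and is not exact in $C$. You should supply this (or an equivalent) argument rather than cite the global theorem. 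Second, and more seriously, your argument at $\ell=pq$ fails as stated: the injectivity of $(f,g)\mapsto[(f+g)\vol]$ into $H^{pq}(C)$ is \emph{not} independent of the hypothesis $k\ge pq$. What is independent of it is injectivity into $(\mathcal{P}_k/(\{q_{\alpha\mu}\}))^K\vol=H^{pq}(\mathrm{gr}(C))$; identifying that with $H^{pq}(C)$ uses Theorem \ref{kgeqnvanishing}, i.e.\ $k\ge pq$. Under $p\ge kq$ one generically has $k<pq$, and then Proposition \ref{generalspectral}(2) only gives a surjection $H^{pq}(\mathrm{gr}(C))\twoheadrightarrow H^{pq}(C)$, so a nonzero class on the $E_1$ page in top degree could a priori be killed by an incoming higher differential, since $E_1$ need not vanish in degree $pq-1$. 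The correct source for top-degree nonvanishing without any hypothesis on $k$ is the Dirac-delta argument of Section \ref{nonvanishingtop} (Theorem \ref{topforweil}): the $\mathfrak{g}$-invariant functional $\delta_0\circ\beta$ sends $[\vol\otimes 1]$ to a nonzero multiple of $[\vol]$ in $H^{pq}(\mathfrak{g},K;\R)$.
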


We use the same proof technique as in the previous section together with Lemma \ref{Eisenbuddepth}.

In what follows we use the symbol $C$ to denote the complex \\
$C^\bullet(\mathfrak{so}(p,q), \SO(p) \times \SO(q); \mathcal{P}_k)$.  We now prove the following proposition by finding a regular subsequence of the $q_{\alpha \mu}$ of length $kq$.  Let $(A, d_A)$ be the complex defined in Equation \eqref{defofAcomplex}.

\begin{prop}
If $p \geq kq$, then
\begin{equation*}
H^\ell(A) = \begin{cases}
0 &\text{ if } \ell < kq\\
\text{nonzero } &\text{ if } \ell = kq.
\end{cases}
\end{equation*}
\end{prop}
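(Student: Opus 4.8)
## Proof proposal

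The plan is to exhibit an explicit subsequence of the quadratic polynomials $q_{\alpha\mu}$ that has length $kq$ and is regular in $\mathcal{P}_k$, and then to invoke Lemma \ref{Eisenbuddepth} and Corollary 17.5 of \cite{Eisenbud} to conclude the stated vanishing and nonvanishing for the Koszul complex $(A,d_A) = K(\{q_{\alpha\mu}\})$. The point is that, although under the hypothesis $p \geq kq$ the full collection $\{q_{\alpha\mu}\}$ (which has $pq$ members) is \emph{not} expected to be regular, the subcomplex structure of a Koszul complex, together with Lemma \ref{Eisenbuddepth}, tells us that the depth of the ideal generated by the $q_{\alpha\mu}$ is at least the length of any regular subsequence, and this already forces $H^\ell(A) = 0$ for $\ell$ below that length. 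The nonvanishing in degree $kq$ will then follow from the existence of the Kudla--Millson class: the class $\varphi_{kq}$, pushed to the $E_0$ page, gives a nonzero cohomology class of $A$ in degree $kq$ (or, alternatively, one argues nonvanishing directly from the regular subsequence by a rank count).

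The key steps, in order, are as follows. First I would select the subsequence: choose, for each $i \in \{1,\dots,k\}$ and each $\mu \in \{p+1,\dots,p+q\}$, a distinct index $\alpha(i,\mu) \in \{1,\dots,p\}$ — this is possible precisely because there are $kq$ pairs $(i,\mu)$ and $p \geq kq$ many available values of $\alpha$, so one can choose them all distinct. Then consider the $kq$ polynomials $q_{\alpha(i,\mu),\mu}$. Second, I would quotient $\mathcal{P}_k$ by the ideal generated by all the variables $z_{\alpha j}$ with $\alpha \notin \{\alpha(i,\mu): i,\mu\}$ together with the variables $z_{\alpha(i,\mu), j}$ for $j \neq i$ and the variables $z_{\mu j}$ for $j \neq i$ — mimicking the construction in the proof of Proposition \ref{Aregularsequence} — so that in the quotient ring the image of $q_{\alpha(i,\mu),\mu}$ becomes the single monomial $z_{\alpha(i,\mu),i}\, z_{\mu i}$, and these are pairwise variable-disjoint. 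Third, by Lemma \ref{easyregular} (disjoint products of variables form a regular sequence) together with Lemma \ref{Matsumuralemma} (reordering), the prepended sequence of coordinate variables followed by the $q_{\alpha(i,\mu),\mu}$ is regular, and hence, taking an initial segment, the $kq$ polynomials $q_{\alpha(i,\mu),\mu}$ themselves form a regular sequence in $\mathcal{P}_k$. Fourth, apply Lemma \ref{Eisenbuddepth} with this regular subsequence of length $kq$ inside the full Koszul complex $K(\{q_{\alpha\mu}\})$ to get $H^\ell(A) = 0$ for $\ell < kq$. Fifth, for the nonvanishing in degree $kq$, invoke the Kudla--Millson class $\varphi_{kq}$: its leading term is a nonzero cocycle for $d_A = d'$ representing a nonzero class in $H^{kq}(A)$ (this is consistent with the Koszul complex not being that of a regular sequence of full length, as noted in the remark after Proposition \ref{Aregularsequence}).

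The main obstacle I anticipate is making the bookkeeping in the quotient construction (Step two) fully correct: one must check that after killing the chosen variables the images of the \emph{selected} $q_{\alpha(i,\mu),\mu}$ really are disjoint monomials in the surviving variables $\{z_{\alpha(i,\mu),i}\} \cup \{z_{\mu i}\}$ — in particular that no two selected pairs $(i,\mu) \neq (i',\mu')$ share a surviving variable, which holds because the $\alpha(i,\mu)$ were chosen distinct and the second index $i$ pins down the column — and that the prepended variables genuinely form a regular sequence whose quotient is the claimed polynomial ring. This is entirely parallel to the argument in Proposition \ref{Aregularsequence}, so it should go through, but it requires care with indices. A secondary, lesser point is the nonvanishing claim: if one prefers not to cite the existence of $\varphi_{kq}$ here, one can instead note that by Corollary 17.5 of \cite{Eisenbud} applied to the length-$kq$ regular sequence, the Koszul cohomology of \emph{that} subcomplex is nonzero in its top degree $kq$, and then argue that this class survives in $H^{kq}(A)$ — but this requires an extra comparison of the two Koszul complexes, so citing $\varphi_{kq}$ is the cleaner route.
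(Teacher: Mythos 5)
Your proposal is correct and follows essentially the same route as the paper: the paper also constructs a length-$kq$ regular subsequence of the $q_{\alpha\mu}$ (using the explicit choice $q_{\alpha,\,p+\lfloor \alpha/k\rfloor}$ for $1\le\alpha\le kq$ rather than an abstract distinct choice function), prepends the complementary coordinate variables so the images become disjoint monomials, invokes Lemmas \ref{easyregular}, \ref{Matsumuralemma}, and \ref{Eisenbuddepth}, and cites $\varphi_{kq}$ for nonvanishing. The only detail worth adding is the paper's one-line reason that $\varphi_{kq}$ is not exact in $A$: it involves only the positive variables $z_{\alpha i}$, whereas anything in the image of $d_A$ must involve the negative variables $z_{\mu i}$.
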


\begin{proof}
We first construct a regular subsequence of the $q_{\alpha \mu}$ of length $kq$.  Consider the subsequence $\tau = \{q_{\alpha, p + \left \lfloor {\frac{\alpha}{k}}\right \rfloor} \}_{1 \leq \alpha \leq kq}$ where $\left \lfloor {\frac{\alpha}{k}}\right \rfloor$ is the greatest integer less than or equal to $\frac{\alpha}{k}$.  We claim this is a regular sequence in $\mathcal{P}_k$.  Similar to the proofs before, we prepend the ``off-diagonal'' variables $z_{\alpha, i}$ with $\alpha \not \equiv i \text{ mod }k$.  That is, we consider the sequence
$$\sigma = \{{z_{\alpha, i}}_{\alpha \not \equiv i \text{ mod }k}, \tau\}.$$
Clearly the initial segment is regular.  To check that $\sigma$ is regular it remains to show $\tau$ is a regular sequence in $\mathcal{P}_k/ (\{{z_{\alpha, i}}_{\alpha \not \equiv i \text{ mod }k}\})$.  The image of $\tau$ in this quotient ring is the result of the following matrix multiplication,

\begin{equation*}
\begin{pmatrix}
z_{1,1} & 0 & \cdots & 0 \\
0 & z_{2,2} & \cdots & 0 \\
\vdots & \vdots & \boldsymbol{\ddots} & \vdots \\
0 & 0 & \cdots & z_{k,k} \\
z_{k+1,1} & 0 & \cdots & 0 \\
0 & z_{k+2,2} & \cdots & 0 \\
\vdots & \vdots & \boldsymbol{\ddots} & \vdots \\
0 & 0 & \cdots & z_{2k,k} \\
\vdots & \vdots & \vdots & \vdots \\
\end{pmatrix} \bullet 
\begin{pmatrix}
z_{p+1,1} & z_{p+2,1} & \cdots & z_{p+q,1} \\
\vdots & \vdots & \cdots & \vdots \\
z_{p+1,k} & z_{p+2,k} & \cdots & z_{p+q,k}
\end{pmatrix}.
\end{equation*}
That is, the image of $\tau$ is $\{ z_{\alpha, i} z_{p+1+ \left \lfloor \frac{\alpha}{k} \right \rfloor,i} \}_{\alpha \equiv i \text{ mod }k}$.  Thus the image of $\tau$ is a sequence of disjoint quadratic monomials, hence regular by Lemma \ref{easyregular}.  As we did in our previous proof of regularity, we reorder this sequence using Lemma \ref{Matsumuralemma} to get a regular sequence with these $q_{\alpha \mu}$ first and note than any initial segment of a regular sequence is regular.  Thus, by Lemma \ref{Eisenbuddepth}, we have
\begin{equation*}
H^\ell(A) = 0 \text{ for } \ell < kq.
\end{equation*}
To show that $H^{kq}(A) \neq 0$, we note that the form $\varphi_{kq}$, defined in Equation \eqref{defnofKMkq}, of Kudla and Millson is closed and not exact since it only involves positive variables (and any exact form necessarily has negative coordinates).

\end{proof}

Now we prove Theorem \ref{mainpgeqkq}.

\begin{proof}
Since $H^\ell(A) = 0$ for $\ell < kq$, by Proposition \ref{grCzeroimpliesCzero} and Equation \eqref{E_0CisA}, we have
$$H^\ell(C) = 0 \text{ for } \ell < kq.$$
Since $\varphi_{kq}$ is closed in $C$, $\SO(p) \times \SO(q)$ invariant, and the $(kq-1)^{st}$ cohomology group of the associated graded of $C$ vanishes, we find that $\varphi_{kq}$ is not exact in $E_\infty$ and so not exact in C.

\end{proof}


\renewcommand{\thechapter}{4}

\chapter{The Relative Lie Algebra Cohomology for $\SO_0(n,1)$}

In this chapter, because the cohomology of the complex ``before taking invariance'', $A$, is large, we first compute the invariant cochains and then compute the cohomology.  The $K$-invariant cochains of $A$ form a subcomplex, $(A)^K$, which is again a Koszul complex.  The main point of this chapter is to compute the cohomology of this subcomplex.

\section{Introduction}
\bigskip
We let $(V, (,))$ be Minkowski space $\R^{n,1}$ and $e_1, e_2, \ldots, e_{n+1}$ be the standard basis.  Let $V_+$ be the span of $e_1, \ldots, e_n$.  We will consider the connected real Lie group $G = \SO_0(n,1)$ with Lie algebra $\mathfrak{so}(n,1)$ and maximal compact subgroup $K=\SO(n)$ with Lie algebra $\mathfrak{so}(n)$, the subgroup of $G$ that fixes the last basis vector $e_{n+1}$.  Let $\mathcal{S}_k$ be the $\OO(n)$-invariant complex-valued polynomials on $V^k$ and $\mathcal{R}_k \subset \mathcal{S}_k$ be the $\OO(n)$-invariant complex-valued polynomials on $V_+^k$, see Section \ref{notation}.  We will consider the Weil representation with values in the Fock model $\mathcal{P}_k = \mathrm{Pol}((V \otimes \C)^k)$.

We restate Theorem \ref{main}, our results for the cohomology with values in $\mathcal{P}_k$.  In the following theorem, let $\varphi_k$ be the cocycle constructed in the work of Kudla and Millson, \cite{KM2}, see Section \ref{notation}, Equation \eqref{vaprhikdef}.  In what follows, $c_1, \ldots, c_k$ are the cubic polynomials on $V^k$ defined in Equation \eqref{cubic}, $q_1, \ldots, q_n$ are the quadratic polynomials on $V^k$ defined in Equation \eqref{qalphadef}, and $\vol$ is as defined in Equation \eqref{voldefn}.  Note that statement $(3)$ is a consequence of Theorem \ref{kgeqnvanishing} and hence we will eventually assume that $k < n$ since these are the only cases remaining.

\begin{thm}
\hfill

\begin{enumerate}
\item  If $k < n$ then
\begin{equation*}
H^\ell \big(\mathfrak{so}(n,1) ,\SO(n); \mathcal{P}_k \big) = \begin{cases}
\mathcal{R}_k \varphi_k &\text{ if } \ell=k \\
\mathcal{S}_k/(c_1, \ldots, c_k) \vol &\text{ if } \ell=n \\
0 &\text{ otherwise }
\end{cases}
\end{equation*}
\item If $k = n$ then
\begin{equation*}
H^\ell \big(\mathfrak{so}(n,1) ,\SO(n); \mathcal{P}_k \big) = \begin{cases}
\mathcal{R}_k \varphi_k \oplus \mathcal{S}_k/(c_1, \ldots, c_k) \vol &\text{ if } \ell=n \\
0 &\text{ otherwise }
\end{cases}
\end{equation*}
\item If $k>n$
\begin{equation*}
H^\ell \big(\mathfrak{so}(n,1) ,\SO(n); \mathcal{P}_k \big) = \begin{cases}
\big( \mathcal{P}_k/(q_1, \ldots, q_n) \big)^K \vol &\text{ if } \ell=n \\
0 &\text{ otherwise }
\end{cases}
\end{equation*}
\end{enumerate}
\end{thm}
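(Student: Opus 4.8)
The plan is to run the filtered-complex spectral sequence of Section~\ref{spectralsection} on $C^{\bullet}=C^{\bullet}(\mathfrak{so}(n,1),\SO(n);\mathcal{P}_k)$, but only after splitting $C^{\bullet}$ into two subcomplexes, each as well-behaved as the cases already treated. Statement~(3) is immediate from Theorem~\ref{kgeqnvanishing}: here $p=n$, $q=1$, so $pq=n$ and $k>n$ means $k\ge pq$. Thus the content lies in the range $k\le n$, i.e.\ statements~(1) and~(2), and I fix notation for that range. Since $\dim V_-=1$ we have $\mathfrak{p}\cong V_+\otimes V_-\cong V_+\cong\C^n$; write $\omega_\alpha$ for $\omega_{\alpha,n+1}$ and $w_i$ for the negative variable $z_{n+1,i}$, so $\mathcal{P}_k=\mathrm{Pol}(V_+^k)\otimes\C[w_1,\dots,w_k]$, with $K=\SO(n)$ acting only on the first factor and (for $k<n$) $\mathcal{P}_k^{K}=\mathcal{S}_k=\mathcal{R}_k[w_1,\dots,w_k]$. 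By Remark~\ref{d=d_{0,1}} the differential induced on $E_0$ is $d'=\sum_{\alpha,i}A(\omega_\alpha)\otimes z_{\alpha i}w_i=\sum_i w_i\,A(\varphi_1^{(i)})$, where $\varphi_1^{(i)}=\sum_\alpha z_{\alpha i}\omega_\alpha$; equivalently $d'=A(\Phi)$ with $\Phi=\sum_i w_i\varphi_1^{(i)}$. Note that for $k<n$ the quadrics $q_1,\dots,q_n$ of~\eqref{qalphadef} do \emph{not} form a regular sequence --- the ideal they generate has height only $k$ --- which is exactly why Theorem~\ref{kgeqnvanishing} does not reach this range, and forces us to pass to $K$-invariants before analysing the Koszul complex.

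The first and principal step is to compute a basis of the cochains $C^{\ell}=(\wwedge{\ell}\mathfrak{p}^*\otimes\mathcal{P}_k)^{K}$. Since $\C[w_1,\dots,w_k]$ is $\SO(n)$-trivial it factors out, and the first and second fundamental theorems for $\SO(n)$ describe $(\wwedge{\ell}V_+^*\otimes\mathrm{Pol}(V_+^k))^{\SO(n)}$ explicitly: for $k\le n$ every invariant is built from the inner products $(x_i,x_j)$ together with at most one $\epsilon$-tensor contraction, and a careful bookkeeping (using $k\le n$ to rule out extra invariants and Pl\"ucker-type relations) identifies $C^{\ell}$ with the free $\mathcal{S}_k$-module on the ``$\varphi$-type'' cochains $\varphi_I=\bigwedge_{i\in I}\varphi_1^{(i)}$ $(I\subseteq\{1,\dots,k\},\ |I|=\ell)$ together with the ``$\psi$-type'' cochains $\psi_J={\star}\,\varphi_J$ $(|J|=n-\ell)$, where $\star$ is the Hodge star on $\wwedge{\bullet}\mathfrak{p}^*$. (When $k=n$ the one extra invariant $\det Z_+$ is present, but it occurs as $\varphi_n=\varphi_{\{1,\dots,n\}}=\det(Z_+)\,\vol$ and as $\psi_{\{1,\dots,n\}}=\det Z_+$, so it is absorbed into the same two families.) Write $C_+^{\ell}=\bigoplus_{|I|=\ell}\mathcal{S}_k\varphi_I$ and $C_-^{\ell}=\bigoplus_{|J|=n-\ell}\mathcal{S}_k\psi_J$, so $C^{\ell}=C_+^{\ell}\oplus C_-^{\ell}$, with $\varphi_k=\varphi_{\{1,\dots,k\}}\in C_+^{k}$ and $\vol=\psi_{\emptyset}\in C_-^{n}$. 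The delicate point --- the one I expect to cost the most work --- is that $C_+$ and $C_-$ are each stable under the \emph{full} differential $d$ of~\eqref{defnofd}, not merely under its leading term $d'$; this is checked by a direct computation from the formula for $d$, using that $\varphi_I$ and $\psi_J$ involve no $w$-variable, that $\star$ commutes with $\partial/\partial z_{\alpha i}$, the identity $\omega_\alpha\wedge\omega_\alpha=0$, and the standard interior-product/Hodge-star identities. Granting this, $(C,d)=(C_+,d)\oplus(C_-,d)$ as cochain complexes, and it remains to compute the cohomology of each summand.

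Next I would identify the $E_0$ page of each summand. On $C_+$ one has $d'\varphi_I=\Phi\wedge\varphi_I=\sum_{j\notin I}\pm w_j\,\varphi_{I\cup\{j\}}$, so under $\varphi_I\leftrightarrow e_I$ the complex $E_0(C_+)$ is exactly the Koszul complex $K(w_1,\dots,w_k)$ over $\mathcal{S}_k$; the $w_i$ trivially form a regular sequence, so by Corollary~17.5 of~\cite{Eisenbud} (see Section~\ref{defofkoszulsection}) $H^{\ell}(E_0(C_+))=0$ for $\ell\neq k$ (and $C_+^{\ell}=0$ for $\ell>k$), while $H^{k}(E_0(C_+))=\mathcal{S}_k/(w_1,\dots,w_k)\,\varphi_k=\mathcal{R}_k\varphi_k$. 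On $C_-$, the interior-product formula for $\Phi\wedge{\star}(\cdot)$ gives $d'\psi_J=\sum_{j\in J}\pm\, c_j\,\psi_{J\setminus\{j\}}$, where $c_j$ is the cubic of~\eqref{cubic}; in particular $d'\psi_j=c_j\,\vol$. Hence $E_0(C_-)\cong K(c_1,\dots,c_k)$ over $\mathcal{S}_k$, reindexed so that cochain degree $\ell$ corresponds to Koszul degree $\ell-n+k$. One then checks that $c_1,\dots,c_k$ is a regular sequence in $\mathcal{S}_k$: writing $c_j=\sum_i(x_i,x_j)w_i$, the device of Lemma~\ref{easyregular} and Proposition~\ref{Aregularsequence} --- prepend sufficiently many ``off-diagonal'' coordinates and note that the images of the $c_j$ become pairwise-disjoint quadratic monomials --- reduces this to Lemma~\ref{easyregular}. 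Therefore $H^{\ell}(E_0(C_-))=0$ for $\ell\neq n$ and $H^{n}(E_0(C_-))=\mathcal{S}_k/(c_1,\dots,c_k)\,\vol$.

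Finally I would transfer these computations to $C$ itself via Section~\ref{spectralsection}. After the regrading of Section~\ref{constructionofss} the filtration of $C$ is decreasing, bounded above, and exhaustive, and it restricts to one with the same properties on $C_+$ and on $C_-$; so Proposition~\ref{grCzeroimpliesCzero} gives $H^{\ell}(C_+)=0$ for $\ell\neq k$ and $H^{\ell}(C_-)=0$ for $\ell\neq n$, while at the one surviving degree of each summand the associated graded vanishes in the neighbouring degrees, so part~(3) of Proposition~\ref{generalspectral} yields $H^{k}(C_+)\cong\mathcal{R}_k\varphi_k$ and $H^{n}(C_-)\cong\mathcal{S}_k/(c_1,\dots,c_k)\,\vol$. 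Adding the two summands: if $k<n$ the surviving degrees $k$ and $n$ are distinct, giving $H^{\ell}(C)=\mathcal{R}_k\varphi_k$ for $\ell=k$, $\mathcal{S}_k/(c_1,\dots,c_k)\,\vol$ for $\ell=n$, and $0$ otherwise --- statement~(1); if $k=n$ they coincide, giving $H^{n}(C)=\mathcal{R}_k\varphi_k\oplus\mathcal{S}_k/(c_1,\dots,c_k)\,\vol$ and $0$ otherwise --- statement~(2). The principal obstacle is Step~1: beyond producing the explicit basis of invariant cochains, one must verify that the $\varphi$- and $\psi$-families are each $d$-stable, so that the decomposition $C=C_+\oplus C_-$ holds on the nose and not merely on $E_0$; once this is in place, the remainder is the by-now-standard pattern ``$E_0$ is Koszul on a regular sequence $\Rightarrow$ cohomology via the spectral sequence'' used in the large-$k$ theorems.
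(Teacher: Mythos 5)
Your proposal is correct, and its skeleton is exactly the paper's: dispose of statement (3) via Theorem \ref{kgeqnvanishing}, split $C=C_+\oplus C_-$ with $C_+^{\ell}$ free over $\mathcal{S}_k$ on the $\varphi_I$ and $C_-^{\ell}$ free on the $\star\varphi_J$ (Proposition \ref{thecochains}), identify $E_0(C_+)$ with the Koszul complex $K(w_1,\dots,w_k)$ and $E_0(C_-)$, after the shift by $n-k$, with $K(c_1,\dots,c_k)$ (Propositions \ref{donbasisfirstcase} and \ref{donbasis}), prove regularity of the $c_j$ by the prepending device (Lemma \ref{regularsequence}), and transfer back through Propositions \ref{grCzeroimpliesCzero} and \ref{generalspectral}. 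You diverge in two secondary but instructive ways. First, you extract the basis of invariant cochains from the first and second fundamental theorems for $\SO(n)$, whereas the paper gets it from the Kashiwara--Vergne/Howe description of the $\wwedge{\ell}(V_+\otimes\C)$-isotypic part of the harmonics (Lemmas \ref{vanext}--\ref{corext}, Proposition \ref{Psi_Jbasis}); both are legitimate, yours being more elementary at the cost of the Pl\"ucker-relation bookkeeping you acknowledge. Second --- and this is the point worth internalizing --- you single out the $d$-stability of $C_+$ and $C_-$ as the hardest step, to be checked by direct computation. The paper makes it a triviality: it \emph{defines} $C_{\pm}$ as the $\pm1$ eigenspaces of the involution $\iota\otimes\iota$, where $\iota$ is as in Equation \eqref{iotadef}; this commutes with $d$, so the splitting is automatically one of complexes, and since $\star$ anticommutes with $\iota\otimes\iota$ by Equation \eqref{starg}, the $-1$ eigenspace is $\star$ applied to the $+1$ eigenspace. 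Your two families coincide with these eigenspaces (each $\varphi_1^{(i)}$ is $\iota\otimes\iota$-invariant), so your computation would go through, but the eigenspace definition eliminates it and gives the useful identification $C_+^{\ell}=C^{\ell}(\mathfrak{so}(n,1,\C),\OO(n,\C);\mathcal{P}_k)$ for free; your parenthetical handling of the extra invariant at $k=n$ is consistent with how the paper's remark after Theorem \ref{KvanCvan} assembles statement (2).
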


The cohomology groups $H^\ell \big(\mathfrak{so}(n,1) ,\SO(n); \mathcal{P}_k \big)$ are $\mathfrak{sp}(2k, \R)$-modules.  We now describe these modules.  If $k < \frac{n+1}{2}$, then as an $\mathfrak{sp}(2k, \R)$-module $\mathcal{R}_k \varphi_k$ is isomorphic to the space of $\mathrm{MU}(k)$-finite vectors in the holomorphic discrete series representation with parameter $(\frac{n+1}{2}, \cdots,\frac{n+1}{2}) $. If $k < n$, then the cohomology group $H^k\big(\mathfrak{so}(n,1) ,\SO(n); \mathcal{P}_k \big)$ is an irreducible holomorphic representation because it was proved in \cite{KM2} that the class of $\varphi_k$ is a lowest weight vector in $H^k\big(\mathfrak{so}(n,1) ,\SO(n); \mathcal{P}_k \big)$.  On the other hand, the cohomology group \\
$H^n\big(\mathfrak{so}(n,1) ,\SO(n); \mathcal{P}_k \big)$ is never irreducible.  Indeed, if $k < n$ then \\
$H^n\big(\mathfrak{so}(n,1) ,\SO(n); \mathcal{P}_k \big)$ is the direct sum of two nonzero $\mathfrak{sp}(2k,\R)$-modules $H_+^n$ and $H_-^n$ and if $k = n$, then $H^n\big(\mathfrak{so}(n,1) ,\SO(n); \mathcal{P}_k \big)$ is the direct sum of three nonzero $\mathfrak{sp}(2k,\R)$-modules $H_+^n$, $H_-^n$, and $\mathcal{R}_k(V) \varphi_k$.



\section{The relative Lie algebra complex} \label{notation}

We reestablish notation that we use throughout this chapter.  Let $e_1,\ldots, e_{n+1}$ be an orthogonal basis for $V$ such that $(e_i,e_i) =1, 1 \leq i \leq n$ and $(e_{n+1},e_{n+1}) = -1$. We let $x_1,\ldots,x_n, t$ be the corresponding coordinates. 

We have the splitting (orthogonal for the Killing form)
$$\mathfrak{so}(n,1) = \mathfrak{so}(n) \oplus \mathfrak{p}_0$$
and denote the complexification $\mathfrak{p}_0 \otimes \C$ by $\mathfrak{p}$.

We recall that the map $\phi:\bigwedge^2(V) \to \mathfrak{so}(n,1)$ given by
$$\phi(u \wedge v)(w) = (u,w)v - (v,w)u $$
is an isomorphism.  Under this isomorphism the elements $e_i \wedge e_{n+1}, 1 \leq i \leq n$ are a basis for $\mathfrak{p}_0$.  We define $e_{i,n+1} = -e_i \wedge e_{n+1}$ and let $\omega_1,\ldots,\omega_n$ be the dual basis for $\mathfrak{p}_0^*$.  We let $\mathcal{I}_{\ell,n}$ ($\mathcal{I}$ for injections) be the set of all ordered $\ell$-tuples of distinct elements $I= (i_1,i_2,\ldots,i_{\ell})$ from $\{1,2,\ldots,n\}$, that is, the set of all injections from the set $\{1, \ldots, \ell\}$ to $\{1, \ldots, n\}$.  We let $\mathcal{S}_{\ell, n} \subset \mathcal{I}_{\ell,n}$ ($\mathcal{S}$ for stricly increasing) be the subset of strictly increasing $\ell$-tuples.  We define $\omega_I $ for $I \in \mathcal{I}_{\ell,n}$ by
$$\omega_I = \omega_{i_1} \wedge \cdots \wedge \omega_{i_{\ell}}.$$
Then the element $\vol \in (\wwedge{n}\mathfrak{p}_0^*)^K$ is, by Equation \eqref{voldefn} given by
\begin{equation}
\vol = \omega_1 \wedge \cdots \wedge \omega_n.
\end{equation}

Now let $V^k = \displaystyle \bigoplus_{i=1}^k V$.  We will use $\mathbf{v}$ to denote the element $(v_1,v_2,\cdots,v_k) \in V^k$.  We will often identify $V^k$ with the $((n+1) \times k)$-matrices 
$$\begin{pmatrix}
x_{11} & x_{12} & \cdots & x_{1k} \\
\vdots & \vdots & \ddots & \vdots \\
x_{n1} & x_{n2} & \cdots & x_{nk} \\
t_1 & t_2 & \cdots & t_k
\end{pmatrix}$$
over $\R$ using the basis $e_1, \ldots, e_{n+1}$.  Then $\mathbf{v}$ will correspond to the $(n+1) \times k$ matrix $X$ where $v_j$ is the $j^{th}$ column of the matrix.

Let $V_+$ be the span of $e_1, \ldots, e_n$ and $V_-$ be the span of $e_{n+1}$.  Then we have the splitting $V = V_+ \oplus V_-$ and the induced splitting $V^k = V_+^k \oplus V_-^k$.  We define, for $1 \leq i,j \leq k$, the quadratic function $r_{ij} \in \mathrm{Pol}(V_+^k)$ for $\mathbf{v} \in V_+^k$ by
\begin{equation}
r_{ij}(\mathbf{v}) = (v_i,v_j).
\end{equation}


We let $\mathcal{R}_k = \mathcal{R}_k(V_+)$ be the subalgebra of $\mathrm{Pol}(V_+^k)$ generated by the $r_{ij}$ for $1 \leq i,j \leq k$.  We note that 
$$\mathcal{R}_k = \mathrm{Pol}(V_+^k)^{\mathrm{O}(n)}$$
is the algebra of polynomial invariants of the group $\mathrm{O}(n)$ (the ``First Main Theorem'' for the orthogonal group, \cite{Weyl}, page 53).  Since (as a consequence of our assumption \eqref{k<n/2} immediately below) we will have $k < n$, it is a polynomial algebra.  This follows from the ``Second Main Theorem'' for the orthogonal group, \cite{Weyl}, page 75. We will assume that
\begin{equation} \label{k<n/2}
k < n
\end{equation}
for the remainder of this chapter.  

For $K = \SO(n), \OO(n)$ (embedded in $\SO(n,1)$ fixing the last coordinate), or $\OO(n) \times \OO(1)$, any complex $\mathfrak{so}(n, 1) \times K$-module $\mathcal{V}$, and $\rho : \OO(n,1) \to \mathrm{End}(\mathcal{V})$, we define
$$C^{\bullet}\big(\mathfrak{so}(n,1) ,K; \mathcal{V}\big)$$
by $C^i\big(\mathfrak{so}(n,1) ,K; \mathcal{V}\big) = (\wwedge{i}\mathfrak{p}_0^* \otimes \mathcal{V})^{K}$ and $d = \sum A(\omega_\alpha) \otimes \rho(e_{\alpha} \wedge e_{n+1})$ as in Borel Wallach \cite{BW}.  Throughout this chapter, the symbol $C$ will denote the complex $C^{\bullet} \big(\mathfrak{so}(n,1) ,\SO(n); \mathcal{P}_k \big)$.

\subsection{The relative Lie algebra complex with values in the Fock model}

Similar to the above identification, we identify $(V \otimes \C)^k$ with the $((n+1) \times k)$-matrices
$$\begin{pmatrix}
z_{11} & z_{12} & \cdots & z_{1k} \\
\vdots & \vdots & \ddots & \vdots \\
z_{n1} & z_{n2} & \cdots & z_{nk} \\
w_1 & w_2 & \cdots & w_k
\end{pmatrix}$$
over $\C$ using the basis $e_1, \ldots, e_{n+1}$.  Then $\mathbf{v}$ will correspond to the $(n+1) \times k$ matrix $Z$ where $v_j$ is the $j^{th}$ column of the matrix.

The earlier splitting $V = V_+ \oplus V_-$ induces the splitting $(V \otimes \C)^k = (V_+ \otimes \C)^k \oplus (V_- \otimes \C)^k$.  By abuse of notation, we define, for $1 \leq i,j \leq k$, the quadratic function $r_{ij} \in \mathrm{Pol}((V_+ \otimes \C)^k)$ for $\mathbf{v} \in (V_+ \otimes \C)^k$ by
\begin{equation} \label{r_{ij}}
r_{ij}(\mathbf{v}) = (v_i,v_j).
\end{equation}
Here $(\ ,\ )$ denotes the complex bilinear extension of $(\ ,\ )$ to $V \otimes \C$.

We let $\mathcal{R}_k(V_+ \otimes \C) = \mathrm{Pol}((V_+ \otimes \C)^k)^{\OO(n,\C)}$.  We will abuse notation and sometimes use the symbol $\mathcal{R}_k$ in place of $\mathcal{R}_k(V_+ \otimes \C)$.  The meaning of $\mathcal{R}_k$ should be clear from context.  We note that the $r_{ij}, 1 \leq i,j \leq k$, generate $\mathcal{R}_k$.  

In the following we will be concerned with the relative Lie algebra complex where 
$$C^\ell\big(\mathfrak{so}(n,1) ,\SO(n); \mathcal{P}_k \big) \cong \big( \wwedge{\ell}(\mathfrak{p}_0^*) \otimes \mathcal{P}_k \big)^{\SO(n)}$$
and $d = \sum d^{(j)}$ with

\begin{equation} \label{defofd}
d^{(j)} = \sum_{\alpha=1}^n A(\omega_\alpha) \otimes (\frac{\partial^2}{\partial z_{\alpha,j} \partial w_j} - z_{\alpha,j} w_{j}), \quad 1 \leq j \leq k.
\end{equation}

Recall that 
$$C^{\bullet} \big(\mathfrak{so}(n,1) ,\SO(n); \mathcal{P}_k \big) \cong C^{\bullet} \big(\mathfrak{so}(n,1, \C) ,\SO(n, \C); \mathcal{P}_k \big)$$
where $C^{\bullet} \big(\mathfrak{so}(n,1, \C) ,\SO(n, \C); \mathcal{P}_k \big) = \big( \wwedge{\bullet}\mathfrak{p}^* \otimes \mathcal{P}_k \big)^{\SO(n,\C)}$.

Note that there is the tensor product map $\mathcal{P}_a \otimes \mathcal{P}_b \to \mathcal{P}_{a+b}$ given by 
$$(f_1 \otimes f_2)(\mathbf{v}) = f_1(v_1,\cdots,v_a) f_2(v_{a+1},\cdots,v_{a+b}).$$

The tensor product map induces a bigraded product
\begin{equation} \label{outerwedgedef}
C^i\big(\mathfrak{so}(n,1) ,\SO(n); \mathcal{P}_a \big) \otimes C^j\big(\mathfrak{so}(n,1) , \SO(n); \mathcal{P}_b \big) \to C^{i+j}\big(\mathfrak{so}(n,1) ,\SO(n); \mathcal{P}_{a+b} \big)
\end{equation}
which we will call  the outer exterior  product and denote $\wedge$ given by

\begin{equation*}
(\omega_{I} \otimes f_I) \wedge (\omega_J \otimes f_J) = (\omega_I \wedge \omega_J)  \otimes (f_I \otimes f_J) = (\omega_I \wedge \omega_J) \otimes f_I f_J.
\end{equation*}

We also have, for $f \in \mathcal{P}_k$, the usual multiplication of functions
$$f(\omega_I \otimes f_I) = \omega_I \otimes f f_I.$$

A key point in the computation of the $k$-coboundaries is a product rule for $d$ relative to the outer exterior product. To state this suppose $\psi$ is an outer exterior product 
$$\psi(\bfv) = \psi_1(v_1) \wedge \psi_2(v_2) \wedge \cdots \wedge \psi_k(v_k)$$
where $\mathrm{deg} (\psi_j) =c_j$.  Then we have
\begin{equation} \label{productrule}
 d \psi = \sum_{i=1}^k (-1)^{\sum_{j=1}^{i -1}c_j} \psi_1 \wedge \cdots \wedge d^{(i)} \psi_i \wedge \cdots \wedge \psi_k.
\end{equation}

We define the cocycle $\varphi_1 \in  C^1\big(\mathfrak{so}(n,1), \SO(n); \mathcal{P}_1 \big)$ by 
$$\varphi_1 = \sum_{\alpha=1}^n \omega_\alpha \otimes z_\alpha.$$
We then define $\varphi_k \in C^k(\mathfrak{so}(n,1) ,\SO(n); \mathcal{P}_k )$ by
\begin{equation} \label{vaprhikdef}
\varphi_k = \varphi_1^{(1)} \wedge \cdots  \wedge \varphi_1^{(k)}.
\end{equation}
Here a superscript $(i)$ on a term in a $k$-fold wedge as above indicates that the term belongs to the $i$-th tensor factor in $\mathcal{P}_k$.  Since $\varphi_1$ is closed and $d$ satisfies \eqref{productrule}, it follows that $\varphi_k$ is also closed.  The cocycle $\varphi_k$ and its analogues for $\SO(p,q)$ and $\mathrm{SU}(p,q)$ played a key role in the work of Kudla and Millson.

\section{Some decomposition results}

For $1 \leq i,j \leq k$, we define the following second order partial differential operator acting on $\mathrm{Pol}((V_+ \otimes \C)^k)$
$$\Delta_{ij} = \sum_{\alpha=1}^n  \frac{\partial^2}{\partial z_{\alpha,i} \partial z_{\alpha,j}}.$$

We define $\mathcal{H}((V_+ \otimes \C)^k)$ to be the subspace of harmonic (annihilated by all $\Delta_{ij}$) polynomials.  Then we have the classical result, see \cite{KV}, Lemma 5.3, (note that we have reversed their $n$ and $k$)
\begin{thm}\label{generalcase}
\begin{enumerate}
\item The map 
$$p(r_{11},r_{12}, \ldots, r_{kk}) \otimes h(z_{11},z_{12},\ldots,z_{nk})\to p(r_{11},r_{12}, \ldots, r_{kk})h(z_{11},z_{12},\ldots,z_{nk})$$
induces a surjection
$$\mathcal{R}_k(V_+ \otimes \C) \otimes \mathcal{H}((V_+ \otimes \C)^k) \to \mathrm{Pol}((V_+ \otimes \C)^k).$$
\item In case $2k < n$ the map is an isomorphism.
\end{enumerate}
\end{thm}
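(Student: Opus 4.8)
The plan is to deduce both statements from the classical theory of harmonic decompositions for the orthogonal group acting on several copies of $V_+\otimes\C$, i.e. the ``separation of variables'' theorem of Kostant--Rallis / Howe. The reference \cite{KV} Lemma 5.3 is precisely this statement, so at the level of citing it the argument is short; but since the excerpt states the theorem in a concrete polynomial-coordinate form I would give the essential structural reasons. First I would recall the setup: $\mathrm{O}(n,\C)$ acts on $(V_+\otimes\C)^k\cong\C^{n}\otimes\C^k$, and the algebra $\mathrm{Pol}((V_+\otimes\C)^k)$ carries the commuting action of $\mathrm{O}(n,\C)$ and of the Lie algebra $\mathfrak{sp}(2k,\C)$ (an instance of Howe duality for the dual pair $(\mathrm{O}(n),\mathfrak{sp}(2k))$). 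The operators $\Delta_{ij}$ span the ``lowering'' part $\mathfrak{n}^-\subset\mathfrak{sp}(2k,\C)$, the multiplication operators by $r_{ij}$ span the ``raising'' part $\mathfrak{n}^+$, and $\mathcal{H}((V_+\otimes\C)^k)=\bigcap_{i,j}\ker\Delta_{ij}$ is exactly the space of $\mathfrak{n}^-$-highest-weight vectors.

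For (1), the surjectivity of $\mathcal{R}_k\otimes\mathcal{H}\to\mathrm{Pol}$: I would argue that $\mathrm{Pol}((V_+\otimes\C)^k)$ is generated as a module over $\mathcal{R}_k=\C[r_{ij}]$ by the harmonic polynomials. The cleanest route is an induction on total polynomial degree. Given $f$ of degree $m$, one uses the fact (a Fischer-type inner product / degree-counting argument, or directly Corollary 17.5-style ideal theory) that the ideal generated by the $r_{ij}$ has a complement in each graded piece consisting of harmonics: concretely, the map $\bigoplus_{i\le j} r_{ij}\cdot\mathrm{Pol}((V_+\otimes\C)^k) \;\oplus\; \mathcal{H}\to\mathrm{Pol}$ is onto in each degree because the ``Fischer'' pairing $\langle p,q\rangle = \big(p(\partial)\bar q\big)(0)$ is nondegenerate and $\mathcal{H}$ is precisely the orthogonal complement of $\sum_{i\le j} r_{ij}\cdot\mathrm{Pol}$ (since $\Delta_{ij}$ is the adjoint of multiplication by $r_{ij}$ under this pairing). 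Iterating the decomposition $f = h + \sum r_{ij} g_{ij}$ with $\deg g_{ij}<\deg f$ terminates, proving surjectivity. This is exactly \cite{KV} Lemma 5.3(1), which I would cite, supplying this sketch for the reader.

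For (2), injectivity when $2k<n$: here I would invoke the Second Main Theorem for the orthogonal group — in the stable range $2k<n$ (equivalently $k<n/2$, the hypothesis \eqref{k<n/2}) there are no relations among the $r_{ij}$, so $\mathcal{R}_k=\C[r_{ij}]$ is a genuine polynomial ring on $k(k+1)/2$ variables, and moreover the Howe-duality decomposition $\mathrm{Pol}((V_+\otimes\C)^k)\cong\bigoplus_\sigma \sigma\otimes L(\sigma)$ (sum over the relevant $\mathrm{O}(n)$-types $\sigma$, with $L(\sigma)$ an irreducible lowest-weight $\mathfrak{sp}(2k)$-module) is multiplicity-free and each $L(\sigma)$ is a free $U(\mathfrak{n}^+)=\C[r_{ij}]$-module on its highest-weight line. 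Feeding this back shows $\mathcal{R}_k\otimes\mathcal{H}\to\mathrm{Pol}$ is not merely onto but bijective. The main obstacle — and the only genuinely nontrivial input — is this injectivity/freeness statement: it is where the numerical hypothesis $2k<n$ is essential (outside the stable range the $r_{ij}$ satisfy Pfaffian-type relations and $U(\mathfrak{n}^+)$ no longer acts freely), and it is exactly the content of \cite{KV} Lemma 5.3(2) together with \cite{Weyl} page 75; I would present it by citation rather than reproving the Second Main Theorem.
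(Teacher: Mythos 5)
Your proposal is correct and takes essentially the same route as the paper: the paper gives no proof at all, simply citing \cite{KV}, Lemma 5.3, which is exactly the reference you rest on. The extra detail you supply is accurate — the Fischer-pairing argument (harmonics as the orthogonal complement of the ideal generated by the $r_{ij}$ in each graded piece, plus induction on degree) is the standard proof of surjectivity, and the stable-range freeness of $\mathrm{Pol}((V_+\otimes\C)^k)$ over $\C[r_{ij}]$ is indeed the nontrivial content of \cite{KV} needed for injectivity when $2k<n$.
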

\begin{rmk}
We will denote this surjection by writing 
$$\mathrm{Pol}((V_+ \otimes \C)^k) = \mathcal{R}_k(V_+ \otimes \C) \cdot \mathcal{H}((V_+ \otimes \C)^k).$$

\end{rmk}

We will need the following three decomposition results.  First, recall $V_+$ is the span of $e_1,\ldots,e_n$.  Then we have
\begin{lem}\label{Minsum}
$$\mathcal{P}_k = \mathrm{Pol}((V_+ \otimes \C)^k) \otimes \C[w_1,\ldots,w_k]$$
\end{lem}
and

\begin{lem}\label{posharm}
$$\mathrm{Pol}((V_+ \otimes \C)^k) = \C[r_{11},r_{12}, \ldots, r_{kk}] \cdot  \mathcal{H}((V_+ \otimes \C)^k).$$
\end{lem}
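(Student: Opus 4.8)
The plan is to read this off directly from Theorem~\ref{generalcase}(1), so the argument is a short deduction rather than a genuine computation. First I would recall that the subalgebra $\C[r_{11},r_{12},\ldots,r_{kk}]$ of $\mathrm{Pol}((V_+\otimes\C)^k)$ generated by the quadratic invariants $r_{ij}$, $1\le i,j\le k$, is exactly $\mathcal{R}_k(V_+\otimes\C)$: this is the content of the First Main Theorem for the orthogonal group already cited from Weyl, which says that $\mathrm{Pol}((V_+\otimes\C)^k)^{\OO(n,\C)}$ is generated as a $\C$-algebra by the $r_{ij}$. Thus in the notation of the statement, $\C[r_{11},\ldots,r_{kk}]$ and $\mathcal{R}_k(V_+\otimes\C)$ denote the same subalgebra.

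Next I would invoke Theorem~\ref{generalcase}(1): the multiplication map
\[
\mathcal{R}_k(V_+\otimes\C)\otimes\mathcal{H}((V_+\otimes\C)^k)\longrightarrow\mathrm{Pol}((V_+\otimes\C)^k),\qquad p\otimes h\mapsto ph,
\]
is surjective. No restriction on $k$ is needed for surjectivity; only part (2) of that theorem, which I do not use, requires $2k<n$. Composing this surjection with the identification $\mathcal{R}_k(V_+\otimes\C)=\C[r_{11},\ldots,r_{kk}]$ from the previous step shows that every $f\in\mathrm{Pol}((V_+\otimes\C)^k)$ can be written as a finite sum $\sum_j p_j(r_{11},\ldots,r_{kk})\,h_j$ with $h_j\in\mathcal{H}((V_+\otimes\C)^k)$, which is precisely the claim of the lemma written in the shorthand ``$\cdot$'' introduced in the Remark following Theorem~\ref{generalcase}.

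Because this is an immediate corollary, there is no real obstacle; the only points warranting a moment's care are (i) appealing solely to the surjectivity in Theorem~\ref{generalcase}(1), which is valid for all $k$, rather than to the isomorphism statement of part (2); and (ii) observing that the relations among the $r_{ij}$ supplied by the Second Main Theorem are irrelevant here, since we are asserting that products $p(r_{ij})\,h$ span $\mathrm{Pol}((V_+\otimes\C)^k)$ and not that the decomposition is unique.
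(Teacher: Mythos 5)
Your deduction is correct and is exactly what the paper intends: Lemma \ref{posharm} is stated without proof as an immediate restatement of the surjectivity in Theorem \ref{generalcase}(1), using that $\C[r_{11},\ldots,r_{kk}]=\mathcal{R}_k(V_+\otimes\C)$ by the First Main Theorem and the ``$\cdot$'' shorthand of the preceding Remark. Your care in citing only part (1) (valid for all $k$) and not the isomorphism of part (2) is exactly the right point to flag.
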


We make the following definition

\begin{defn}
$$\mathcal{S}_k = \mathcal{P}_k^{\OO(n, \C)} = \C[r_{11},r_{12}, \ldots, r_{kk},w_1, \ldots,w_k].$$
\end{defn}
Then we have

\begin{lem}\label{combination}
$$\mathcal{P}_k = \mathcal{S}_k \cdot \mathcal{H}((V_+ \otimes \C)^k).$$
\end{lem}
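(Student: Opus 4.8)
The plan is to combine Lemma \ref{Minsum} and Lemma \ref{posharm} in a straightforward way, keeping careful track of what the symbol $\mathcal{H}$ denotes on each side. First I would recall from Lemma \ref{Minsum} that $\mathcal{P}_k = \mathrm{Pol}((V_+ \otimes \C)^k) \otimes \C[w_1, \ldots, w_k]$, so every element of $\mathcal{P}_k$ is a $\C[w_1, \ldots, w_k]$-linear combination of polynomials in the positive variables $z_{\alpha, i}$. Then I would substitute Lemma \ref{posharm}, which says $\mathrm{Pol}((V_+ \otimes \C)^k) = \C[r_{11}, \ldots, r_{kk}] \cdot \mathcal{H}((V_+ \otimes \C)^k)$, to obtain
\begin{equation*}
\mathcal{P}_k = \C[r_{11}, \ldots, r_{kk}] \cdot \mathcal{H}((V_+ \otimes \C)^k) \cdot \C[w_1, \ldots, w_k].
\end{equation*}
Since $\C[r_{11}, \ldots, r_{kk}]$ and $\C[w_1, \ldots, w_k]$ involve disjoint sets of variables, their product inside $\mathcal{P}_k$ is exactly $\mathcal{S}_k = \C[r_{11}, \ldots, r_{kk}, w_1, \ldots, w_k]$ by the displayed definition of $\mathcal{S}_k$. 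Rearranging the factors (all these are polynomials, so multiplication is commutative) gives $\mathcal{P}_k = \mathcal{S}_k \cdot \mathcal{H}((V_+ \otimes \C)^k)$, which is the claim.

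The one genuine subtlety — and the step I would present most carefully — is that $\mathcal{H}((V_+ \otimes \C)^k)$ on the right-hand side of Lemma \ref{posharm} is the space of polynomials in the $z_{\alpha, i}$ only, annihilated by all the operators $\Delta_{ij} = \sum_\alpha \partial^2/(\partial z_{\alpha, i} \partial z_{\alpha, j})$, and that this is precisely the same space of harmonics appearing in the statement of Lemma \ref{combination}; no $w$-variables enter the harmonicity condition. So the factor $\C[w_1, \ldots, w_k]$ introduced by Lemma \ref{Minsum} gets entirely absorbed into $\mathcal{S}_k$ together with $\C[r_{11}, \ldots, r_{kk}]$, and the harmonic factor is untouched. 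It is worth remarking that, unlike part (2) of Theorem \ref{generalcase}, this is only a statement about spanning (a surjection of the multiplication map), not an assertion that the decomposition is a tensor-product isomorphism; that weaker form is all that is needed, and it follows from the analogous spanning statements in Lemmas \ref{Minsum} and \ref{posharm} with no hypothesis relating $k$ and $n$ beyond what those lemmas already assume.

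There is essentially no hard part here: the result is a formal consequence of the two preceding lemmas once the variables are bookkept correctly, so the main (and only) thing to get right is the observation that the $r$-variables and the $w$-variables together generate exactly $\mathcal{S}_k$ while leaving the harmonic factor alone.
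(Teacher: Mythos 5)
Your proof is correct and follows exactly the route the paper intends: Lemma \ref{combination} is stated there without a written proof precisely because it is the immediate combination of Lemma \ref{Minsum}, Lemma \ref{posharm}, and the definition of $\mathcal{S}_k$ that you spell out. Your added care about the harmonicity condition involving only the $z$-variables, and about the statement being one of spanning rather than a tensor-product isomorphism, is accurate and consistent with the paper's remark following Theorem \ref{generalcase}.
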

It will be very  useful to us  that as $\SO(n)$-modules we have

\begin{equation}
\mathfrak{p} \cong \mathfrak{p}^* \cong V_+ \otimes \C
\end{equation}
and hence

\begin{equation}\label{gothicp}
\wwedge{i}(\mathfrak{p}^*) \cong \wwedge{i}( V_+ \otimes \C).
\end{equation}

\section{The occurrence of the $\OO(n,\C)$-module $\wwedge{\ell}(V_+ \otimes \C)$ in \\
 $\mathcal{H} \big( (V_+ \otimes \C)^k \big)$} \label{realrep}

In this section we compute the $\wwedge{\ell}(V_+ \otimes \C)$ isotypic subspaces for $\OO(n, \C)$ acting on $\mathcal{H} \big( (V_+ \otimes \C)^k \big)$ where we identify $V_+ \otimes \C$ with $\C^n$ and hence $\OO(V_+ \otimes \C)$ with $\OO(n, \C)$ using the basis $e_1, \ldots, e_n$.

 It is standard to parametrize the irreducible representations of $\OO(n, \C)$  by Young diagrams such that the sum of the lengths of the first two columns is less than or equal to $n$, see \cite{Weyl}, Chapter 7, \S 7.  In \cite{Howe}, Howe defines the depth of an irreducible representation to be the number of rows in the associated diagram.

\begin{lem} \label{vanext}
$$\Hom_{\OO(n, \C)} \big( \wwedge{\ell}(V_+ \otimes \C), \mathcal{H} \big( (V_+ \otimes \C)^k \big) \big) \neq 0 \text{ if and only } \ell \leq k.$$
\end{lem}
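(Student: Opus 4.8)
The plan is to identify, for each $\ell$, the $\OO(n,\C)$-type $\wwedge{\ell}(V_+\otimes\C)$ in terms of Young diagrams, and then detect its occurrence in $\mathcal{H}\big((V_+\otimes\C)^k\big)$ via classical $(\OO(n,\C),\mathfrak{sp}(2k,\C))$ Howe duality. The representation $\wwedge{\ell}(\C^n)$ of $\OO(n,\C)$ is (for $\ell \le n/2$, which holds here since $\ell \le k < n$ — and for $\ell$ near $n$ one uses the determinant twist, but our range is $\ell\le k<n$) the irreducible associated to the single-column Young diagram with $\ell$ boxes, i.e.\ the fundamental representation of depth $\ell$. So the statement to prove is: the depth-$\ell$ fundamental representation occurs in $\mathcal{H}\big((V_+\otimes\C)^k\big)$ if and only if $\ell\le k$.

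First I would recall the Howe-duality decomposition of the full polynomial algebra $\mathrm{Pol}\big((V_+\otimes\C)^k\big)$, or equivalently of the harmonics $\mathcal{H}\big((V_+\otimes\C)^k\big)$, under the dual pair $\OO(n,\C)\times\mathfrak{sp}(2k,\C)$: the harmonic polynomials are a sum, over $\OO(n,\C)$-types $\sigma$ that can occur in $k$ variables, of $\sigma$ tensored with an irreducible lowest-weight $\mathfrak{sp}(2k,\C)$-module. The combinatorial content — which $\OO(n,\C)$-types occur — is classical: an irreducible $\OO(n,\C)$-representation with Young diagram $\lambda$ occurs in the harmonics on $k$ copies of $\C^n$ precisely when $\lambda$ has at most $k$ rows (again subject to the stable-range hypothesis on the columns, which is automatic here). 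Applying this with $\lambda$ the single column of length $\ell$: it has exactly $\ell$ rows, so it occurs if and only if $\ell\le k$. This gives both directions at once.

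For the ``if'' direction one can also argue directly and self-containedly, which may be cleaner to include: given $\ell\le k$, I would exhibit an explicit nonzero $\OO(n,\C)$-equivariant map $\wwedge{\ell}(V_+\otimes\C)\to\mathcal{H}\big((V_+\otimes\C)^k\big)$. The natural candidate sends $e_{i_1}\wedge\cdots\wedge e_{i_\ell}$ to the $\ell\times\ell$ minor $\det(z_{i_a,b})_{1\le a,b\le\ell}$ built from the first $\ell$ column-variables; such a determinant is $\OO(n,\C)$-equivariant of the right type (it transforms in $\wwedge{\ell}$ of the defining representation), is nonzero since $\ell\le k$ means there are enough columns, and is harmonic because each $\Delta_{ij}$ applied to a determinant of distinct linear forms in disjoint variable-blocks vanishes — the key point being $\Delta_{ij}$ is a sum of $\partial_{z_{\alpha,i}}\partial_{z_{\alpha,j}}$ and each monomial of the determinant is multilinear and squarefree in each pair of columns. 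For the ``only if'' direction, if $\ell>k$ there is simply no room: any harmonic polynomial on $(V_+\otimes\C)^k$ carrying the $\wwedge{\ell}$-type would, under Howe duality, force a nonzero lowest-weight $\mathfrak{sp}(2k,\C)$-module attached to a depth-$\ell$ diagram, which is impossible when $\ell$ exceeds $k$; alternatively one cites directly that the $\OO(n,\C)$-types occurring in $k$ variables have at most $k$ rows.

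The main obstacle I anticipate is bookkeeping around the precise statement of Howe duality and the stable-range hypotheses — specifically making sure the parametrization of $\wwedge{\ell}(V_+\otimes\C)$ by a single-column diagram is valid in the needed range and citing the exact form of the $\OO(n,\C)$-type occurrence theorem (e.g.\ from Howe's work or Kashiwara--Vergne \cite{KV}). If one prefers to keep the argument elementary, the real work is in the ``only if'' direction without invoking Howe duality: there one would need a direct argument that no harmonic polynomial in $k$ column-variables can transform under $\wwedge{\ell}$ with $\ell>k$, e.g.\ by a highest-weight / weight-counting argument showing the relevant $\OO(n,\C)$-highest weight $(1^\ell)$ cannot be a weight of any harmonic polynomial of degree $\ell$ in only $k$ columns. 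I expect invoking the classical duality to be the efficient route and would write it that way, with the explicit minors used to make the ``if'' direction concrete.
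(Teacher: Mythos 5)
Your proposal is correct and follows essentially the same route as the paper: the paper identifies $\wwedge{\ell}(V_+\otimes\C)$ with the single-column diagram of depth $\ell$ and cites Howe's Proposition 3.6.3 (that a depth-$\ell$ type occurs in $\mathcal{H}((V_+\otimes\C)^k)$ iff $\ell\leq k$), noting the Kashiwara--Vergne alternative, and then in the following section constructs exactly your explicit minors $f_{I,J}$ with the same squarefree-in-each-row harmonicity argument to give a direct proof of the ``if'' direction.
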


\begin{proof}
We will use Proposition 3.6.3 in \cite{Howe}.  $\wwedge{\ell}(V_+ \otimes \C)$ corresponds to the diagram $D$ which is a single column of length $\ell$.  Hence, $\wwedge{\ell}(V_+ \otimes \C)$ has depth $\ell$.  But, Proposition 3.6.3 states that a representation of depth $\ell$ occurs in $\mathcal{H} \big( (V_+ \otimes \C)^k \big)$ if and only if $\ell \leq k$.

\end{proof}

\begin{rmk}
The lemma also follows from Proposition 6.6 ($n$ odd), and  Proposition 6.11 ($n$ even) of Kashiwara-Vergne, \cite{KV}.
\end{rmk}

\begin{lem} \label{intertwine}
\hfill
\begin{enumerate}
\item If $U_1$ is an irreducible representation of $\OO(n, \C)$, then
$$U_2 = \Hom_{\OO(n, \C)} \big( U_1, \mathcal{H} \big( (V_+ \otimes \C)^k \big) \big)$$
is an irreducible representation of $\GL(k, \C)$.
\item Hence, given $U_1$ as above, there exists a unique representation $U_2$ of \\
$\GL(k, \C)$ such that we have an $\OO(n, \C) \times \GL(k, \C)$ equivariant embedding
$$\Psi : U_1 \otimes U_2 \to \mathcal{H} \big( (V_+ \otimes \C)^k \big).$$
\end{enumerate}
\end{lem}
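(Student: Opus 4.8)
The plan is to deduce both statements from the classical Howe duality for the reductive dual pair $(\OO(n,\C),\GL(k,\C))$ acting on $\mathcal{H}\big((V_+\otimes\C)^k\big)$. First I would make precise how $\GL(k,\C)$ enters: it acts on $(V_+\otimes\C)^k=(V_+\otimes\C)\otimes\C^k$ through its standard action on the $\C^k$ factor, hence on $\mathrm{Pol}\big((V_+\otimes\C)^k\big)$, and this action commutes with the $\OO(n,\C)$-action on $V_+\otimes\C$. The span of the operators $\Delta_{ij}$ is a $\GL(k,\C)$-submodule of the constant-coefficient differential operators — it is the image of $\mathrm{Sym}^2$ of the standard representation of $\GL(k,\C)$ under contraction with the $\OO(n,\C)$-invariant form, so $g\,\Delta_{ij}\,g^{-1}$ is again a linear combination of the $\Delta_{ab}$ for every $g\in\GL(k,\C)$. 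Consequently $\mathcal{H}\big((V_+\otimes\C)^k\big)$, being the common kernel of the $\Delta_{ij}$, is stable under $\GL(k,\C)$, and is therefore an $\OO(n,\C)\times\GL(k,\C)$-module.

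The substantive input, which I would quote rather than reprove, is the multiplicity-free decomposition
$$\mathcal{H}\big((V_+\otimes\C)^k\big)\;\cong\;\bigoplus_{\sigma}\sigma\boxtimes\tau(\sigma),$$
where $\sigma$ runs over the equivalence classes of irreducible $\OO(n,\C)$-representations occurring in $\mathcal{H}\big((V_+\otimes\C)^k\big)$ — by Lemma \ref{vanext} exactly those of depth $\le k$ — each $\tau(\sigma)$ is an irreducible rational representation of $\GL(k,\C)$, and $\sigma\mapsto\tau(\sigma)$ is injective. This is the $(\OO_n,\GL_k)$ instance of Howe duality; it follows, for example, from the explicit description in Kashiwara--Vergne \cite{KV} (Propositions 6.6 and 6.11, as noted in the remark following Lemma \ref{vanext}), or from Howe's general duality theorem for type I dual pairs together with \cite{Howe}, Proposition 3.6.3. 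I expect this decomposition, with its multiplicity-one assertion and the irreducibility of the $\tau(\sigma)$, to be the main obstacle; everything else is formal bookkeeping.

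Granting the decomposition, statement (1) is immediate: for $U_1$ occurring in $\mathcal{H}\big((V_+\otimes\C)^k\big)$ (necessarily of depth $\le k$ by Lemma \ref{vanext}), Schur's lemma applied to the $\OO(n,\C)$-side gives a $\GL(k,\C)$-equivariant isomorphism $U_2=\Hom_{\OO(n,\C)}\big(U_1,\mathcal{H}((V_+\otimes\C)^k)\big)\cong\tau(U_1)$, which is irreducible; if $U_1$ does not occur the $\Hom$-space is $0$. For statement (2), take $U_2=\tau(U_1)$ and let $\Psi$ be the inclusion of the $U_1$-isotypic subspace $U_1\otimes U_2\hookrightarrow\mathcal{H}\big((V_+\otimes\C)^k\big)$, which is $\OO(n,\C)\times\GL(k,\C)$-equivariant by construction. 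Uniqueness of $U_2$ up to isomorphism follows by applying $\Hom_{\OO(n,\C)}(U_1,-)$ to any equivariant embedding $U_1\otimes U_2'\hookrightarrow\mathcal{H}\big((V_+\otimes\C)^k\big)$ and using Schur's lemma together with multiplicity one to recover $U_2'\cong U_2\cong\tau(U_1)$.
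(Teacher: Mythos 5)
Your proposal is correct and follows essentially the same route as the paper: the paper simply cites Kashiwara--Vergne (Proposition 5.7 of \cite{KV}) for statement (1) and deduces (2) from (1), which is exactly the Howe-duality input you quote (in its equivalent multiplicity-free isotypic-decomposition form) before carrying out the same formal Schur's-lemma bookkeeping. The only difference is presentational — you spell out the $\GL(k,\C)$-stability of the harmonics and the uniqueness argument that the paper leaves implicit.
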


\begin{proof}
Statement (1) follows from Proposition 5.7 of Kashiwara-Vergne \cite{KV}.\\
Statement (2) follows from statement (1).

\end{proof}

\begin{lem} \label{corext}
In the set-up of the previous lemma, if $U_1$ is $\wwedge{\ell}(V_+ \otimes \C)$, then $U_2$ is $\wwedge{\ell}(\C^k)$ and hence the $\wwedge{\ell} (V_+ \otimes \C)$ isotypic subspace for $\OO(n, \C)$ in $\mathcal{H} \big( (V_+ \otimes \C)^k \big)$ is $\wwedge{\ell}(V_+ \otimes \C) \otimes \wwedge{\ell}(\C^k)$ as an $\OO(n, \C) \times \GL(k, \C)$-module.
\end{lem}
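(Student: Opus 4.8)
The plan is to produce an explicit nonzero $\OO(n,\C)\times\GL(k,\C)$-equivariant map from $\wwedge{\ell}(V_+\otimes\C)\otimes\wwedge{\ell}(\C^k)$ into $\mathcal{H}\big((V_+\otimes\C)^k\big)$ and then to read off $U_2$ from Lemma \ref{intertwine} by Schur's lemma. We may assume $\ell\le k$, since otherwise $\wwedge{\ell}(V_+\otimes\C)$ does not occur (Lemma \ref{vanext}) and $\wwedge{\ell}(\C^k)=0$, so both sides are zero.

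Identify $(V_+\otimes\C)^k$ with the $n\times k$ matrices $Z=(z_{\alpha i})$ as before, and for $\ell$-subsets $S=\{\alpha_1<\cdots<\alpha_\ell\}\subset\{1,\dots,n\}$ and $T=\{i_1<\cdots<i_\ell\}\subset\{1,\dots,k\}$ let $m_{S,T}=\det(z_{\alpha_a,i_b})_{1\le a,b\le\ell}$ be the corresponding $\ell\times\ell$ minor. First I would verify by a direct computation that each $m_{S,T}$ is harmonic: $\partial_{z_{\alpha j}}m_{S,T}$ is, up to sign, a minor of the matrix obtained by deleting row $\alpha$ (if $\alpha\in S$) and column $j$ (if $j\in T$), so it no longer involves any entry of row $\alpha$, hence $\partial_{z_{\alpha i}}$ annihilates it; therefore $\Delta_{ij}m_{S,T}=0$ for all $i,j$. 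Thus the span $M$ of the $m_{S,T}$ lies in $\mathcal{H}\big((V_+\otimes\C)^k\big)$.

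Next I would identify $M$ as an $\OO(n,\C)\times\GL(k,\C)$-module. The map
\[
(e_{\alpha_1}\wedge\cdots\wedge e_{\alpha_\ell})\otimes(f_{i_1}\wedge\cdots\wedge f_{i_\ell})\;\longmapsto\;m_{S,T},
\]
with $f_1,\dots,f_k$ the standard basis of $\C^k$, is surjective onto $M$, and by the Cauchy--Binet formula the substitutions $Z\mapsto gZ$ for $g\in\GL(n,\C)$ and $Z\mapsto Zh$ for $h\in\GL(k,\C)$ carry $m_{S,T}$ into exactly the linear combinations prescribed by the actions on $\wwedge{\ell}(\C^n)$ and on $\wwedge{\ell}(\C^k)$ --- here using the Fock-model convention under which the linear coordinates $z_{\alpha i}$ span the standard $\GL(k,\C)$-module $\C^k$. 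Restricting to $\OO(n,\C)$ and using $V_+\otimes\C\cong\C^n$ as $\OO(n,\C)$-modules, this gives a nonzero $\OO(n,\C)\times\GL(k,\C)$-equivariant map $\wwedge{\ell}(V_+\otimes\C)\otimes\wwedge{\ell}(\C^k)\to\mathcal{H}\big((V_+\otimes\C)^k\big)$.

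To conclude, its image lies in the $\wwedge{\ell}(V_+\otimes\C)$-isotypic subspace of $\mathcal{H}\big((V_+\otimes\C)^k\big)$, which by Lemma \ref{intertwine} is $\wwedge{\ell}(V_+\otimes\C)\otimes U_2$ with $U_2$ an irreducible $\GL(k,\C)$-module. Since $\wwedge{\ell}(V_+\otimes\C)$ is $\OO(n,\C)$-irreducible, Schur's lemma shows the map has the form $\mathrm{id}\otimes\phi$ for a $\GL(k,\C)$-map $\phi\colon\wwedge{\ell}(\C^k)\to U_2$, and $\phi\ne0$; as both source and target are irreducible, $\phi$ is an isomorphism, so $U_2\cong\wwedge{\ell}(\C^k)$, and the isotypic statement is then Lemma \ref{intertwine}(2) applied with this $U_2$. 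The one genuinely delicate point is the bookkeeping of the $\GL(k,\C)$-convention so that the answer comes out as $\wwedge{\ell}(\C^k)$ rather than its dual; harmonicity of the minors and the Cauchy--Binet computation are routine, and Lemma \ref{intertwine} does all of the representation-theoretic work.
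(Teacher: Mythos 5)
Your proof is correct. Note, however, that the paper's printed proof of Lemma \ref{corext} is a one-line citation to Howe (Proposition 3.6.3) or Kashiwara--Vergne (Theorems 6.9 and 6.13), i.e., it imports the full classification of the $\OO(n,\C)\times\GL(k,\C)$ duality correspondence on harmonic polynomials. What you have written is instead the direct argument: realize the candidate module by the $\ell\times\ell$ minors, check harmonicity, check equivariance via Cauchy--Binet, and then combine Lemma \ref{intertwine}(1) (irreducibility of the multiplicity space $U_2$) with Schur's lemma to force $U_2\cong\wwedge{\ell}(\C^k)$. This is exactly the route the paper itself follows in the section immediately after the lemma, where it constructs the intertwiner $\Psi(e_I\otimes\epsilon_J)=f_{I,J}$ and remarks that this yields a direct proof of Lemma \ref{corext} and of the existence half of Lemma \ref{vanext}; the one step the paper leaves implicit there --- deducing $U_2\cong\wwedge{\ell}(\C^k)$ from the existence of a nonzero equivariant map out of $\wwedge{\ell}(V_+\otimes\C)\otimes\wwedge{\ell}(\C^k)$ rather than from the cited classification --- is precisely your Schur's-lemma paragraph, so your write-up is, if anything, the more self-contained of the two (it still relies on Kashiwara--Vergne Proposition 5.7 through Lemma \ref{intertwine}, but not on the explicit Howe/KV tables). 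Your caveat about the $\GL(k,\C)$ convention (standard module versus its dual) is the right thing to flag; the paper fixes the same convention in its equivariance formula \eqref{equivariant}, and nothing downstream depends on the distinction, since only the basis $\{\Psi_J\}$ indexed by $\mathcal{S}_{\ell,k}$ and its cardinality are used later.
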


\begin{proof}
This is an immediate consequence of Howe \cite{Howe} Proposition 3.6.3 or \\
Kashiwara-Vergne \cite{KV} Theorem 6.9 ($n$ odd) and Theorem 6.13 ($n$ even).

\end{proof}

In the next section we construct an explicit $\OO(n, \C) \times \GL(k,\C)$-intertwiner
$$\Psi: \wwedge{\ell}(V_+ \otimes \C) \otimes \wwedge{\ell} (\C^k) \to \mathcal{H} \big( (V_+ \otimes \C)^k \big).$$  Thus we will give a direct proof of Lemma \ref{corext} and  the ``if'' part of  Lemma \ref{vanext}. 

\section{The intertwiner $\Psi$}
In what follows we will identify the space $\mathrm{Hom}(\C^k, V_+ \otimes \C)$ with the space of $n$ by $k$ matrices using the basis $e_1,\ldots, e_n$ for $V_+$.  That is,
$$\mathrm{Hom}(\C^k, V_+ \otimes \C) \cong (\C^k)^* \otimes (V_+ \otimes \C) \cong (V_+ \otimes \C)^k \cong M_{n,k}(\C).$$
Let $\ell \leq k$.  Let $J = (j_1,\ldots, j_\ell)$ be in $\mathcal{S}_{\ell,k}$ and $I =(i_1,i_2, \ldots,i_{\ell})$ be in $\mathcal{S}_{\ell,n}$, that is, strictly increasing $\ell$-tuples of elements of $\{1, \ldots, k\}$ and $\{1, \ldots, n\}$ respectively.  Let $\epsilon_1,\ldots, \epsilon_k$ be the standard basis for $\C^k$ and $\alpha_1,\ldots, \alpha_k$ be the dual basis.  Let $\{e_I^*\}$ be the basis of $\wwedge{\ell}(V_+ \otimes \C)^*$ dual to the basis $\{e_I\}$ of $\wwedge{\ell}(V_+ \otimes \C)$.  Now define $e_I$ and $\epsilon_{J}$ by
$$e_I = e_{i_1} \wedge \cdots \wedge e_{i_\ell} \quad \text{and} \quad \epsilon_{J} = \epsilon_{j_1} \wedge \cdots  \epsilon_{j_\ell}.$$
Then we define the intertwiner
\begin{align*}
\Psi: \wwedge{\ell}(V_+ \otimes \C) \otimes \wwedge{\ell} \C^k &\rightarrow \mathrm{Pol}\big( \Hom(\C^k, V_+ \otimes \C) \big) \\
\Psi(e_I \otimes \epsilon_J)(Z) &= e_I^* \big(\wwedge{\ell}(Z) (\epsilon_J) \big).
\end{align*}
Here $Z \in \mathrm{Hom}(\C^k, V_+ \otimes \C)$ and $\wwedge{\ell}(Z) : \wwedge{\ell}(\C^k) \to \wwedge{\ell}(V_+ \otimes \C)$ is the $\ell^{th}$ exterior power of $Z$.  Clearly $\Psi$ is nonzero.

Now define $f_{I,J}(Z)$ to be the determinant of the $\ell$ by $\ell$ minor given by choosing the rows $i_1,i_2,\ldots,i_{\ell}$ and the columns $j_1,\ldots, j_\ell$ of $Z \in \mathrm{Hom}(\C^k, V_+ \otimes \C)$ regarded as an $n$ by $k$ matrix. 
Then we have
\begin{lem}
$$\Psi(e_I \otimes \epsilon_J)(Z) = f_{I,J}(Z).$$
\end{lem}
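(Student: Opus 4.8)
The plan is to unwind the definition of $\Psi$ and identify the scalar $e_I^*\big(\wwedge{\ell}(Z)(\epsilon_J)\big)$ with the minor determinant $f_{I,J}(Z)$. First I would recall that for a linear map $Z : \C^k \to V_+\otimes\C$, the $\ell$-th exterior power $\wwedge{\ell}(Z)$ acts on decomposable tensors by $\wwedge{\ell}(Z)(\epsilon_{j_1}\wedge\cdots\wedge\epsilon_{j_\ell}) = Z\epsilon_{j_1}\wedge\cdots\wedge Z\epsilon_{j_\ell}$. Writing $Z$ as the $n\times k$ matrix with entries $Z_{\alpha j}$ relative to the bases $e_1,\dots,e_n$ and $\epsilon_1,\dots,\epsilon_k$, we have $Z\epsilon_{j_m} = \sum_{\alpha=1}^n Z_{\alpha j_m} e_\alpha$, so that
\begin{equation*}
\wwedge{\ell}(Z)(\epsilon_J) = \sum_{\alpha_1,\dots,\alpha_\ell} Z_{\alpha_1 j_1}\cdots Z_{\alpha_\ell j_\ell}\, e_{\alpha_1}\wedge\cdots\wedge e_{\alpha_\ell}.
\end{equation*}

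Next I would pair this against $e_I^* = (e_{i_1}\wedge\cdots\wedge e_{i_\ell})^*$. Since $\{e_I\}_{I\in\mathcal{S}_{\ell,n}}$ is a basis of $\wwedge{\ell}(V_+\otimes\C)$ with dual basis $\{e_I^*\}$, the pairing picks out exactly those terms in the sum whose unordered index set $\{\alpha_1,\dots,\alpha_\ell\}$ equals $\{i_1,\dots,i_\ell\}$; reordering each such term to the increasing tuple $I$ introduces the sign of the permutation $\pi$ sending $(i_1,\dots,i_\ell)$ to $(\alpha_1,\dots,\alpha_\ell)$. Collecting terms gives
\begin{equation*}
e_I^*\big(\wwedge{\ell}(Z)(\epsilon_J)\big) = \sum_{\pi\in\mathfrak{S}_\ell} \mathrm{sgn}(\pi)\, Z_{i_{\pi(1)} j_1}\cdots Z_{i_{\pi(\ell)} j_\ell},
\end{equation*}
which is precisely the Leibniz expansion of the determinant of the $\ell\times\ell$ submatrix of $Z$ obtained by selecting rows $i_1,\dots,i_\ell$ and columns $j_1,\dots,j_\ell$, i.e. $f_{I,J}(Z)$. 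Finally I would remark that under the identifications $Z_{\alpha j} = z_{\alpha j}$ this matches the stated convention, completing the proof.

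This argument is essentially a bookkeeping computation, so there is no serious obstacle; the only point requiring a little care is the sign/permutation accounting when passing from an arbitrary multi-index $(\alpha_1,\dots,\alpha_\ell)$ to the strictly increasing representative $I$, and checking that the resulting signed sum is exactly the determinant Leibniz formula rather than its transpose. One could alternatively phrase the whole thing more invariantly: $\Psi(e_I\otimes\epsilon_J)(Z)$ is the $(I,J)$ matrix entry of $\wwedge{\ell}(Z)$ with respect to the bases $\{e_I\}$ and $\{\epsilon_J\}$, and it is a standard fact that the matrix of $\wwedge{\ell}(Z)$ in these bases has entries the $\ell\times\ell$ minors of $Z$ (the Cauchy--Binet setup). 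I would likely present the explicit computation above since it is short and self-contained.
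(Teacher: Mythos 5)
Your computation is correct and is exactly the standard argument the paper has in mind; the paper simply states this lemma without proof, treating the identification of $e_I^*\big(\wwedge{\ell}(Z)(\epsilon_J)\big)$ with the $(I,J)$ minor of $Z$ as immediate. Your sign bookkeeping via the Leibniz formula is right, so the proposal fills in the omitted routine verification correctly.
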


\begin{lem}
$f_{I,J}(Z)$ is harmonic.
\end{lem}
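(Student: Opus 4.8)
The plan is to verify directly that $\Delta_{ij} f_{I,J} = 0$ for every pair $1 \le i, j \le k$, using nothing more than the fact that a determinant is multilinear and alternating in its rows. Recall that $f_{I,J}(Z)$ is the $\ell \times \ell$ minor of $Z$ on the rows $i_1 < \cdots < i_\ell$ from $\{1,\dots,n\}$ and the columns $j_1 < \cdots < j_\ell$ from $\{1,\dots,k\}$, so as a polynomial it involves only the entries $z_{\alpha,m}$ with $\alpha \in I$ and $m \in J$; moreover, in the Leibniz expansion of $f_{I,J}$ every monomial selects exactly one entry from each of the rows $i_1,\dots,i_\ell$. In particular $f_{I,J}$ has degree at most one in each variable $z_{\alpha,m}$, and no monomial of $f_{I,J}$ is divisible by a product of two entries lying in a common row.

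First I would handle $i = j$: here $\Delta_{ii} = \sum_{\alpha=1}^n \partial^2/\partial z_{\alpha,i}^2$, and each summand annihilates $f_{I,J}$ because $f_{I,J}$ has degree $\le 1$ in $z_{\alpha,i}$. For $i \neq j$, I may assume $i \in J$ and $j \in J$ (otherwise $f_{I,J}$ is constant in the corresponding column and the derivative vanishes). Then each summand $\partial^2/\partial z_{\alpha,i}\,\partial z_{\alpha,j}$ of $\Delta_{ij}$ is nonzero only on monomials divisible by $z_{\alpha,i} z_{\alpha,j}$; but these two variables both lie in row $\alpha$, and no monomial of $f_{I,J}$ contains two entries from the same row. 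Hence every summand kills $f_{I,J}$, so $\Delta_{ij} f_{I,J} = 0$. Since $f_{I,J}$ is annihilated by all the $\Delta_{ij}$, it lies in $\mathcal{H}\big((V_+ \otimes \C)^k\big)$ by definition.

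A slightly more structural variant I could present instead: expand $f_{I,J}$ along the column indexed by $i$, writing $f_{I,J} = \sum_s \pm\, z_{i_s, i}\, M_s$, where $M_s$ is the complementary minor on the rows $I \setminus \{i_s\}$. Then $\partial f_{I,J}/\partial z_{\alpha,i}$ vanishes unless $\alpha = i_s$, in which case it equals $\pm M_s$, a polynomial that does not involve row $i_s$ at all; applying $\partial/\partial z_{i_s, j}$ to it therefore gives $0$, and summing over $\alpha$ yields $\Delta_{ij} f_{I,J} = 0$. There is essentially no obstacle here — the statement is an elementary consequence of the alternating nature of the determinant — and the only point requiring a little care is the bookkeeping of which variables actually occur in $f_{I,J}$, i.e.\ disposing of the degenerate cases $i \notin J$ or $j \notin J$.
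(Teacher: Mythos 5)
Your proof is correct and is essentially the paper's own argument: both rest on the observation that each monomial in the Leibniz expansion of the minor $f_{I,J}$ contains at most one entry from any given row, so every summand $\partial^2/\partial z_{\alpha,i}\partial z_{\alpha,j}$ of $\Delta_{ij}$ annihilates it term by term. The extra bookkeeping you include (the cases $i=j$ and $i\notin J$ or $j\notin J$, and the column-expansion variant) is fine but not a different method.
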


\begin{proof}
Given a monomial $m$, we have $\Delta_{ij}(m) \neq 0$ if and only if $m = z_{\alpha, i} z_{\alpha, j} m^\prime$ for some $1 \leq \alpha \leq n$ and non-zero monomial $m^\prime$.  But since $f_{I,J}$ is a determinant, it is the sum of monomials each of which has at most one term from a given row.  That is,
$$\frac{\partial^2}{\partial z_{\alpha, i} \partial z_{\alpha, j}} f_{I,J}(Z) = 0$$
for all $i,j, \alpha$ and thus $\Delta_{ij}\big( f_{I,J}(Z) \big) = 0$ term by term.

\end{proof}

\begin{cor}
The intertwiner $\Psi$ maps to the harmonics, that is
$$\Psi: \wwedge{\ell}(V_+ \otimes \C) \otimes \wwedge{\ell} \C^k \rightarrow \mathcal{H}\big( (V_+ \otimes \C)^k \big).$$
\end{cor}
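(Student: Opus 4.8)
The plan is to deduce this corollary directly from the two lemmas immediately preceding it, together with linearity. First I would recall that the vectors $e_I \otimes \epsilon_J$, with $I \in \mathcal{S}_{\ell,n}$ and $J \in \mathcal{S}_{\ell,k}$, form a basis of $\wwedge{\ell}(V_+ \otimes \C) \otimes \wwedge{\ell}\C^k$; consequently the image of the linear map $\Psi$ is the $\C$-span of the polynomials $\Psi(e_I \otimes \epsilon_J)$.

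By the penultimate lemma, $\Psi(e_I \otimes \epsilon_J)(Z) = f_{I,J}(Z)$, the determinant of the $\ell \times \ell$ submatrix of $Z$ on rows $I$ and columns $J$; and by the last lemma each $f_{I,J}$ is harmonic, i.e. lies in $\mathcal{H}\big((V_+ \otimes \C)^k\big)$. Since $\mathcal{H}\big((V_+ \otimes \C)^k\big)$ is by definition the common kernel of the operators $\Delta_{ij}$, it is a linear subspace of $\mathrm{Pol}\big((V_+ \otimes \C)^k\big)$, so it contains every linear combination of the $f_{I,J}$. Hence $\Psi$ carries all of $\wwedge{\ell}(V_+ \otimes \C) \otimes \wwedge{\ell}\C^k$ into $\mathcal{H}\big((V_+ \otimes \C)^k\big)$, which is exactly the claim.

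There is no real obstacle: the substance is already contained in the two lemmas — the identification of $\Psi(e_I \otimes \epsilon_J)$ with a minor determinant and the observation that applying $\Delta_{ij}$ to such a determinant kills it term by term (a determinant being a signed sum of monomials each using at most one entry from any given row, hence never divisible by $z_{\alpha,i}z_{\alpha,j}$). The only point to verify for the corollary itself is that harmonicity is preserved under linear combinations, which is immediate from the definition of $\mathcal{H}$ as an intersection of kernels of the $\Delta_{ij}$.
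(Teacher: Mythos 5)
Your argument is correct and is exactly the route the paper takes: the corollary is an immediate consequence of the two preceding lemmas, since the $e_I \otimes \epsilon_J$ span the domain, each $\Psi(e_I \otimes \epsilon_J) = f_{I,J}$ is harmonic, and $\mathcal{H}\big((V_+ \otimes \C)^k\big)$, being the common kernel of the $\Delta_{ij}$, is closed under linear combinations. Nothing further is needed.
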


We leave the following proof to the reader.

\begin{lem}
$\Psi$ is $\OO(n, \C) \times \GL(k, \C)$-equivariant.  That is, for $g \in \OO(n, \C)$ and $g^\prime \in \GL(k, \C)$,
\begin{equation}\label{equivariant} 
\Psi \big( \wwedge{\ell}(g)(e_I) \otimes \wwedge{\ell}(g^\prime)(\epsilon_J) \big)(Z) = \Psi(e_I \otimes \epsilon_J)(g^{-1} Z g^\prime).
\end{equation}
\end{lem}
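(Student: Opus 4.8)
The plan is to work with the intrinsic description $\Psi(e_I \otimes \epsilon_J)(Z) = e_I^*\big(\wwedge{\ell}(Z)(\epsilon_J)\big)$ and reduce the whole statement to functoriality of the exterior power functor, $\wwedge{\ell}(AB) = \wwedge{\ell}(A)\circ\wwedge{\ell}(B)$ for composable linear maps. The one genuinely structural input — and the reason the relevant group is $\OO(n,\C)$ rather than all of $\GL(n,\C)$ — is that the dual basis vector $e_I^*$ coincides with the image of $e_I$ under the isomorphism $\wwedge{\ell}(V_+\otimes\C)\to\wwedge{\ell}(V_+\otimes\C)^*$ induced by the complex-bilinear extension of the form $(\ ,\ )$, since $e_1,\dots,e_n$ is orthonormal for that form and hence $\{e_I\}$ is orthonormal for the induced form on $\wwedge{\ell}$. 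Because the form is $\OO(n,\C)$-invariant, this identification intertwines the $\OO(n,\C)$-action on $\wwedge{\ell}(V_+\otimes\C)$ with the contragredient action on its dual.

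First I would record two elementary facts. (i) Functoriality of $\wwedge{\ell}$ applied to $Z\circ g'$ and then to $g^{-1}\circ(Zg')$ gives $\wwedge{\ell}(g^{-1}Zg') = \wwedge{\ell}(g^{-1})\circ\wwedge{\ell}(Z)\circ\wwedge{\ell}(g')$. (ii) Writing $\pi$ for the $\OO(n,\C)$-representation on $W:=\wwedge{\ell}(V_+\otimes\C)$ and $\pi^\vee$ for its contragredient on $W^*$, the map $e_I\mapsto e_I^*$ satisfies $\big(\wwedge{\ell}(g)e_I\big)^* = e_I^*\circ\wwedge{\ell}(g^{-1})$ as functionals on $W$; this is immediate from $\OO(n,\C)$-invariance of the form together with orthonormality of $\{e_I\}$.

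Then the computation is purely formal: for $g\in\OO(n,\C)$ and $g'\in\GL(k,\C)$,
\begin{align*}
\Psi\big(\wwedge{\ell}(g)e_I \otimes \wwedge{\ell}(g')\epsilon_J\big)(Z)
&= \big(\wwedge{\ell}(g)e_I\big)^*\Big(\wwedge{\ell}(Z)\,\wwedge{\ell}(g')\,\epsilon_J\Big) \\
&= e_I^*\Big(\wwedge{\ell}(g^{-1})\,\wwedge{\ell}(Z)\,\wwedge{\ell}(g')\,\epsilon_J\Big) \\
&= e_I^*\Big(\wwedge{\ell}(g^{-1}Zg')\,\epsilon_J\Big)
= \Psi(e_I\otimes\epsilon_J)(g^{-1}Zg'),
\end{align*}
using (ii) in the second line and (i) in the third; linearity in $e_I\otimes\epsilon_J$ then yields \eqref{equivariant}. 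As a sanity check — and an alternative route avoiding duality language — one can expand $f_{I,J}(g^{-1}Zg')$ by the Cauchy--Binet formula applied twice: the resulting double sum over intermediate multi-indices $K\in\mathcal{S}_{\ell,n}$, $L\in\mathcal{S}_{\ell,k}$ has coefficients the $\ell\times\ell$ minors of $g^{-1}$ and of $g'$, which (using $g^{-1}=g^{T}$ for $g\in\OO(n,\C)$) are precisely the matrix entries of $\wwedge{\ell}(g)$ and $\wwedge{\ell}(g')$ in the bases $\{e_I\}$, $\{\epsilon_J\}$.

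The main obstacle is nothing more than bookkeeping: being careful that moving $\wwedge{\ell}(g)$ across the pairing $e_I^*(\,\cdot\,)$ converts it to $\wwedge{\ell}(g^{-1})$, which rests on the self-duality of the standard representation of $\OO(n,\C)$ via the bilinear form — exactly the spot where membership in the orthogonal group, rather than merely $\GL(n,\C)$, is used. Everything else is formal, which is why the statement was left to the reader.
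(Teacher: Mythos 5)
Your argument is correct, and since the paper explicitly leaves this proof to the reader there is no competing argument to compare it with; your route --- functoriality of $\wwedge{\ell}$ together with the identification $e_I^* = (e_I,\cdot)$ coming from orthonormality of $\{e_I\}$ for the complex-bilinear form, which converts $\wwedge{\ell}(g)$ on the left of the pairing into $\wwedge{\ell}(g^{-1})$ on the right --- is exactly the intended one. You also correctly isolate the only place where $g\in\OO(n,\C)$ (rather than $\GL(n,\C)$) is used, and the Cauchy--Binet expansion you sketch as a cross-check matches the coordinate description $\Psi(e_I\otimes\epsilon_J)=f_{I,J}$ given just before the lemma.
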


We note that $\Psi$ is equivalent to a bilinear map $\widetilde{\Psi}$ from  $\wwedge{\ell}(V_+ \otimes \C) \times \wwedge{\ell} \C^k$ to $\mathcal{H} \big( (V_+ \otimes \C)^k \big)$ where $\widetilde{\Psi}(\eta, \tau) = \Psi(\eta \otimes \tau)$.  We define
\begin{equation*}
\widetilde{\Psi}^\prime : \wwedge{\ell}(\C^k) \to \mathrm{Hom}_{\OO(n, \C)} \big( \wwedge{\ell}(V_+ \otimes \C), \mathcal{H} \big( (V_+ \otimes \C)^k \big) \big)
\end{equation*}
by
\begin{equation}\label{formulaforpsi}
\widetilde{\Psi}^\prime (\epsilon_J) = \widetilde{\Psi} (\bullet, \epsilon_J) = \sum_{I \in \mathcal{S}_{\ell,n} } f_{I,J} e_I^*.
\end{equation}
We define
\begin{equation*}
\Psi_J = \widetilde{\Psi}^\prime (\epsilon_J) \in \Hom_{\OO(n,\C)}\big( \wwedge{\ell}(V_+ \otimes \C), \mathcal{H} \big( (V_+ \otimes \C)^k \big) \big)
\end{equation*}
and thus
$$\Psi_J(Z) = \sum_{I \in \mathcal{S}_{\ell,n}} f_{I,J}(Z) e_I^*.$$

We now compute $\Hom_{\OO(n,\C)}\big( \wwedge{\ell}(V_+ \otimes \C), \mathcal{H} \big( (V_+ \otimes \C)^k \big) \big)$. Since $\wwedge{\ell} (V_+ \otimes \C) \otimes \wwedge{\ell} (\C^k)$ is an irreducible $\OO(n, \C) \times \GL(k,\C)$-module it follows that $\Psi$ is injective. The image of $\Psi$ is contained in the $\wwedge{\ell} (V_+ \otimes \C)$ isotypic subspace of $\mathcal{H}(V^k_+)$.  By Lemma \ref{corext}, we know that the $\wwedge{\ell}(V_+ \otimes \C)$-isotypic subspace is isomorphic to this tensor product as an $\OO(n, \C) \times \GL(k,\C)$-module which is irreducible.  Hence $\Psi$ is a nonzero map of irreducible $\OO(n, \C) \times \GL(k, \C)$-modules and hence an isomorphism.  Thus $\widetilde{\Psi}^\prime$ is an isomorphism of $\C$-vector spaces and we have

\begin{lem} \label{psijbasisharm}
$\{\Psi_J : J \in \mathcal{S}_{m,k}\}$ is a basis for the vector space
$$\Hom_{\OO(n,\C)}\big( \wwedge{\ell}(V_+ \otimes \C), \mathcal{H} \big( (V_+ \otimes \C)^k \big) \big).$$
\end{lem}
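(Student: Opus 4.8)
The plan is to read the statement off the isomorphism $\widetilde{\Psi}'$ already obtained in the discussion preceding the lemma. Recall that $\widetilde{\Psi}' : \wwedge{\ell}(\C^k) \to \Hom_{\OO(n,\C)}\big(\wwedge{\ell}(V_+\otimes\C), \mathcal{H}((V_+\otimes\C)^k)\big)$ was shown to be an isomorphism of $\C$-vector spaces, and that by definition $\Psi_J = \widetilde{\Psi}'(\epsilon_J)$. Since the vectors $\epsilon_J = \epsilon_{j_1}\wedge\cdots\wedge\epsilon_{j_\ell}$, with $J=(j_1<\cdots<j_\ell)$ ranging over $\mathcal{S}_{\ell,k}$, form the standard monomial basis of $\wwedge{\ell}(\C^k)$, and a linear isomorphism carries a basis to a basis, the set $\{\Psi_J : J\in\mathcal{S}_{\ell,k}\}$ is a basis of the target Hom-space. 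That is essentially the whole proof; the only real work is in justifying the isomorphism claim.

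For that justification, which is where the genuine content lies, I would argue as follows. The map $\Psi$ is nonzero, since $\Psi(e_I\otimes\epsilon_J) = f_{I,J}$ is the nonzero minor polynomial. By Lemma \ref{intertwine} together with Lemma \ref{corext} (or directly from Proposition 3.6.3 of \cite{Howe} and Kashiwara-Vergne \cite{KV}), the source $\wwedge{\ell}(V_+\otimes\C)\otimes\wwedge{\ell}(\C^k)$ is an irreducible $\OO(n,\C)\times\GL(k,\C)$-module, so the nonzero equivariant map $\Psi$ is injective; its image lies in the $\wwedge{\ell}(V_+\otimes\C)$-isotypic subspace of $\mathcal{H}((V_+\otimes\C)^k)$, which by Lemma \ref{corext} is itself isomorphic to that same tensor product and hence irreducible, so $\Psi$ is an isomorphism onto the isotypic subspace. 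Finally, under the adjunction $\Hom(U_1\otimes U_2, W)\cong\Hom(U_2,\Hom(U_1,W))$ and Schur's lemma, this isomorphism of $\GL(k,\C)$-modules is precisely the map $\widetilde{\Psi}'$ of Equation \eqref{formulaforpsi}, so $\widetilde{\Psi}'$ is an isomorphism.

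I expect no serious obstacle: everything reduces to representation-theoretic inputs already cited (irreducibility of the isotypic component, via Lemmas \ref{intertwine} and \ref{corext}, and harmonicity of the $f_{I,J}$). The only points requiring care are organizational: verifying that the map $\widetilde{\Psi}'$ written explicitly in Equation \eqref{formulaforpsi} really is the image of $\Psi$ under the tensor--hom adjunction, so that ``$\Psi$ is an isomorphism'' transfers verbatim to ``$\widetilde{\Psi}'$ is an isomorphism,'' and noting that the $\epsilon_J$ with $J$ strictly increasing are linearly independent, not merely spanning, in $\wwedge{\ell}(\C^k)$ (so that $\mathcal{S}_{\ell,k}$ indexes an honest basis).
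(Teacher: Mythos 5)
Your proposal is correct and follows the paper's own argument essentially verbatim: the paper also deduces the lemma from the fact that $\Psi$ is a nonzero equivariant map between irreducible $\OO(n,\C)\times\GL(k,\C)$-modules (using Lemma \ref{corext} to identify the isotypic subspace), hence an isomorphism, so that $\widetilde{\Psi}'$ carries the basis $\{\epsilon_J\}$ to the basis $\{\Psi_J\}$. The only discrepancy is notational: the index set in the statement should read $\mathcal{S}_{\ell,k}$ rather than $\mathcal{S}_{m,k}$, as you implicitly assume.
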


\begin{prop} \label{Psi_Jbasis}
$\{\Psi_J : J \in \mathcal{S}_{\ell,k}\}$ is a basis for the $\mathcal{S}_k$-module
$$\Hom_{\OO(n,\C)}\big( \wwedge{\ell}(V_+ \otimes \C), \mathcal{P}_k \big).$$
\end{prop}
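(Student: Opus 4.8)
The plan is to derive the result from the decomposition $\mathcal{P}_k=\mathcal{S}_k\cdot\mathcal{H}\big((V_+\otimes\C)^k\big)$ of Lemma~\ref{combination} together with an explicit specialization that forces linear independence. First I would reduce to the ``$V_+$ part'': by Lemma~\ref{Minsum}, $\mathcal{P}_k=\mathrm{Pol}\big((V_+\otimes\C)^k\big)\otimes_\C\C[w_1,\dots,w_k]$ both as an $\OO(n,\C)$-module (the group acting trivially on the $w_j$) and as a module over $\mathcal{S}_k=\mathcal{R}_k\otimes_\C\C[w_1,\dots,w_k]$. Since $M\mapsto\Hom_{\OO(n,\C)}\big(\wwedge{\ell}(V_+\otimes\C),M\big)=\big(M\otimes(\wwedge{\ell}(V_+\otimes\C))^*\big)^{\OO(n,\C)}$ is exact and commutes with $-\otimes_\C\C[w_1,\dots,w_k]$, and each $\Psi_J$ (being built from the minors $f_{I,J}$, which involve only the $z_{\alpha j}$) already lies in $\Hom_{\OO(n,\C)}\big(\wwedge{\ell}(V_+\otimes\C),\mathrm{Pol}((V_+\otimes\C)^k)\big)$, it suffices to prove the analogous statement over $\mathcal{R}_k$ for $\mathrm{Pol}\big((V_+\otimes\C)^k\big)$.

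For the spanning half I would apply the exact, $\mathcal{R}_k$-linear functor $\Hom_{\OO(n,\C)}\big(\wwedge{\ell}(V_+\otimes\C),-\big)$ to the surjection $\mathcal{R}_k\otimes_\C\mathcal{H}\big((V_+\otimes\C)^k\big)\twoheadrightarrow\mathrm{Pol}\big((V_+\otimes\C)^k\big)$ of Lemma~\ref{posharm}. Since $\mathcal{R}_k$ is $\OO(n,\C)$-trivial, the left side becomes $\mathcal{R}_k\otimes_\C\Hom_{\OO(n,\C)}\big(\wwedge{\ell}(V_+\otimes\C),\mathcal{H}((V_+\otimes\C)^k)\big)$, which by Lemma~\ref{psijbasisharm} equals $\mathcal{R}_k\otimes_\C\bigoplus_{J\in\mathcal{S}_{\ell,k}}\C\Psi_J$; hence the image of the resulting surjection is exactly $\sum_J\mathcal{R}_k\Psi_J$, proving that the $\Psi_J$ generate the $\mathcal{R}_k$-module.

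The heart of the argument is linear independence over $\mathcal{R}_k$. Suppose $\sum_{J\in\mathcal{S}_{\ell,k}}p_J\Psi_J=0$ with $p_J\in\mathcal{R}_k$, and specialize by setting $v_j=t_{1j}e_1+\cdots+t_{jj}e_j$ for $1\le j\le k$, the $t_{ij}$ ($1\le i\le j\le k$) being independent indeterminates; let $Z_0$ be the resulting $n\times k$ matrix. Its top $k\times k$ block is the upper-triangular matrix $T=(t_{ij})$ and its remaining rows vanish, so a routine expansion of the determinant shows $f_{I,J}(Z_0)=0$ unless $I\subseteq\{1,\dots,k\}$ and $I\le J$ coordinatewise, while $f_{J,J}(Z_0)=t_{j_1j_1}\cdots t_{j_\ell j_\ell}$. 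Hence, ordering $\mathcal{S}_{\ell,k}$ by a linear extension of $\le$, the square matrix whose $(I,J)$-entry is the $e_I^*$-coefficient of $\Psi_J(Z_0)$ is triangular with nonvanishing diagonal, so the $\Psi_J(Z_0)$ are linearly independent over $\C(t_{ij})$; reading the specialized relation off in each coordinate $e_I^*$ forces $p_J|_{Z_0}=0$ in $\C[t_{ij}]$ for every $J$. Finally, $r_{ij}$ specializes to $\sum_{l\le\min(i,j)}t_{li}t_{lj}$, the $(i,j)$-entry of $T^{\top}T$; the complex Cholesky factorization shows $T\mapsto T^{\top}T$ is dominant onto the space of symmetric $k\times k$ matrices, and because $k<n$ the ring $\mathcal{R}_k$ is the polynomial ring generated by the $r_{ij}$ (Second Main Theorem), so this specialization $\mathcal{R}_k\to\C[t_{ij}]$ is injective and $p_J=0$ for all $J$. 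Combined with the spanning half, $\{\Psi_J:J\in\mathcal{S}_{\ell,k}\}$ is an $\mathcal{R}_k$-basis, and undoing the reduction of the first paragraph gives the asserted $\mathcal{S}_k$-basis.

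The one delicate point is the linear-independence step: verifying that the triangular frame $Z_0$ really makes the coefficient matrix of the $\Psi_J(Z_0)$ triangular with invertible diagonal, and that the invariants $r_{ij}$ remain algebraically independent along this family — which is exactly where the hypothesis $k<n$ enters. As an alternative to the Cholesky specialization one could instead compute $\dim_{\mathrm{Frac}(\mathcal{R}_k)}\Hom_{\OO(n,\C)}\big(\wwedge{\ell}(V_+\otimes\C),\mathrm{Pol}((V_+\otimes\C)^k)\big)\otimes_{\mathcal{R}_k}\mathrm{Frac}(\mathcal{R}_k)$ by identifying the generic fibre of the Gram map $(v_i)\mapsto((v_i,v_j))$ with $\OO(n,\C)/\OO(n-k,\C)$ and using Frobenius reciprocity together with the branching $\wwedge{\ell}(\C^n)|_{\OO(n-k,\C)}=\bigoplus_{a+b=\ell}\wwedge{a}(\C^k)\otimes\wwedge{b}(\C^{n-k})$, whose invariant part has dimension $\binom{k}{\ell}=\#\mathcal{S}_{\ell,k}$; this shows the spanning set of the second paragraph is a basis after inverting the invariants, hence, by torsion-freeness of the module inside $\mathrm{Pol}\big((V_+\otimes\C)^k\big)\otimes(\wwedge{\ell}(V_+\otimes\C))^*$, already over $\mathcal{R}_k$.
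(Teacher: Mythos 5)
Your proof is correct, and its overall architecture matches the paper's: for spanning, both apply the exact functor $\Hom_{\OO(n,\C)}\big(\wwedge{\ell}(V_+ \otimes \C), -\big)$ to the surjection of Lemma \ref{posharm} (resp.\ Lemma \ref{combination}) and invoke Lemma \ref{psijbasisharm}; for independence, both specialize the frame $Z$ and use linear independence of the specialized $\Psi_J$. The difference lies in how that independence step is carried out. The paper notes that $\Psi_J(Z_0)=\wwedge{\ell}(Z_0)\epsilon_J$ is an independent set for \emph{every} injective $Z_0$, deduces that the coefficients $p_J$ vanish on the dense set of injective frames, and then concludes $p_J=0$ in $\mathcal{S}_k$ --- a final step that tacitly uses dominance of the Gram map $Z\mapsto (r_{ij}(Z))$ onto symmetric matrices for $k\le n$. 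You instead restrict to the single family of upper-triangular frames, get independence from triangularity of the matrix of minors $(f_{I,J}(Z_0))$ with respect to the componentwise order, and then make the dominance explicit via the Cholesky factorization $T\mapsto T^{\top}T$. Your route is a touch longer but supplies exactly the justification the paper leaves implicit at the last step; the two arguments are otherwise interchangeable. (Your closing alternative via Frobenius reciprocity on the generic fibre $\OO(n,\C)/\OO(n-k,\C)$ and torsion-freeness is also sound, but it is not needed once the specialization argument is in place.)
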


\begin{proof}
By Lemma \ref{psijbasisharm}, we have $\{\Psi_J : J \in \mathcal{S}_{\ell,k}\}$ is a basis for the $\C$-vector space\\
$\Hom_{\OO(n, \C)} \big( \wwedge{\ell}(V_+ \otimes \C), \mathcal{H} \big( (V_+ \otimes \C)^k \big) \big)$.  Since $\Hom_{\OO(n, \C)}(\C, \cdot)$ is exact, the surjection $\mathcal{S}_k \otimes \mathcal{H} \big( (V_+ \otimes \C)^k \big) \to \mathcal{P}_k$ induces a surjection $\phi$ from the $\C$-vector space $\mathcal{S}_k \otimes \Hom_{\OO(n, \C)} \big( \wwedge{\ell}(V_+ \otimes \C), \mathcal{H} \big( (V_+ \otimes \C)^k \big) \big)$ to the $\C$-vector space $\Hom_{\OO(n, \C)} \big( \wwedge{\ell}(V_+ \otimes \C), \mathcal{P}_k \big)$, given by $\phi(f \otimes T) = fT$ and thus \\
$\{\Psi_J : J \in \mathcal{S}_{\ell,k}\}$ spans $\Hom_{\OO(n, \C)} \big( \wwedge{\ell}(V_+ \otimes \C), \mathcal{P}_k \big)$ as an $\mathcal{S}_k$-module.

We now show the elements of this set are independent over $\mathcal{S}_k$.  We claim that if $Z_0 \in \Hom^0(\C^k, V_+ \otimes \C)$, the set of injective homomorphisms, then $\{\Psi_J(Z_0)\}$ is an independent set over $\C$.  Indeed, $\Psi_J(Z_0) = \wwedge{\ell}(Z_0) \epsilon_J \in \wwedge{\ell}(V_+ \otimes \C)$.  And, since $Z_0$ is an injection, so is $\wwedge{\ell}(Z_0)$.  Thus, since $\{\epsilon_J\}$ is an independent set over $\C$ and $\wwedge{m}(Z_0)$ is an injection, $\{\wwedge{\ell}(Z_0) \epsilon_J\}$ is an independent set over $\C$.

Following Equation \eqref{r_{ij}} we have the quadratic $\OO(n, \C)$-invariants $r_{ij}(Z)$ for $Z \in \Hom(\C^k, V_+ \otimes \C)$, where $r_{ij}(Z) = \big( Z(\epsilon_i), Z(\epsilon_j) \big)$ is the inner product of columns.  We will regard $r_{ij}$ both as matrix indeterminates and functions of $X$.  Recall $\mathcal{S}_k = \C[r_{11}, r_{12}, \ldots, r_{kk}, w_1, \ldots, w_k]$.

Now suppose there is some dependence relation over $\mathcal{S}_k$ where $\mathbf{t} \in \C^k$ and we abbreviate $\mathbf{r} = (r_{11}, r_{12}, \ldots, r_{kk})$
$$\sum_J p_J(\mathbf{r}(Z), \mathbf{t}) \Psi_J(Z) = 0.$$
Then for each $(Z_0, \mathbf{t}_0) \in \Hom^0(\C^k, V_+) \times \C^k$, since $\{\Psi_J(Z_0, \mathbf{t}_0)\}$ is independent over $\C$, we have that $p_J(\mathbf{r}(Z_0), \mathbf{t}_0) = 0$ for all $J$ .  And since $k \leq n$, $\Hom^0(\C^k, V_+) \times \C^k$ is dense in $\Hom(\C^k, V_+) \times \C^k$, and thus we have $p_J = 0$ for all $J$.

\end{proof}

\section{Computation of the spaces of cochains}
Recall that $\OO(n) \subset \OO(n,1)$ is the subgroup that fixes the last basis vector $e_{n+1}$.

Recall $\mathcal{I}_{a,n}$ was defined to be the set of all ordered $a$-tuples of distinct elements from $\{1, \ldots, n\}$, equivalently the set of all injective maps from $\{1, \ldots, a \}$ to $\{1, \ldots, n\}$.  We recall $\mathcal{S}_{a, n} \subset \mathcal{I}_{a.n}$ is the set of all strictly increasing $a$-tuples.  Lastly, given $I = (i_1, \ldots, i_a) \in \mathcal{I}_{a,n}$, we define the set $\overline{I} = \{i_1, \ldots, i_a\} \subset \{1, \ldots, n\}$.  Note that this map restricted to $\mathcal{S}_{a,n}$ is a bijection to its image, the set of all subsets of size $a$ of $\{1, \ldots, n\}$.

Let $\ast: \wwedge{\ell} \mathfrak{p}_0^{*} \to \wwedge{n - \ell} \mathfrak{p}_0^{*}$ be the Hodge star operator associated to the Riemannian metric and the volume form $\vol = \omega_1 \wedge \cdots \wedge \omega_n$.  Extend $*$ to $\wwedge{\ell} \mathfrak{p}^*$ to be complex linear.  Hence
\begin{equation} \label{starg}
* \circ g = (\det g) g \circ * \text{ for } g \in \OO(n, \C).
\end{equation}

We now observe that the complex $C = C^{\bullet}(\mathfrak{so}(n,1, \C) ,\SO(n, \C); \mathcal{P}_k)$ is the direct sum of two subcomplexes $C_+$ and $C_-$.  Indeed let $\iota \in \mathrm{O}(n, \C)$ be the element satisfying 
\begin{equation} \label{iotadef}
\iota(e_1) = -e_1 \ \text{and} \ \iota(e_j) = e_j, \quad 1 < j \leq n+1.
\end{equation}

Then $\iota \otimes \iota$ acts on the complex $C$ and commutes with $d$.  We define $C_+$ resp. $C_-$ to be the $+1$ resp. $-1$ eigenspace of $\iota \otimes \iota$.  Then we have
$$C = C_+ \oplus C_-.$$
Hence, $H^\bullet(C) = H^\bullet(C_+) \oplus H^\bullet(C_-)$.  By Equation \eqref{starg}, $* \otimes 1$ anticommutes with $\iota \otimes \iota$ and hence 
$$C^{n-\ell}_- = (* \otimes 1) \big(C^\ell_+\big) .$$
Since $C^\ell = C_+^\ell \oplus C_-^\ell$, to compute $C^\ell$ it suffices to compute $C_+^\ell$ and $C_+^{n-\ell}$.  Hence it suffices to compute $C_+$.  We note
\begin{align*}
C_+^\ell &= C^{\ell}\big(\mathfrak{so}(n,1, \C),\SO(n, \C);\mathcal{P}_k \big)^{\iota \otimes \iota} \\
&= C^{\ell}\big(\mathfrak{so}(n,1, \C),\OO(n, \C);\mathcal{P}_k \big).
\end{align*}

\subsection{The computation of $C_+$}

We recall $\mathcal{R}_k = \C[r_{11},r_{12},\ldots, r_{kk}]$ and \\
$\mathcal{S}_k =  \mathcal{R}_k[w_1,\ldots,w_k] =  \C[r_{11},r_{12},\ldots, r_{kk},w_1,\ldots,w_k]$.

We define an isomorphism of $\mathcal{S}_k$-modules
\begin{align*}
F_\ell : \Hom_{\OO(n, \C)}(\wwedge{\ell}(V_+ \otimes \C), \mathcal{P}_k) &\to \Hom_{\OO(n, \C)}(\wwedge{\ell}\mathfrak{p}, \mathcal{P}_k) \\
e_I^* \otimes p &\mapsto \omega_I \otimes p
\end{align*}
and define, for $J \in \mathcal{S}_{\ell,n}$, $\varPhi_J$ by $F_\ell(\Psi_J) = \varPhi_J$.  Hence
$$\varPhi_J = \sum_{I \in \mathcal{S}_{\ell,n}} \omega_I \otimes f_{I,J}.$$
Then by Lemma \ref{vanext} we have

\begin{lem}
$C_+^\ell = 0$ for $\ell > k$.
\end{lem}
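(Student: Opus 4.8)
The plan is to identify $C_+^\ell$ with a space of $\OO(n,\C)$-equivariant homomorphisms out of $\wwedge{\ell}(V_+\otimes\C)$ and then quote Lemma \ref{vanext}. By definition $C_+^\ell = \big(\wwedge{\ell}(\mathfrak{p}^*)\otimes\mathcal{P}_k\big)^{\OO(n,\C)} = \Hom_{\OO(n,\C)}\big(\wwedge{\ell}\mathfrak{p},\mathcal{P}_k\big)$, and via the $\OO(n,\C)$-module isomorphism $\wwedge{\ell}(\mathfrak{p}^*)\cong\wwedge{\ell}(V_+\otimes\C)$ of Equation \eqref{gothicp} --- equivalently, via the isomorphism $F_\ell$ introduced above --- this is
\[
C_+^\ell \cong \Hom_{\OO(n,\C)}\big(\wwedge{\ell}(V_+\otimes\C),\mathcal{P}_k\big),
\]
with $\{\varPhi_J : J\in\mathcal{S}_{\ell,k}\}$ an $\mathcal{S}_k$-basis by Proposition \ref{Psi_Jbasis}.

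From here there are two essentially equivalent ways to conclude. The quickest is to note that when $\ell>k$ there are no strictly increasing $\ell$-tuples of elements of $\{1,\ldots,k\}$, so $\mathcal{S}_{\ell,k}=\varnothing$, the $\mathcal{S}_k$-basis of $C_+^\ell$ supplied by Proposition \ref{Psi_Jbasis} is empty, and hence $C_+^\ell=0$. The second, which is the route the surrounding text points to, is to combine Lemma \ref{combination} --- the $\OO(n,\C)$-equivariant surjection $\mathcal{S}_k\otimes\mathcal{H}\big((V_+\otimes\C)^k\big)\to\mathcal{P}_k$, with $\OO(n,\C)$ acting trivially on $\mathcal{S}_k$ --- with the exactness of the functor $\Hom_{\OO(n,\C)}\big(\wwedge{\ell}(V_+\otimes\C),-\big)$ (exact since $\OO(n,\C)$ is reductive, so passage to an isotypic component is exact) to obtain a surjection of $\mathcal{S}_k$-modules
\[
\mathcal{S}_k\otimes\Hom_{\OO(n,\C)}\big(\wwedge{\ell}(V_+\otimes\C),\mathcal{H}\big((V_+\otimes\C)^k\big)\big)\twoheadrightarrow\Hom_{\OO(n,\C)}\big(\wwedge{\ell}(V_+\otimes\C),\mathcal{P}_k\big);
\]
for $\ell>k$ the source vanishes by Lemma \ref{vanext}, hence so does the target, and therefore $C_+^\ell=0$.

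There is no genuine obstacle here: the statement is a short deduction from structural facts already established. The only points requiring a little care are the identification of $C_+^\ell$ with the Hom-space (using Equation \eqref{gothicp}, or equivalently the map $F_\ell$) and --- if one follows the second route --- the exactness of the $\OO(n,\C)$-equivariant Hom functor used to propagate the vanishing in Lemma \ref{vanext} from $\mathcal{H}\big((V_+\otimes\C)^k\big)$ to all of $\mathcal{P}_k$. The first route, via Proposition \ref{Psi_Jbasis}, avoids the latter point entirely, and is what I would write up.
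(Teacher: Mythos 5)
Your proof is correct and matches the paper's: the paper deduces the lemma directly from Lemma \ref{vanext} via the identification $C_+^\ell \cong \Hom_{\OO(n,\C)}\big(\wwedge{\ell}(V_+ \otimes \C), \mathcal{P}_k\big)$, which is precisely your second route (and the exactness/surjection step you flag is the one carried out in the proof of Proposition \ref{Psi_Jbasis}). Your first route, reading off the vanishing from the empty basis $\{\varPhi_J : J \in \mathcal{S}_{\ell,k}\}$ of Proposition \ref{Psi_Jbasis}, is an equally valid repackaging of the same facts.
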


We then have the following consequence of Proposition \ref{Psi_Jbasis}
\begin{prop} \label{varphibasis}
$\{\varPhi_J\}$ is a basis for the $\mathcal{S}_k$-module $C_+^\ell$.
\end{prop}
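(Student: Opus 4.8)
The plan is to transport the basis statement of Proposition \ref{Psi_Jbasis} through the isomorphism $F_\ell$ and then identify the target with $C_+^\ell$. Recall that $C_+^\ell = C^\ell(\mathfrak{so}(n,1,\C),\OO(n,\C);\mathcal{P}_k) = (\wwedge{\ell}\mathfrak{p}^* \otimes \mathcal{P}_k)^{\OO(n,\C)}$, and that $\OO(n,\C)$ acts trivially on the $\mathcal{P}_k$-factor only after we separate the polynomial ring; more precisely, the space of $\OO(n,\C)$-invariants in $\wwedge{\ell}\mathfrak{p}^* \otimes \mathcal{P}_k$ is canonically $\Hom_{\OO(n,\C)}(\wwedge{\ell}\mathfrak{p}, \mathcal{P}_k)$, using the self-duality $\wwedge{\ell}\mathfrak{p} \cong \wwedge{\ell}\mathfrak{p}^*$ coming from \eqref{gothicp}. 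So the first step is simply to record the identification $C_+^\ell \cong \Hom_{\OO(n,\C)}(\wwedge{\ell}\mathfrak{p}, \mathcal{P}_k)$ as $\mathcal{S}_k$-modules, where the $\mathcal{S}_k$-action on the Hom-space is by postcomposition with multiplication in $\mathcal{P}_k$ (this is where we use that $\mathcal{S}_k = \mathcal{P}_k^{\OO(n,\C)}$ acts $\OO(n,\C)$-equivariantly on $\mathcal{P}_k$).

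Next I would invoke the $\mathcal{S}_k$-module isomorphism $F_\ell : \Hom_{\OO(n,\C)}(\wwedge{\ell}(V_+\otimes\C),\mathcal{P}_k) \to \Hom_{\OO(n,\C)}(\wwedge{\ell}\mathfrak{p},\mathcal{P}_k)$ defined just above the statement, which sends $e_I^* \otimes p$ to $\omega_I \otimes p$; this is an isomorphism because it is induced by the $\OO(n,\C)$-equivariant identification $V_+\otimes\C \cong \mathfrak{p}$ of \eqref{gothicp} sending $e_I$ to $\omega_I$ (under the basis $e_i \leftrightarrow e_{i,n+1} \leftrightarrow$ dual $\omega_i$). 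One checks directly that $F_\ell$ intertwines the $\mathcal{S}_k$-actions, since both actions only touch the $\mathcal{P}_k$-factor. By Proposition \ref{Psi_Jbasis}, $\{\Psi_J : J \in \mathcal{S}_{\ell,k}\}$ is an $\mathcal{S}_k$-basis of the source; applying the isomorphism $F_\ell$, its image $\{F_\ell(\Psi_J)\} = \{\varPhi_J : J \in \mathcal{S}_{\ell,k}\}$ is an $\mathcal{S}_k$-basis of $\Hom_{\OO(n,\C)}(\wwedge{\ell}\mathfrak{p},\mathcal{P}_k) \cong C_+^\ell$. The explicit formula $\varPhi_J = \sum_{I \in \mathcal{S}_{\ell,n}} \omega_I \otimes f_{I,J}$ follows from the formula $\Psi_J = \sum_I f_{I,J} e_I^*$ of \eqref{formulaforpsi} and the definition of $F_\ell$.

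The only real content — and hence the main obstacle — is to nail down the identification $C_+^\ell \cong \Hom_{\OO(n,\C)}(\wwedge{\ell}\mathfrak{p},\mathcal{P}_k)$ as $\mathcal{S}_k$-modules, i.e. to check that the $\OO(n,\C)$-invariants in $\wwedge{\ell}\mathfrak{p}^* \otimes \mathcal{P}_k$ really do match up with equivariant homomorphisms $\wwedge{\ell}\mathfrak{p} \to \mathcal{P}_k$ compatibly with the $\mathcal{S}_k = \mathcal{P}_k^{\OO(n,\C)}$ action, together with confirming that $F_\ell$ is well-defined and bijective (one must check it does not depend on the choice of basis presentation $e_I^*\otimes p$, i.e. that it is the base change along \eqref{gothicp} of the identity). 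Everything else is formal transport of structure. Note the bookkeeping subtlety that $\varPhi_J$ is indexed by $J \in \mathcal{S}_{\ell,k}$ whereas the sum defining it runs over $I \in \mathcal{S}_{\ell,n}$; once the two index sets are kept straight this causes no difficulty. A brief remark that for $\ell > k$ both sides vanish (by Lemma \ref{vanext}, already recorded) makes the statement vacuously consistent in that range.
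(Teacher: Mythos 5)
Your proposal is correct and follows the same route as the paper: the paper also obtains Proposition \ref{varphibasis} as an immediate consequence of Proposition \ref{Psi_Jbasis} by transporting the basis through the $\mathcal{S}_k$-module isomorphism $F_\ell$, with the identification $C_+^\ell \cong \Hom_{\OO(n,\C)}(\wwedge{\ell}\mathfrak{p},\mathcal{P}_k)$ left implicit. Your extra care in justifying that identification and the $\mathcal{S}_k$-equivariance of $F_\ell$ only makes explicit what the paper takes for granted.
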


We now give another description of $\varPhi_J$ as the outer exterior product of $\ell$ copies of $\varphi_1$.  That is,

\begin{equation} \label{varphiJwedgeproduct}
\varphi_{1}^{(j_1)} \wedge \cdots \wedge \varphi_{1}^{(j_\ell)} = \sum_{I \in \mathcal{S}_{\ell,n}} \omega_I  \otimes f_{I,J} = \varPhi_J.
\end{equation}

\begin{rmk}
$$C_+^k =  \mathcal{S}_k \varphi_k$$
\end{rmk}
where $\varphi_k = \varPhi_{1,2, \ldots, k}$ as before.

\subsection{The computation of $C_-$}\label{highdegreecochains}

Since $* \otimes I$ is an isomorphism of $\mathcal{S}_k$-modules from $C_+^\ell \to C_-^{n-\ell}$, we have, abbreviating $(* \otimes I)(\varPhi_J)$ to $(*\varPhi_J)$,

\begin{prop} \label{thecochains}
$$C_+^\ell = \sum_{J \in \mathcal{S}_{\ell, k}} \mathcal{S}_k \varPhi_J$$
$$C_-^\ell = \sum_{J \in \mathcal{S}_{n-\ell, k}} \mathcal{S}_k (*\varPhi_J)$$
where if $i > j$ then $\mathcal{S}_{i,j}$ is the empty set.  In particular $C_+^\ell \cong \mathcal{S}_k^{\binom{k}{\ell}}$ for $0 \leq \ell \leq k$ and zero for $\ell > k$, whereas $C_-^\ell \cong \mathcal{S}_k^{\binom{k}{n-\ell}}$ for $\ell \geq n-k$ and zero for $\ell < n-k$.
\end{prop}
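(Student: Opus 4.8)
The assertion about $C_+^\ell$ is, up to rephrasing, already contained in Proposition \ref{varphibasis}: that proposition states that $\{\varPhi_J : J \in \mathcal{S}_{\ell,k}\}$ is an $\mathcal{S}_k$-basis of $C_+^\ell$, which immediately yields $C_+^\ell = \sum_{J \in \mathcal{S}_{\ell,k}} \mathcal{S}_k \varPhi_J$ and, since $|\mathcal{S}_{\ell,k}| = \binom{k}{\ell}$, the identification $C_+^\ell \cong \mathcal{S}_k^{\binom{k}{\ell}}$ for $0 \le \ell \le k$. When $\ell > k$ the index set $\mathcal{S}_{\ell,k}$ is empty, so $C_+^\ell = 0$ (this is the content of the vanishing lemma preceding Proposition \ref{varphibasis}). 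Thus for the $C_+$ half there is nothing to prove beyond invoking the earlier results.

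For $C_-^\ell$ I would use the Hodge star. Recall the setup of $C_\pm$: the involution $\iota \otimes \iota$ commutes with $d$ and $C_\pm$ are its $\pm 1$-eigenspaces, while by Equation \eqref{starg} (with $\det \iota = -1$) the operator $* \otimes 1$ anticommutes with $\iota \otimes \iota$, so that $* \otimes 1$ carries $C_+^\ell$ onto $C_-^{\,n-\ell}$ — this was already recorded in the text as $C_-^{\,n-\ell} = (*\otimes 1)(C_+^\ell)$. The key extra observation is that $* \otimes 1$ acts only on the exterior-algebra factor $\wwedge{\bullet}\mathfrak{p}^*$ and hence commutes with multiplication by elements of $\mathcal{S}_k \subset \mathcal{P}_k$; therefore $* \otimes 1 : C_+^\ell \to C_-^{\,n-\ell}$ is an isomorphism of $\mathcal{S}_k$-modules, and it transports the $\mathcal{S}_k$-basis $\{\varPhi_J : J \in \mathcal{S}_{\ell,k}\}$ of $C_+^\ell$ to the $\mathcal{S}_k$-basis $\{*\varPhi_J : J \in \mathcal{S}_{\ell,k}\}$ of $C_-^{\,n-\ell}$. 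Reindexing $\ell \mapsto n-\ell$ gives $C_-^\ell = \sum_{J \in \mathcal{S}_{n-\ell,k}} \mathcal{S}_k(*\varPhi_J)$, which is $\cong \mathcal{S}_k^{\binom{k}{n-\ell}}$ when $n - \ell \le k$, i.e. $\ell \ge n-k$, and is $0$ when $\ell < n-k$ (empty index set). The ``in particular'' sentence is then just these two binomial counts, using $|\mathcal{S}_{m,k}| = \binom{k}{m}$ and the convention $\mathcal{S}_{i,j} = \emptyset$ for $i > j$.

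I do not expect any serious obstacle: the proposition is an assembly of Proposition \ref{varphibasis}, the vanishing of $C_+^\ell$ for $\ell > k$, and the Hodge-star duality $C_-^{\,n-\ell} \cong C_+^\ell$ coming from Equation \eqref{starg}. The only point meriting a line of care is checking that $* \otimes 1$ is $\mathcal{S}_k$-linear (immediate, as it leaves the $\mathcal{P}_k$-factor untouched), so that it sends $\mathcal{S}_k$-bases to $\mathcal{S}_k$-bases, together with bookkeeping of the reindexing $\ell \leftrightarrow n-\ell$ and the empty-index-set conventions.
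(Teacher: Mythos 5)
Your proposal is correct and follows essentially the same route as the paper: the text immediately preceding Proposition \ref{thecochains} simply observes that $* \otimes 1$ is an $\mathcal{S}_k$-module isomorphism $C_+^\ell \to C_-^{n-\ell}$ and combines this with Proposition \ref{varphibasis} and the vanishing of $C_+^\ell$ for $\ell > k$, exactly as you do. Your added remark that $\mathcal{S}_k$-linearity of $* \otimes 1$ holds because it acts only on the exterior-algebra factor is the (unstated) justification the paper relies on.
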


\section{The computation of the cohomology of $C_+$} \label{computationofcplus}

We will compute the cohomology of the associated graded complex $E_0 = \mathrm{gr}(C)$.  By Remark \ref{d=d_{0,1}}, our differential is $d_{0,1}$.  We abuse notation and call this operator $d$.  We will see there is only one non-zero cohomology group and use the results of Section \ref{spectralsection} to compute the cohomology of the original complex.

Throughout this section, $J$ will denote an element of $\mathcal{S}_{\ell,n}$.  To simplify the notation in what follows, we will abbreviate $\omega \otimes \varphi$ to $\varphi \omega$ for $\omega \in \wwedge{\bullet} \mathfrak{p}^*$ and $\varphi \in \mathcal{P}_k$.  In particular,

\begin{equation} \label{defofphij}
\varPhi_J= \sum_{I \in \mathcal{S}_{\ell,n}} f_{I,J} \omega_I.
\end{equation}

Recall we have fixed $k$ with $k < n$ and we have the ring
\begin{equation}
\mathcal{S}_k = \C[r_{11},r_{12},\ldots, r_{kk},w_1,\ldots,w_k].
\end{equation}

For $1 \leq i \leq k, J \in \mathcal{S}_{\ell,k}$, if $i \notin \overline{J}$, then we denote by $\{J, i\}$ the element of $\mathcal{S}_{\ell+1, k}$ that corresponds to the set $\overline{J} \cup \{i\}$.  Now we define $\varPhi_{J, i} \in C_+^{\ell+1}$  by

\begin{equation}
\varPhi_{J,i} = \begin{cases} (-1)^{J(i)} \varPhi_{\{J, i\}} & \text{ if } i \notin \overline{J} \\ 0 & \text{ if } i \in \overline{J}.
\end{cases}
\end{equation}
where $J(i)$ is defined as follows.

\begin{defn}
$J(i) = |\{j \in \overline{J} : j < i\}|$ is the number of elements of $J$ less than $i$.
\end{defn}

We remark that the reason for the sign $(-1)^{J(i)}$ in this notation is that we have put the $i$ in the appropriate spot instead of the beginning.  In particular, we have the following lemma.

\begin{lem}
$$\varphi_1^{(i)} \wedge \varPhi_J = \varPhi_{J,i}.$$
\end{lem}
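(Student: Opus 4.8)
The plan is to reduce everything to the description of $\varPhi_J$ as an outer exterior product of copies of $\varphi_1$ given in Equation \eqref{varphiJwedgeproduct}, together with two elementary facts about the outer exterior product $\wedge$: it is associative, and it is graded-commutative with respect to the $\wwedge{\bullet}\mathfrak{p}^*$-degree, so that two factors of cohomological degree one anticommute. Both facts are immediate from the defining formula $(\omega_I\otimes f_I)\wedge(\omega_J\otimes f_J)=(\omega_I\wedge\omega_J)\otimes f_I f_J$, since the exterior algebra $\wwedge{\bullet}\mathfrak{p}^*$ is associative and graded-commutative while multiplication of polynomials placed in distinct tensor factors of $\mathcal{P}_k$ is commutative and associative; this is also consistent with the sign appearing in the product rule \eqref{productrule}.

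First I would treat the case $i\in\overline{J}$, say $i=j_m$. Using graded-commutativity of $\wedge$ I move the leading factor $\varphi_1^{(i)}$ in $\varphi_1^{(i)}\wedge\varPhi_J=\varphi_1^{(i)}\wedge\varphi_1^{(j_1)}\wedge\cdots\wedge\varphi_1^{(j_\ell)}$ until it is adjacent to $\varphi_1^{(j_m)}=\varphi_1^{(i)}$. It then suffices to observe that $\varphi_1^{(i)}\wedge\varphi_1^{(i)}=0$: writing $\varphi_1^{(i)}=\sum_{\alpha=1}^n\omega_\alpha\otimes z_\alpha^{(i)}$, with the superscript recording the tensor factor, the product equals $\sum_{\alpha,\beta}(\omega_\alpha\wedge\omega_\beta)\otimes z_\alpha^{(i)}z_\beta^{(i)}$, which vanishes because $\omega_\alpha\wedge\omega_\beta$ is antisymmetric in $(\alpha,\beta)$ while $z_\alpha^{(i)}z_\beta^{(i)}$ is symmetric. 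Hence $\varphi_1^{(i)}\wedge\varPhi_J=0=\varPhi_{J,i}$, as required.

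Next I would treat the case $i\notin\overline{J}$. Here I reorder the outer product $\varphi_1^{(i)}\wedge\varphi_1^{(j_1)}\wedge\cdots\wedge\varphi_1^{(j_\ell)}$ so that the superscripts appear in strictly increasing order, that is, in the order prescribed by the tuple $\{J,i\}\in\mathcal{S}_{\ell+1,k}$. To do this I slide $\varphi_1^{(i)}$ to the right past exactly those factors $\varphi_1^{(j_m)}$ with $j_m<i$, of which there are $J(i)=|\{j\in\overline{J}:j<i\}|$ by definition. Each such transposition swaps two factors of cohomological degree one and therefore introduces a sign $-1$. After $J(i)$ such moves the product becomes $(-1)^{J(i)}\varphi_1^{(j'_1)}\wedge\cdots\wedge\varphi_1^{(j'_{\ell+1})}=(-1)^{J(i)}\varPhi_{\{J,i\}}$, where $(j'_1,\ldots,j'_{\ell+1})$ is the strictly increasing arrangement of $\overline{J}\cup\{i\}$; this is precisely $\varPhi_{J,i}$ by its definition.

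I do not expect any real obstacle; the only point requiring care is the bookkeeping of signs, namely confirming that $\varphi_1$ behaves as an odd element for $\wedge$, so that transpositions cost $-1$, and that the number of transpositions is exactly $J(i)$ — both of which follow directly from the conventions already fixed in the paper.
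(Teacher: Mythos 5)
Your proof is correct and fills in exactly the verification the paper leaves implicit: the remark preceding the lemma ("the sign $(-1)^{J(i)}$ ... is that we have put the $i$ in the appropriate spot instead of the beginning") is precisely your transposition count, and your check that $\varphi_1^{(i)}\wedge\varphi_1^{(i)}=0$ (symmetric polynomial factor against antisymmetric form factor) correctly handles the case $i\in\overline{J}$ where $\varPhi_{J,i}$ is defined to be zero. No gaps; this matches the paper's intended argument.
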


The following formula for $d$ is then immediate.

\begin{prop} \label{donbasisfirstcase}
\begin{equation}
d \varPhi_J =   \sum_{i=1}^k w_i \varphi_1^{(i)} \wedge \varPhi_J = \sum_{i=1}^k w_i \varPhi_{J,i}
\end{equation}
\end{prop}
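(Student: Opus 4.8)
The plan is to compute $d\varPhi_J$ directly from its description as an outer exterior product and the product rule \eqref{productrule}. By Equation \eqref{varphiJwedgeproduct} we may write $\varPhi_J = \varphi_1^{(j_1)} \wedge \cdots \wedge \varphi_1^{(j_\ell)}$, a $\ell$-fold outer wedge of the degree-one cochain $\varphi_1$ in the tensor slots indexed by $\overline J$. Since each factor $\varphi_1$ has polynomial degree $1$, the signs $(-1)^{\sum_{r<s} c_r}$ in \eqref{productrule} are all $+1$ (each $c_r = 1$ and we are applying the differential in a slot, which in the $E_0$-differential $d = d_{0,1}$ raises polynomial degree — but the combinatorial sign bookkeeping is the point and I will track it carefully). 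First I would recall from Remark \ref{d=d_{0,1}} that on $E_0$ the relevant operator is $d = \sum_{i=1}^k d^{(i)}$ with $d^{(i)} = \sum_{\alpha=1}^n A(\omega_\alpha) \otimes z_{\alpha,i} w_i$, i.e. the "$z_{\alpha i} w_i$" part of \eqref{defofd}.

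The key computation is that $d^{(i)} \varphi_1 = \sum_{\alpha=1}^n \omega_\alpha \wedge \bigl(\omega_\alpha \otimes (z_{\alpha,i} w_i)\bigr)$ applied to the single-variable cocycle — but more to the point, I want $d^{(i)}$ acting on the $i$-th slot of $\varPhi_J$. If $i \in \overline J$, then the $i$-th slot already contains $\varphi_1^{(i)} = \sum_\alpha \omega_\alpha \otimes z_{\alpha,i}$, and applying $d^{(i)} = \sum_\alpha A(\omega_\alpha)\otimes z_{\alpha,i}w_i$ gives $\sum_{\alpha,\beta} \omega_\beta \wedge \omega_\alpha \otimes z_{\alpha,i} z_{\beta,i} w_i = 0$ by antisymmetry of the wedge against the symmetry of $z_{\alpha,i}z_{\beta,i}$. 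If $i \notin \overline J$, the $i$-th tensor slot of $\varPhi_J$ carries the constant polynomial $1$, so $d^{(i)}$ inserts a new factor $\varphi_1^{(i)} \cdot w_i$ into that slot: the result is $w_i\, \varphi_1^{(i)} \wedge \varPhi_J$, where the wedge places $\varphi_1^{(i)}$ in the correct sorted position among the $\omega$'s, which is exactly what the definition of $\varPhi_{J,i}$ with its sign $(-1)^{J(i)}$ records, via the Lemma $\varphi_1^{(i)}\wedge \varPhi_J = \varPhi_{J,i}$. Summing $d = \sum_i d^{(i)}$ over all $i$, the terms with $i \in \overline J$ vanish and the terms with $i \notin \overline J$ contribute $w_i \varPhi_{J,i}$; since $\varPhi_{J,i} = 0$ when $i \in \overline J$ anyway, we may write the sum over all $i$ from $1$ to $k$, giving $d\varPhi_J = \sum_{i=1}^k w_i \varPhi_{J,i} = \sum_{i=1}^k w_i \varphi_1^{(i)}\wedge \varPhi_J$.

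I expect the main obstacle to be purely notational: verifying that the sign produced by moving $\varphi_1^{(i)}$ past $\varphi_1^{(j_1)}\wedge\cdots$ into its sorted slot in the outer wedge matches $(-1)^{J(i)}$, and confirming that the product rule \eqref{productrule} is being invoked with the correct sign conventions (the paper's $d^{(i)}$ on $E_0$ versus the full $d$). Since the Lemma immediately preceding the Proposition already packages $\varphi_1^{(i)}\wedge\varPhi_J = \varPhi_{J,i}$, the proof reduces to (a) the vanishing $d^{(i)}\varPhi_J = 0$ for $i\in\overline J$ by symmetry/antisymmetry, and (b) the identification $d^{(i)}\varPhi_J = w_i\,\varphi_1^{(i)}\wedge\varPhi_J$ for $i\notin\overline J$ by inspection of \eqref{defofd} restricted to the $i$-th slot. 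Both are short; no representation theory or spectral sequence input is needed here.
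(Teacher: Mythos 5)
Your proof is correct and fills in exactly the argument the paper leaves implicit when it states the proposition is "immediate" from the preceding lemma: write $\varPhi_J$ as the outer wedge \eqref{varphiJwedgeproduct}, apply the product rule \eqref{productrule} to the $E_0$-differential of Remark \ref{d=d_{0,1}}, kill the slots $i \in \overline{J}$ by the symmetry of $z_{\alpha i}z_{\beta i}$ against the antisymmetry of $\omega_\beta\wedge\omega_\alpha$, and identify the surviving terms via $\varphi_1^{(i)}\wedge\varPhi_J = \varPhi_{J,i}$. The sign bookkeeping also works out as you expect, since the product-rule sign $(-1)^{\sum_{j<i}c_j}$ for a slot $i\notin\overline{J}$ is precisely $(-1)^{J(i)}$, matching the definition of $\varPhi_{J,i}$.
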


\subsection{The map from $\mathrm{gr}(C_+)$ to a Koszul complex $K_+$}

We define $K_+$ to be the complex given by
\begin{equation}
K_+^\bullet = \wwedge{\bullet}((\C^k)^*) \otimes \mathcal{S}_k \text{ with the differential } d_{K_+} = \sum_i A(dw_i) \otimes w_i.
\end{equation}
Here $w_1, \ldots, w_k$ are coordinates on $\C^k$ and $dw_1, \ldots, dw_k$ are the corresponding one-forms.

We define a map $\Psi_+$ of $\mathcal{S}_k$-modules from the associated graded complex $\mathrm{gr}(C_+)$ to $K_+$ by sending $\varPhi_J$ to $dw_J$.  In particular, this sends $\varphi_1^{(i)} \mapsto dw_i$.  Recall that the degrees $\ell$ such that $C_+^\ell$ is non-zero range from $0$ to $k$.

The following lemma is an immediate consequence of Proposition \ref{donbasisfirstcase}. We leave its verification to the reader.

\begin{lem} $\Psi_+$  is an isomorphism  of cochain complexes, $\Psi_+ : \mathrm{gr}(C_+) \to K_+$.
\end{lem}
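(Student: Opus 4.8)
The plan is to verify the two conditions that make $\Psi_+$ an isomorphism of cochain complexes: that it is a bijection, and that it intertwines the induced differential $d$ on $\mathrm{gr}(C_+)$ with $d_{K_+}$.

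For bijectivity, I would first note that $\Psi_+$ is a well-defined map of $\mathcal{S}_k$-modules. By Proposition \ref{varphibasis}, the set $\{\varPhi_J : J \in \mathcal{S}_{\ell,k}\}$ is an $\mathcal{S}_k$-basis of $C_+^\ell$; since each $\varPhi_J = \sum_{I} f_{I,J}\,\omega_I$ is homogeneous of polynomial degree $\ell$ and the ring generators $r_{ij}, w_i$ of $\mathcal{S}_k$ are homogeneous, passing to the associated graded leaves this basis intact, so $\{\varPhi_J : J \in \mathcal{S}_{\ell,k}\}$ is also an $\mathcal{S}_k$-basis of $\mathrm{gr}(C_+^\ell)$. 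On the target side $\{dw_J : J \in \mathcal{S}_{\ell,k}\}$ is visibly an $\mathcal{S}_k$-basis of $K_+^\ell = \wwedge{\ell}((\C^k)^*) \otimes \mathcal{S}_k$. As $\Psi_+$ carries the first basis onto the second $\mathcal{S}_k$-linearly, it is an isomorphism of $\mathcal{S}_k$-modules in every degree, hence a bijection of graded vector spaces.

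For compatibility with the differentials, I would argue that $\Psi_+ \circ d$ and $d_{K_+} \circ \Psi_+$ are both $\mathcal{S}_k$-linear maps $\mathrm{gr}(C_+^\ell) \to K_+^{\ell+1}$: indeed $\Psi_+$ is $\mathcal{S}_k$-linear by construction, while the induced differential $d$ on $\mathrm{gr}(C_+)$ and $d_{K_+} = \sum_i A(dw_i)\otimes w_i$ are each a sum of terms of the form $A(\cdot)\otimes(\text{multiplication operator})$, hence commute with multiplication by any element of $\mathcal{S}_k$. It therefore suffices to check equality on the basis elements $\varPhi_J$. By Proposition \ref{donbasisfirstcase}, $d\varPhi_J = \sum_{i=1}^k w_i\,\varPhi_{J,i}$, so $\Psi_+(d\varPhi_J) = \sum_{i=1}^k w_i\,\Psi_+(\varPhi_{J,i})$, while directly $d_{K_+}(dw_J) = \sum_{i=1}^k w_i\,(dw_i \wedge dw_J)$. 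Thus everything reduces to the elementary identity $dw_i \wedge dw_J = 0$ when $i \in \overline{J}$ and $dw_i \wedge dw_J = (-1)^{J(i)}\,dw_{\{J,i\}}$ when $i \notin \overline{J}$, i.e. the sign produced by sorting $dw_i$ into its slot is exactly the $(-1)^{J(i)}$ built into the definition of $\varPhi_{J,i}$; this gives $\Psi_+(\varPhi_{J,i}) = dw_i \wedge dw_J$ for every $i$ and closes the argument.

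I do not expect a serious obstacle: the statement is a bookkeeping verification already essentially packaged by the relation $\varphi_1^{(i)} \wedge \varPhi_J = \varPhi_{J,i}$ and its evident analogue $dw_i \wedge dw_J = (-1)^{J(i)}\,dw_{\{J,i\}}$ on $K_+$. The only point deserving a moment's care is the interaction of the $\mathcal{S}_k$-module structure with the polynomial filtration defining $\mathrm{gr}$, which is harmless because the $\varPhi_J$ and the generators of $\mathcal{S}_k$ are all homogeneous, and the matching of the sign conventions in $d$ versus $d_{K_+}$, which holds by construction.
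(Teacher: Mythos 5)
Your proposal is correct and is precisely the verification the paper leaves to the reader: the paper declares the lemma an immediate consequence of Proposition \ref{donbasisfirstcase}, and your argument supplies the two missing bookkeeping steps (that the $\varPhi_J$ remain an $\mathcal{S}_k$-basis after passing to the associated graded, and that the sign $(-1)^{J(i)}$ in the definition of $\varPhi_{J,i}$ matches the sign in $dw_i \wedge dw_J$). No discrepancy with the paper's approach.
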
 

We now compute the cohomology of the complex $K_+, d_{K_+}$. 

\begin{prop}\label{K+vanishing}
\hfill

\begin{enumerate}
\item $H^\ell(K_+) = 0,  \ell \neq k$ 
\item $H^k(K_+) = \mathcal{S}_k / (w_1, \ldots, w_k) dw_1 \wedge \cdots \wedge dw_k \cong \mathcal{R}_k dw_1 \wedge \cdots \wedge dw_k$.
\end{enumerate}
\end{prop}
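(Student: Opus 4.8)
The plan is to recognize $K_+$ as the Koszul complex of Section~\ref{defofkoszulsection} attached to the sequence $w_1,\dots,w_k$ in the ring $\mathcal{S}_k$, to observe that this sequence is regular, and then to invoke Corollary~17.5 of \cite{Eisenbud} exactly as in the proof of Proposition~\ref{Avanishing}.

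First I would record that $w_1,\dots,w_k$ is a regular sequence in $\mathcal{S}_k$. This is immediate from the description $\mathcal{S}_k = \mathcal{R}_k[w_1,\dots,w_k]$ as a polynomial ring over $\mathcal{R}_k$ in the fresh variables $w_1,\dots,w_k$: for each $i$ the quotient $\mathcal{S}_k/(w_1,\dots,w_{i-1}) \cong \mathcal{R}_k[w_i,\dots,w_k]$ is again a polynomial ring, so multiplication by $w_i$ is injective on it. (Alternatively this follows from Lemma~\ref{easyregular}, or the subsequent remark on disjoint monomials, after prepending the generators $r_{ij}$ of $\mathcal{R}_k$.)

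Next I would match the two descriptions of the complex. Taking Eisenbud's ring to be $S = \mathcal{S}_k$ and his module to be $\mathcal{S}_k^{\,k}$, with $dw_1,\dots,dw_k$ the basis of $\Hom_{\mathcal{S}_k}(\mathcal{S}_k^{\,k},\mathcal{S}_k)$ dual to the standard basis, and using that forming the exterior algebra commutes with base change (as in Section~\ref{E_0isakoszulsection}), one gets $\wwedge{\ell}\big((\C^k)^*\big) \otimes \mathcal{S}_k \cong \wwedge{\ell}\big((\mathcal{S}_k^{\,k})^*\big)$, under which $d_{K_+} = \sum_i A(dw_i)\otimes w_i$ is the Koszul differential $\sum_i w_i\,A(dw_i)$, i.e.\ $\sum_i f_i A(\omega_i)$ with $f_i = w_i$. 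Since $w_1,\dots,w_k$ is a regular sequence, Corollary~17.5 of \cite{Eisenbud} (applied with $M = S = \mathcal{S}_k$) gives $H^\ell(K_+) = 0$ for $\ell < k$ and $H^k(K_+) \cong \mathcal{S}_k/(w_1,\dots,w_k)$, the isomorphism carrying the class of the top form $dw_1\wedge\cdots\wedge dw_k$ to $1$. Since $K_+^\ell = 0$ for $\ell > k$ (an exterior algebra on $k$ generators), this yields part~(1) and the first equality of part~(2); the second isomorphism of part~(2) is just the ring isomorphism $\mathcal{S}_k/(w_1,\dots,w_k) \cong \mathcal{R}_k$ obtained by killing all the $w_i$ in $\mathcal{R}_k[w_1,\dots,w_k]$.

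There is essentially no obstacle: once one notices that $\mathcal{S}_k$ is literally a polynomial ring in $w_1,\dots,w_k$ over $\mathcal{R}_k$, regularity of $w_1,\dots,w_k$ is automatic and the rest is a direct application of the Koszul machinery already set up in Section~\ref{defofkoszulsection}. The only minor care needed is lining up the indexing and sign conventions between the abstract Koszul complex and the explicit $K_+$, and this does not affect the cohomology.
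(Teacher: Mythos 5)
Your proposal is correct and follows essentially the same route as the paper: identify $K_+$ with Eisenbud's Koszul complex on $\mathcal{S}_k^{\,k}$ via base change, note that $w_1,\dots,w_k$ is (obviously) a regular sequence in the polynomial ring $\mathcal{S}_k = \mathcal{R}_k[w_1,\dots,w_k]$, and apply Corollary~17.5 of \cite{Eisenbud} for both the vanishing below top degree and the identification of $H^k$. The only difference is that you spell out the regularity of $w_1,\dots,w_k$, which the paper simply declares obvious.
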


We first prove statement $(1)$ of Proposition \ref{K+vanishing}.  We first note that 
$$ d_K = \sum_j w_j \otimes A(dw_j)$$
is the differential in the Koszul complex $K_+(w_1,\ldots,w_k)$ associated to the sequence of the linear polynomials $w_1,\ldots,w_k$, see Eisenbud \cite{Eisenbud}, Section 17.2.  To see that the Koszul complex as described in \cite{Eisenbud} is the above complex $K$ we choose $\mathcal{S}_k$ as Eisenbud's ring $R$ and $\mathcal{S}_k^k$ as Eisenbud's module $N$. In our description we are using the exterior algebra $\bigwedge^{\bullet}( (\C^k)^*) \otimes \mathcal{S}_k$.  But the operation of taking the exterior algebra of a module commutes with base change and hence we have $\bigwedge^{\bullet}( (\C^k)^*) \otimes \mathcal{S}_k \cong \bigwedge^{\bullet}(\mathcal{S}_k^k)$. Then we apply Eisenbud's construction with the sequence $w_1,\ldots, w_k$ to obtain the above complex $K_+$.  We recall that $f_1, \ldots, f_k$ is a regular sequence in a ring $R$ if and only if $f_i$ is not a zero divisor in $R/(f_1, \ldots, f_{i-1})$ for $1 \leq i \leq k$.  The following lemma is obvious.

\begin{lem} \label{K+regularsequence}
$w_1, \ldots, w_k$ is a regular sequence in $\mathcal{S}_k$.
\end{lem}

Statement $(1)$ of Proposition \ref{K+vanishing} then follows from Lemma \ref{K+regularsequence} above and Corollary 17.5 of \cite{Eisenbud} (with $M = R$), which states that the cohomology of a Koszul complex $K(f_1,\ldots,f_k)$ below the top degree vanishes if $f_1,\ldots,f_k$ is a regular sequence.

We next note that statement $(2)$ of Proposition \ref{K+vanishing} follows from \cite{Eisenbud}, Corollary 17.5, which states that if $f_1,\ldots,f_k$ is a regular sequence in the ring $R$ then the top cohomology $H^k(K(f_1, \ldots, f_k))$ is isomorphic to $R/(f_1, \ldots, f_k)$.

We now pass from the above results for $K_+$ to the corresponding results for $C_+$.  

\begin{thm}\label{K+vanCvan}
\hfill

\begin{enumerate}
\item $H^{\ell}(C_+) = 0$ $\ell \neq k$
\item $H^k(C_+) = \mathcal{S}_k / (w_1, \ldots, w_k) \varPhi_k \cong \mathcal{R}_k \varphi_k$.
\end{enumerate}
\end{thm}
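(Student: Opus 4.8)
The plan is to transfer the cohomology computation from the associated graded complex $\mathrm{gr}(C_+)$, which we have just identified with the Koszul complex $K_+$, back to the original filtered complex $C_+$ using the spectral sequence machinery of Section \ref{spectralsection}. First I would recall that the filtration induced by polynomial degree on $C = C^\bullet(\mathfrak{so}(n,1),\SO(n);\mathcal{P}_k)$ restricts to a filtration on the subcomplex $C_+$ (since the action of $\iota\otimes\iota$ preserves polynomial degree and commutes with $d$), and that after the regrading described in Section \ref{constructionofss} this filtration is decreasing, bounded above, and exhaustive. Hence Proposition \ref{Steveprop1} applies and the spectral sequence of $C_+$ converges to $\mathrm{gr}(H^\bullet(C_+))$, with $E_0$ term the associated graded complex $\mathrm{gr}(C_+)$ and differential $d_{0,1}$, which by the lemma preceding this theorem is isomorphic via $\Psi_+$ to $(K_+, d_{K_+})$.

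Next I would invoke Proposition \ref{K+vanishing}, which gives $H^\ell(\mathrm{gr}(C_+)) = H^\ell(K_+) = 0$ for $\ell \neq k$ and $H^k(K_+) \cong \mathcal{S}_k/(w_1,\ldots,w_k)\, dw_1\wedge\cdots\wedge dw_k \cong \mathcal{R}_k\, dw_1\wedge\cdots\wedge dw_k$. For statement (1): since $H^{\ell}(\mathrm{gr}(C_+)) = 0$ for all $\ell \neq k$, Proposition \ref{grCzeroimpliesCzero} immediately yields $H^\ell(C_+) = 0$ for $\ell \neq k$. For statement (2): to identify $H^k(C_+)$ I would apply statement (3) of Proposition \ref{generalspectral} with $\ell = k$; the hypotheses $H^{k-1}(\mathrm{gr}(C_+)) = 0$ and $H^{k+1}(\mathrm{gr}(C_+)) = 0$ hold by Proposition \ref{K+vanishing} (recall $C_+^\ell = 0$ for $\ell > k$, so the degree $k+1$ statement is trivial), giving an isomorphism $H^k(C_+) \xrightarrow{\sim} H^k(\mathrm{gr}(C_+))$. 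Composing with the isomorphism $\Psi_+$ and Proposition \ref{K+vanishing}(2), and tracking the generator $\varPhi_k \mapsto dw_1\wedge\cdots\wedge dw_k$, we obtain $H^k(C_+) \cong \mathcal{S}_k/(w_1,\ldots,w_k)\,\varPhi_k \cong \mathcal{R}_k\varphi_k$.

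The one point requiring genuine care — and the main potential obstacle — is verifying that the isomorphism of Proposition \ref{generalspectral}(3) is compatible with the $\mathcal{S}_k$-module structures and that the leading-term map sends the Koszul generator $\varPhi_k \in C_+^k$ to itself, i.e.\ that $\varPhi_k$ survives as a nonzero cohomology class in $C_+$ with the claimed module of multiplier coefficients. Here one uses that $d\varPhi_k = \sum_i w_i\varPhi_{k,i} = 0$ (each $\varPhi_{k,i}$ vanishes since $i \in \overline{J}$ when $J = (1,\ldots,k)$), so $\varPhi_k$ is genuinely closed in $C_+$, not merely in $E_0$; and the identification of its coefficient module as $\mathcal{S}_k/(w_1,\ldots,w_k)$ follows because the full differential $d = d_{0,1} + d_{4,-3}$ agrees with $d_{0,1}$ on the top-degree piece $C_+^{k-1} \to C_+^k$ (as $d_{4,-3}$ would land in $C_+^k$ with polynomial degree shifted down, but degree considerations on $K_+$ show the relevant relations are exactly $w_1,\ldots,w_k$). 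I expect this bookkeeping to be routine given the explicit basis from Proposition \ref{varphibasis} and the product rule \eqref{productrule}, so no serious difficulty should arise.
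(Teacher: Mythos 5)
Your proposal is correct and follows exactly the paper's argument: statement (1) via Proposition \ref{K+vanishing} together with Proposition \ref{grCzeroimpliesCzero}, and statement (2) via statement (3) of Proposition \ref{generalspectral} plus the identification of $\varPhi_k$ with the Kudla--Millson form $\varphi_k$. The additional bookkeeping you flag (that $\varphi_k$ is closed for the full differential $d$, which the paper establishes from the product rule \eqref{productrule} right after Equation \eqref{vaprhikdef}) is a sound and worthwhile check, but it does not change the route.
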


\begin{proof}
Since $K_+$ is the associated graded complex of $C_+$, the statement (1) is an immediate consequence of Proposition \ref{K+vanishing} and Proposition \ref{grCzeroimpliesCzero}.

Statement $(2)$ follows by applying statement $(3)$ of Proposition \ref{generalspectral} and noting that $\varPhi_k$ is the form $\varphi_k$ of Kudla and Millson.

\end{proof}

\section{The computation of the cohomology of $C_-$} \label{computationofcminus}
We now compute the cohomology of the associated graded complex $\mathrm{gr}(C_-)$.  As in the previous section, the differential is $d_{0,1}$.  Again, we abuse notation and call this operator $d$.  We will see there is only one non-zero cohomology group and use the results of Section \ref{spectralsection} to compute the cohomology of the original complex.

In Proposition \ref{thecochains} we proved that $\{* \Phi_J : J \in \mathcal{S}_{n-\ell, k} \}$ was a basis for the $\mathcal{S}_k$-module $C_-^\ell$.  Note that in order to obtain a cochain of degree $\ell$, we assume $J \in \mathcal{S}_{n-\ell, k}$ instead of $\mathcal{S}_{\ell,k}$, since by Equation \eqref{defofphij}, for $J \in \mathcal{S}_{\ell, k}, \varPhi_J= \displaystyle \sum_{I \in \mathcal{S}_{\ell,n}} f_{I,J} \omega_I$ and hence

\begin{equation} \label{def*phij}
*\varPhi_J =   \sum_{I \in \mathcal{S}_{\ell,n}}   f_{I,J} (*\omega_I)
\end{equation}
has degree $n-\ell$.  For our later computations, we need to replace the determinant $f_{I,J}$ of \eqref{def*phij} by the monomials $z_{I,J}$ where $z_{I,J} = z_{i_1,j_1} \cdots z_{i_{n-\ell},j_{n-\ell}}$.  In order to do this, we sum over all ordered subsets $\mathcal{I}_{n-\ell, n}$, instead of just $\mathcal{S}_{n-\ell, n}$ (those which are in increasing order), to obtain

\begin{equation} \label{defof*phij}
*\varPhi_J =  \sum_{I \in \mathcal{I}_{n-\ell,n}}   z_{I,J} (*\omega_I).
\end{equation}

Using this basis we may identify $C_-^\ell$ with the direct sum of $\binom{k}{n-\ell}$ copies of $\mathcal{S}_k$.

\subsection{A formula for $d$} \label{dformula}
\hfill

Our goal is to prove Proposition \ref{donbasis}, a formula for $d$ relative to the basis $\{*\varPhi_J\}$.  Recall that for $J \in \mathcal{S}_{n-\ell,k}$, and $1 \leq i \leq k$, we have $J(i)$ is the number of elements in $J$ less than $i$.   For $1 \leq i \leq k$, $J \in \mathcal{S}_{\ell,k}$, if $i \in \overline{J}$, then we denote by $\{J - i\}$ the element of $\mathcal{S}_{\ell-1, k}$ that corresponds to the set $\overline{J} - \{i\}$.  Now we define $*\varPhi_{J - i} \in C_+^{\ell+1}$  by

\begin{equation}
\varPhi_{J-i} = \begin{cases} (-1)^{J(j)} \varPhi_{\{J - i\}} & \text{ if } i \in \overline{J} \\ 0 & \text{ if } i \notin \overline{J}.
\end{cases}
\end{equation}



\begin{prop} \label{donbasis}
Assume $|J| = n-\ell$, then we have
\begin{equation*}
d(*\varPhi_J) = (-1)^{(n-\ell-1)} \big( \sum_{j \in J}   \sum_{i =1}^k w_i r_{ij} *\varPhi_{J-j}    \big).
\end{equation*} 
\end{prop}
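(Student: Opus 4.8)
The plan is to compute $d(*\varPhi_J)$ directly from the definition of $d$ in Equation \eqref{defofd} applied to the expansion \eqref{defof*phij} of $*\varPhi_J$, and to organize the resulting sum so that it collapses onto the basis elements $*\varPhi_{J-j}$. Recall that on $E_0$ the differential is $d = d_{0,1}$, which by Remark \ref{d=d_{0,1}} is $\sum_{i=1}^k\sum_{\alpha=1}^n A(\omega_\alpha)\otimes z_{\alpha i}w_i$. So the first step is to write
\begin{equation*}
d(*\varPhi_J) = \sum_{I\in\mathcal{I}_{n-\ell,n}} z_{I,J}\sum_{i=1}^k\sum_{\alpha=1}^n w_i z_{\alpha i}\, \big(\omega_\alpha\wedge *\omega_I\big),
\end{equation*}
since $d$ acts as a derivation twisted by the polynomial multiplication, and here $z_{\alpha i}w_i$ multiplies the coefficient while $A(\omega_\alpha)$ acts on the exterior part. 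The key wedge identity I would establish first is that $\omega_\alpha \wedge *\omega_I$ is nonzero precisely when $\alpha \in \overline{I}$, and in that case equals $\pm *\omega_{I\setminus\{\alpha\}}$ with a sign governed by the position of $\alpha$ in $I$; this is the standard behavior of the Hodge star under wedging with a basis covector. Feeding this back in forces $\alpha = i_t$ for some $t$, so the double sum over $\alpha$ and over $I$ reorganizes into a sum over $(n-\ell-1)$-tuples with one index removed.

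The second step is the bookkeeping that turns the monomials back into determinants $f_{I',J'}$, i.e. back into the basis $\{*\varPhi_{J-j}\}$. After setting $\alpha = i_t$, the coefficient $z_{I,J}z_{i_t,i}w_i$ contains the extra factor $z_{i_t, j_t}$ from $z_{I,J}$ together with the new factor $z_{i_t, i}w_i$ coming from the differential; since the row index $i_t$ is now repeated, one recognizes $\sum_{i_t} z_{i_t,j_t}z_{i_t,i} = r_{j_t i} = r_{i j_t}$, which is exactly how the quadratic invariant $r_{ij}$ enters. The remaining monomial factors, summed over ordered tuples, reassemble (via the same passage between $\mathcal{I}$ and $\mathcal{S}$ used to derive \eqref{defof*phij}) into $*\varPhi_{J-j}$ where $j = j_t$ ranges over $\overline{J}$. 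Careful tracking of the three sources of signs — the sign from $\omega_{i_t}\wedge *\omega_I$, the sign hidden in passing from the ordered-tuple sum to the $*\varPhi$ normalization, and the sign $(-1)^{J(j)}$ built into the definition of $\varPhi_{J-j}$ — must combine to give the global constant $(-1)^{(n-\ell-1)}$ stated in the proposition.

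I expect the sign reconciliation to be the main obstacle: there are several competing conventions (the definition of $*\varPhi_{J-j}$ with its $(-1)^{J(j)}$, the Koszul-type sign in $A(\omega_\alpha)$ acting on a wedge, and the sign in the Hodge star of a basis element relative to $\vol = \omega_1\wedge\cdots\wedge\omega_n$), and getting them to telescope into the clean overall factor $(-1)^{n-\ell-1}$ requires choosing an ordering convention and sticking to it. A clean way to sidestep most of the mess is to do the computation for a single outer factor first — i.e. compute $d(*\varphi_1^{(j)})$-type building blocks — and then invoke the product rule \eqref{productrule} for $d$ with respect to the outer exterior product, since $*\varPhi_J$ is, up to sign, an outer product of Hodge-dualized one-forms; this localizes the sign analysis to one index at a time and makes the inductive sign $(-1)^{n-\ell-1}$ transparent. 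Once the formula is in hand it is purely a matter of verifying the two degenerate cases ($i\notin\overline{J}$ giving zero, handled by the convention for $\varPhi_{J-i}$) and confirming degrees match ($*\varPhi_J$ has degree $\ell$, $*\varPhi_{J-j}$ has degree $\ell+1$, consistent with $d$ raising degree by one).
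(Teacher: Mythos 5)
Your proposal is correct and follows essentially the same route as the paper: the paper likewise expands $d(*\varPhi_J)$ via the formula for $d$, proves your ``key wedge identity'' as Lemma \ref{wedgeandstar} (namely $\omega_\alpha\wedge *\omega_I = (-1)^{n-\ell-1}*\iota_{e_{\alpha,n+1}}\omega_I$, which is where the global sign comes from), and then handles the reassembly into $r_{ij_s}*\varPhi_{J-j_s}$ exactly by the ``one slot at a time'' outer-product argument you suggest at the end, using that contraction acts as a derivation on $\varphi_1^{(j_1)}\wedge\cdots\wedge\varphi_1^{(j_{n-\ell})}$ and replaces the $s$-th factor by $(-1)^{s-1}r_{i,j_s}$ (Lemma \ref{mainformulaford}).
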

The proposition will follow from the next two lemmas. Note first that from the defining formula we have
\begin{equation}\label{firstformulaford}
d(*\varPhi_J) = \sum_{i = 1}^k \sum_{\alpha=1}^n z_{\alpha,i} w_i \omega_\alpha \wedge (*\varPhi_J).
\end{equation}

In what follows we will need to extend the definition of $J-j$ for $J \in \mathcal{S}_{n-\ell,k}$ to elements $I \in \mathcal{I}_{n-\ell,n}$.  Given $I \in \mathcal{I}_{n-\ell,n}$, we define the symbol $I-i_s$ to be the element $(i_1, \ldots, \widehat{i_s}, \ldots, i_{n-\ell}) \in \mathcal{I}_{n-\ell-1,n}$, the symbol $\widehat{i_s}$ indicating that the term $i_s$ is omitted.

We leave the proof of the next lemma to the reader.

\begin{lem} \label{wedgeandstar}
Given $I \in \mathcal{I}_{n-\ell,n}$,
\begin{align*}
\omega_{\alpha} \wedge *(\omega_I) = &(-1)^{(n-\ell-1)}* \iota_{e_{\alpha, n+1}}(\omega_I)\\
= & (-1)^{(n-\ell-1)}\sum_{s=1}^{n-\ell} (-1)^{s-1}\delta_{\alpha, i_{s}}  *  (\omega_{I-i_s}).\\
\end{align*}

Here $\delta_{\alpha, i_{s}}$ is the Kronecker delta 
$$\delta_{\alpha, i_{s}} = \begin{cases} 1 & \ \text{if} \  \alpha = i_{s}  \\ 0 & \ \text{if} \  \alpha  \neq i_{s} 
\end{cases}$$
\end{lem}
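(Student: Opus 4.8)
The plan is to derive the first displayed identity from the standard commutation of the Hodge star with interior multiplication, and then to read off the second as the coordinate expansion of $\iota_{e_{\alpha,n+1}}$.

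First I would record that, with the Riemannian metric normalized so that $\mathrm{vol}=\omega_1\wedge\cdots\wedge\omega_n$ is a unit volume form, the basis $\{\omega_\alpha\}$ of $\mathfrak{p}_0^*$ is orthonormal and $\omega_\alpha$ is the metric dual of $e_{\alpha,n+1}\in\mathfrak{p}_0$. I would then invoke two classical facts about the Hodge star of an $n$-dimensional positive-definite inner product space: for a $p$-form $\beta$ and a vector $X$ with metric-dual $1$-form $X^\flat$ one has $\iota_X(\ast\beta)=(-1)^p\,\ast(X^\flat\wedge\beta)$, and $\ast\ast=(-1)^{p(n-p)}$ on $p$-forms. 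Applying the first with $\beta=\ast\omega_I$, a form of degree $\ell$ since $|I|=n-\ell$, gives $\iota_{e_{\alpha,n+1}}(\ast\ast\omega_I)=(-1)^{\ell}\ast(\omega_\alpha\wedge\ast\omega_I)$; rewriting the left side via $\ast\ast\omega_I=(-1)^{(n-\ell)\ell}\omega_I$ and then applying $\ast$ once more to both sides, using $\ast\ast=(-1)^{(\ell+1)(n-\ell-1)}$ in degree $\ell+1$, yields
\[
\omega_\alpha\wedge\ast\omega_I=(-1)^{e}\,\ast\!\big(\iota_{e_{\alpha,n+1}}\omega_I\big),\qquad e=(\ell+1)(n-\ell-1)+(n-\ell)\ell+\ell,
\]
and a one-line reduction modulo $2$ collapses $e$ to $n-\ell-1$. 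The second displayed equality is then immediate: expanding $\iota_{e_{\alpha,n+1}}$ on $\omega_I=\omega_{i_1}\wedge\cdots\wedge\omega_{i_{n-\ell}}$ by the graded Leibniz rule with $\omega_\beta(e_{\alpha,n+1})=\delta_{\alpha\beta}$, the $s$-th slot contributes $(-1)^{s-1}\delta_{\alpha,i_s}\,\omega_{I-i_s}$, and applying the linear operator $\ast$ gives the stated sum.

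The only real content is the sign bookkeeping, so the main thing to check carefully is that $e\equiv n-\ell-1\pmod 2$ and that the abstract identity $\iota_X\ast=(-1)^{\deg}\ast(X^\flat\wedge\,\cdot)$ is used with exactly the conventions of the paper. As a cross-check, and as an alternative proof avoiding the abstract identities, I would note that both sides of the first equality are multiplied by $\mathrm{sgn}(\pi)$ when the entries of $I$ are permuted by $\pi$, so it suffices to treat $I\in\mathcal{S}_{n-\ell,n}$; writing $\ast\omega_I=\epsilon(I)\,\omega_{I^c}$ with $\epsilon(I)=(-1)^{\#\{(a,b)\,:\,a\in\overline{I},\ b\notin\overline{I},\ a>b\}}$, the case $\alpha\notin\overline{I}$ is trivially $0=0$ (the left side repeats $\omega_\alpha$), while the case $\alpha=i_t\in\overline{I}$ reduces to the identity $\epsilon(I)\,\epsilon(I-i_t)=(-1)^{\,n-\ell+t+\rho}$ with $\rho=\#\{j\notin\overline{I}:\ j<i_t\}$, which follows by tracking how the inversion count changes when $i_t$ is moved from $\overline{I}$ into its complement.
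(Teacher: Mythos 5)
Your proof is correct. The paper explicitly leaves this lemma to the reader, so there is no argument to compare against; your derivation from the standard identities $\iota_X(\ast\beta)=(-1)^{\deg\beta}\ast(X^\flat\wedge\beta)$ and $\ast\ast=(-1)^{p(n-p)}$ is the natural one, and the exponent does reduce correctly: writing $m=n-\ell$, one has $e=(\ell+1)(m-1)+m\ell+\ell=2m\ell+m-1\equiv n-\ell-1\pmod 2$. The combinatorial cross-check is also sound (the inversion counts of $I$ and $I-i_t$ differ by $\rho$ pairs with first entry $i_t$ and by $(n-\ell)-t$ pairs with second entry $i_t$, giving $\epsilon(I)\epsilon(I-i_t)=(-1)^{n-\ell+t+\rho}$ as you claim), and both sides being alternating in the entries of $I$ justifies the reduction to increasing tuples needed for the extension to $\mathcal{I}_{n-\ell,n}$.
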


\begin{lem} \label{mainformulaford}
\begin{equation*}
\sum_{\alpha=1}^n z_{\alpha, i} A(\omega_\alpha) *\varPhi_J = \sum_{s=1}^{n-\ell} (-1)^{s-1} r_{i j_s} *\varPhi_{J-j_s}
\end{equation*}
\end{lem}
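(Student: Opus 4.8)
The plan is to compute the left-hand side directly by unwinding the definitions, using the expression \eqref{defof*phij} for $*\varPhi_J$ in terms of the monomials $z_{I,J}$ summed over $I \in \mathcal{I}_{n-\ell,n}$. Writing $*\varPhi_J = \sum_{I \in \mathcal{I}_{n-\ell,n}} z_{I,J}(*\omega_I)$ and applying $\sum_\alpha z_{\alpha,i} A(\omega_\alpha)$, I would move the left multiplication $A(\omega_\alpha)$ past the polynomial coefficient $z_{I,J}$ and then invoke Lemma \ref{wedgeandstar} to rewrite $\omega_\alpha \wedge (*\omega_I)$ as $(-1)^{n-\ell-1}\sum_{s=1}^{n-\ell}(-1)^{s-1}\delta_{\alpha,i_s}(*\omega_{I-i_s})$. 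The Kronecker delta then collapses the $\alpha$-sum: the only surviving term for each $s$ sets $\alpha = i_s$, which converts the factor $z_{\alpha,i}$ in front into $z_{i_s, i}$, so that $z_{i_s,i}\, z_{I,J}$ reassembles into a monomial whose $I$-entry has been replaced by the extra column-$i$ factor in row $i_s$.

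The key bookkeeping step is to recognize that after fixing $s$, the resulting sum over $I$ (with $i_s$ now free, since it has been matched) reorganizes precisely into $r_{i j_s}\, z_{I', J-j_s}$ summed over $I' \in \mathcal{I}_{n-\ell-1,n}$: the sum over the value of the deleted index $i_s$ against the coefficient $z_{i_s,i} z_{i_s, j_s}$ produces $\sum_{i_s=1}^n z_{i_s,i} z_{i_s,j_s} = r_{i j_s}$, using the definition \eqref{r_{ij}} of $r_{ij}$ as the inner product of columns (here the two columns being $i$ and $j_s$, and $n = \dim V_+$). The remaining data $(I - i_s, J - j_s)$ then reconstitutes $*\varPhi_{J-j_s}$ via \eqref{defof*phij}, and the sign $(-1)^{n-\ell-1}$ combined with the definition of $\varPhi_{J-j_s}$ (which carries its own sign $(-1)^{J(j_s)}$) needs to be matched carefully against the stated $(-1)^{s-1}$ on the right-hand side; one checks these agree because $j_s$ is the $s$-th element of the increasing tuple $J$, so $J(j_s) = s-1$.

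The main obstacle I anticipate is exactly this sign-tracking: one must be scrupulous about (a) the sign incurred moving $\omega_\alpha$ through $*\omega_I$ in Lemma \ref{wedgeandstar}, (b) the sign built into the notation $\varPhi_{J-j}$ versus $\varPhi_{\{J-j\}}$, and (c) whether we are summing over ordered tuples $\mathcal{I}$ or strictly increasing tuples $\mathcal{S}$ — the switch between these conventions (made deliberately in the passage from \eqref{def*phij} to \eqref{defof*phij}) is what makes the monomials $z_{I,J}$ rather than determinants $f_{I,J}$ appear, and it affects the combinatorial factors. Once the sign is pinned down on a single representative term, the identity follows, and Proposition \ref{donbasis} is then obtained by substituting this lemma into \eqref{firstformulaford}, summing over $i$, and collecting the factor $w_i$: the double sum $\sum_i \sum_s (-1)^{s-1} w_i r_{ij_s} *\varPhi_{J-j_s}$ is rewritten as a sum over $j \in J$ (with $j = j_s$) and the overall sign $(-1)^{n-\ell-1}$ is extracted, matching the statement of Proposition \ref{donbasis}.
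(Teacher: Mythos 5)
Your overall strategy---expand $*\varPhi_J$ in the monomial basis \eqref{defof*phij}, apply Lemma \ref{wedgeandstar}, and collapse the Kronecker delta---runs parallel to the paper's, but the key reassembly step has a genuine gap. After fixing $s$ and setting $\alpha = i_s$, you claim the deleted index ``is now free'' and sums to $\sum_{i_s=1}^{n} z_{i_s,i} z_{i_s,j_s} = r_{ij_s}$. It is not free: the tuples $I \in \mathcal{I}_{n-\ell,n}$ have \emph{distinct} entries, so for a fixed residual tuple $I' = I - i_s$ the index $i_s$ ranges only over $\{1,\dots,n\}\setminus\overline{I'}$. For fixed $s$ you therefore obtain $\sum_{I'}\bigl(\sum_{\beta\notin\overline{I'}} z_{\beta i} z_{\beta j_s}\bigr) z_{I',J-j_s}\,(*\omega_{I'})$, which differs from $r_{ij_s}\,(*\varPhi_{\{J-j_s\}})$ by the ``diagonal'' terms with $\beta\in\overline{I'}$; these are genuinely nonzero for a single $s$ (already for $n-\ell=2$ they are $\sum_{\gamma} z_{\gamma i}z_{\gamma j_1}z_{\gamma j_2}(*\omega_\gamma)$). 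They do cancel, but only after summing over $s$ against the signs $(-1)^{s-1}$, because $z_{\beta j_s}z_{\beta j_u}$ is symmetric in $s \leftrightarrow u$ while the accompanying signs are antisymmetric. Your outline never addresses this cancellation, so as written the argument does not close.

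The paper sidesteps the issue entirely: it uses the first line of Lemma \ref{wedgeandstar} to convert $\sum_\alpha z_{\alpha,i}A(\omega_\alpha)(*\varPhi_J)$ into $(-1)^{n-\ell-1}\,{*}\sum_\alpha z_{\alpha,i}\,\iota_{e_{\alpha,n+1}}(\varPhi_J)$, writes $\varPhi_J = \varphi_1^{(j_1)}\wedge\cdots\wedge\varphi_1^{(j_{n-\ell})}$ as in \eqref{varphiJwedgeproduct}, and applies the antiderivation property of $\iota_{e_{\alpha,n+1}}$. The scalar computation $\sum_\alpha z_{\alpha,i}\,\iota_{e_{\alpha,n+1}}(\varphi_1^{(j_s)}) = r_{i j_s}$ then takes place inside a single factor, with $\alpha$ running over all of $\{1,\dots,n\}$ unconstrained by the other factors, so no diagonal terms are ever dropped. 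If you want to keep your monomial-level bookkeeping, you must either supply the explicit cancellation argument for the $\beta\in\overline{I'}$ terms or switch to the antiderivation formulation; your sign analysis (including $J(j_s)=s-1$) is otherwise on the right track.
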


\begin{proof}
By Lemma \ref{wedgeandstar},
\begin{align*}
\sum_{\alpha=1}^n z_{\alpha, i} &A(\omega_\alpha) *\varPhi_J = (-1)^{n-\ell-1}* \sum_{\alpha=1}^n z_{\alpha, i} \iota_{e_{\alpha, n+1}} (\varPhi_J) \\
= &(-1)^{n-\ell-1}* \sum_{\alpha=1}^n z_{\alpha, i} \iota_{e_{\alpha, n+1}} ( \varphi_1^{(j_1)} \wedge \cdots \wedge \varphi_1^{(j_s)} \wedge \cdots \varphi_1^{(j_{n-\ell})} ) \\
= &(-1)^{n-\ell-1}* \sum_{s=1}^{n-\ell} (-1)^{s-1} \big( \varphi_1^{(j_1)} \wedge \cdots \wedge \sum_{\alpha = 1}^n z_{\alpha,i} \iota_{e_{\alpha, n+1}}(\varphi_1^{(j_s)}) \wedge \cdots \varphi_1^{(j_{n-\ell})} \big)
\end{align*}
where the second equality is by Equation \eqref{varphiJwedgeproduct}.  However,
\begin{align*}
\sum_{\alpha = 1}^n z_{\alpha,i} \iota_{e_{\alpha, n+1}}(\varphi_1^{(j_s)}) = & \sum_{\alpha = 1}^n z_{\alpha,i} \sum_{\beta=1}^n z_{\beta, j_s} \iota_{e_{\alpha, n+1}}(\omega_\beta) \\
= &\sum_{\alpha = 1}^n \sum_{\beta=1}^n z_{\alpha,i} z_{\beta, j_s}  \delta_{\alpha, \beta}\\
= &r_{i,j_s}.
\end{align*}
We see that in the $j_s^{th}$ slot we have replaced $\varphi_1$ by $(-1)^{s-1} r_{i,j_s}$ and the lemma follows.

\end{proof}

Proposition \ref{donbasis} follows by substituting the formula of Lemma \ref{mainformulaford} into Equation \eqref{firstformulaford}.

\subsection{The map from $\mathrm{gr}(C_-)$ to a Koszul complex $K_-$}

Define the cubic polynomials $c_j \in \mathcal{S}_k$ by
\begin{equation}\label{cubic}
c_j = \sum_{i=1}^k r_{ij} w_i, 1 \leq j \leq k.
\end{equation} 

We note that the $c_i$ are the result of the following matrix multiplication of elements of $\mathcal{S}_k$
\begin{equation*}
\begin{pmatrix}
r_{11} & \cdots & r_{1k} \\
\vdots & \ddots & \vdots \\
r_{k1} & \cdots & z_{kk}
\end{pmatrix}
\begin{pmatrix}
w_1 \\
\vdots \\
w_k
\end{pmatrix}
=
\begin{pmatrix}
c_1 \\
\vdots \\
c_k
\end{pmatrix}.
\end{equation*}
We then define $K_-$ to be the complex given by

\begin{equation} \label{newcomplexK-}
K_-^{\bullet} =  \wwedge{\bullet}(( \C^k)^*) \otimes \mathcal{S}_k  
\ \text{with the differential} \ d_{K_-} = \sum_{j=1}^k  A(dw_j) \otimes c_j.
\end{equation}

In order to obtain an isomorphism of complexes we need to shift degrees according to the following definition.

\begin{defn}
Let $M$ be a cochain complex and $j$ an integer.  Then we define the cochain complex $M[j]$ by $(M[j])^i = M^{j+i}$.
\end{defn}

We define a map $\Psi_-$ from the associated graded complex $\mathrm{gr}(C_-)[n-k]^{\ell}$ to $K_-^{\ell}$ by sending $* \varPhi_J$ to $* dw_J$.  Here by $*$ we mean the Hodge star for the standard Euclidean metric on $\R^k$ extended to be complex linear. Note that the degrees $\ell$ such that $C_-[n-k]^\ell$ is nonzero range from $0$ to $k$.

The following lemma is an immediate consequence of Proposition \ref{donbasis}. We leave its verification to the reader.

\begin{lem} $\Psi_-$  is an isomorphism  of cochain complexes 
$$\Psi_- : \mathrm{gr}(C_-)[n-k] \to K_-.$$
\end{lem}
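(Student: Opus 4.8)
The plan is to check directly that the degree-shifted, $\mathcal{S}_k$-linear map $\Psi_-$ defined by $*\varPhi_J \mapsto *\,dw_J$ commutes with the differentials. Since $\Psi_-$ is, by construction, an $\mathcal{S}_k$-module isomorphism sending the $\mathcal{S}_k$-basis $\{*\varPhi_J : J \in \mathcal{S}_{n-\ell,k}\}$ of $C_-^\ell$ (from Proposition \ref{thecochains}) to the $\mathcal{S}_k$-basis $\{*\,dw_J : J \in \mathcal{S}_{n-\ell,k}\}$ of $K_-^{\ell-(n-k)} = \wwedge{k-(n-\ell)}((\C^k)^*)\otimes\mathcal{S}_k$, the only thing left to verify is compatibility with $d$. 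So the single key step is: apply $\Psi_-$ to both sides of the formula in Proposition \ref{donbasis} and compare with the Koszul differential $d_{K_-}$ of \eqref{newcomplexK-}.

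Concretely, Proposition \ref{donbasis} gives, for $|J| = n-\ell$,
\begin{equation*}
d(*\varPhi_J) = (-1)^{n-\ell-1}\sum_{j\in J}\sum_{i=1}^k w_i r_{ij}\,(*\varPhi_{J-j}).
\end{equation*}
Applying $\Psi_-$ and using $\mathcal{S}_k$-linearity, the right side becomes $(-1)^{n-\ell-1}\sum_{j\in J}\big(\sum_{i=1}^k w_i r_{ij}\big)\,(*\,dw_{J-j}) = (-1)^{n-\ell-1}\sum_{j\in J} c_j\,(*\,dw_{J-j})$ by the definition \eqref{cubic} of the cubic $c_j$. On the other side, $d_{K_-}(*\,dw_J) = \sum_{j=1}^k c_j\, A(dw_j)(*\,dw_J)$; since $A(dw_j)$ followed by $*$ (or $*$ followed by contraction) picks out exactly the terms $j\in\overline J$ and produces $*\,dw_{J-j}$ up to the sign already encoded in the $\varPhi_{J-j}$ notation, one checks the two expressions agree, the global sign $(-1)^{n-\ell-1}$ being absorbed by the degree shift $[n-k]$ and the conventions for $*$ on $\R^k$ versus $\mathfrak p^*$. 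This is the ``leave its verification to the reader'' computation; it is bookkeeping with the signs $(-1)^{J(j)}$ in $\varPhi_{J-j}$ matched against the signs coming from $A(dw_j)$ and the two Hodge stars.

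The main obstacle is purely notational: matching the sign conventions. There are three sign sources — the $(-1)^{J(j)}$ built into $*\varPhi_{J-j}$, the sign produced by left exterior multiplication $A(dw_j)$ on a wedge already containing or not containing $dw_j$, and the sign relating $*$ on $\wwedge{\bullet}\mathfrak p^*$ (via $\vol=\omega_1\wedge\cdots\wedge\omega_n$) to $*$ on $\wwedge{\bullet}(\C^k)^*$ under the degree shift. One must confirm these combine so that the overall $(-1)^{n-\ell-1}$ in Proposition \ref{donbasis} is exactly what the shifted Koszul differential wants; this is why the paper pushes it to the reader. Once the signs are reconciled, the map of complexes is immediate and, being a bijection on basis elements over $\mathcal{S}_k$, is an isomorphism of cochain complexes $\mathrm{gr}(C_-)[n-k] \xrightarrow{\ \sim\ } K_-$.
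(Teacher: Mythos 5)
Your proposal is correct and follows exactly the route the paper intends: the paper states the lemma as "an immediate consequence of Proposition \ref{donbasis}" and leaves the verification to the reader, which is precisely the computation you carry out — applying the $\mathcal{S}_k$-linear bijection on the bases from Proposition \ref{thecochains}, recognizing $\sum_i w_i r_{ij}$ as $c_j$, and matching signs against the Koszul differential of \eqref{newcomplexK-}. Your explicit attention to the three sign sources is more detail than the paper provides, and is the right thing to check.
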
 

We now compute the cohomology of the complex $K_-, d_{K_-}$. 

\begin{prop}\label{Koszulvanishing}
\hfill

\begin{enumerate}
\item $H^\ell(K_-) = 0,  \ell \neq k$ 
\item $H^k(K_-) = \mathcal{S}_k / (c_1, \ldots, c_k) \vol$.
\end{enumerate}
\end{prop}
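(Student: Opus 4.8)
The plan is to recognize $K_-$ as the Koszul complex $K(c_1,\ldots,c_k)$ over the ring $\mathcal{S}_k$, so that the whole proposition reduces to the single assertion that $c_1,\ldots,c_k$ is a regular sequence in $\mathcal{S}_k$. Indeed, by inspection $d_{K_-}=\sum_j A(dw_j)\otimes c_j$ is the Koszul differential attached to the sequence $c_1,\ldots,c_k$ in exactly the way $d_{K_+}$ was attached to $w_1,\ldots,w_k$ in Proposition \ref{K+vanishing} (take $\mathcal{S}_k$ as Eisenbud's ring $R$ and $\mathcal{S}_k^k$ as his module $N$, and use that the exterior algebra commutes with base change); moreover $\wwedge{\ell}((\C^k)^*)=0$ for $\ell>k$, so the complex lives in degrees $0\le\ell\le k$. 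Granting regularity, Corollary 17.5 of \cite{Eisenbud} (with $M=R=\mathcal{S}_k$) immediately gives $H^\ell(K_-)=0$ for $\ell\neq k$ and $H^k(K_-)\cong\mathcal{S}_k/(c_1,\ldots,c_k)$, free of rank one over this quotient on the top generator $dw_1\wedge\cdots\wedge dw_k$; under the isomorphism $\Psi_-$ of the preceding subsection that generator corresponds to $*\varPhi_\emptyset=\vol$, which is why statement (2) is written with $\vol$. Note that the hypothesis $k<n$ enters only through the Second Main Theorem for $\OO(n)$, which is what guarantees that $\mathcal{S}_k=\C[r_{11},\ldots,r_{kk},w_1,\ldots,w_k]$ is genuinely a polynomial ring, so that \cite{Eisenbud} applies.

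So the real work is to show that the cubics $c_j=\sum_{i=1}^k r_{ij}w_i$, $1\le j\le k$, form a regular sequence in $\mathcal{S}_k$, and here I would repeat the strategy used for Proposition \ref{Aregularsequence}: prepend the off-diagonal variables and reduce to disjoint monomials. Explicitly, consider the longer sequence $\sigma=\big(\{r_{ij}\}_{1\le i<j\le k},\,c_1,\ldots,c_k\big)$. The off-diagonal $r_{ij}$ are among the polynomial generators of $\mathcal{S}_k$, hence form a regular sequence, and
\[
\mathcal{S}_k/(\{r_{ij}\}_{i<j})\;\cong\;\C[r_{11},\ldots,r_{kk},w_1,\ldots,w_k],
\]
in which $c_j$ maps to the monomial $r_{jj}w_j$. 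Since $r_{11}w_1,\ldots,r_{kk}w_k$ involve pairwise disjoint variables, Lemma \ref{easyregular} (with $x_j=r_{jj}$, $y_j=w_j$, $N=k$) shows they form a regular sequence in this quotient; therefore $\sigma$ is regular in $\mathcal{S}_k$. Every entry of $\sigma$ is homogeneous (in the standard grading the $r_{ij}$ have degree $1$ and each $c_j$ has degree $2$), so Lemma \ref{Matsumuralemma} lets us permute $\sigma$ to put $c_1,\ldots,c_k$ first, and since an initial segment of a regular sequence is regular, $c_1,\ldots,c_k$ is regular, which is all that is needed.

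The only genuine content is this regularity statement, and its sole nontrivial ingredient — the reduction to disjoint monomials — is handled verbatim by Lemmas \ref{easyregular} and \ref{Matsumuralemma}, which were set up for precisely this kind of argument, so I do not anticipate a real obstacle. As an independent check one can instead note that $V(c_1,\ldots,c_k)$ is the locus $\{(R,w)\in\operatorname{Sym}_k\times\C^k:Rw=0\}$: over symmetric matrices $R$ of corank $j$ the fibre has dimension $j$ while the corank-$j$ stratum of $\operatorname{Sym}_k$ has codimension $\binom{j+1}{2}$, so the corresponding stratum of $V(c_1,\ldots,c_k)$ has dimension $\binom{k+1}{2}-\binom{j+1}{2}+j\le\binom{k+1}{2}$ with equality only at $j=0$; hence $V(c_1,\ldots,c_k)$ has codimension $k$ in $\operatorname{Spec}\mathcal{S}_k$, and since $\mathcal{S}_k$ is Cohen--Macaulay this again forces $c_1,\ldots,c_k$ to be a regular sequence.
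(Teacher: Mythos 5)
Your argument is correct and is essentially the paper's own proof: identify $K_-$ with the Koszul complex $K(c_1,\ldots,c_k)$ over $\mathcal{S}_k$, prove regularity of the $c_j$ by prepending the off-diagonal $r_{ij}$ and reducing to the disjoint monomials $r_{jj}w_j$ via Lemmas \ref{easyregular} and \ref{Matsumuralemma}, then invoke Corollary 17.5 of \cite{Eisenbud}. The Cohen--Macaulay dimension count you append is a valid independent confirmation not present in the paper (though equality in your stratum estimate also occurs at $j=1$, not only at $j=0$; this does not affect the conclusion).
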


We mimic the proof of Proposition \ref{Avanishing}.  We note $d_{K_-}$ is the differential in the Koszul complex $K(c_1,\ldots,c_k)$ associated to the sequence of the cubic polynomials $c_1,\ldots,c_k$, \cite{Eisenbud}, Section 17.2.  It remains to show $\{c_j\}$ is a regular sequence.

\begin{lem} \label{regularsequence}
The sequence $(c_1, \ldots, c_k)$ is a regular sequence in $\mathcal{S}_k$.
\end{lem}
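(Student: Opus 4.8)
The plan is to realize the $c_j = \sum_{i=1}^k r_{ij} w_i$ as a ``bilinear'' sequence in a polynomial ring and find a regular subsequence of monomials to which we can specialize, exactly as in the proofs of Proposition \ref{Aregularsequence} and of the proposition about $H^\ell(A)$ for $p \geq kq$. Recall $\mathcal{S}_k = \C[r_{11}, r_{12}, \ldots, r_{kk}, w_1, \ldots, w_k]$, where the $r_{ij}$ are the images of the independent matrix indeterminates (here $i < n$ ensures that the $r_{ij}$, $i \leq j$, are algebraically independent, by the Second Main Theorem). So $\mathcal{S}_k$ is a genuine polynomial ring in $\binom{k+1}{2} + k$ variables, and $c_j$ is a cubic in it. First I would note, via Lemma \ref{Matsumuralemma}, that it suffices to exhibit \emph{some} ordering of a sequence containing $c_1, \ldots, c_k$ which is regular, since any initial segment of a regular sequence of homogeneous elements in a Noetherian graded ring is again regular after reordering.

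The key step is to prepend the ``off-diagonal'' indeterminates so as to diagonalize the matrix $(r_{ij})$. Concretely, form the sequence $\sigma$ consisting of all the $r_{ij}$ with $i \neq j$ (these are coordinates, hence trivially a regular sequence), followed by $c_1, \ldots, c_k$. To check $\sigma$ is regular it remains to check that the images $\bar c_j$ form a regular sequence in the quotient ring
\begin{equation*}
\mathcal{S}_k / (\{r_{ij}\}_{i \neq j}) \cong \C[r_{11}, r_{22}, \ldots, r_{kk}, w_1, \ldots, w_k].
\end{equation*}
But in this quotient the matrix $(r_{ij})$ becomes diagonal, so the matrix product defining the $c_j$ collapses to
\begin{equation*}
\bar c_j = r_{jj} w_j, \quad 1 \leq j \leq k.
\end{equation*}
These are $k$ disjoint quadratic monomials in $2k$ distinct variables, hence a regular sequence by Lemma \ref{easyregular}. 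Therefore $\sigma$ is regular; now apply Lemma \ref{Matsumuralemma} to reorder $\sigma$ so that $c_1, \ldots, c_k$ come first, and observe that this initial segment is a regular sequence in $\mathcal{S}_k$.

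I do not expect a serious obstacle here; the one point requiring a little care is making sure that after adjoining the $r_{ij}$ with $i \neq j$ we really get a polynomial ring (so that Lemma \ref{easyregular} applies verbatim), which is where the standing hypothesis $k < n$, together with the First and Second Main Theorems for $\OO(n)$ cited in Section \ref{notation}, is used: it guarantees $\mathcal{S}_k = \C[r_{11}, \ldots, r_{kk}, w_1, \ldots, w_k]$ is a free polynomial ring on these $\binom{k+1}{2} + k$ generators with no relations among the $r_{ij}$. Given that, the specialization argument is identical in structure to the two earlier regularity proofs in the paper, and the lemma follows. Once Lemma \ref{regularsequence} is in hand, Proposition \ref{Koszulvanishing} follows from Corollary 17.5 of \cite{Eisenbud} exactly as Proposition \ref{Avanishing} did.
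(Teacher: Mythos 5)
Your proposal is correct and follows essentially the same route as the paper: prepend the off-diagonal $r_{ij}$ (a coordinate sequence), observe that the images $\bar c_j = r_{jj}w_j$ in the quotient are disjoint monomials regular by Lemma \ref{easyregular}, and reorder via Lemma \ref{Matsumuralemma}. Your added remark that the standing hypothesis $k<n$ is what makes $\mathcal{S}_k$ a genuine polynomial ring on the $r_{ij}$ ($i\leq j$) and $w_i$ is a point the paper only makes implicitly earlier in the chapter, and is worth having spelled out.
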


\begin{proof}
We follow the method of proof of Proposition \ref{Aregularsequence}.  By Lemma \ref{easyregular} we see that $(r_{11} w_1, \ldots, r_{kk} w_k)$ is a regular sequence in $\C[r_{11}, r_{22}, \ldots, r_{kk}, w_1, \ldots, w_k]$.  We now examine the sequence $\sigma = (\{r_{ij}\}_{i < j}, c_1, \ldots, c_k)$ of ``super-diagonal" $\{r_{ij}\}_{i<j}$ followed by $c_1, \ldots, c_k$.  That is,
\begin{equation*}
\sigma = (r_{12}, r_{13}, \ldots, r_{1k}, r_{23}, \ldots, r_{2k}, \ldots, r_{k-1,k}, c_1, \ldots, c_k).
\end{equation*}
It is clear that the ``super-diagonal" $r_{ij}$ form a regular sequence since they are coordinates.  To check if the $c_i$ are regular we work in
$$\mathcal{S}_k / (\{r_{ij}\}_{i < j}) \cong \C[r_{11}, r_{22}, \ldots, r_{kk}, c_1, \ldots, c_k].$$
The image of $c_i$ in this quotient ring is $r_{ii} w_i$ which form a regular sequence.

Now we apply Lemma \ref{Matsumuralemma} to reorder $\sigma$ and note that $(c_1, \ldots, c_k, \{r_{ij}\}_{i < j})$ is a regular sequence.  Hence $(c_1, \ldots, c_k)$ is a regular seqeuence.

\end{proof}

Proposition \ref{Koszulvanishing} then follows by the same argument that appears after Lemma \ref{K+regularsequence}.



We now pass from the above results for $K_-$ to the corresponding results for $C_-$.  

\begin{thm}\label{KvanCvan}
\hfill

\begin{enumerate}
\item $H^{\ell}(C_-) = 0$ $\ell \neq n$
\item $H^n(C_-) \cong \mathcal{S}_k / (c_1, \ldots, c_k)$.
\end{enumerate}
\end{thm}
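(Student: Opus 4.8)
Theorem \ref{KvanCvan} asserts that $H^\ell(C_-) = 0$ for $\ell \neq n$ and $H^n(C_-) \cong \mathcal{S}_k/(c_1,\ldots,c_k)$, passing from the already-established results for the Koszul complex $K_-$ (Proposition \ref{Koszulvanishing}) to the original complex $C_-$.

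The plan is to run exactly the same argument as in the proof of Theorem \ref{K+vanCvan}, using the spectral sequence machinery of Section \ref{spectralsection}. First I would recall that by the lemma preceding Proposition \ref{Koszulvanishing}, the map $\Psi_-$ gives an isomorphism of cochain complexes $\mathrm{gr}(C_-)[n-k] \cong K_-$, so $H^\ell(\mathrm{gr}(C_-)[n-k]) \cong H^\ell(K_-)$ for all $\ell$. Undoing the degree shift, this says $H^\ell(\mathrm{gr}(C_-)) \cong H^{\ell-(n-k)}(K_-)$. By Proposition \ref{Koszulvanishing}, $H^m(K_-)$ vanishes unless $m = k$, i.e. unless $\ell - (n-k) = k$, i.e. unless $\ell = n$. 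Hence $H^\ell(\mathrm{gr}(C_-)) = 0$ for $\ell \neq n$, and $H^n(\mathrm{gr}(C_-)) \cong H^k(K_-) = \mathcal{S}_k/(c_1,\ldots,c_k)\,\mathrm{vol}$.

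For statement (1), since the polynomial filtration on $C_-$ is bounded above and exhaustive (as discussed in Section \ref{constructionofss}; $C_-$ is a subcomplex of $C$, so it inherits a filtration with the same properties), Proposition \ref{grCzeroimpliesCzero} applies: $H^\ell(\mathrm{gr}(C_-)) = 0$ implies $H^\ell(C_-) = 0$. This gives $H^\ell(C_-) = 0$ for all $\ell \neq n$. For statement (2), I would invoke part (3) of Proposition \ref{generalspectral}: since $H^{n-1}(\mathrm{gr}(C_-)) = 0$ and $H^{n+1}(\mathrm{gr}(C_-)) = 0$ (the latter because $C_-^{n+1} = 0$, as $C^{n+1} = \wwedge{n+1}\mathfrak{p}_0^* \otimes \mathcal{P}_k = 0$ since $\dim \mathfrak{p}_0 = n$), the natural map $H^n(C_-) \to H^n(\mathrm{gr}(C_-))$ is an isomorphism. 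Therefore $H^n(C_-) \cong H^n(\mathrm{gr}(C_-)) \cong \mathcal{S}_k/(c_1,\ldots,c_k)$, identifying the generator with $*\varPhi_k$ (equivalently $\mathrm{vol}$) via $\Psi_-$.

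The only genuinely delicate point — the main obstacle — is the bookkeeping around the degree shift $[n-k]$ and making sure the filtration hypotheses (bounded above, exhaustive) and the cohomological-degree bounds ($C_-^{n+1} = 0$, and $C_-^\ell = 0$ for $\ell < n-k$ by Proposition \ref{thecochains}) are correctly lined up so that both the vanishing-below input to Proposition \ref{generalspectral}(3) and the vanishing-above input are genuinely available at degree $\ell = n$. Everything else is a verbatim repeat of the $C_+$ argument, so I would keep the write-up short and simply point to the proof of Theorem \ref{K+vanCvan}.
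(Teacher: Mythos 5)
Your proposal is correct and follows exactly the paper's route: pass from $K_-$ to $\mathrm{gr}(C_-)[n-k]$ via the isomorphism $\Psi_-$, use Proposition \ref{Koszulvanishing} together with Proposition \ref{grCzeroimpliesCzero} for the vanishing, and statement (3) of Proposition \ref{generalspectral} for the top degree. The extra bookkeeping you supply (the degree shift and the vanishing of $H^{n\pm 1}(\mathrm{gr}(C_-))$) is exactly what the paper leaves implicit, so no discrepancy.
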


\begin{proof}
Since $K_-$ is the associated graded complex of $C_-[n-k]$, statement (1) is a consequence of Proposition \ref{Koszulvanishing} and Proposition \ref{grCzeroimpliesCzero}.

Statement $(2)$ follows by applying statement $(3)$ of Proposition \ref{generalspectral}.

\end{proof}

\subsection{Infinite generation of $H^n(C)$ as an $\mathcal{R}_k$-module}

We will now demonstrate that $H^n(C)$ is not finitely generated as an $\mathcal{R}_k$-module.  In what follows, recall $\vol =\omega_1 \wedge \cdots \wedge \omega_n$.

\begin{prop}
The map from $\C[w_1, \ldots, w_k]$ to $H^n(C_-)$ sending $f$ to $[f \vol]$ is an injection.  Furthermore, $\C[w_1, \ldots, w_k]$ generates $H^n(C_-)$ over $\mathcal{R}_k$.
\end{prop}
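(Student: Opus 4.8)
The plan is to exploit the description $H^n(C_-) \cong \mathcal{S}_k/(c_1,\dots,c_k)$ from Theorem~\ref{KvanCvan}, where $\mathcal{S}_k = \C[r_{11},\dots,r_{kk},w_1,\dots,w_k]$ and $c_j = \sum_{i=1}^k r_{ij} w_i$. Under this isomorphism the class $[f\vol]$ for $f \in \C[w_1,\dots,w_k]$ corresponds to the image of $f$ in the quotient ring. So the injectivity statement amounts to: if $f \in \C[w_1,\dots,w_k]$ lies in the ideal $(c_1,\dots,c_k) \subset \mathcal{S}_k$, then $f = 0$. First I would observe that each $c_j$ lies in the ideal $(r_{11},\dots,r_{kk})$ generated by the diagonal quadratics, and in fact $(c_1,\dots,c_k) \subset (r_{11},r_{22},\dots,r_{kk})\mathcal{S}_k$ (even $\subset (\{r_{ij}\})\mathcal{S}_k$). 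Hence if $f \in (c_1,\dots,c_k)$ then $f \in (\{r_{ij}\})\mathcal{S}_k$. But $\mathcal{S}_k = \C[\{r_{ij}\}] \otimes_\C \C[w_1,\dots,w_k]$ as a graded ring, and $f$ involves only the $w$-variables; the only element of $\C[w_1,\dots,w_k]$ lying in the ideal generated by the $r$-variables is $0$. This gives injectivity. (Concretely: set all $r_{ij} = 0$; this is a ring homomorphism $\mathcal{S}_k \to \C[w_1,\dots,w_k]$ killing every $c_j$, hence killing the ideal $(c_1,\dots,c_k)$, and it fixes $f$, forcing $f = 0$.)

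For the generation statement, I would argue as follows. We must show that the images of $\C[w_1,\dots,w_k]$ generate $\mathcal{S}_k/(c_1,\dots,c_k)$ as a module over $\mathcal{R}_k = \C[\{r_{ij}\}]$. Since $\mathcal{S}_k = \mathcal{R}_k[w_1,\dots,w_k]$, it suffices to show that modulo $(c_1,\dots,c_k)$ every monomial $r_{i_1 j_1}\cdots r_{i_m j_m}\, w^a$ can be rewritten, using the relations $c_j \equiv 0$ and $\mathcal{R}_k$-linearity, as an $\mathcal{R}_k$-combination of pure powers of the $w$'s; but that is automatic since the monomial already is $r_{i_1 j_1}\cdots r_{i_m j_m}$ (an element of $\mathcal{R}_k$) times $w^a$ (an element of $\C[w_1,\dots,w_k]$). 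So in fact $\C[w_1,\dots,w_k]$ maps onto $\mathcal{S}_k$ as an $\mathcal{R}_k$-module before passing to the quotient, and a fortiori it generates $H^n(C_-)$ over $\mathcal{R}_k$.

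Combining: the $\mathcal{R}_k$-module $H^n(C_-)$ is generated by the images of the pure $w$-monomials, and no nonzero $w$-polynomial maps to $0$, so $H^n(C_-)$ contains $\C[w_1,\dots,w_k]$ as a submodule and is generated by it over $\mathcal{R}_k$. Since $H^n(C) = H^n(C_+) \oplus H^n(C_-)$ and $H^n(C_+) = 0$ when $k < n$ (Theorem~\ref{K+vanCvan}), the same holds for $H^n(C)$, giving the infinite generation claimed in the section title: $\C[w_1,\dots,w_k]$ is infinite-dimensional over $\C$, and the relations defining $\mathcal{S}_k/(c_1,\dots,c_k)$ never allow one $w$-power to be absorbed into an $\mathcal{R}_k$-multiple of another. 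The main obstacle is simply to pin down precisely the identification of $[f\vol]$ with the class of $f$ in $\mathcal{S}_k/(c_1,\dots,c_k)$ under the chain of isomorphisms $H^n(C_-) \cong H^k(\mathrm{gr}(C_-)[n-k]) \cong H^k(K_-) \cong \mathcal{S}_k/(c_1,\dots,c_k)$; once one checks that $\vol = *1$ corresponds (up to sign) to $dw_1\wedge\cdots\wedge dw_k$ and hence to the class of $1$, and that multiplication by $f\in\C[w_1,\dots,w_k]$ is compatible with all these maps, the algebra above finishes the proof.
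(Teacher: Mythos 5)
Your proof is correct and uses essentially the same argument as the paper: the paper also defines the retraction $\pi:\mathcal{S}_k \to \C[w_1,\ldots,w_k]$ sending $r_{ij}\mapsto 0$ and $w_i \mapsto w_i$, notes $\pi(c_j)=0$ so that $\pi$ descends to the quotient and splits the inclusion, and dismisses the generation statement as obvious since $\C[w_1,\ldots,w_k]$ generates $\mathcal{S}_k$ over $\mathcal{R}_k$. Your extra care about identifying $[f\vol]$ with the class of $f$ under the isomorphism $H^n(C_-)\cong \mathcal{S}_k/(c_1,\ldots,c_k)$ is a reasonable addition but not a departure from the paper's route.
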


\begin{proof}
There is an inclusion of polynomial algebras
$$\C[w_1, \ldots, w_k] \hookrightarrow \mathcal{S}_k.$$
This map has a right inverse, $\pi$, where $\pi(w_i) = w_i$ and $\pi(r_{ij})=0$.  Then since $\pi(c_j) = \pi(\sum_i r_{ij} w_i) = 0$, $\pi$ descends to a right inverse from $\mathcal{S}_k / (c_1, \ldots, c_k) \to \C[w_1, \ldots, w_k]$.  Hence the map is injective.

The second statement is obvious since $\C[w_1, \ldots, w_k]$ generates $\mathcal{S}_k$ as an $\mathcal{R}_k$-module.

\end{proof}

\begin{rmk}
Note that 
\begin{enumerate}
\item If $k \neq n$, then
$$H^n(C) = H^n(C_-) = \mathcal{S}_k / (c_1, \ldots, c_k) [\vol].$$
\item If $k=n$ then
$$H^n(C) = \mathcal{S}_n / (c_1, \ldots, c_n) [\vol] \oplus \mathcal{R}_n \varphi_n.$$
\end{enumerate}
\end{rmk}

\subsection{The decomposability of $H^n(C)$ as a $\mathfrak{sp}(2k, \R)$-module} Define $\iota^\prime \in O(n,1)$ by $\iota^\prime(e_j) = e_j, 1 \leq j \leq n$ and $\iota^\prime(e_{n+1}) = -e_{n+1}$.  Then since $\iota^\prime \otimes \iota^\prime$ acts on $\big( \wwedge{\ell}\mathfrak{p}^* \otimes \mathcal{P}_k \big)^{\SO(n)}$ and commutes with $d$, it acts on $H^n(C)$.   Since $\iota^\prime \otimes \iota^\prime$ has order two, we get the eigenspace decomposition into the $-1$ and $+1$ eigenspaces

$$H^n(C) = H^n(C)_- \oplus H^n(C)_+.$$

\begin{lem}
$H^n(C)_-$ and $H^n(C)_+$ are nonzero.
\end{lem}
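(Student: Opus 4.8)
The plan is to produce, inside $H^n(C)$, two explicit nonzero cohomology classes that are eigenvectors of $\iota'\otimes\iota'$ with opposite eigenvalues; since these eigenvalues lie in $\{\pm1\}$, one class sits in $H^n(C)_+$ and the other in $H^n(C)_-$, so both summands are nonzero. By the standing hypothesis $k<n$ we have $H^n(C_+)=0$ (Theorem~\ref{K+vanCvan}(1), as $n\neq k$), so $H^n(C)=H^n(C_-)\cong\mathcal S_k/(c_1,\dots,c_k)\,\vol$ by Theorem~\ref{KvanCvan}(2), and the two classes will be $[\vol]$ and $[w_1\vol]$. (In the case $k=n$ these same two classes still lie in the $\mathcal S_n/(c_1,\dots,c_n)[\vol]$ summand of $H^n(C)$, so the argument is unchanged.)

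First I would record the action of $\iota'\otimes\iota'$ on top-degree cochains. Since $\iota'$ fixes $e_1,\dots,e_n$ and sends $e_{n+1}\mapsto-e_{n+1}$, the isomorphism $\phi:\wwedge{2}V\to\mathfrak{so}(n,1)$ shows $\mathrm{Ad}(\iota')$ acts by $-1$ on each basis vector $e_{i,n+1}$ of $\mathfrak p_0$, hence $(\mathrm{Ad}\,\iota')^*$ acts by $-1$ on each $\omega_\alpha$ and by $(-1)^n$ on $\vol=\omega_1\wedge\cdots\wedge\omega_n$. On the Fock model, $\varpi(\iota')$ carries the constant $1$ to $\epsilon\cdot 1$ and $w_1$ to $-\epsilon\, w_1$ for a fixed sign $\epsilon=\pm1$ (the value of the twisting character on $\iota'$, which satisfies $\epsilon^2=1$), because $\iota'^{-1}=\iota'$ negates the $e_{n+1}$-coordinate. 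Consequently $\iota'\otimes\iota'$ fixes $[\vol]$ up to the scalar $\epsilon(-1)^n$ and fixes $[w_1\vol]$ up to the scalar $-\epsilon(-1)^n$.

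Next I would check both classes are nonzero in $\mathcal S_k/(c_1,\dots,c_k)\,\vol$. This is exactly the content of the Proposition immediately preceding the lemma: the retraction $\pi:\mathcal S_k\to\C[w_1,\dots,w_k]$ with $\pi(r_{ij})=0$, $\pi(w_i)=w_i$ kills each $c_j=\sum_i r_{ij}w_i$, hence descends to $\mathcal S_k/(c_1,\dots,c_k)\to\C[w_1,\dots,w_k]$ and sends $1\mapsto 1$, $w_1\mapsto w_1$, both nonzero (that $1\notin(c_1,\dots,c_k)$ also follows just from the $c_j$ being homogeneous of positive degree). Therefore $[\vol]$ is a nonzero eigenvector of $\iota'\otimes\iota'$ with eigenvalue $\lambda_0:=\epsilon(-1)^n\in\{\pm1\}$ and $[w_1\vol]$ is a nonzero eigenvector with eigenvalue $-\lambda_0$; one lies in $H^n(C)_+$ and the other in $H^n(C)_-$, proving the lemma.

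The only point requiring any care — and I would regard it as the main point rather than a genuine obstacle — is fixing the action of $\iota'$ on the twisted Weil representation precisely enough. But the argument uses only that $\iota'$ fixes $e_1,\dots,e_n$ and negates $e_{n+1}$, so that $\varpi(\iota')$ multiplies $1$ and $w_1$ by scalars differing exactly by a sign, and we never need to decide whether $\lambda_0$ equals $+1$ or $-1$; so no real difficulty arises.
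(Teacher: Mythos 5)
Your proposal is correct and follows essentially the same route as the paper: the paper's proof likewise computes that $\iota'\otimes\iota'$ acts on $[p\,\vol]$ by $(-1)^{n+a}$ for $p$ homogeneous of degree $a$ in $w_1,\dots,w_k$, and relies on the preceding proposition's injection of $\C[w_1,\dots,w_k]$ into $H^n(C_-)$ for nonvanishing. Your extra care about the undetermined sign $\epsilon$ of the twist on $\iota'$ is a harmless refinement; no gap.
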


\begin{proof}
Note that $\iota^\prime(\vol) = (-1)^n \vol$ and if $p(w_1, \ldots, w_k)$ is homogenous of degree $a$, then $\iota^\prime(p) = (-1)^a p$.  Hence $\iota^\prime \otimes \iota^\prime ([\vol \otimes p]) = (-1)^{n+a} [\vol \otimes p]$.

\end{proof}

Since the action of $\iota^\prime$ on $\mathcal{P}_k$ commutes with the action of $\mathfrak{sp}(2k, \R)$, the above decomposition of $H^n(C)$ is invariant under $\mathfrak{sp}(2k, \R)$.

\section{A simple proof of nonvanishing of $H^n(C)$.}\label{nonvanishingtop}
In what follows we let $G$ be a connected, noncompact, and semisimple Lie group with maximal compact $K$.  We let $n = \mathrm{dim}(G/K)$ and $\mathcal{V}$ be a  $(\mathfrak{g}, K)$-module with $\mathcal{V}^*$ the dual.
\begin{lem}
Suppose either

$(1)$ $\mathcal{V}$ is a topological vector space and $K$ acts continuously. Furthermore, assume there exists a nonzero $\mathfrak{g}$-invariant continuous linear functional $\alpha \in (\mathcal{V}^*)^{\mathfrak{g}}$\\
or

$(2)$ there exists a $\mathfrak{g}$-invariant linear functional $\alpha$ and a $K$-invariant vector $v \in \mathcal{V}$ such that $\alpha(v) \neq 0$ (no topological hypotheses needed).

\vspace{.25cm}

Then $H^n(\mathfrak{g}, K; \mathcal{V}) \neq 0$.
\end{lem}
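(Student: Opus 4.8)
The plan is to reduce the nonvanishing of $H^n(\mathfrak{g},K;\mathcal{V})$ to producing a nonzero top-degree cocycle, and to build such a cocycle out of the data in the hypotheses: a $\mathfrak{g}$-invariant functional $\alpha$ and a $K$-fixed vector $v$. The key observation is that $C^n(\mathfrak{g},K;\mathcal{V}) = (\wwedge{n}\mathfrak{p}^* \otimes \mathcal{V})^K$ is the \emph{top} group in the complex (since $n = \dim \mathfrak{p}$), so every element of $C^n$ is automatically a cocycle — there is no $C^{n+1}$. Thus $H^n(\mathfrak{g},K;\mathcal{V}) = C^n(\mathfrak{g},K;\mathcal{V})/d(C^{n-1}(\mathfrak{g},K;\mathcal{V}))$, and the task is to exhibit one element of $C^n$ that is not a coboundary.

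First I would fix a $K$-invariant volume element $\vol \in (\wwedge{n}\mathfrak{p}^*)^K$ (available since $K$ acts on $\mathfrak{p}$ through $\SO(n)$, or more generally with trivial determinant on the relevant piece in the semisimple setting; this is exactly the $\vol$ of Equation \eqref{voldefn}). Then $\vol \otimes v$ lies in $\wwedge{n}\mathfrak{p}^* \otimes \mathcal{V}$, and since both $\vol$ and $v$ are $K$-fixed, $\vol \otimes v \in C^n(\mathfrak{g},K;\mathcal{V})$. Next I would extend $\alpha$ to a map $1 \otimes \alpha : \wwedge{\bullet}\mathfrak{p}^* \otimes \mathcal{V} \to \wwedge{\bullet}\mathfrak{p}^* \otimes \C$ and check that, because $\alpha$ is $\mathfrak{g}$-invariant (equivalently, annihilated by $\rho(X)^*$ for all $X \in \mathfrak{g}$, in particular for $X \in \mathfrak{p}$), this map intertwines the differential $d = \sum_i A(\omega_i)\otimes\rho(e_i)$ on the $\mathcal{V}$-valued complex with the differential of the complex $C^\bullet(\mathfrak{g},K;\C)$ computing trivial-coefficient relative Lie algebra cohomology — indeed $(1\otimes\alpha)(d\psi) = \sum_i A(\omega_i)\wedge \alpha(\rho(e_i)\psi) = -\sum_i A(\omega_i)\wedge (\rho(e_i)^*\alpha)(\psi) = 0$ on the nose for the relevant terms, so in particular $1\otimes\alpha$ kills coboundaries $d(C^{n-1})$. (In case $(1)$ one needs $\alpha$ continuous so that it passes through the finite sums appearing in $d$; in case $(2)$ no such issue arises since everything is a finite algebraic expression.)

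Then I would compute $(1\otimes\alpha)(\vol\otimes v) = \alpha(v)\,\vol \neq 0$ by hypothesis. Since $1\otimes\alpha$ annihilates $d(C^{n-1}(\mathfrak{g},K;\mathcal{V}))$ but does not annihilate $\vol\otimes v$, the cocycle $\vol\otimes v$ is not a coboundary, hence defines a nonzero class in $H^n(\mathfrak{g},K;\mathcal{V})$. The main obstacle — really the only subtle point — is verifying carefully that $1\otimes\alpha$ is a chain map, i.e. that $\mathfrak{g}$-invariance of $\alpha$ (as opposed to mere $\mathfrak{p}$- or $\mathfrak{k}$-invariance) is exactly what makes the differentials compatible; this is where the hypothesis $\alpha \in (\mathcal{V}^*)^{\mathfrak{g}}$ is used, and in the topological case $(1)$ one must additionally confirm continuity so that $\alpha$ commutes with the (finite) sums defining $d$ and with the $K$-averaging used to land in $C^\bullet$. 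Once that compatibility is in hand, the nonvanishing is immediate from $\alpha(v)\neq 0$.
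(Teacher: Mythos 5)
Your proposal is correct and is essentially the paper's argument: both exhibit the top-degree cocycle $\vol\otimes v$ and use the $\mathfrak{g}$-invariant functional $\alpha$ to push forward to the trivial-coefficient complex, where the coefficient of $\vol$ is $\alpha(v)\neq 0$, so $\vol\otimes v$ cannot be a coboundary; your chain-level verification that $1\otimes\alpha$ kills $d(C^{n-1})$ is just an explicit unwinding of the paper's statement that $\alpha$ induces a map $\alpha_*:H^n(\mathfrak{g},K;\mathcal{V})\to H^n(\mathfrak{g},K;\R)=\R[\vol]$. Your handling of case $(1)$ by averaging over $K$ (using continuity and $K$-invariance of $\alpha$ to get a $K$-fixed vector with $\alpha(p(v))=\alpha(v)\neq 0$) likewise matches the paper's reduction of $(1)$ to $(2)$.
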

\begin{proof}
We will first assume (2). We let $\vol$ be the element in $\wwedge{n}(\mathfrak{p}^*)$ which is of unit length for the metric induced by the Killing form and of the correct orientation. Then $\vol \otimes v$ is invariant under the product group $K \times K$, hence is invariant under the diagonal and hence gives rise to an $n$-cochain with values in $\mathcal{V}$ which is automatically a cocycle. Let $[\vol \otimes v]$ be the corresponding cohomology class. 
Now $\alpha$ induces a map on cohomology
\begin{equation*}
\alpha_* : H^n(\mathfrak{g}, K; \mathcal{V}) \to H^n(\mathfrak{g}, K; \R).
\end{equation*}
But $H^n(\mathfrak{g}, K; \R)$ is the ring of invariant differential $n$-forms on $D = G/K$.  Thus $H^n(\mathfrak{g}, K; \R) = \R [\vol]$.  Finally, we have
$$ \alpha_*[\vol \otimes v] = [\vol \otimes \alpha(v)] = \alpha(v) [\vol] \neq 0.$$
Hence, $[\vol \otimes v ] \neq 0$.

Now we reduce (1) to (2). 
 Since $\alpha \neq 0$ there is some $v \in \mathcal{V}$ such that $\alpha(v) \neq 0$. Let $dk$ be the Haar measure on $K$ normalized so that $\int\nolimits_{K} dk = 1$. We define the projection  $ p: \mathcal{V} \to \mathcal{V}^{K}$ by
\begin{equation*} \label{projection} 
 p(v) = \int_{K} k \cdot v dk.
\end{equation*}
The reader will verify since $\alpha$ is $K$ invariant and $\alpha(v)$ is continuous in $V$  that
\begin{equation*} \label{pinvariant}
 \alpha(p(v)) = \alpha(v).
\end{equation*}
Hence 
$$\alpha(p(v)) \neq 0 \ \text{and} \ p(v) \in \mathcal{V}^{K}.$$
Now the result follows from the argument of  case (2).

\end{proof}

We will apply $(2)$ for the following examples.  Let $G = \SO_0(p,q)$\\
(resp $\mathrm{SU}(p,q)$), $K = \SO(p) \times \SO(q)$ (resp $\mathrm{S}(\mathrm{U}(p) \times \mathrm{U}(q))$), $V = \R^{p,q}$ (resp $\C^{p,q}$) and $\mathcal{V} = \mathcal{S}(V^k)$, the space of Schwartz functions.  Then if $\alpha = \delta_0$, the Dirac delta distribution at the origin, $\alpha$ is a non-zero element of $(\mathcal{V}^*)^\mathfrak{g}$.  Hence, we have proved

\begin{thm} \label{topforweil}
For $\mathcal{V} = \mathcal{S}(V^k)$,
\begin{enumerate}
\item $H^{pq}(\mathfrak{so}(p,q), \SO(p) \times \SO(q); \mathcal{S}(V^k)) \neq 0$
\item $H^{2pq}(\mathfrak{u}(p,q), \mathrm{U}(p) \times \mathrm{U}(q); \mathcal{S}(V^k)) \neq 0$.
\end{enumerate}
\end{thm}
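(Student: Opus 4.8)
The plan is to deduce Theorem~\ref{topforweil} directly from the preceding Lemma, applied in its form~(2) (form~(1) would do just as well). Take $G=\SO_0(p,q)$ with $K=\SO(p)\times\SO(q)$ (resp.\ $G=\SU(p,q)$ with $K=\mathrm{S}(\mathrm{U}(p)\times\mathrm{U}(q))$, which covers the $\mathfrak u(p,q)$ statement once one works with the normalization of the Weil representation making the center act trivially), so that $n=\dim_{\R}(G/K)=pq$ (resp.\ $2pq$), and let $\mathcal{V}=\mathcal{S}(V^k)$ carry the Schr\"odinger model of the Weil representation. Choose $\alpha=\delta_0$, evaluation at the origin of $V^k$, and $v=\psi_0$ the Gaussian $\psi_0(\mathbf{x})=\exp\!\big(-\pi\sum_{i=1}^k\langle x_i,x_i\rangle_+\big)$, where $\langle\ ,\ \rangle_+$ is the positive-definite majorant of $(\ ,\ )$ attached to the chosen maximal compact. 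Since $\alpha(v)=\psi_0(\mathbf{0})=1\neq 0$, once we check $\alpha\in(\mathcal{V}^*)^{\mathfrak g}$ and $v\in\mathcal{V}^K$ the Lemma gives $H^n(\mathfrak g,K;\mathcal{S}(V^k))\neq0$, which is exactly the two assertions.

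So the substance of the proof is the two invariance checks, and both rest on one fact: in the Schr\"odinger model the relevant group acts on $\mathcal{S}(V^k)$ by the \emph{geometric} action $g\cdot f=f\circ g^{-1}$, possibly premultiplied by a character of the appropriate two-fold cover. First I would verify $K$-invariance of $\psi_0$: the group $K$ preserves $\langle\ ,\ \rangle_+$, hence fixes $\psi_0$ under the geometric action (and any character occurring restricts trivially to the compact connected $K$ in these cases), so $\psi_0\in\mathcal{V}^K$. Next I would verify that $\delta_0$ is $\mathfrak g$-invariant: writing $\varpi$ for the Weil representation, for $X\in\mathfrak g$ the operator $\varpi(X)$ is the infinitesimal geometric action $(\varpi(X)f)(\mathbf{x})=\tfrac{d}{dt}\big|_{t=0}f(\exp(-tX)\mathbf{x})$ plus possibly $\chi'(X)f$ for a character $\chi$; since $\mathfrak g$ is semisimple in the $\SO_0(p,q)$ and $\SU(p,q)$ cases one has $\chi'\equiv0$, so $\varpi(X)$ is exactly the geometric action, and evaluating at the origin, which is fixed by the linear $G$-action, gives $(\varpi(X)f)(\mathbf{0})=\tfrac{d}{dt}\big|_{t=0}f(\mathbf{0})=0$. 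Thus $\langle\varpi(X)f,\delta_0\rangle=0$ for all $f$ and all $X\in\mathfrak g$, i.e.\ $\delta_0\in(\mathcal V^*)^{\mathfrak g}$; it is also a nonzero continuous functional on $\mathcal{S}(V^k)$, so form~(1) of the Lemma applies verbatim.

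The only genuine obstacle is the bookkeeping step identifying the abstract $(\mathfrak g,K)$-module $\mathcal{S}(V^k)$ coming from the Weil representation with this concrete geometric model, and confirming that the character twist used to normalize the action (so that the center of the metaplectic-type cover acts trivially) does not disturb $\mathfrak g$-invariance of $\delta_0$ — this is also what reconciles the $\mathfrak{su}(p,q)$ argument with the $\mathfrak u(p,q)$ statement. Once that is pinned down everything is immediate. I would also note in passing that the witnessing cohomology class is $[\,\vol\otimes\psi_0\,]$, which $(\delta_0)_*$ sends to $\psi_0(\mathbf{0})[\vol]=[\vol]\neq0$ in $H^n(\mathfrak g,K;\R)=\R[\vol]$, and that under the Bargmann transform $\psi_0$ corresponds to $1\in\mathcal{P}_k$, linking this nonvanishing to the top-degree nonvanishing obtained in the Fock-model computations of the previous sections.
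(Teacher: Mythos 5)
Your proposal is correct and follows exactly the paper's route: apply case (2) (or (1)) of the preceding lemma with $\alpha=\delta_0$ and the $K$-invariant Gaussian as $v$, using that in the Schr\"odinger model $\mathfrak{g}$ acts geometrically so that $\delta_0\in(\mathcal{V}^*)^{\mathfrak{g}}$. You merely spell out the invariance checks and the $\mathfrak{su}$ versus $\mathfrak{u}$ bookkeeping that the paper leaves implicit.
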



We give one more example which uses (1).Then (choosing $\alpha$ to be the Dirac delta function at the origin of $V$) we have
\begin{thm}\label{generaltheorem}
Let $G$ be a connected linear semisimple Lie group, $K$ a maximal compact subgroup and $n = \dim(G/K)$.  Let $V$ be a finite dimensional representation and $\mathcal{S}(V)$ be the Schwartz space of $V$.
$$H^n(\mathfrak{g}, K; \mathcal{S}(V)) \neq 0.$$
\end{thm}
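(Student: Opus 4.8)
The plan is to deduce this directly from part~(1) of the preceding Lemma, applied with $\mathcal{V} = \mathcal{S}(V)$ and $\alpha = \delta_0$, the Dirac delta distribution at the origin of $V$. So I would verify the three hypotheses of that Lemma: that $\mathcal{S}(V)$ is a topological vector space on which $K$ acts continuously, that $\delta_0$ is a nonzero continuous linear functional, and that $\delta_0$ is $\mathfrak{g}$-invariant.

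First I would note that $\mathcal{S}(V)$ is a Fr\'echet space for the standard family of Schwartz seminorms. The finite-dimensional representation $\rho$ of $G$ on $V$ restricts to $K$, and the induced action on functions, $(\rho(k)f)(v) = f(k^{-1}v)$, preserves $\mathcal{S}(V)$ and is continuous (in fact smooth), since $K$ acts through linear transformations, which together with all their derivatives are polynomially bounded. Differentiating this $K$-action produces the compatible $\mathfrak{g}$-action that makes $\mathcal{S}(V)$ a $(\mathfrak{g},K)$-module, so that the complex $C^\bullet(\mathfrak{g},K;\mathcal{S}(V))$ is defined.

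Next, $\delta_0(f) = f(0)$ is continuous because $|\delta_0(f)| \le \sup_{v\in V}|f(v)|$, one of the defining seminorms, and $\delta_0 \neq 0$ since any Gaussian $f$ has $f(0) \neq 0$. For the $\mathfrak{g}$-invariance, the derived action on $\mathcal{S}(V)$ is $(\rho(X)f)(v) = \frac{d}{dt}\big|_{t=0} f(\exp(-tX)\cdot v)$ for $X\in\mathfrak{g}$; since the $G$-action on $V$ is linear we have $\exp(-tX)\cdot 0 = 0$ for all $t$, so $t\mapsto f(\exp(-tX)\cdot 0)$ is the constant $f(0)$ and $(\rho(X)f)(0) = 0$. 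Hence $\delta_0(\rho(X)f) = 0$ for every $X$ and $f$, i.e.\ $\delta_0 \in (\mathcal{S}(V)^*)^{\mathfrak{g}}$. With all hypotheses in place, part~(1) of the Lemma gives $H^n(\mathfrak{g},K;\mathcal{S}(V))\neq 0$.

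There is no real obstacle: the argument is a short corollary of the Lemma. The only points that require care are fixing the normalization of the infinitesimal action so that $\delta_0$ comes out $\mathfrak{g}$-invariant, and checking that the $K$-action on the Fr\'echet space $\mathcal{S}(V)$ is continuous, which is what legitimizes the averaging step used inside the proof of the Lemma when reducing case~(1) to case~(2).
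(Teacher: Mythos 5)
Your proposal is correct and follows exactly the route the paper intends: Theorem \ref{generaltheorem} is stated as an immediate application of part (1) of the preceding Lemma with $\alpha = \delta_0$, the Dirac delta at the origin, whose $\mathfrak{g}$-invariance comes from the linearity of the $G$-action fixing $0 \in V$. Your write-up simply supplies the routine verifications (continuity of the $K$-action on the Fr\'echet space $\mathcal{S}(V)$ and of $\delta_0$) that the paper leaves implicit.
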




\section{The extension of the theorem to the two-fold cover of $\mathrm{O}(n,1)$ }

\subsection{A general vanishing theorem in case the Weil representation is genuine} \label{genuinesection}

\begin{prop} \label{genuineprop}
Suppose $G$ is a semi simple subgroup of $\mathrm{Sp}(2N, \R)$. Let $\tilde{G} \rightarrow G$ be the pullback of the metaplectic extension of $\mathrm{Sp}(2N, \R)$ and $\tilde{K}$ be a maximal compact subgroup of $\tilde{G}$.  If some element of the center of $\tilde{G}$ acts by a multiple of the identity which is not one so, in particular, if the center of $\tilde{G}$ acts by a nontrivial character, then for all $\ell$
$$C^\ell(\mathfrak{g}, \tilde{K}; \mathcal{W}) = 0.$$
Hence the cohomology groups are all zero.  That is, for all $\ell$
$$H^\ell(\mathfrak{g}, \tilde{K}; \mathcal{W}) = 0.$$

\end{prop}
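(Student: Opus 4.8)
The plan is to prove the stronger statement that the cochain groups themselves vanish, $C^\ell(\mathfrak{g}, \tilde K; \mathcal{W}) = 0$ for all $\ell$, from which the vanishing of cohomology is immediate (the differential on the zero complex is the zero map). Recall that, by the definition of the relative Lie algebra complex,
\[
C^\ell(\mathfrak{g}, \tilde K; \mathcal{W}) = \big(\wwedge{\ell}(\mathfrak{g}/\mathfrak{k})^* \otimes \mathcal{W}\big)^{\tilde K},
\]
so an element $\omega \in C^\ell$ satisfies $(\mathrm{Ad}\,k)^*\omega = \rho(k)\,\omega$ for every $k \in \tilde K$, where $\rho$ denotes the (genuine) Weil action of $\tilde G$ on $\mathcal{W}$. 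The idea is to feed into this identity a single central element $z$ that acts trivially on the exterior factor and by a scalar $c \neq 1$ on $\mathcal{W}$: then $\omega = (\mathrm{Ad}\,z)^*\omega = \rho(z)\,\omega = c\,\omega$, whence $(c-1)\omega = 0$ and therefore $\omega = 0$.

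Concretely, I would take $z$ to be the element of the center of $\tilde G$ which, by hypothesis, acts on $\mathcal{W}$ by $c\cdot\mathrm{Id}$ with $c \neq 1$. Two points must be checked. First, $z$ acts trivially on $\wwedge{\ell}(\mathfrak{g}/\mathfrak{k})^*$: since $z$ is central, $\mathrm{Ad}\,z$ is the identity on $\mathfrak{g}$, hence on $\mathfrak{g}/\mathfrak{k}$, hence on its dual and all of its exterior powers, so the only contribution of $z$ to the action on the tensor product is through $\rho(z) = c\cdot\mathrm{Id}$ on $\mathcal{W}$. Second, $z$ must actually lie in $\tilde K$ so that the cochain condition can be applied to it: because $G$ is a semisimple subgroup of $\mathrm{Sp}(2N,\R)$ its center is finite, and $\tilde G \to G$ is a two-fold (metaplectic) cover, so $Z(\tilde G)$ is finite as well; then $\tilde K \cdot Z(\tilde G)$ is a compact subgroup of $\tilde G$ containing the maximal compact subgroup $\tilde K$, so $\tilde K \cdot Z(\tilde G) = \tilde K$ and hence $Z(\tilde G) \subseteq \tilde K$.

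Combining these, the displayed computation applies to $z \in \tilde K$ and forces every $\omega \in C^\ell(\mathfrak{g}, \tilde K; \mathcal{W})$ to be zero, for all $\ell$; the cohomology therefore vanishes in all degrees. The only delicate point — and the main, if mild, obstacle — is the verification that the relevant central element genuinely belongs to the \emph{chosen} maximal compact $\tilde K$ and not merely to some maximal compact subgroup. In the motivating case $\tilde G = \widetilde{\OO(p,q)}$ this is transparent, since the nontrivial element of $\ker(\tilde G \to G)$ lies in $\ker(\tilde K \to K)$ by the very construction of the cover; in the general statement one uses the finiteness-of-the-center argument above together with the maximality of $\tilde K$.
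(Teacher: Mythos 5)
Your proof is correct and is essentially the paper's argument: a central element of $\tilde G$ acts trivially on $\wwedge{\ell}\mathfrak{p}^*$ via $\mathrm{Ad}$ but by a scalar $c\neq 1$ on $\mathcal{W}$, so the $\tilde K$-equivariance condition forces every cochain to vanish. You additionally justify the inclusion $Z(\tilde G)\subset\tilde K$ (via finiteness of the center and maximality of $\tilde K$), which the paper simply asserts.
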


\begin{proof}

Let $Z(\tilde{G})$ be the center of $\tilde{G}$ and suppose $z \in Z(\tilde{G})$ acts by a nontrivial multiple of the identity.  Suppose $z$ generates the subgroup $A$ of $Z(\tilde{G})$.  Then $A \subset \tilde{K}$ and we have
$$\mathrm{Hom}_{\tilde{K}} (\wwedge{\ell} \mathfrak{p}^*, \mathcal{W}) \subset \mathrm{Hom}_{A} (\wwedge{\ell} \mathfrak{p}^*, \mathcal{W}).$$
But since $Z(\tilde{G})$ acts by conjugation on $\mathfrak{p}^*$, it acts trivially on $\mathfrak{p}^*$ and hence $A$ acts trivially on $\wwedge{\ell}\mathfrak{p}^*$.  But $z$ acts by a nontrivial multiple of the identity on $\mathcal{W}$.  Hence
$$\mathrm{Hom}_{A}(\wwedge{\ell} \mathfrak{p}^*, \mathcal{W}) = 0.$$

\end{proof}

\subsection{The computation for the two-fold cover of $\OO(n,1)$.}

In light of Proposition \ref{genuineprop}, we must twist the Weil representation by a character such that the center of $\widetilde{\OO(n,1)}$ acts trivially.

It is important to give the analogues of the results for $\SO_0(n,1)$ when we replace the connected group $\SO_0(n,1)$ by the covering group $\widetilde{\mathrm{O}(n,1)}$ (with four components) of $\OO(n,1)$ and hence the maximal compact $\SO(n)$ in $\SO_0(n,1)$ by the maximal compact subgroup $\widetilde{K} = \widetilde{\mathrm{O}(n) \times \mathrm{O}(1)}$ of $\widetilde{\mathrm{O}(n,1)}$.  Here  $\widetilde{\mathrm{O}(n,1)}$ denotes the total space of the restriction to $\mathrm{O}(n,1)$ of the pull-back of the metaplectic cover of $\Sp( 2k(n+1),\R )$ under the inclusion of the dual pair $\mathrm{O}(n,1) \times \Sp(2k,\R)$ into $\Sp( 2k(n+1),\R)$. Let $\varpi_k$ be the restriction of the Weil representation of $\mathrm{Mp}( 2k(n+1),\R)$ to $\widetilde{\mathrm{O}(n,1)}$ under the embedding $\widetilde{\mathrm{O}(n,1)} \to \mathrm{Mp} (2k(n+1),\R)$.  The following lemma is a consequence of the result of Section 4 of \cite{BMM2}). We believe it is more enlightening to state the following lemma in terms of a general orthogonal group.  Note that the required results for $\mathrm{O}(p,q)$ follow from those of \cite{BMM2} for $\mathrm{U}(p,q)$ by restriction. 

\begin{lem}\label{Weildescends}
\hfill

\begin{enumerate}
\item
The central extension $\widetilde{\mathrm{O}(p,q)} \to \mathrm{O}(p,q)$ is the pull-back under \\
$\det_{\OO(p,q)}^k: \OO(p,q) \to \C^{\ast}$ of the twofold extension $\C^{\ast} \to \C^{\ast}$ given by taking the square. Hence, the group $\widetilde{\mathrm{O}(p,q)}$, has the character  $\det_{\mathrm{O}(p,q)}^{k/2}$, the square-root of $\det_{\mathrm{O}(p,q)}^k$. 
\item
The character ${\det}_{\mathrm{O}(p,q)}^{k/2}$  is ``genuine'' (does not descend to the base of the cover)  if and  only if $k$ is odd.
\item For both even and odd $k$, the twisted Weil representation $\varpi_k \otimes  {\det}_{\mathrm{O}(p,q)}^{k/2}$ descends to $\mathrm{O}(p,q)$.
\item The induced action of $K = \OO(p) \times \OO(q)$ by $\varpi_k \otimes {\det}_{\mathrm{O}(p,q)}^{k/2}$ on the vaccuum vector $\psi_0$ (the constant polynomial $1$) in the Fock model $\mathcal{P}_k$ is given by
\begin{equation*}
\big(\varpi_k \otimes {\det}_{\mathrm{O}(p,q)}^k\big)(k_+, k_-) \big(\psi_0) = {\det}_{\mathrm{O}(q)}(k_-)^k \psi_0.
\end{equation*}
\end{enumerate}
\end{lem}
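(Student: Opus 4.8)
\textbf{Proof plan for Lemma \ref{Weildescends}.} The plan is to obtain all four statements by restricting to $\OO(p,q)$ the analysis of the metaplectic cover over the dual pair $\mathrm{U}(p,q)\times\Sp(2k,\R)$ carried out in Section 4 of \cite{BMM2}: one embeds $\OO(p,q)\times\Sp(2k,\R)$ in $\mathrm{U}(p,q)\times\Sp(2k,\R)$, hence in $\Sp(2k(p+q),\R)$, and pulls everything back. The one input I would quote in full is the description, coming from that analysis (ultimately from the general determination, due to Kudla, of the splitting of a metaplectic cover over a reductive dual pair), of the $2$-cocycle defining $\widetilde{\OO(p,q)}\subset\mathrm{Mp}(2k(p+q),\R)$: it is cohomologous to the pullback along $\det_{\OO(p,q)}^{k}\colon\OO(p,q)\to\C^{\ast}$ of the standard $2$-cocycle attached to the squaring map $z\mapsto z^{2}$ on $\C^{\ast}$. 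This gives
$$\widetilde{\OO(p,q)}\ \cong\ \{(g,\zeta)\in\OO(p,q)\times\C^{\ast}\ :\ \zeta^{2}=\det_{\OO(p,q)}(g)^{k}\},$$
with $\det_{\OO(p,q)}^{k/2}$ the square root of (the pullback of) $\det_{\OO(p,q)}^{k}$ singled out by this description, which is statement (1).

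Statement (2) is then a bookkeeping point: a character of $\widetilde{\OO(p,q)}$ descends to $\OO(p,q)$ exactly when it is trivial on the kernel $\mu_{2}$ of the covering map, and $\det_{\OO(p,q)}^{k/2}$ may be taken trivial on $\mu_{2}$ precisely when $\det_{\OO(p,q)}^{k}$ already admits an honest square-root character on $\OO(p,q)$. Since every character of $\OO(p,q)$ factors through the finite component group, on which $\det_{\OO(p,q)}$ is of order two, $\det_{\OO(p,q)}^{k}$ is a square in $\Hom(\OO(p,q),\C^{\ast})$ if and only if $k$ is even; hence $\det_{\OO(p,q)}^{k/2}$ is genuine if and only if $k$ is odd. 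For (3) I would use that the same dual-pair computation identifies the restriction of $\varpi_{k}$ to $\mu_{2}$ with exactly the same sign character as $\det_{\OO(p,q)}^{k/2}|_{\mu_{2}}$: both are trivial when $k$ is even (the cover splits over $\OO(p,q)$, so $\varpi_k$ already descends) and both are the nontrivial character of $\mu_{2}$ when $k$ is odd. Therefore $\varpi_{k}\otimes\det_{\OO(p,q)}^{k/2}$ is trivial on $\mu_{2}$ and descends to an honest representation of $\OO(p,q)$ for $k$ of either parity.

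For (4) I would work in the Fock model $\mathcal{P}_{k}$. The maximal compact $\widetilde{\mathrm{U}(N)}\subset\mathrm{Mp}(2N,\R)$ with $N=k(p+q)$ acts on the vacuum vector $\psi_{0}=1$ through the genuine character $\det_{\mathrm{U}(N)}^{-1/2}$; this is compatible with Proposition \ref{actionofov}, where the generators of $\mathfrak{k}$ act by first-order operators with no constant term and hence annihilate $\psi_{0}$, so that $\C\psi_{0}$ is $K$-stable and $K$ acts on it by a character pinned down by the cover. Restricting along $\widetilde{\OO(p)\times\OO(q)}\hookrightarrow\widetilde{\mathrm{U}(N)}$, one reads off that $(k_{+},k_{-})$ acts on $\psi_{0}$ by a half-integer power of $\det_{\OO(p)}(k_{+})$ times a half-integer power of $\det_{\OO(q)}(k_{-})$, the two exponents being of opposite sign --- this asymmetry reflecting the signature of $V$ --- and determined by the Kashiwara--Vergne normalization. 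Tensoring with $\det_{\OO(p,q)}^{k/2}=\det_{\OO(p)}^{k/2}\det_{\OO(q)}^{k/2}$ is designed precisely to cancel the $\OO(p)$-contribution (this is the point of the twist: to trivialize the action on the positive directions), leaving the action $\det_{\OO(q)}(k_{-})^{k}\psi_{0}$ recorded in the displayed formula.

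The main obstacle is not in the formal manipulations but in the two places that require the hard input: the precise cohomology class of the restricted metaplectic cocycle on $\OO(p,q)$ (equivalently, the assertion that the restricted cover is ``classified by $\det_{\OO(p,q)}^{k}$''), and the matching of $\varpi_{k}|_{\mu_{2}}$ with $\det_{\OO(p,q)}^{k/2}|_{\mu_{2}}$ for $k$ of both parities. Both are exactly the content of \cite{BMM2}, Section 4, for $\mathrm{U}(p,q)$, restricted to $\OO(p,q)$; once these are quoted in the right form, (1)--(3) are formal and (4) is a direct Fock-model character computation that one must nonetheless carry out with care about the half-integer exponents and the choice of orientation.
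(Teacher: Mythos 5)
Your approach is the same as the paper's: the paper offers no proof of this lemma beyond the sentence preceding it, which simply declares it a consequence of Section 4 of \cite{BMM2}, with the orthogonal case obtained from the unitary case by restriction along $\OO(p,q)\times\Sp(2k,\R)\hookrightarrow \mathrm{U}(p,q)\times\Sp(2k,\R)$ --- exactly the strategy you propose, so the extra detail you supply goes beyond what the paper records. One caution about that extra detail: your claim that $\varpi_k|_{\mu_2}$ is \emph{trivial} when $k$ is even is not right as stated, since $\widetilde{\OO(p,q)}$ is the fiber product with $\mathrm{Mp}(2k(p+q),\R)$ and the Weil representation is genuine on the metaplectic kernel for every $k$; the correct statement is that for $k$ even the extension class is trivial, so the cover splits and $\varpi_k$ becomes an honest representation of $\OO(p,q)$ after composing with a splitting (equivalently after tensoring with the genuine square root), which still yields statement (3).
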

Applying  items (1),(2),(3) and (4) to the  case in hand, we obtain 
\begin{prop} \label{actionofK}
The action of $K = \OO(n) \times \OO(1)$ on $\mathcal{P}_k$ under  the restriction of the Weil representation twisted by ${\det}_{\mathrm{O}(n,1)}^{k/2}$ is given by 
\begin{equation*}
\big(\varpi_k \otimes {\det}_{\OO(n,1)}^k\big)(k_+, k_-) \big(\varphi \big)(\mathbf{v})  = {\det}_{\OO(1)}(k_-)^k \  \varphi(k_+^{-1} k_-^{-1} \mathbf{v}).
\end{equation*}
\end{prop}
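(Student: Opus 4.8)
The plan is to reduce the claim to the action of $K$ on a single vector --- the vacuum --- and then quote Lemma~\ref{Weildescends}. First I would observe that for $g=(k_+,k_-)\in K=\OO(n)\times\OO(1)$ the operator $\big(\varpi_k\otimes{\det}_{\OO(n,1)}^k\big)(g)$ on $\mathcal{P}_k=\mathrm{Pol}\big((V\otimes\C)^k\big)$ is of the form
\begin{equation*}
\big(\varpi_k\otimes{\det}_{\OO(n,1)}^k\big)(g)(\varphi)(\mathbf{v})=\chi(g)\,\varphi(g^{-1}\mathbf{v})
\end{equation*}
for a character $\chi\colon K\to\C^{\ast}$. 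On the identity component $\SO(n)$ this is immediate: the formulas of Proposition~\ref{actionofov} show that $\mathfrak{k}=\mathfrak{so}(n)$ is spanned by the operators $\varpi(e_\alpha\wedge e_\beta)=-\sum_i\big(z_{\alpha i}\frac{\partial}{\partial z_{\beta i}}-z_{\beta i}\frac{\partial}{\partial z_{\alpha i}}\big)$, which have no zeroth-order term, hence act by the infinitesimal geometric action of $\SO(n)$ on $(V\otimes\C)^k$; since ${\det}_{\OO(n,1)}^k$ is constant on the identity component, this integrates on the connected subgroup to $\varphi\mapsto\varphi(g^{-1}\mathbf{v})$ (the convention $g$ versus $g^{-1}$ being as in the remark following Proposition~\ref{actionofov}). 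That the twisted representation is in fact defined on all of $\OO(n,1)$ and, restricted to $K$, differs from the geometric action only by the character ${\det}_{\OO(n,1)}^{k/2}$ is precisely the content of items (1)--(3) of Lemma~\ref{Weildescends}; this handles the disconnected components of $\OO(n)$ and of $\OO(1)$.

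Next I would determine $\chi$ by evaluating on the vacuum vector $\psi_0$, the constant polynomial $1$. Since $\psi_0(g^{-1}\mathbf{v})=\psi_0(\mathbf{v})$, the displayed formula gives $\big(\varpi_k\otimes{\det}_{\OO(n,1)}^k\big)(g)(\psi_0)=\chi(g)\psi_0$. On the other hand, item (4) of Lemma~\ref{Weildescends}, specialized to $(p,q)=(n,1)$, asserts that this equals ${\det}_{\OO(1)}(k_-)^k\psi_0$. Comparing the two, $\chi(k_+,k_-)={\det}_{\OO(1)}(k_-)^k$; in particular the $\OO(n)$-factor acts trivially through $\chi$.

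Finally I would assemble the pieces. Under the splitting $V=V_+\oplus V_-$ with $V_-=\R e_{n+1}$, the elements $k_+\in\OO(V_+)=\OO(n)$ and $k_-\in\OO(V_-)=\OO(1)$ act on complementary orthogonal summands, hence commute in $\OO(n,1)$, so $g^{-1}=(k_+,k_-)^{-1}$ acts on $V$ as $k_+^{-1}k_-^{-1}$. Substituting $\chi(k_+,k_-)={\det}_{\OO(1)}(k_-)^k$ and this description of $g^{-1}$ into the displayed formula yields exactly the asserted identity. The only genuinely nontrivial ingredient is the behaviour on the disconnected part of $K$: the Weil representation visibly acts geometrically only on the identity component, and computing the normalized action of the orthogonal reflections --- together with the precise power of $\det$ needed to make the representation descend to $\OO(n,1)$ --- rests on the cocycle computation of \cite{BMM2}, which we use only through Lemma~\ref{Weildescends}. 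Everything else is the bookkeeping indicated above.
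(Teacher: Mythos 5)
Your proposal is correct and follows essentially the same route as the paper, which derives the proposition in one line by specializing items (1)--(4) of Lemma \ref{Weildescends} to $(p,q)=(n,1)$. Your additional bookkeeping (the infinitesimal argument on $\SO(n)$ via Proposition \ref{actionofov} and the evaluation on the vacuum vector to pin down the character) just makes explicit what the paper leaves implicit, and you correctly identify that the only nontrivial input --- the behaviour on the non-identity components --- is exactly what Lemma \ref{Weildescends} imports from \cite{BMM2}.
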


The cohomology groups of interest to us now are the groups
$$H^\ell\big(\mathfrak{so}(n,1) ,\widetilde{K}; \mathcal{P}_k \otimes \det^{k/2}\big).$$

The goal in this subsection is to prove
\begin{thm}\label{maintwo}
\hfill
\begin{enumerate}
\item If $k < n$,
\begin{equation*}
H^\ell \big(\mathfrak{so}(n,1) , \widetilde{\mathrm{O}(n) \times \mathrm{O}(1)}; \rm{Pol}((V \otimes \C)^k)\otimes {\det}^{\frac{k}{2}} \big) = \begin{cases}
\mathcal{R}_k \varphi_k &\text{ if } \ell = k \\
0 &\text{ otherwise.}
\end{cases}
\end{equation*}
\item  If $k=n$,
\begin{equation*}
H^{\ell} \big(\mathfrak{so}(n,1) , \widetilde{\mathrm{O}(n) \times \mathrm{O}(1)}; \mathcal{P}_k \otimes {\det}^{\frac{k}{2}} \big) =
\begin{cases}
\mathcal{R}_n \varphi_n &\text{ if } \ell = n \\
0 &\text{ otherwise.}
\end{cases}
\end{equation*}
\item If $k > n$,
\begin{equation*}
H^{\ell} \big(\mathfrak{so}(n,1) , \widetilde{\mathrm{O}(n) \times \mathrm{O}(1)}; \mathcal{P}_k \otimes {\det}^{\frac{k}{2}} \big) =
\begin{cases}
\text{nonzero} &\text{ if } \ell = n \\
0 &\text{ otherwise.}
\end{cases}
\end{equation*}
\end{enumerate}
\end{thm}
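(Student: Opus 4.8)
\textbf{Proof proposal for Theorem \ref{maintwo}.}

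The plan is to reduce the computation with the twisted Weil representation for $\widetilde{\mathrm{O}(n)\times\mathrm{O}(1)}$ to the already-completed computation for $\SO(n)$, using the extra symmetries provided by the two reflections. First I would record, using Proposition \ref{actionofK}, exactly how $K=\mathrm{O}(n)\times\mathrm{O}(1)$ acts: the $\mathrm{O}(n)$ factor acts in the geometric way on $\mathfrak p^*\cong V_+\otimes\C$ and on $\mathcal P_k$, while the generator of $\mathrm{O}(1)$ acts on $\mathcal P_k$ by ${\det}_{\mathrm{O}(1)}^k$ together with $w_i\mapsto -w_i$, and acts trivially on $\mathfrak p^*$. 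Then
$$
C^\ell\big(\mathfrak{so}(n,1),\widetilde{\mathrm{O}(n)\times\mathrm{O}(1)};\mathcal P_k\otimes{\det}^{k/2}\big)
=\Big(C^\ell\big(\mathfrak{so}(n,1),\SO(n);\mathcal P_k\big)\Big)^{\iota\otimes\iota,\ \iota'\otimes\iota'}
$$
where $\iota$ (Equation \eqref{iotadef}) supplies the extra reflection in $\mathrm{O}(n)$ and $\iota'$ is the reflection in $e_{n+1}$ introduced just before the statement of the $\mathfrak{sp}(2k,\R)$-decomposability lemma. So the twisted complex is precisely the simultaneous $+1$-eigenspace of $\iota\otimes\iota$ and $\iota'\otimes\iota'$ acting on $C=C^\bullet(\mathfrak{so}(n,1),\SO(n);\mathcal P_k)$.

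Next I would intersect the two decompositions already in hand. The $\iota\otimes\iota$-eigenspaces give $C=C_+\oplus C_-$ (Section \ref{highdegreecochains}), and the $\iota'\otimes\iota'$-eigenspaces give $H^n(C)=H^n(C)_+\oplus H^n(C)_-$ (the decomposability subsection). Since the relevant operators commute with $d$ and with each other, the twisted cohomology is $\big(H^\bullet(C_+)\big)^{\iota'\otimes\iota'=+1}\oplus\big(H^\bullet(C_-)\big)^{\iota'\otimes\iota'=+1}$. By Theorem \ref{K+vanCvan}, $H^\bullet(C_+)$ is concentrated in degree $k$ and equals $\mathcal R_k\varphi_k$; one checks $\iota'$ fixes $\varphi_k$ (it involves only $z$-variables, no $w$'s) and acts trivially on $\mathcal R_k$ (which only involves the $r_{ij}$), so this summand survives in full. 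By Theorem \ref{KvanCvan}, $H^\bullet(C_-)$ is concentrated in degree $n$ and equals $\mathcal S_k/(c_1,\dots,c_k)\cdot[\vol]$; here $\iota'\otimes\iota'$ acts on $[\vol\otimes p(w)]$ by $(-1)^{n+\deg p}$, so taking $\iota'$-invariants kills this summand when $k<n$ (the class is genuinely of degree $n\neq k$ and the only way to be invariant would require $p$ of degree $\equiv n\pmod 2$, but more to the point: when $k<n$ this contributes only in degree $n$, and I must check whether \emph{any} invariant survives — in fact since the $\det$-twist forces the $\mathrm{O}(1)$-weight and $\vol$ carries weight $(-1)^n$ while the allowed polynomials carry weight $(-1)^{\deg}$, the invariance condition is $\deg p\equiv n\bmod 2$, which does leave a nonzero space; so I will instead argue that the $\mathrm{O}(n)$-reflection $\iota$ already kills $C_-$ in the twisted setting, i.e.\ $*\varPhi_J$ has the wrong $\det_{\mathrm{O}(n)}^{k/2}$-behaviour). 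This sign bookkeeping is the delicate point and I would carry it out carefully case by case for $k<n$, $k=n$, $k>n$.

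For the case $k=n$ I would combine the two surviving pieces as in the remark following the infinite-generation proposition — the degree-$n$ group acquires both $\mathcal R_n\varphi_n$ and the (possibly surviving) $\mathcal S_n/(c_1,\dots,c_n)[\vol]$ piece, and the statement of Theorem \ref{maintwo}(2) asserts only $\mathcal R_n\varphi_n$ survives, so the $\det$-twist must annihilate the $[\vol]$-part; I would verify this from Proposition \ref{actionofK} with $p=n$, $q=1$, noting ${\det}_{\mathrm{O}(1)}(k_-)^k$ against $\iota'(\vol)=(-1)^n\vol$. For $k>n$ (Theorem \ref{maintwo}(3)), statement (3) of Theorem \ref{main} together with the nonvanishing in Section \ref{nonvanishingtop} (or Theorem \ref{topforweil}) gives the degree-$n$ nonvanishing, and the vanishing elsewhere follows from Theorem \ref{kgeqnvanishing}; I just need to check the $\widetilde K$-invariants are nonzero, which follows from Lemma \ref{Weildescends}(4) since the vacuum vector $\psi_0$ times $\vol$ is $\widetilde K$-invariant after the twist. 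The main obstacle throughout is getting the characters of the two reflections on $\vol$, on $\varphi_k$, on $\mathcal R_k$, and on $\mathcal S_k/(c_1,\dots,c_k)$ exactly right, so that the intersection of eigenspaces produces precisely the asserted answer and not a larger or smaller space; everything else is an immediate consequence of results already proved in this chapter.
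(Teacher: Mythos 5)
Your treatment of cases (1) and (2) eventually lands on the paper's argument, but only after a detour you should cut: the correct mechanism is that the $\mathrm{O}(n)$-reflection $\iota$ kills $C_-$ already at the level of cochains, because the twisted character $\det_{\OO(1)}^k$ of Proposition \ref{actionofK} is trivial on the $\OO(n)$ factor and $C_-$ is by definition the $-1$-eigenspace of $\iota\otimes\iota$; the attempt to dispose of $H^n(C_-)$ by taking $\iota'$-invariants fails for exactly the reason you notice (polynomials of the right parity survive), so that branch should be deleted rather than "carried out carefully case by case." Once $C_-$ is gone, the answer is $(H^\bullet(C_+)\otimes\det_{\OO(1)}^k)^{\Z/2\times\Z/2}$, and the only sign check needed is that $\varphi_k$ transforms under $\OO(1)$ by $\det_{\OO(1)}^k$ (Remark \ref{phiko(1)}) while $\mathcal{R}_k$ is $\OO(1)$-invariant. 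Note your parenthetical "$\iota'$ fixes $\varphi_k$ (it involves only $z$-variables, no $w$'s)" is literally false for the untwisted action: $\iota'\otimes\iota'$ sends every $\omega_\alpha$ to $-\omega_\alpha$, so it multiplies $\varphi_k$ by $(-1)^k$; invariance holds only after the $\det^{k/2}$ twist. The case $k=n$ then requires no extra work, since the $\mathcal{S}_n/(c_1,\ldots,c_n)\vol$ summand lives in $C_-$ and never enters the twisted complex.

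The genuine gap is in case (3). The class $\psi_0\otimes\vol=1\cdot\vol$ is \emph{not} $\widetilde{K}$-invariant: under $k_+\in\OO(n)$ the form $\vol\in\wwedge{n}\mathfrak{p}^*\cong\wwedge{n}(V_+\otimes\C)$ transforms by $\det_{\OO(n)}(k_+)$, and since the twisted character is trivial on the $\OO(n)$ factor, nothing cancels this sign. Likewise, the nonvanishing of $H^n$ over $\SO(n)$ (Theorem \ref{main}(3)) or of Theorem \ref{topforweil} does not by itself give nonvanishing after passing to invariants under the extra $\Z/2\times\Z/2$: one must exhibit an explicit invariant class that is nonzero modulo $(q_1,\ldots,q_n)$. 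The paper does this by introducing $\det_+$, the determinant of the $n\times n$ block $(z_{\alpha i})_{1\le\alpha,i\le n}$, which transforms by $\det_{\OO(n)}(k_+)$ and so cancels the sign from $\vol$; it then proves (Lemma \ref{injecttoA}) that $f\mapsto f\det_+$ is injective from $\C[w_{n+1},\ldots,w_k]$ into $\mathcal{P}_k/(q_1,\ldots,q_n)$, and finally selects $f$ of the parity making $\det_{\OO(1)}(k_-)^{a+k+n}$ trivial. Both the construction of $\det_+$ and the injectivity lemma are missing from your proposal, and without them the nonvanishing assertion in degree $n$ for $k>n$ is unproved.
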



We now prove Theorem \ref{maintwo}. 
Put $K = \OO(n) \times \OO(1)$.  Then from (3) of Lemma \ref{Weildescends} the restriction of the twisted Weil representation of $\widetilde{K}$ descends to $K$ and we have

\begin{equation} \label{twistedactioncohomologygroups}
C^\ell\big(\mathfrak{so}(n,1) ,\widetilde{K}; \mathcal{P}_k \otimes {\det}^{k/2}\big) = C^\ell\big(\mathfrak{so}(n,1) ,K; \mathcal{P}_k \otimes {\det}^{k/2}\big).
\end{equation}
We use the notation $C^\ell(\mathcal{P}_k) = C^\ell(\mathfrak{so}(n,1), \SO(n); \mathcal{P}_k )$.

Note $(\mathrm{O}(n) \times \mathrm{O}(1))/ \SO(n)\cong \Z/2 \times \Z/2$ and apply Proposition \ref{actionofK} to the right-hand side of  Equation \eqref{twistedactioncohomologygroups}  to obtain 
\begin{equation*}
C^\ell(\mathfrak{so}(n,1), K; \mathcal{P}_k \otimes {\det}^{k/2} ) = \big( C^\ell( \mathcal{P}_k ) \otimes {\det}_{\OO(1)}^k \big)^{\Z/2 \times \Z/2}.
\end{equation*}
On the right-hand side of the above equation, we have extended the action of $\SO(n)$ on $(V \otimes \C)^k$ to the action of $\OO(n)$ given by
$$k \varphi(\mathbf{v}) = \varphi(k^{-1} \mathbf{v}).$$

Now recall that $C^\ell( \mathcal{P}_k) = C_+^\ell \oplus C_-^\ell$ and hence
$$C^\ell( \mathcal{P}_k \otimes {\det}_{\OO(1)}^k)^{\Z/2 \times \Z/2} = (C_+^\ell \otimes {\det}_{\OO(1)}^k)^{\Z/2 \times \Z/2} \oplus (C_-^\ell \otimes {\det}_{\OO(1)}^k)^{\Z/2 \times \Z/2}.$$
Since the element $(1,0) \in \Z/2 \times \Z/2$ acts by the element $\iota \otimes \iota$ (see Equation \eqref{iotadef}) and $C_-$ is defined to be the $-1$ eigenspace of the action of $\iota \otimes \iota$, we have $C_-^{\Z/2 \times \Z/2} = 0$ and hence $(C_- \otimes {\det}_{\OO(1)}^k)^{\Z/2 \times \Z/2} = 0$.  Hence
\begin{equation*}
C^\ell(\mathfrak{so}(n,1), K; \mathcal{P}_k \otimes {\det}^{k/2} ) = (C_+^\ell \otimes {\det}_{\OO(1)}^k)^{\Z/2 \times \Z/2}.
\end{equation*}
Hence we have
\begin{equation*}
H^\ell\big(\mathfrak{so}(n,1) ,\widetilde{K}; \mathcal{P}_k \otimes {\det}^{k/2}\big) = (H^\ell(C_+) \otimes {\det}_{\OO(1)}^k)^{\Z/2 \times \Z/2}.
\end{equation*}

\begin{rmk} \label{phiko(1)}
Note that $\varphi_1$ is invariant under $\OO(n)$ and transforms under $\OO(1)$ by ${\det}_{\OO(1)}$. Since $\varphi_k$ is the $k$-fold exterior wedge of $\varphi_1$ with itself, it follows that  $\varphi_k$ is invariant under $\OO(n)$ and transforms under $\OO(1)$ by ${\det}_{\OO(1)}^k$.  This is the reason for twisting the Fock model by ${\det}^{k/2}$.
\end{rmk}

\begin{lem}
$$H^\ell(C_+) = (H^\ell(C_+) \otimes {\det}_{\OO(1)}^k)^{\Z/2 \times \Z/2}.$$
\end{lem}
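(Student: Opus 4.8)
The plan is to prove that the group $\Z/2\times\Z/2 = (\OO(n)\times\OO(1))/\SO(n)$ acts trivially on $H^\ell(C_+)\otimes{\det}_{\OO(1)}^k$; once this is known the space of invariants is all of $H^\ell(C_+)\otimes{\det}_{\OO(1)}^k$, which we identify with $H^\ell(C_+)$ since ${\det}_{\OO(1)}^k$ is one-dimensional. It suffices to check triviality on the two generators $a$ and $b$ of $\Z/2\times\Z/2$, where $a$ is the image of the reflection $\iota$ of Equation \eqref{iotadef} (the nontrivial coset of $\SO(n)$ in $\OO(n)$, i.e. the element $(1,0)$) and $b$ is the nontrivial element of $\OO(1)$ (the element $(0,1)$, which fixes $e_1,\dots,e_n$ and sends $e_{n+1}\mapsto -e_{n+1}$).

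For $a$: by construction $a$ acts on $C_+$ through $\iota\otimes\iota$, and $C_+$ is by definition the $+1$-eigenspace of $\iota\otimes\iota$, so $a$ is the identity on $C_+$ and hence on $H^\ell(C_+)$. Since $a$ lies in the $\OO(n)$-factor it acts trivially on the character ${\det}_{\OO(1)}^k$. Therefore $a$ acts trivially on $H^\ell(C_+)\otimes{\det}_{\OO(1)}^k$, and invariance under $a$ is automatic.

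For $b$: by Theorem \ref{K+vanCvan} we have $H^\ell(C_+)=0$ unless $\ell=k$, in which case $H^k(C_+)=\mathcal{S}_k/(w_1,\dots,w_k)\,\varPhi_k\cong\mathcal{R}_k\varphi_k$, so only $\ell=k$ requires argument. The element $b$ fixes each $r_{ij}$ (these involve only the coordinates on $(V_+\otimes\C)^k$, on which $b$ is the identity), so $b$ acts trivially on $\mathcal{R}_k$; and by Remark \ref{phiko(1)} the cocycle $\varphi_k$ transforms under $\OO(1)$ by ${\det}_{\OO(1)}^k$, i.e. $b\cdot\varphi_k = {\det}_{\OO(1)}(b)^k\varphi_k = (-1)^k\varphi_k$. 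Hence $b$ acts on $H^k(C_+)=\mathcal{R}_k\varphi_k$ by the scalar $(-1)^k$. On the other hand $b$ acts on ${\det}_{\OO(1)}^k$ by ${\det}_{\OO(1)}(b)^k=(-1)^k$. Consequently $b$ acts on $H^k(C_+)\otimes{\det}_{\OO(1)}^k$ by $(-1)^k(-1)^k=1$, and trivially (vacuously) for $\ell\neq k$.

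Combining the two steps, $\Z/2\times\Z/2$ acts trivially on $H^\ell(C_+)\otimes{\det}_{\OO(1)}^k$ for every $\ell$, which yields the claimed identity. The only point that needs care — and the only place an error could creep in — is the sign with which $b$ acts on $H^k(C_+)$: one must keep track of the $(-1)^\ell$ produced by $b$ on $\wwedge{\ell}\mathfrak{p}^*$ together with the fact that the substitution $w_i\mapsto -w_i$ coming from the $\OO(1)$-action becomes trivial modulo the ideal $(w_1,\dots,w_k)$, so that these combine to exactly ${\det}_{\OO(1)}(b)^k$ and cancel the twist. Everything else is formal.
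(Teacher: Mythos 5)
Your proof is correct and takes essentially the same route as the paper's: both reduce to the computation $H^\ell(C_+)=\mathcal{R}_k\varphi_k$ for $\ell=k$ (and $0$ otherwise) from Theorem \ref{K+vanCvan}, and then use that $\varphi_k$ transforms under $\OO(1)$ by ${\det}_{\OO(1)}^k$ (Remark \ref{phiko(1)}) so that the twist cancels the action. You merely spell out the check on the two generators of $\Z/2\times\Z/2$ more explicitly than the paper does.
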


\begin{proof}
By Theorem \ref{K+vanCvan},
$$H^\ell(C_+) = \begin{cases} \mathcal{R}_k \varphi_k &\text{ if }\ell = k \\
0 &\text{ otherwise. } \end{cases}$$
By Remark \ref{phiko(1)}, $\varphi_k$ transforms by ${\det}_{\OO(1)}^k$ and hence $H^k(C_+)$ transforms by ${\det}_{\OO(1)}^k$ and the lemma follows.

\end{proof}

As an immediate consequence of the previous lemma we have 

\begin{equation*}
H^\ell \big(\mathfrak{so}(n,1) ,\mathrm{O}(n) \times \mathrm{O}(1); \rm{Pol}((V \otimes \C)^k)\otimes {\det}^{\frac{k}{2}} \big) =
\begin{cases}
\mathcal{R}_k \varphi_k \text{ if } \ell = k\\
0 \text{ otherwise.}
\end{cases}
\end{equation*}
and 
statements (1) and (2) of Theorem \ref{maintwo} follow.  

We now prove statement (3).  Thus, we have $k > n$. The vanishing part (for $\ell <n$) of statement (3) follows from the vanishing statement in Theorem \ref{kgeqnvanishing}.  It remains to prove nonvanishing in degree $n$. To this end, we must exhibit classes in $H^n(A)$, where $A$ is defined as in Equation \eqref{defofAcomplex}, which, once twisted by $\det^\frac{k}{2}$, are $\OO(n) \times \OO(1)$-invariant.  By Proposition \ref{Avanishing} we have (this is before we take invariance),
\begin{equation*}
H^\ell(A) = \begin{cases}
0 &\text{ if } \ell \neq n \\
\mathcal{P}_k/(q_1, \ldots, q_n) \vol &\text{ if } \ell = n.
\end{cases}
\end{equation*}
It remains to find a nonzero element which is invariant.  First, a definition and a lemma.  Define ${\det}_+$ to be the determinant of the upper-left $(n \times n)$-block of coordinates.  That is, ${\det_+}$ is the determinant of the $n \times n$-matrix $(z_{\alpha i})_{1 \leq \alpha, i \leq n}$.

\begin{lem} \label{injecttoA}
The map from $\C[w_{n+1}, \ldots, w_k]$ to $\mathcal{P}_k/(q_1, \ldots, q_n)$ sending $f$ to $f {\det}_+$ is an injection.
\end{lem}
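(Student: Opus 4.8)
The plan is to show directly that if $f \in \C[w_{n+1},\ldots,w_k]$ and $f\,{\det}_+ \in (q_1,\ldots,q_n)$, then $f = 0$. The key observation is that the ideal $(q_1,\ldots,q_n)$, where $q_\alpha = q_{\alpha,n+1} = \sum_{i=1}^k z_{\alpha i} w_i$ (recall in the $\SO_0(n,1)$ setting $q=1$ so there is a single "negative" coordinate $z_{n+1,i} = w_i$ per factor), is contained in the ideal $(w_1,\ldots,w_k)$ generated by the negative coordinates. Indeed each $q_\alpha$ is a sum of monomials each of which is divisible by some $w_i$. So it suffices to prove the sharper statement: if $f\,{\det}_+ \in (w_1,\ldots,w_k)$ with $f \in \C[w_{n+1},\ldots,w_k]$, then $f = 0$.

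First I would set up the right quotient ring. Since $\mathcal{P}_k = \C[\{z_{\alpha i}\}_{1\le\alpha\le n,\,1\le i\le k},\, w_1,\ldots,w_k]$ is a polynomial ring, we have
\begin{equation*}
\mathcal{P}_k/(w_1,\ldots,w_k) \cong \C[\{z_{\alpha i}\}_{1\le\alpha\le n,\,1\le i\le k}].
\end{equation*}
Under the quotient map, ${\det}_+$ maps to ${\det}(z_{\alpha i})_{1\le\alpha,i\le n}$, which is a nonzero polynomial (it involves only the variables $z_{\alpha i}$, none of which are among the $w_j$). A polynomial $f \in \C[w_{n+1},\ldots,w_k]$ maps to the constant $f(0,\ldots,0)$ times... wait — more carefully, $f$ is a polynomial in $w_{n+1},\ldots,w_k$, all of which are killed in the quotient, so the image of $f$ is the scalar $f(0) \in \C$, the constant term of $f$. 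Hmm, but $f$ need not have zero higher-degree part a priori; I must be more careful. Since $w_{n+1},\ldots,w_k$ are among the generators $w_1,\ldots,w_k$ of the ideal (as $n < k$), every monomial of $f$ of positive degree lies in $(w_1,\ldots,w_k)$; so the image of $f$ in the quotient is indeed the scalar $f(0)$. Hmm, that only shows $f(0) \cdot {\det}_+|_{\text{quot}} = 0$ in the quotient, hence $f(0) = 0$, i.e. $f$ has no constant term — not yet that $f=0$.

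To fix this I would instead use the finer grading by total degree in the $w$-variables (equivalently, work one homogeneous piece at a time) or, cleaner, argue as follows. Write $f\,{\det}_+ = \sum_{i=1}^k g_i w_i$ with $g_i \in \mathcal{P}_k$. Apply the $\C$-algebra homomorphism $\pi : \mathcal{P}_k \to \C[w_{n+1},\ldots,w_k,\,\{z_{\alpha i}\}]$ defined by $\pi(w_j) = 0$ for $1 \le j \le n$, $\pi(w_j) = w_j$ for $n+1 \le j \le k$, and $\pi(z_{\alpha i}) = z_{\alpha i}$. Then $\pi(f) = f$ (as $f$ involves only $w_{n+1},\ldots,w_k$), $\pi({\det}_+) = {\det}_+$ (it involves only $z_{\alpha i}$ with $i \le n$, and $\pi$ fixes all $z$'s), so applying $\pi$ gives $f\,{\det}_+ = \sum_{j=n+1}^k \pi(g_j)\, w_j$, an identity now taking place inside the polynomial ring $\C[w_{n+1},\ldots,w_k,\,\{z_{\alpha i}\}]$. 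Thus $f\,{\det}_+ \in (w_{n+1},\ldots,w_k)$ in that ring. But ${\det}_+$ and $f$ together involve only the variables $w_{n+1},\ldots,w_k$ and $z_{\alpha i}$, and ${\det}_+$ does not involve any $w$ at all, so ${\det}_+$ is a unit modulo $(w_{n+1},\ldots,w_k)$ in the sense that it maps to the nonzero element ${\det}(z_{\alpha i})_{1\le \alpha,i\le n}$ of the domain $\C[\{z_{\alpha i}\}]$; reducing the displayed identity modulo $(w_{n+1},\ldots,w_k)$ gives $f(0)\cdot {\det}(z_{\alpha i}) = 0$, and iterating on the graded pieces — or simply noting $\C[w_{n+1},\ldots,w_k,\{z_{\alpha i}\}]$ is a domain and ${\det}_+ \ne 0$, so multiplication by ${\det}_+$ is injective, forcing $f \in (w_{n+1},\ldots,w_k)$, then $f\cdot {\det}_+ \in (w_{n+1},\ldots,w_k)^2$, etc. — yields $f \in \bigcap_m (w_{n+1},\ldots,w_k)^m = 0$ by Krull. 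Actually the domain argument alone is immediate: from $f\,{\det}_+ \in (w_{n+1},\ldots,w_k)$ and ${\det}_+$ not a zero-divisor, divide — no, that is not how ideals work. The clean finish: in the domain $R = \C[w_{n+1},\ldots,w_k,\{z_{\alpha i}\}]$ consider the $\Z$-grading by total $w$-degree; ${\det}_+$ is homogeneous of $w$-degree $0$ and nonzero, and $f\,{\det}_+ \in (w_{n+1},\ldots,w_k)$ means the $w$-degree-$0$ part of $f\,{\det}_+$, namely $f(0)\,{\det}_+$, is zero, so $f(0)=0$; now $f - f(0) = f$ has all terms of positive $w$-degree, and repeating with the associated graded / lowest-degree term of $f$ shows the lowest-degree homogeneous component of $f$ is also forced to vanish, hence $f = 0$.

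The main obstacle is purely bookkeeping: making precise that ${\det}_+$ "survives" modulo the ideal of negative coordinates and that $f$, living only in the $w_{n+1},\ldots,w_k$, cannot be used to compensate. I expect no genuine difficulty — the substitution homomorphism $\pi$ together with the fact that $\mathcal{P}_k$ is a polynomial ring (hence a UFD, in particular a domain, with the desired variables algebraically independent) does all the work, and the lemma follows.
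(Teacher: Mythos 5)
Your opening reduction is where the argument breaks, and none of the later repairs recover from it. You replace the hypothesis $f\,{\det}_+ \in (q_1,\ldots,q_n)$ by the weaker $f\,{\det}_+ \in (w_1,\ldots,w_k)$ and claim it suffices to show this forces $f=0$. That ``sharper statement'' is false: since $n<k$, the element $f=w_{n+1}$ satisfies $f\,{\det}_+ = {\det}_+\, w_{n+1} \in (w_{n+1}) \subset (w_1,\ldots,w_k)$ but $f\neq 0$. You notice the symptom (you only get $f(0)=0$), but every attempted fix has the same defect. The map $\pi$ killing $w_1,\ldots,w_n$ yields $f\,{\det}_+\in(w_{n+1},\ldots,w_k)$, which is again automatic for any $f$ with $f(0)=0$ and so carries no information; the Krull iteration fails at its first step, since $f\in(w_{n+1},\ldots,w_k)$ does not give $f\,{\det}_+\in(w_{n+1},\ldots,w_k)^2$ (for $f=w_{n+1}$, the product $w_{n+1}{\det}_+$ has $w$-degree $1$, while every element of the square has $w$-degree at least $2$); and the final grading argument only ever extracts the $w$-degree-zero piece, i.e.\ $f(0)=0$, because membership in $(w_{n+1},\ldots,w_k)$ says nothing about the positive-degree components. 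The root cause is that coarsening $(q_1,\ldots,q_n)$ to an ideal generated by $w$'s that include $w_{n+1},\ldots,w_k$ destroys exactly the information needed, since $f$ itself lives in those variables.

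The paper keeps the needed information by specializing in the $z$'s rather than coarsening in the $w$'s: set the off-diagonal $z_{\alpha i}$ (with $1\le\alpha\le n$, $i\neq\alpha$) to zero. Under this quotient ${\det}_+\mapsto z_{11}\cdots z_{nn}$ and $q_\alpha=\sum_i z_{\alpha i}w_i\mapsto z_{\alpha\alpha}w_\alpha$, so the image ideal lies in $(w_1,\ldots,w_n)$ --- crucially, only the \emph{first} $n$ of the $w$'s. Since $f\prod_\alpha z_{\alpha\alpha}$ involves none of $w_1,\ldots,w_n$, reducing further modulo $(w_1,\ldots,w_n)$ leaves it unchanged and equal to zero in the domain $\C[z_{11},\ldots,z_{nn},w_{n+1},\ldots,w_k]$, whence $f=0$. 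If you want to salvage your approach, you must make some such use of the $z$-coefficients of the $q_\alpha$ (for instance via your observation that $\pi(q_\alpha)=\sum_{i>n}z_{\alpha i}w_i$, which you state but never exploit); the bare containment in an ideal of $w$'s cannot work.
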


\begin{proof}
We will show that this map has kernel zero.  Suppose $f(w_{n+1}, \ldots, w_k) {\det}_+ \in (q_1, \ldots, q_n)$.   Now pass to the quotient where we have divided by the ``off-diagonal" $z_{\alpha i}$.  Then
\begin{align*}
f(w_{n+1}, \ldots, w_k) {\det}_+ \mapsto f(w_{n+1}, \ldots, w_k) z_{11} z_{22} \cdots z_{nn}
q_\alpha \mapsto w_\alpha z_{\alpha \alpha}.
\end{align*}
By assumption, $f \prod_\alpha z_{\alpha \alpha} \in (w_1 z_{11}, w_2 z_{22}, \ldots, w_n z_{nn}) \subset (w_1, \ldots, w_n)$.
Hence
$$f(w_{n+1}, \ldots, w_k) \prod_\alpha z_{\alpha \alpha} \in (w_1, \ldots, w_n)$$
and thus $f = 0$.

\end{proof}

By Lemma \ref{injecttoA} the cohomology classes $f(w_{n+1}, \ldots, w_k) {\det}_+ \vol \in H^n(A)$ are nonzero.  We now check if they are invariant.

Let $f$ be homogenous of degree $a$.  Then for $(k_+, k_-) \in \OO(n) \times \OO(1)$ we have, by Lemma \ref{Weildescends},
$$(k_+ k_-) f {\det}_+ \vol = \det(k_-)^{a+k+n} f {\det}_+ \vol.$$
Thus, if $a+k+n$ is even this element is invariant and Statement (3) is proved.  In fact, we have shown

\begin{prop}
For $k > n$, if $k+n$ is even (resp. odd), then the even (resp. odd) degree polynomials $f \in \C[w_{n+1}, \ldots, w_k]$ inject into\\
$H^n \big(\mathfrak{so}(n,1) , \widetilde{\mathrm{O}(n) \times \mathrm{O}(1)}; \mathcal{P}_k \otimes {\det}^{\frac{k}{2}} \big)$ by the map $f \mapsto f det_+ \vol.$
\end{prop}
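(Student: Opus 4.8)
The plan is short: this proposition is essentially a clean restatement of the construction already used to prove statement~(3) of Theorem~\ref{maintwo}, so I would assemble the pieces rather than prove anything new. First I would recall that, since $\OO(n)\times\OO(1)$ is compact, the invariants functor is exact, so by Equation~\eqref{twistedactioncohomologygroups} and Proposition~\ref{actionofK} the $E_0$ term of the spectral sequence attached to $C^\bullet\big(\mathfrak{so}(n,1),\widetilde{K};\mathcal{P}_k\otimes{\det}^{k/2}\big)$ is the invariant sub-Koszul complex $\big(A^\bullet\otimes{\det}_{\OO(1)}^k\big)^{\OO(n)\times\OO(1)}$ of the complex $A$ of Equation~\eqref{defofAcomplex}, and hence its degree-$n$ cohomology is $\big(H^n(A)\otimes{\det}_{\OO(1)}^k\big)^{\OO(n)\times\OO(1)}$. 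By Proposition~\ref{Avanishing} the cohomology of $A$ vanishes outside degree $n$, and the cohomological degree never exceeds $n$, so degrees $n-1$ and $n+1$ of the associated graded both vanish; statement~(3) of Proposition~\ref{generalspectral} then yields
\[
H^n\big(\mathfrak{so}(n,1),\widetilde{K};\mathcal{P}_k\otimes{\det}^{k/2}\big)\;\cong\;\big((\mathcal{P}_k/(q_1,\dots,q_n))\,\vol\otimes{\det}_{\OO(1)}^k\big)^{\OO(n)\times\OO(1)}.
\]

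Next I would exhibit the classes. For $f\in\C[w_{n+1},\dots,w_k]$ the element $f\,{\det}_+\,\vol$ lies in $A^n=\wwedge{n}(\mathfrak{p}^*)\otimes\mathcal{P}_k$ and is automatically a cocycle, being of top cochain degree $n$; thus it represents a class in $H^n(A)=(\mathcal{P}_k/(q_1,\dots,q_n))\,\vol$. Lemma~\ref{injecttoA} asserts that $f\mapsto f\,{\det}_+$ is injective from $\C[w_{n+1},\dots,w_k]$ into $\mathcal{P}_k/(q_1,\dots,q_n)$, so, tensoring with the one-dimensional space spanned by $\vol$, the map $f\mapsto[f\,{\det}_+\,\vol]$ is injective into $H^n(A)$, and hence into the invariant subspace above once invariance is checked.

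Finally I would verify equivariance, reducing to $f$ homogeneous of degree $a$. Combining the transformation of the $\wwedge{n}(\mathfrak{p}^*)$ factor under $\mathrm{Ad}^*$, the behaviour of ${\det}_+$ and of $f$ in the Fock model under $(k_+,k_-)\in\OO(n)\times\OO(1)$, and the factor ${\det}_{\OO(1)}^k$ coming from Lemma~\ref{Weildescends}(4) — using $\det(k_+)^2=1$ to cancel the two contributions of $\det(k_+)$ — one obtains, exactly as in the computation preceding the statement,
\[
(k_+,k_-)\cdot\big(f\,{\det}_+\,\vol\big)\;=\;\det(k_-)^{a+k+n}\, f\,{\det}_+\,\vol,
\]
so $f\,{\det}_+\,\vol$ is invariant after the twist by ${\det}^{k/2}$ precisely when $a+k+n$ is even, i.e.\ for even $a$ when $k+n$ is even and for odd $a$ when $k+n$ is odd. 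Together with the injectivity of the previous paragraph and the isomorphism of the first paragraph, this proves the proposition. The one place needing genuine care is this last bookkeeping — keeping the three independent determinant contributions (the intrinsic behaviour of ${\det}_+$, the behaviour of $\vol$ under $\mathrm{Ad}^*$, and the ${\det}_{\OO(1)}^k$ from the twisted Weil representation) straight and confirming that only the parity $a+k+n \bmod 2$ survives; everything else is immediate from Proposition~\ref{Avanishing}, Proposition~\ref{generalspectral}, and Lemma~\ref{injecttoA}.
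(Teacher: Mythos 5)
Your proposal is correct and follows essentially the same route as the paper: identify $H^n$ of the associated graded with $\big((\mathcal{P}_k/(q_1,\dots,q_n))\vol\otimes{\det}_{\OO(1)}^k\big)^{\OO(n)\times\OO(1)}$ via Proposition \ref{Avanishing} and the spectral sequence, invoke Lemma \ref{injecttoA} for injectivity, and check that the class transforms by $\det(k_-)^{a+k+n}$. The only difference is that you spell out the spectral-sequence bookkeeping slightly more explicitly than the paper does; the substance is identical.
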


\renewcommand{\baselinestretch}{1}
\small\normalsize


\end{document}